\documentclass[11pt, reqno]{amsart} 

\usepackage[margin=2.5 cm]{geometry} 
\usepackage{setspace,graphicx} 
\usepackage{amsmath,amssymb,amsthm}   
                                                   
\usepackage{url, hyperref}     
\usepackage{floatflt}

\usepackage{xy}
\input xy 
\xyoption{all}

\usepackage{pstricks, epstopdf}

\usepackage{color,soul}

\geometry{a4paper} 
\onehalfspacing    

\newcommand{\bC}{{\mathbb C}}
\newcommand{\bD}{{\mathbb D}}
\newcommand{\bN}{{\mathbb N}}

\newcommand{\bQ}{{\mathbb Q}}
\newcommand{\bR}{{\mathbb R}}

\newcommand{\bZ}{{\mathbb Z}}

\newcommand{\bJ}{{\mathbb J}}
\newcommand{\bT}{{\mathbb T}}


\newcommand{\hx}{{\textbf x}}
\newcommand{\hy}{{\textbf y}}
\newcommand{\hz}{{\textbf z}}

\newcommand{\hal}{\mbox{\boldmath{$\alpha$}}}
\newcommand{\hbe}{\mbox{\boldmath{$\beta$}}}


\newcommand{\fs}{\mathfrak{s}}
\newcommand{\fj}{\mathfrak{j}}
\newcommand{\fd}{\mathfrak{d}}
\newcommand{\fS}{\mathfrak{S}}



\newcommand{\mA}{\mathcal{A}}
\newcommand{\mB}{\mathcal{B}}

\newcommand{\mD}{\mathcal{D}}
\newcommand{\mE}{\mathcal{E}}
\newcommand{\mF}{\mathcal{F}}

\newcommand{\mH}{\mathcal{H}}
\newcommand{\mJ}{\mathcal{J}}

\newcommand{\mL}{\mathcal{L}}

\newcommand{\mM}{\mathcal{M}}

\newcommand{\mP}{\mathcal{P}}

\newcommand{\wM}{\widehat {\mathcal{M}}}
\newcommand{\Muniv}{\mM^{\textrm{univ}}}


\newcommand{\De}{\Delta}
\newcommand{\Om}{\Omega}
\newcommand{\om}{\omega}
\newcommand{\Ga}{\Gamma}

\newcommand{\Si}{\Sigma}
\newcommand{\la}{\lambda}
\newcommand{\be}{\beta}
\newcommand{\al}{\alpha}
\newcommand{\de}{\delta}
\newcommand{\si}{\sigma}
\newcommand{\ep}{\epsilon}
\newcommand{\varep}{\varepsilon}
\newcommand{\ga}{\gamma}


\newcommand{\hfi}{HF^\infty}
\newcommand{\hfhat}{\widehat{HF}}
\newcommand{\hfkhat}{\widehat{HFK}}


\providecommand{\abs}[1]{\lvert#1\rvert} 
\providecommand{\norm}[1]{\lVert#1\rVert}


\newcommand{\bdd}{\partial}


\newtheorem{thm}{Theorem} [section]
\newtheorem{theorem/definition}{Theorem/Definition}[section]
\newtheorem{prop}[thm]{Proposition}
\newtheorem{claim}[thm]{Claim}
\newtheorem{cor}[thm]{Corollary}
\newtheorem{lemma}[thm]{Lemma}

\theoremstyle{definition}
\newtheorem{ex}[thm]{Example}

\newtheorem{deff}[thm]{Definition}

\newtheorem{rmk}[thm]{Remark}

\theoremstyle{remark}




\DeclareMathOperator{\rank} {rank}

\DeclareMathOperator{\ind} {ind}


\DeclareMathOperator{\Ker} {Ker}
\DeclareMathOperator{\Coker} {Coker}

\DeclareMathOperator{\proj} {proj} 


\DeclareMathOperator{\End} {End}


\DeclareMathOperator{\Int} {Int}

\DeclareMathOperator{\Spin} {Spin}
\DeclareMathOperator{\Sym} {Sym}

\DeclareMathOperator{\vol} {vol} 
\DeclareMathOperator{\gr} {gr}

\DeclareMathOperator{\Imagine} {Im} 
\DeclareMathOperator{\Lag} {Lag} 
 
\DeclareMathOperator{\dete} {det} 
\DeclareMathOperator{\Sp} {Sp} 



 \begin{document} 

\title{Introduction to sutured Floer homology}         
\date{\today}       
\author{Irida Altman}
\maketitle

\begin{abstract} 

The following article constitutes the first chapter of my thesis.  It has been written as a standalone introduction to sutured Floer homology for graduate students in geometry and topology.  It contains most of the things the author wishes she had known when she started her journey into the world of sutured Floer homology.  

The article is divided into three parts.  The first part is an introductory level exposition of Lagrangian Floer homology.  The second part is a construction of Heegaard Floer homology as a special, and slightly modified, case of Lagrangian Floer homology.  The third part covers the background on sutured manifolds, the definition of sutured Floer homology, as well as a discussion of its most basic properties and implications (it detects the product, behaves nicely under surface decompositions, defines an asymmetric polytope, its Euler characteristic is computable using Fox calculus). Any errors in the article are the author's.

\end{abstract}

\tableofcontents

\section{Introduction}

\indent \indent A part of today's geometry and topology uses tools built on large analytic and algebraic foundations.   The goal of this article is to introduce the ideas behind sutured Floer homology in a way that exposes the depth and beauty of the theory, without getting lost in analytical detail.  The story of Floer theory begins about 35 years ago.

In the late 1980's, Andreas Floer developed an infinite-dimensional analogue of Morse theory in order to solve a special case of the Arnold conjecture \cite{F0,F1,F2,F3}.  In subsequence years, his work  proved to be the starting point of many theories in diverse areas of mathematics such as geometry, topology and dynamics. 

In the early 2000's, Ozsv\'ath and Szab\'o developed a set of invariants of closed 3-manifolds, using a slightly modified construction of Floer's theory \cite{OSmain}.   They associate to a given closed 3-manifold $Y$ a variety of  finitely generated Abelian groups that are referred to as  `flavours' of {\it Heegaard Floer homology}.  The simplest and most relevant for us is the `hat flavour' denoted by $\hfhat(Y)$.  
 
 In 2006, Juh\'asz developed a Floer theory  for a wide class of 3-manifolds with boundary, called {\it (balanced) sutured manifolds} \cite{Ju}. A sutured manifold is a pair $(M,\ga)$, where $M$ is an oriented 3-manifold with boundary, and $\ga$ is a collection of oriented, simple closed curves on $\bdd M$ that satisfy certain orientation requirements with respect to the orientation of $\bdd M$.  Then the {\it sutured Floer homology} of $(M,\ga)$, denoted by $SFH(M,\ga)$, is a finitely generated, bigraded Abelian group, whose construction is very similar to the construction of $\hfhat(Y)$ for a closed manifold $Y$.
 
In the last section, among other things, we answer simple questions such as why sutured Floer homology is defined only for balanced sutured manifolds, why it is constructed like $\hfhat$ (and not some other flavour of Heegaard Floer homology), and why it does not refer to a base point like the Ozsv\'ath-Szab\'o invariants.  The answers are obvious to those familiar with sutured Floer homology, but may be less obvious to someone starting out in Floer theory, and wanting to work with sutured manifolds.  We hope that this article provides a quick way into the field.  

The remainder of this article is divided into three parts: 
\begin{itemize}
\item Section \ref{section Lag FH}: \,\,Introductory level exposition of Lagrangian Floer homology.  
\item Section \ref{section HF}: \,Construction of Heegaard Floer homology as a special, and slightly modified, case of Lagrangian Floer homology.  
\item Section \ref{section SFH}: Background on sutured manifolds, the definition of sutured Floer homology, as well as a discussion of its most basic properties and implications (it detects the product, behaves nicely under surface decompositions, defines an asymmetric polytope, its Euler characteristic is computable using Fox calculus).
\end{itemize}
As may be expected of an introduction to sutured Floer homology, Section \ref{section SFH} attempts to balance foundational details with a sufficiently large scope, so that the reader may both see some `tricks of the trade' and witness a variety of results.  In places, we try to fill in small gaps that may have been omitted in the original, `adult' publications of (mostly) Juh\'asz's work.   The majority of the exposition in that section can be understood independently of the previous sections.  The hope is that those who do not care about the underlying Floer machinery can still familiarise themselves with sutured Floer homology as a powerful tool for studying sutured manifolds.

\subsection*{Notation}    If two topological spaces $W$ and $X$ are homeomorphic, we write $W \cong X$.  Denote by $\abs X$ the number of connected components of $X$.  If $U$ is an open set in $X$, then $\overline U$ denotes the closure of $U$ in the topology of $X$.  If $V$ is another space and $V \cap X \neq \emptyset$, then $X-V$ denotes all elements of $X$ not in $V$.  All homology groups are assumed to be given with $\bZ$ coefficients unless otherwise stated.  

\subsection*{Figures}  As this is an introductory article, and not original research, the figures have deliberately been done in a style that evokes a less formal, lecture-notes format, while retaining clarity.

\subsection*{Acknowledgements}
As this article is the first part of my thesis, I would like thank all three of my Ph.D. advisers:
\begin{itemize}
\item  {\it Stefan Friedl} for taking me as his student at the University of Warwick and for introducing me to the world of 3-manifolds.  He has given me invaluable advice, both globally in the form of project ideas and math lore, as well as locally in the form of typos, errors, and improvements of my manuscripts.
\item {\it Andr\'as Juh\'asz} for being my guide through the world of sutured Floer homology and through the projects I have done in this domain. His support and supervision of my work kept me headed in the right direction. I am most grateful for his teaching, patience and enthusiasm during my stay at the University of Cambridge.
 
\item{\it Saul Schleimer} for accepting the responsibility of watching over me, guiding me through the bureaucracy of a PhD, and making sure I stayed on track.

\end{itemize}
Many thanks to Will Merry for a careful proofreading of this article, and for pointing out a couple of minor errors.

I owe much gratitude to the University of Warwick and the Warwick Mathematics Institute for generously supporting me through a Warwick Postgraduate Research Scholarship, as well as to the Department for Pure Mathematics and Mathematical Statistics in Cambridge for their hospitality.

\section{Lagrangian Floer homology }\label{section Lag FH}

Floer homology takes its name after Andraes Floer who first used it to prove a special case of the Arnold conjecture; he showed that the sum of the Betti numbers of a symplectic manifold is a lower bound for the number of 1-periodic solutions of a 1-periodic Hamiltonian system \cite{Ar65}. The full conjecture is still open -- it states that the number of 1-periodic solutions of a 1-periodic Hamiltonian system is at least the minimal number of critical points of a function on the manifold, and if all the 1-periodic solutions are non-degenerate, then the number of solutions is at least the number of critical points of a Morse function on the manifold.  The Lefschetz fixed point theorem is related to the Arnold conjecture in the same way that the Poincar\'e-Hopf theorem is related to the Morse theoretic statement that the sum of the Betti numbers of a manifold is a lower bound on the number of critical points of a Morse function on that manifold.    The message to take away is that Floer's homology is a generalisation of Morse homology, and that the latter is a finite-dimensional model of the former.   Those familiar with Morse homology should find the exposition in this section accessible.

In fact, we care  about the Lagrangian-theoretic version of the Arnold conjecture.  We start with a given symplectic manifold $(M,\om)$ and Lagrangian submanifold $L \subset M$.  In this setting, the Arnold conjecture proposes that the number of 1-chords of a Hamiltonian system both starting and ending in $L$ is bounded bellow by the sum of the Betti numbers of the Lagrangian.  Again, more generally, it proposes that the number of such 1-chords is at least the minimal number of critical points of a function on $L$, and if $L$ is transverse to its image under the time-one map of the Hamiltonian system, then the number of 1-chorus is at least the number of critical points of a Morse function. Historically, Floer first worked on this latter version of the conjecture \cite{F0}, and his constructions form the main basis of this section.

Let $(M^{2n},\om)$ denote a closed connected symplectic manifold.  That means that $\om \in \Om^2(M)$ is a closed 2-form with the property that $\om^n$ is a volume form on $M$.  Recall that a submanifold $L$ of $M$ is {\it Lagrangian} if $\dim L=n$ and $\om |_{TL}=0$.  In this article we assume that all Lagrangians are closed. 

Under favourable conditions the {\it Lagrangian Floer homology} $HF(M,L_1,L_2)$ is a well defined abelian group associated to a pair of Lagrangian submanifolds. The meaning of `favourable conditions' varies: Floer originally assumed that $L_2= \varphi (L_1)$ for a Hamiltonian diffeomorphism  $\varphi$, that $\pi_2(M,L_1)=0$, and that $L_1$ and $L_2$ are transverse \cite{F0}.  In this article, we discuss one of the first extensions of Floer's theory due to Oh \cite{Oh}. In particular, Oh defined $HF(M, L_1,L_2)$ under the following four assumptions:
\begin{description}
\item [(F1)] $L_1$ and $L_2$ are transverse;
\item [(F2)] $L_1$ and $L_2$ are {\it (positively) monotone};
\item [(F3)] at least one of the subgroups $\iota_{1*}(\pi_1(L_1))$ and $\iota_{2*}(\pi_1(L_2))$ are torsion subgroups of $\pi_1(M)$, where $\iota_k \colon L_k \hookrightarrow M$ is the inclusion for $k=1,2$;
\item [(F4)] the {\it minimal Maslov numbers} $N_{L_1}$ and ${N_{L_2}}$ of $L_1$ and $L_2$ are both at least 3.
\end{description}
We refer to these as assumptions or conditions followed by these numbers {\bf(F1--4)}.

In the remainder of Section \ref{section Lag FH}, we explain these assumption and how they enable the construction of Lagrangian Floer homology. Here is a list of topics.
\begin{itemize}

\item[\ref{subsection pseudo}\, ] Basics of pseudo-holomorphic maps. 
\item[\ref{subsection the setup}\, ] Setup of Lagrangian Floer homology.
\item[\ref{subsection maslov}\, ] Introduction to various forms of Maslov indices.
\item[\ref{subsection floer chain}\, ] Definition of  Floer chain complex.
\item[\ref{subsection grading}\, ] Grading of  Floer complex.
\item[\ref{subsection sobolev}\, ]  $W^{k,p}$ norms and Sobolev spaces.
\item[\ref{subsection J-holo}\, ]  Some properties of $J$-holomorphic curves.
\item[\ref{subsection fredholm}\, ]   Fredholm operators and infinite-dimensional Implicit Function Theorem.
\item[\ref{subsection transversality}\, ]  Transversality of vertical derivative and zero section (of a Banach bundle).
\item[\ref{subsection spectral}] A bit about spectral flow.
\item[\ref{subsection Morse theory}] Morse theory written in the language of Floer theory.
\item[\ref{subsection floer case}] Spectral flow and Maslov index in the Floer case.
\item[\ref{subsection energy}]  Energy of pseudo-holomorphic maps.
\item[\ref{subsection gromov}]  Compactness of moduli spaces: Gromov compactness theorems.

\end{itemize}

\subsection{Pseudo-holomorphic maps}\label{subsection pseudo}
  First let us define the domain of pseudo-holomorphic maps.  Denote by $S$ the closed strip $\{\bZ \in \bC \mid 0 \leq \Imagine(z) \leq 1\}$, which we identify with $\bR \times [0,1]$.  Let $S^\circ$ denote the interior of $S$ identified with $\bR \times (0,1)$.  Next, denote by $\bD$ the closed unit disk in $\bC$, and by $\bD ^\circ$ the open unit disk in $\bC$.  The open strip $S^\circ$ is biholomorphic to the open unit disk $\bD^0$, for example, via the map $f \colon S^\circ \to \bD^\circ$ given by
\begin{equation} \label{eq identification}
f(z):=\frac{\exp(\pi z)- i}{\exp(\pi z) + i}.
\end{equation}

Let $(M^{2n}, \om)$ be a closed symplectic manifold, and $J$ be an almost-complex structure of $M$.  Recall that  $\End(TM) \to M$ is the vector bundle with fibre over $p \in M$ being the vector space of linear maps $T_p(M) \to T_p(M)$.   Then $J \in \Ga \End(TM))$, that is, $J$ is a section of  $\End(TM) \to M$, with $J^2=-1$.  An almost complex structure $J$ is said to be {\it compatible} with the symplectic from $\om$ if the bilinear form $g_J(\cdot,\cdot):=\om(\cdot,J\cdot)$ is both positive definitive and symmetric, and so it is a Riemannian metric on $M$.  An easy argument shows that the set of compatible almost complex structures $\mJ(M,\om)$ is connected \cite[Prop.\,4.1]{McDS1}.  This means that if we think of $(TM,J)$ as a complex vector bundle over $M$, then the Chern numbers $c_k(TM,J)$ are independent of $J \in \mJ(M,\om)$.  Specifically, we care about $c_1(TM,J)$, which we denote by $c_1(M)$ or even just $c_1$. 

A smooth map $u \colon S \to M$ is called {\it $J$-holomorphic} or {\it pseudo-holomorphic with respect to $J$} if the differential $du$ is a complex linear map with respect to $i$ and $J$, that is: 
\begin{equation} \label{J holo 1}
du + J \cdot du \cdot i=0,
\end{equation}
where $i$ is the standard complex structure on $\bC$, restricted to $S$.  Equivalently, $u$ is pseudo-holomorphic if the complex anitilinear part of $du$ is zero, that is, if 
\[
\bar \bdd_J (u):=\frac{1}{2}(du + J \cdot du \cdot i)\in \Om^{0,1}(u^*TM).
\]
More generally, the notion of pseudo-holomorphic is defined for any pair $(S,j)$, where $S$ is any Riemann surface together with a complex structure $j$, and any almost complex manifold $(M,J)$.  Then $u \in C^\infty(S,M)$ is called {\it $(J,j)$-holomorphic} when $du + J \cdot du \cdot j=0$.  In this language the map in \eqref{eq identification} is a pseudo-holomorphic map $f$ from $(S^\circ, i)$ to $(\bD^\circ,i)$.

For us it is sufficient to consider  the strip $S$ in $\bC$ with $j$ equal to the standard complex structure on $S$.  In particular, if $(s,t)$ are coordinates on $\bR \times [0,1]$, then $i \left( \frac{\partial}{\partial s} \right)=\frac {\partial}{\partial t}$ and $i \left( \frac{\partial}{\partial t} \right)=-\frac {\partial}{\partial s}$, so in coordinates we have
\[
\bar \bdd_J(u)\left( \frac{\bdd}{\bdd s} \right)=\frac{1}{2}\left( \frac{\bdd u}{\bdd s} + J \frac{\bdd u}{\bdd t} \right),
\hspace{1cm}
\bar \bdd_J(u)\left( \frac{\bdd}{\bdd t} \right)=\frac{1}{2}\left( \frac{\bdd u}{\bdd t} - J \frac{\bdd u}{\bdd s} \right).
\]
Thus, $\bar \bdd _J (u)=0$ if and only if 
\[
\left( \frac{\bdd u}{\bdd s} + J \frac{\bdd u}{\bdd t} \right)=0, 
\hspace{1cm}
\left( \frac{\bdd u}{\bdd t} - J \frac{\bdd u}{\bdd s} \right)=0.
\]
But the second equation gives no new information, as $J\left( \frac{\bdd u}{\bdd t} - J \frac{\bdd u}{\bdd s} \right)=\left( \frac{\bdd u}{\bdd s} + J \frac{\bdd u}{\bdd t} \right)$. Therefore, Equation \eqref{J holo 1} can equivalently be written as 
\begin{equation} \label{J holo 2}
\frac{\partial u}{\partial s} + J \frac{\partial u }{\partial t}=0.
\end{equation}

\subsection{The setup} \label{subsection the setup}

As before let $(M^{2n}, \om)$ be a closed symplectic manifold, and let $L_1$ and  $L_2$ be two Lagrangian submanifolds of $M$.  Note that the assumption {\bf (F1)} on the transversality of $L_1$ and $L_2$ is not very restrictive.  Indeed, since $L_1$ and $L_2$ are closed submanifolds of $M$, a standard Sard-Smale argument shows that we can always perturb $L_2$ to get a new Lagrangian manifold $L_2':=\varphi(L_2)$, such that $L_2$ is transverse to $L_1$, and $\varphi$ is a Hamiltonian diffeomorphism; this argument is similar to, but simpler than, the transversality arguments discussed in Section \ref{subsection transversality}.  A key property of the Floer homology groups $HF(L_1,L_2)$ proved originally by Floer \cite[Sec.\,3]{F0}, is that they are invariant under perturbations of the form $L_2 \leadsto \varphi(L_2)$. Therefore, it is possible to define $HF(L_1,L_2)$ even when {\bf (F1)} fails, by setting
\[
HF(L_1,L_2):=HF(L_1,\varphi(L_2)),
\]
for any appropriate $\varphi$.  (In particular, we could define $HF(L,L)$.)

In any case, we assume {\bf (F1)}.  By a dimension counting argument  $L_1 \cap L_2$ is zero-dimensional, and since $L_1$ and $L_2$ are compact, $L_1 \cap L_2$ is necessarily a finite set of points.

Lagrangian Floer homology studies the set $\mM_J(M, L_1,L_2)$ of smooth $J$-holomorphic maps $u$ that satisfy
\[
u\left(\bR\times \{0\}\right) \subset L_1, \hspace{1cm} u\left(\bR\times \{1\}\right)\subset L_2, 
\hspace{1cm} u(\pm \infty, \cdot) \subset L_1 \cap L_2.
\]
The set $\mM_J(M, L_1,L_2)$ is referred to as the {\it moduli space of pseudo-holomorphic maps}.

Let $\hx,\hy \in L_1 \cap L_2$.  Denote by $\mM_J(\hx, \hy) \subset \mM_J(M,L_1,L_2)$ the subset of maps $u$ such that the asymptotes are fixed, that is, such that $u(-\infty, \cdot) \cong \hx$ and $u (\infty, \cdot) \cong \hy$.  When $J$ is fixed or understood, we write only $\mM(\hx,\hy)$.  Note that, alternatively, a map $u \in \mM(\hx,\hy)$ can be thought of as a map $v \colon \bD \to M$ that is $J$-holomorphic on $\bD^\circ$ and that satisfies
\begin{align*}
& v(\bdd^- \bD) \subset L_1, \hspace{1cm}  v(\bdd^+ \bD) \subset L_2, \\
& v(-i)=\hx,  \hspace{1.5cm} v(i)=\hy,
\end{align*}
where 
\begin{gather*}
\bdd^- \bD:=\bdd \bD \cap \{z \in \bC \mid \Imagine(z)\leq 0\}, \\
\bdd^+ \bD:=\bdd \bD \cap \{z \in \bC \mid \Imagine(z)\geq 0\}.
\end{gather*}
It is sometimes useful to think of elements of $\mM(\hx,\hy)$ as maps $u \colon S \to M$ and sometimes as maps $v \colon \bD \to M$ and we do so interchangeably; see Figure \ref{fig Whitney disk} for an illustration $v$.  Nevertheless, in order to help the reader, we {\it consistently} use the letter $u$ for maps defined on $S$, and $v$ for maps defined on $\bD$.
\begin{figure}[h]
\centering
\includegraphics [scale=0.50]{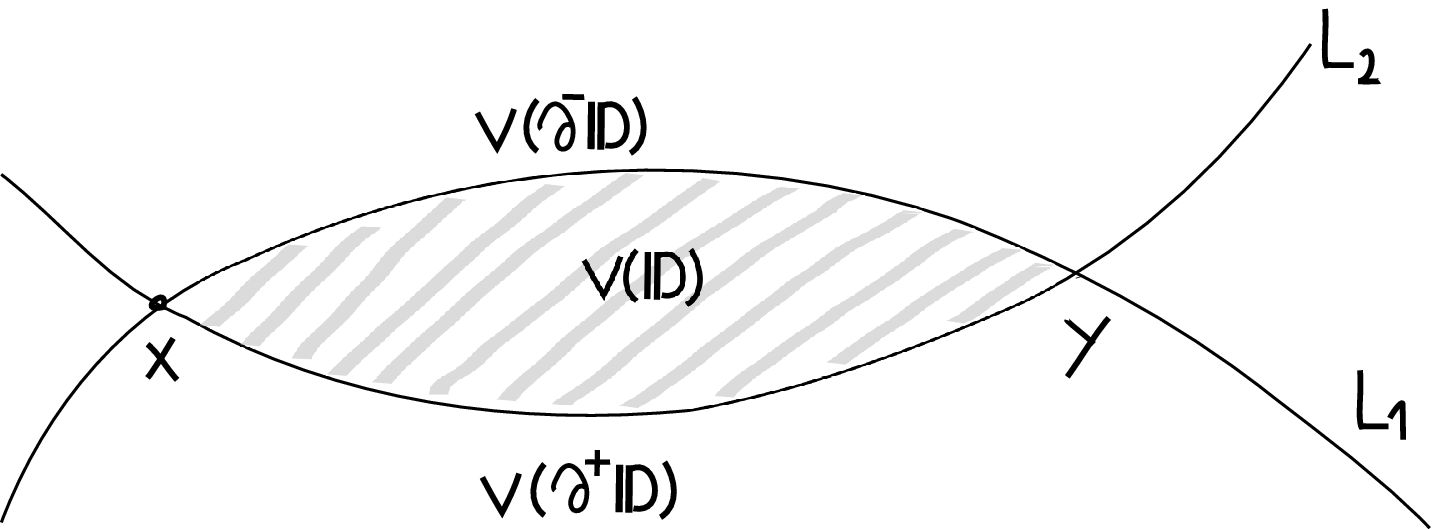} 
\caption{A $J$-holomorphic disk $v \in \mM(\hx,\hy)$.}
\label{fig Whitney disk}
\end{figure}

\begin{claim} \label{claim manifold} For a generic almost complex structure $J$ the set $\mM_J(\hx,\hy)$ admits the structure of a finite-dimensional smooth manifold (possibly with components of differing dimensions).  
\end{claim}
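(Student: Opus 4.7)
The plan is to realise $\mM_J(\hx,\hy)$ as the zero set of a smooth Fredholm section of a Banach bundle and then invoke a Sard--Smale argument to obtain smoothness for a generic $J$. First I would fix $p>2$ and introduce the Banach manifold $\mathcal{B}^{1,p}(\hx,\hy)$ consisting of $W^{1,p}_{\mathrm{loc}}$-maps $u\colon S\to M$ satisfying the Lagrangian boundary conditions $u(\bR\times\{0\})\subset L_1$, $u(\bR\times\{1\})\subset L_2$ and converging to $\hx$ as $s\to-\infty$, to $\hy$ as $s\to+\infty$, with exponential rate. Assumption {\bf(F1)} gives $T_\hx M = T_\hx L_1\oplus T_\hx L_2$ and similarly at $\hy$, which is exactly what makes the exponential weights available and turns $\mathcal{B}^{1,p}(\hx,\hy)$ into a genuine Banach manifold modelled on the Sobolev spaces of Section \ref{subsection sobolev}.

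Over $\mathcal{B}^{1,p}(\hx,\hy)$ I would form the Banach bundle $\mathcal{E}$ whose fibre over $u$ is the $L^p$-space of $(0,1)$-forms with values in $u^*TM$, and view the assignment $u\mapsto \bar\bdd_J u$ of \eqref{J holo 1} as a smooth section $s_J$ of $\mathcal{E}$. By construction $s_J^{-1}(0)=\mM_J(\hx,\hy)$. The vertical derivative $Ds_J(u)\colon T_u\mathcal{B}^{1,p}\to \mathcal{E}_u$ is a first-order linear Cauchy--Riemann operator on the strip with totally real boundary conditions and non-degenerate asymptotics, hence Fredholm by the theory reviewed in Section \ref{subsection fredholm}. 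Via the spectral-flow/Maslov-index correspondence of Sections \ref{subsection spectral} and \ref{subsection floer case}, its index equals a Maslov--Viterbo index $\mu(\hx,\hy,[u])$ depending only on the homotopy class of $u$.

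If $Ds_J(u)$ were surjective at every zero, the infinite-dimensional implicit function theorem (Section \ref{subsection transversality}) would immediately cut out a smooth manifold of the Fredholm dimension $\mu(\hx,\hy,[u])$ on each homotopy class. Surjectivity typically fails for a fixed $J$, so I would pass to the universal setting: introduce a Banach manifold $\mathcal{J}^\ell$ of $C^\ell$-perturbations of a reference compatible almost complex structure, and form the universal moduli space
\[
\Muniv(\hx,\hy)=\{\,(u,J)\, :\, J\in\mathcal{J}^\ell,\ u\in\mM_J(\hx,\hy)\,\}.
\]
The \emph{main obstacle} is to show that the universal section $(u,J)\mapsto\bar\bdd_J u$ is transverse to the zero section, i.e.\ that letting $J$ vary adds enough freedom to hit every cokernel direction. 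The standard argument uses unique continuation for Cauchy--Riemann operators to locate a somewhere-injective interior point of any non-constant strip $u$, and constructs compactly supported perturbations of $J$ near its image that realise any prescribed direction in $\Coker Ds_J(u)$.

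Granted universal transversality, $\Muniv(\hx,\hy)$ is a smooth Banach manifold and the projection $\pi\colon \Muniv(\hx,\hy)\to\mathcal{J}^\ell$ is Fredholm of the same index as $Ds_J(u)$. The Sard--Smale theorem then produces a comeager set of regular values $J\in\mathcal{J}^\ell$; for each such $J$ the fibre $\pi^{-1}(J)=\mM_J(\hx,\hy)$ is a smooth manifold whose component through $u$ has dimension $\mu(\hx,\hy,[u])$. Because different homotopy classes can carry different Maslov indices, the components may have different dimensions, as stated. A final Taubes-style diagonal argument over $\ell$ (or working directly with Floer's $\mathcal{C}^\varepsilon$-space of smooth perturbations) upgrades the result from $C^\ell$ to smooth $J$, and elliptic regularity ensures the $W^{1,p}$-solutions are in fact smooth maps, consistent with the definition of $\mM_J(\hx,\hy)$ in Section \ref{subsection the setup}.
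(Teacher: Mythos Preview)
Your proposal is correct and follows essentially the same architecture as the paper: realise $\mM_J(\hx,\hy)$ as the zero set of the Fredholm section $\bar\bdd_J$ of a Banach bundle, enlarge to a universal moduli space, and apply Sard--Smale. The one technical difference worth noting is in the choice of perturbation space: the paper (Section~\ref{subsection transversality}) varies $J$ in the space of \emph{$t$-dependent} compatible almost complex structures $J(t)$, $t\in[0,1]$, which makes the universal transversality argument cleaner and avoids any appeal to somewhere-injective points; your approach with domain-independent $C^\ell$ perturbations works too but relies on the more delicate fact (Floer--Hofer--Salamon) that non-constant strips admit injective regular points.
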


See Theorem \ref{thm manifold} for a more precise statement.

It is possible to view the intersection points $\hx,\hy \in L_1 \cap L_2$ as critical points of a function $\mA$ defined on the set of paths from $L_1$ to $L_2$, and to view elements of $\mM(\hx,\hy)$ as gradient flow lines of $\mA$ (see for example \cite{F0} or \cite[Sec.\,13.5]{Oh book}). Under Assumption {\bf(F1)}, the function $\mA$ is a Morse function.  One way to study the spaces $\mM(\hx,\hy)$ would thus be an attempt to do Morse theory on $\mA$.  However, this approach fails dramatically: in general the Morse index $i(\hx)$ of an intersection point $\hx$ (that is, the dimension of the negative eigenspace of the Hessian of $\mA$ at $\hx$) is infinite.  Analogously to Morse theory, we would expect the dimension $\mM(\hx,\hy)$ to be the difference of the Morse indices $i(\hx)-i(\hy)$, and this is definitely {\it not} the case here as the difference may not even be defined.

Floer's key breakthrough was to interpret elements of $\mM(\hx,\hy)$ not as a gradient flow lines of $\mA$, but as pseudo-holomorphic maps (as we do here).  Put concisely, Floer realised he could study elements of $\mM(\hx,\hy)$ as solutions to an elliptic PDE, rather than a hyperbolic ODE.  Indeed, `morally' Claim \ref{claim manifold} holds because elements of $\mM(\hx,\hy)$ are solutions of an {\it elliptic} PDE, and so $\mM(\hx,\hy)$ is the zero set of a {\it Fredholm operator} (see Section \ref{subsection fredholm}).

What Floer needed next, was a suitable index theory (to mimic the role of the Morse index in Morse theory).  It turns out that the correct object to study is the so called {\it Maslov index}.  Given $u \in \mM(\hx,\hy)$, the {\it local dimensional} $\dim_u\mM(\hx,\hy)$ is the dimension of the component of $\mM(\hx,\hy)$ containing $u$, and is given by the Maslov index $\mu([u])$ that depends only on the homotopy class of $u$ in $\mM(\hx,\hy)$.  This dependence on the homotopy class immediately highlights an important difference between Floer's theory and Morse theory --  in Morse theory the dimension of the moduli spaces depends only on the critical points themselves (and not on the gradient flow line).  In this sense, the Lagrangian Floer homology we study here is closer to {\it Novikov Morse theory}.

Actually, in the original paper \cite{F0}, Floer worked with a slightly different index, called the {\it Maslov-Viterbo index}, which was discovered by Viterbo \cite{Vi}.  However, we use the more `modern' approach based on the {\it Maslov index of a bundle pair}, which we study in the next section.

\subsection{The Maslov Index} \label{subsection maslov} In this section we define three different Maslov indices that eventually lead to the definition of $\mu([u])$ for $u \in \mM(\hx,\hy)$ as mentioned above.

Start with the standard identification of $\bR^{2^n}$ with $\bC^n$ given by $(x,y) \mapsto z=x+iy$.  Then the {\it standard symplectic form} $\om_{std} \in \Om^2(\bR^{2n})$ on $\bR^{2n}$ is defined to be 
\[
\om_{std}((u_1,v_1),(u_2,v_2)):=u_1 \cdot v_2 - u_2 \cdot v_1,
\] 
for $(u_j,v_j) \in T_{(x,y)} \bR^{2n} \cong \bR^{2n}$.  Here $\cdot$ denotes the Euclidean dot product.

We now recall some elementary facts about linear symplectic geometry; we refer the reader to \cite[Chap.\,2]{McDS1} for details and proofs of the following claims.  Denote by $\mL(n)$ the set $\Lag(\bC^n, \om_{std})$ of Lagrangian subspaces of $\bC^n$ with the standard symplectic structure.  Let $U(n)$ be the space of unitary $n\times n$ matrices, and $O(n)$ the space of orthogonal $n\times n$ matrices.  Then $\mL(n)$ can be identified with the homogenous space $U(n)/O(n)$, where $O(n)$ is considered as a subgroup of $U(n)$ under the natural embedding.  Indeed, given any $L \in \mL(n)$, we can write $L=A(\bR^n)$ for some unitary matrix $A \in U(n)$. Here $\bR^n$ is identified with $\bR^n \times \{0\}$ as a Lagrangian subset of $\bC^n \cong   \bR^{2n}$,  and $A ( \bR^n) = \bR^n$ if and only if $A \in O(n) \subset U(n)$.  It follows that 
\[
\pi_1(\mL(n))=\pi_1(U(n)/O(n))=\bZ
\]
The space $\mL(n)$ has a characteristic class $\mu_\mL \in H^1(\mL(n))$, called the {\it Maslov class}.  The Maslov class gives an explicit homomorphism $\pi_1(\mL(n)) \to \bZ$ that takes a loop of Lagrangian subspaces  $\la \colon S^1 \to \mL(n)$ and maps it into
\[
\mu_\mL(\la):= \deg (\dete^2(\la)) \in \bZ.
\]
Similarly, the symplectic linear group $\Sp(2n, \bR)$ has a characteristic class \linebreak $\mu_S \in H^1(\Sp(2n,\bR);\bZ)$.  Recall that $\Sp(2n,\bR)$ is the group of $2n \times 2n$ matrices $B$ satisfying $J_{std} = B^T J_{std} B$ for $J_{std}:= \left(\begin{smallmatrix} 0& I_n\\ -I_n&0 \end{smallmatrix}\right)$.  It can be shown that $\Sp(2n,\bR)$ deformation retracts onto $U(n)$ (regarded as a subgroup of $GL(2n,\bR)$), and so 
\[
\pi_1(\Sp(2n,\bR))=\pi_1(U(n))=\bZ.
\]
Further, this means that the determinant map $\dete \colon U(n) \to S^1$ can be extended to a map $\widetilde \dete \colon \Sp(2n,\bR) \to S^1$ that induces an isomorphism on $\pi_1$.  Thus, we can define a characteristic class $\mu_S\in H^1(\Sp(2n,\bR))$, by defining its action on a loop $\tau \colon S^1 \to \Sp(2n,\bR)$:
\[
\mu_S(\tau):= \deg(\widetilde \dete(\tau)) \in \bZ.
\]

Given two loops, $\la \colon S^1 \to \mL(n)$ and $\tau \colon S^1 \to \Sp(2n,\bR)$, we can define another loop $\tau \cdot \la \colon S^1 \to \mL(n)$ by setting $(\tau \cdot \la)(t):=\tau(t) \left( \la(t) \right)$.  Note that $\tau \cdot \la$ is well-defined, as a symplectic matrix maps a Lagrangian subspace of $\bR^{2n}$ into another Lagrangian subspace, hence there is a well-defined action of $\Sp(2n,\bR)$ on $\mL(n)$.  The two classes $\mu_\mL$ and $\mu_S$ are related in the following way
\begin{equation} \label{equation on mus}
\mu_\mL(\tau \cdot \la) = \mu_\mL(\la) + 2 \mu_S(\tau),
\end{equation}
(see \cite[Thm.\,2.35]{McDS1}).

Next, we define the {\it Maslov index of a symplectic bundle pair} $(E,F)$. Suppose $X$ is a Riemann surface with non-empty boundary.  Let $E \to X$ be a symplectic vector bundle, that is, there exists $\om \in \Ga(X, \Om^2(E))$ such that $(E_z,\om_z)$ is a symplectic vector space for each $z \in X$.  Any such vector bundle is necessarily trivial since $\bdd X \neq \emptyset$ \cite[Prop.\,2.66]{McDS1}.  Also, let $F$ be a subbundle of $E|_{\bdd X}$ with Lagrangian fibres, which means that for each $z \in \bdd X$, the subspace $F_z$ is Lagrangian in the symplectic vector space $(E_z,\om_z)$.

Let us label the connected components of $\bdd X$ by $C_1, \ldots, C_k$, and parametrize each component via a homeomorphism with the circle, $C_i \cong S^1$.  Since $E$ is a symplectic vector bundle, we can pick a symplectic trivialisation $\Phi \colon E \to X \times \bR^{2n}$.  Then $\Phi$ defines $k$ loops $\la_i \colon S^1 \to \mL(n)$ via
\[
\la_i := \Phi(F|_{C_1}).
\]
Finally, define
\[
\mu(E,F):=\sum_{i=1}^k \mu_\mL(\la_i) \in \bZ.
\]

It remains to show that the integer $\mu(E,F)$ is independent of the choice of trivialisation $\Phi$.  If $\Psi$ is another trivialisation, then the change of trivialisation function at a point $z \in X$:
\begin{equation} \label{psi and phi}
\tau_z:= \Phi \circ \Psi^{-1}|_{E_z}: \bR^{2n} \to \bR^{2n}
\end{equation}
is actually an element of $\Sp(2n,\bR)$.  Thus if $\la'_i:=\Psi(F|_{C_i})$, then $\la_i'=\tau|_C \cdot \la_i$, and so from Equation \eqref{equation on mus} it follows that
\[
\sum_{i=1}^k \mu_\mL(\la_i')=\sum_{i=1}^k \mu_\mL(\la_i) + 2 \sum_{i=1}^k \mu_S(\tau|_{C_i}).
\]
Next, by definition $\sum_{i=1}^k \mu_S(\tau|_{C_i})=\deg(\widetilde{\deg}(\tau|_{\bdd X}))$ (since degree is additive).   The key point is that since $\tau|_{\bdd X}$ is defined on all of $X$, it must have degree zero (this is proved in \cite[Lem.\,2.71]{McDS1}), so $\sum_{i=1}^k \mu_S(\tau|_{C_i})=0$.  Therefore, from \eqref{psi and phi} we see that  $\mu(E,F)$ is the same value regardless of whether we use $\Psi$ or $\Phi$.

It can also be shown that $\mu(E,F)$ is  invariant under homotopy in the following sense.  Suppose that for $i=1,2$,  $E_i \to X_i$ are symplectic vector bundles with Lagrangian subbundles $F_i \subset E_i|_{\bdd X_i}$, and that $Y$ is a cobordism from $X_1$ to $X_2$.  Next, suppose that there exists a symplectic vector bundle $E$ over $Y$ that extends each $E_i$, that is $E|_{X_i}=E_i$, and suppose that there exists a symplectic vector bundle $F$ over all of $Y$ that extends each $F_i$, that is $F|_{X_i}=F_i$.  (Although, $F$ is not required to be a Lagrangian subbundle of $E$ except over $\bdd Y$.)  Then \cite[Prop.\,13.11]{Oh book} shows that 
\[
\mu(E_1,F_1) = \mu(E_2,F_2).
\]

We proceed to use $\mu(E,F)$ to define the three {\it  Maslov indices}, two of which give rise to the {\it minimal Maslov numbers} in condition {\bf (F4)}.  We do not impose this condition now, as it is only necessary in the proof of the very last theorem of this section: Theorem \ref{theorem bdd^2=0}.
\\
\\
\noindent {\bf Definition: Maslov index $\mu_L$.}  Set $X:=\bD$.   Fix a Lagrangian submanifold $L \subset M$.  Suppose that $v \colon (\bD,\bdd \bD) \to (M,L)$ is a smooth map.  Then define
\[
\mu_L(v):= \mu(v^*TM, v^*TL).
\]
The homotopy invariance of $\mu(E,F)$ implies that $\mu_L(v)$ depends only on $v$ up to homotopy.  Thus we can define the {\it minimal Maslov number $N_L$ of $L$} to be the smallest positive generator of the group
\[
\langle \mu_L(v) \mid v \in \pi_2(M,L)\rangle.
\]

Before giving the second definition, let us set up some notation.  

\begin{deff} \label{deff of pi2} As usual let $L_1,L_2 \subset M$ be two transverse Lagrangians.  Fix $\hx,\hy \in L_1 \cap L_2$.  Denote by $\pi_2(\hx,\hy)$ the set of homotopy classes of continuous maps $f \colon [0,1] \times [0,1] \to M$ such that for all $(s,t) \in [0,1] \times [0,1]$ we have 
\begin{align*}
& f(0,t)=\hx, \hspace{1cm}  f(1,t)=\hy, \\
& f(s,0)\in L_1,  \hspace{0.8cm} f(s,1)\in L_2.
\end{align*}
Then for any three points $\hx,\hy,\hz \in L_1 \cap L_2$ there is a  natural `gluing' map 
\[
\sharp \colon \pi_2(\hx,\hy) \times \pi_2(\hy,\hz) \to \pi_2(\hx,\hz).
\]
To give an explicit definition of $\sharp$, let $f,g \colon [0,1] \times [0,1] \to M$ be continuous maps such that
\[
[f]=\phi \in \pi_2(\hx,\hy) \hspace{1cm} \text{and} \hspace{1cm} [g]=\psi \in \pi_2(\hy,\hz).
\]
\end{deff}
Then we can `glue' together $f$ and $g$ to get a map $f \# g \colon [0,1] \times [0,1] \to M$:
\[
(f \# g)(s,t):= 
\begin{cases}
f(2s,t), &(s,t) \in [0,1/2] \times [0,1], \\
g(2s-1,t), &(s,t) \in [1/2,1] \times [0,1].
\end{cases}
\]
We then define $\phi \sharp \psi:=f \# g$, which is actually independent of the choices of representatives $f$ and $g$, and so is well-defined.

Remark that $\sharp$ defines a free and transitive action of $\pi_2(\hx,\hx)$ on $\pi_2(\hx,\hy)$.
\\
\\
\noindent {\bf Definition: Maslov index $\mu$}.  We now define an index function $\mu \colon \pi_2(\hx,\hy)\to \bZ$ that we simply call the {\it Maslov index}.  One reference for this material is \cite[Sec.\,13.6]{Oh book}.

Take $X=[0,1]\times [0,1]$.  For each intersection point $\hx \in L_1 \cap L_2$, let $V_\hx \subset  \Lag(T_\hx M,\om_x)$ denote the set of all Lagrangian subspaces of $T_\hx M$ with the property that if $\la \in V_\hx$, then $\dim T_\hx L_1 \cap \la \geq 1$.  In other words,  $V_\hx$ is the set of Lagrangian subspaces of $T_\hx M$ that are {\it not} transverse to $T_\hx L_1$.  Now let $\la_\hx$ denote a path of Lagrangian subspaces such that 
\begin{gather*}
\la_\hx(0)=T_\hx L_1, \hspace{1cm} \la_\hx(1)=T_\hx L_2, \hspace{1cm}  \la_\hx(t) \pitchfork T_\hx L_1, \hspace{0.2cm} \forall t \in (0,1], \\
\frac{\bdd}{\bdd t}\la_\hx(0) \oplus T_{T_\hx L_1} V_\hx = T_{T_\hx L_1} \Lag(T_\hx M, \om_\hx).
\end{gather*}
(This last condition says that $\la$ is transverse to the {\it Maslov cycle $V_\hx$}.)
Suppose $f\colon X \to M$ represents an element $\phi \in \pi_2(\hx,\hy)$.  Set $E:=f^*TM$, and define $F$ to be given by $f^*TL_1$ and $f^*TL_2$ on the two horizontal sides ($t=0$ and $t=1$), and by $\la_\hx$ and $\la_\hy$ on the two vertical sides ($s=0$ and $s=1$).  The homotopy invariance property of the Maslov index of a bundle pair implies that $\mu(E,F)$ depends only on the homotopy class $\phi$, so $\mu(\phi):=\mu(E,F)$ gives a well-defined map
\[
\mu \colon \pi_2(\hx,\hy) \to \bZ,
\]
and since the Maslov index is additive under the concatenation of bundle pairs, we have that 

\begin{equation} \label{additivity of Maslov}
\mu(\phi \sharp \psi)=\mu(\phi) + \mu(\psi).
\end{equation}

Lastly, we give the third definition.
\\
\\
\noindent {\bf Definition: Maslov index $\mu_{L_1,L_2}$}.
Take $X=[0,1] \times S^1$.  Suppose $c \colon X \to M$ satisfies
\[
c(\{0\} \times S^1) \subseteq L_1 \hspace{0.5cm} \text{and} \hspace{0.5cm} c(\{1\} \times S^1) \subseteq L_2.  
\]
Then we can define
\[
\mu_{L_1,L_2}(c):=\mu(c^*TM,F),
\]
where $F_{(0,\cdot)}=c^*TL_1$ and $F_{(1,\cdot)}=c^*TL_2$.  We define $N_{L_1,L_2}$ to be the smallest positive generator of the group $\langle \mu_{L_1,L_2}(c) \rangle$ as $c$ ranges over the homotopy classes of such maps.

\begin{lemma} \label{two homotopy classes modulo}
For any $\phi,\psi \in \pi_2(\hx,\hy)$ the following holds
\[
\mu(\phi) \equiv \mu(\psi) \mod N_{L_1,L_2}.
\]
\end{lemma}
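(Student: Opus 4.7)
The plan is to use the free and transitive action of $\pi_2(\hx,\hx)$ on $\pi_2(\hx,\hy)$, together with additivity of $\mu$ under $\sharp$, to reduce the statement to showing $\mu(\alpha) \in N_{L_1,L_2}\bZ$ for every $\alpha \in \pi_2(\hx,\hx)$, and then to identify each such $\mu(\alpha)$ with a value of $\mu_{L_1,L_2}$.

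By the remark immediately preceding the lemma, there is a unique $\alpha \in \pi_2(\hx,\hx)$ with $\psi = \alpha\,\sharp\,\phi$, and the additivity relation \eqref{additivity of Maslov} yields $\mu(\psi) - \mu(\phi) = \mu(\alpha)$. Now let $f \colon [0,1]\times[0,1] \to M$ represent $\alpha$, so $f(0,t) = f(1,t) = \hx$ for all $t$. Then $f$ descends through the quotient $q \colon [0,1]\times[0,1] \to S^1\times[0,1]$ that identifies the two vertical sides, producing a continuous map $c \colon S^1\times[0,1] \to M$; from $f(s,0) \in L_1$ and $f(s,1) \in L_2$ one reads off $c(S^1\times\{0\}) \subseteq L_1$ and $c(S^1\times\{1\}) \subseteq L_2$. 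After swapping the two factors, $c$ is exactly of the type appearing in the definition of $\mu_{L_1,L_2}$.

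The heart of the argument is to establish $\mu(\alpha) = \mu_{L_1,L_2}(c)$. I would first choose a symplectic trivialisation $\Psi$ of $c^*TM$ over the cylinder (possible since the cylinder has nonempty boundary) and pull it back via $q$ to a symplectic trivialisation $\Phi$ of $f^*TM$ over the square. By construction $\Phi$ coincides on the two vertical sides $s=0$ and $s=1$. Computing $\mu(\alpha)$ as the Maslov class of the boundary loop of the square in $\mL(n)$ via $\Phi$, the contributions of the two vertical sides cancel, since both carry the same path $\la_\hx$ but are traversed in opposite directions. What remains is exactly the sum of the Maslov classes of the two loops $\Psi(c^*TL_1)$ and $\Psi(c^*TL_2)$ around the boundary circles of the cylinder, which by definition is $\mu_{L_1,L_2}(c)$. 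Since $\mu_{L_1,L_2}(c) \in N_{L_1,L_2}\bZ$, the lemma follows.

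The main obstacle is the orientation bookkeeping in this central step: one has to verify that, under the outward-normal boundary orientation used in the definition of $\mu(E,F)$, the horizontal pieces of $\bdd([0,1]^2)$ and the two boundary circles of $S^1\times[0,1]$ match up on the nose rather than merely up to sign. A more conceptual alternative, which I would fall back on if the direct computation became unwieldy, is to invoke the cobordism invariance of the Maslov index of a bundle pair recalled in the excerpt, exhibiting an explicit bundle-pair cobordism from the square (with $F_\alpha$) to the cylinder (with $F_c$) in which the two vertical sides of the square are capped off.
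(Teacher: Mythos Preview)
Your proof is correct and follows essentially the same route as the paper: both reduce via the $\pi_2(\hx,\hx)$-action and additivity to showing $\mu(\alpha)=\mu_{L_1,L_2}(c)$ for $\alpha\in\pi_2(\hx,\hx)$, and both justify this by noting that the contributions from the two copies of $\la_\hx$ on the vertical sides of the square cancel. The paper dispatches this cancellation in a single sentence, whereas you spell out the trivialisation argument and flag the orientation check; your fallback via the cobordism invariance of $\mu(E,F)$ is a perfectly good alternative justification.
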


\begin{proof}
We can write, $\phi=\theta \sharp \psi$, for some $\theta \in \pi_2(\hx,\hx)$.  A representative $f \colon [0,1] \times [0,1] \to M$ for $\theta$ can also be regarded as a map $f \colon [0,1] \times S^1\to M$ satisfying
\[
f(\{0\} \times S^1) \subseteq L_1 \hspace{0.5cm} \text{and} \hspace{0.5cm} f(\{1\} \times S^1) \subset L_2.
\]
As $\theta \in \pi_2(\hx,\hx)$, the contribution to $\mu(\theta)$ arising from the paths $\la_\hx$ cancel out; in particular, It follows from the definitions that 
\[
\mu(\theta)=\mu_{L_1,L_2}(f).
\]
So $\mu(\theta) \equiv 0 \mod N_{L_1,L_2}$, and hence by the additivity of the Maslov index (Equation \eqref{additivity of Maslov}) if follows that $\mu(\phi) \equiv \mu(\psi) \mod N_{L_1,L_2}$.
\end{proof}

\begin{rmk} \label{rmk minimal Chern}
Fix $\al \in \pi_2(M)$.  We can always choose a representative $f \colon S^2 \to M$ of $\al$ with the additional property that 
\[
f(\text{north pole}) \in L_1 \hspace{0.5cm} \text{and}  \hspace{0.5cm} f(\text{south pole}) \in L_2.
\]
Then $\mu_{L_1,L_2}(f)$ is well-defined (just consider it as a map from $[0,1] \times S^1$ taking $\{0\} \times S^1$ to the north pole, and similarly $\{1\} \times S^1$ to the south pole). It can be shown (see \cite{Vi}) that in this case we have
\begin{equation} \label{eq division by two}
\mu_{L_1,L_2}(f)=2 c_1(f_*([S^2]),
\end{equation}
where $c_1 \in H^2(M)$ is the first Chern class of $(TM,J)$, the class $[S^2] \in H_2(S^2)$ is the fundamental class, and  $f_* \colon H_2(S^2) \to H_2(M)$ is the induced map on homology. 
\end{rmk}

Let us now give a precise definition of {\bf (F2)}.

\begin{deff} \label{deff monotone}
A Lagrangian manifold $L \subset M$ is called {\it (positively) monotone} if there exists a positive constant $c>0$ such that for any smooth map $u \colon (D^2, \bdd D^2) \to (M,L)$ we have
\begin{equation}
\mu_L(u)=c \int_{D^2} u^* \om.
\end{equation}
\end{deff}

There is an analogous notion of {\it monotonicity} for a symplectic manifold $(M,\om)$. Namely, $(M,\om)$ is a {\it (positively) monotone} symplectic manifold if there exists a constant $C>0$ such that for any smooth map $u \colon S^2 \to M$ one has
$$C\int_{S^2}u^* \omega = 2c_1(u_* ([S^2])).$$

In fact, monotone Lagrangians can only exist in monotone symplectic manifolds. Indeed, as in Remark \ref{rmk minimal Chern}, the point is that if $u \colon S^2 \to M$ satisfies $u(\text{north pole}) \in L$ then $\mu_L (u) = 2c_1 (u_* ([S^2]))$, and conversely (as $M$ is always assumed connected), any map $u\colon S^2 \to M$ may be assumed to satisfy $u(\text{north pole}) \in L$. Thus if $L$ is monotone with constant $c>0$, then $M$ itself is monotone with constant $C = 2c$. This also shows that if $L_1$ and $L_2$ are both monotone, then they are monotone with the {\it same} monotonicity constant.

Moreover, if we define $N_M$, the {\it minimal Chern number } of $(M,\omega)$ to be the smallest positive generator of the group
\[
\{ \langle c_1 , h \rangle \mid h \in H_2^{S}(M,\bZ)\},
\]
where $H_2^{S}(M,\bZ)$ denotes the subgroup of homology classes that can be represented by maps from $S^2$ into $M$, then necessarily
\[
N_L \mid 2N_M,
\]
and similarly 
\[ 
N_{L_1, L_2} \mid 2 N_M.
\]

As we see below, it is a general fact that (when defined) we can always grade the Floer homology groups $HF(L_1 , L_2)$ modulo $N_{L_1 , L_2}$. This is sometimes not very satisfactory, as it can be hard to compute $N_{L_1 , L_2}$. In the next remark we discuss how, under additional assumptions on the Lagrangians, this can be relaxed.

\begin{rmk}
We construct the Floer homology groups assuming four conditions {\bf(F1--4)} hold. As discussed above, condition {\bf(F2)} implies that $(M,\omega)$ is itself monotone. Suppose we replace condition {\bf(F2)} with the (generally weaker) statement that $(M,\omega)$ is monotone, but at the same time we strengthen condition  {\bf(F3)} to require that {\it both} $L_1$ and $L_2$ are simply connected.
In this case both $L_1$ and $L_2$ are monotone, and the following holds
\begin{equation} \label{eq equals}
N_{L_1} = N_{L_2} = N_{L_1 ,L_2} = 2 N_M.
\end{equation}
This implies that we could also replace condition {\bf(F4)} with the requirement that $N_M \ge 2$. Asking that both $L_1$ and $L_2$ are simply connected is a significantly stronger assumption, so in general we prefer to work with conditions {\bf(F1--4)} as originally stated.  However, this stronger assumption does allow for a more satisfactory solution to the grading problem. Indeed, in this case the Floer homology can be graded modulo $2N_M$. Not only is this independent of the choice of Lagrangians, but it is also often much easier to compute $N_M$ than $N_{L_1 , L_2}$.
Let us now prove \eqref{eq equals}.

\begin{proof}[Proof of \eqref{eq equals}]
We prove that $N_L = 2N_M$ if $(M,\omega)$ is monotone and $L$ is a simply connected Lagrangian. The other statements are similar. 

Suppose $u \colon (D^2,\bdd D^2) \to (M,L)$ satisfies $\mu_L (u) = N_L$. Since $L$ is simply connected, $u|_{\bdd D^2}$ bounds a disc $v \colon D^2 \to L$. Let $w \colon S^2 \to M$ denote the map obtained by gluing $v$ onto  $u$. Since $v$ is contained in $L$, we have
\[
N_L = \mu_L(u) = \mu_L (w) = 2 c_1 (w_*[(S^2)]) = 2j N_M
\]
for some integer $j \in \bN$. Since we already know that $N_L \mid 2 N_M$, we must have $N_L = 2N_M$. This completes the proof. 
\end{proof} 
\end{rmk}

Let us return to the moduli space $\mM(\hx,\hy)$.  Having now defined the Maslov index, we can give a slightly more precise statement of Claim \ref{claim manifold}.  Since $S$ is homotopy equivalent to $[0,1] \times [0,1]$, every element $u \in \mM(\hx,\hy)$ belongs to a unique class $\phi \in \pi_2(\hx,\hy)$; by a slight abuse of notation, we write $u \in \phi$. 
This induces a decomposition 
\[
\mM(\hx,\hy) = \bigsqcup_{\phi \in \pi_2(\hx,\hy)} \mM(\hx,\hy; \phi),
\]
where 
\[
\mM(\hx,\hy; \phi):=\{ u \in \mM(\hx,\hy) \mid u \in \phi\}.
\]
Here is the more precise statement of Claim \ref{claim manifold}; note that for it to be true we only require condition {\bf (F1)}.
\begin{thm} \label{thm manifold}
Assume that $L_1$ and $L_2$ satisfy {\bf (F1)}.  Then $\mM(\hx,\hy; \phi)$ has the structure of a finite-dimensional smooth manifold with 
\[
\dim \mM(\hx,\hy; \phi)=\mu(\phi).
\]
\end{thm}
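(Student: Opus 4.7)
The plan is the standard Floer-theoretic recipe: realise $\mM(\hx,\hy;\phi)$ as the zero set of a Fredholm section of a Banach bundle, achieve transversality by a Sard--Smale argument over the space of compatible almost complex structures, apply the infinite-dimensional implicit function theorem to get a smooth Banach manifold, upgrade smoothness via elliptic regularity, and finally identify the Fredholm index with the Maslov index $\mu(\phi)$.

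\textbf{Step 1: Banach setup.} For $p>2$ and a fixed smooth reference map $u_0$ representing $\phi$ (with the required boundary and asymptotic conditions), I would define the Banach manifold
\[
\mB^{1,p}(\hx,\hy;\phi)=\{\,u:S\to M\mid u\in W^{1,p}_{\mathrm{loc}},\ u-u_0\in W^{1,p},\ u(\mathbb R\times\{i\})\subset L_{i+1},\ u(\pm\infty,\cdot)=\hx,\hy\,\}.
\]
Over it I form the Banach bundle $\mE\to\mB^{1,p}$ with fiber $\mE_u=L^{p}(\Omega^{0,1}_{J}(u^\ast TM))$, and regard $\bar\partial_J$ as a smooth section. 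By construction $\mM(\hx,\hy;\phi)$ equals the zero set of $\bar\partial_J$, modulo the final regularity step. The transversality hypothesis \textbf{(F1)} is used here to guarantee that the asymptotic limits $\hx,\hy$ are isolated and that exponential decay holds on the strip ends.

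\textbf{Step 2: Fredholm and transversality.} At a zero $u$, the vertical derivative $D_u\colon T_u\mB^{1,p}\to\mE_u$ is a first-order linear operator whose principal part is $\bar\partial$ with totally real boundary conditions coming from $u^\ast TL_1$ and $u^\ast TL_2$, and whose asymptotic operators at $\pm\infty$ are the self-adjoint elliptic operators determined by the transverse Lagrangian pair at $\hx$ and $\hy$. Using \textbf{(F1)} these asymptotic operators are invertible, and the standard theory for the Cauchy--Riemann operator on a strip with totally real boundary (Section \ref{subsection fredholm}) gives that $D_u$ is Fredholm. To obtain transversality I would enlarge the setup to a universal moduli space
\[
\Muniv=\{(u,J)\mid \bar\partial_J u=0\},
\]
where $J$ varies in a Banach manifold of compatible almost complex structures, show the universal linearisation is surjective by a standard density/unique-continuation argument on the open dense set of somewhere-injective maps, and apply the Sard--Smale theorem (Section \ref{subsection transversality}) to deduce that for generic $J$ the operator $D_u$ is surjective at every $u\in\mM(\hx,\hy;\phi)$.

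\textbf{Step 3: Manifold structure, regularity, and dimension.} With surjectivity in hand, the infinite-dimensional implicit function theorem (Section \ref{subsection fredholm}) gives that $\mM(\hx,\hy;\phi)$ is a Banach submanifold of $\mB^{1,p}$ of dimension $\ind(D_u)$. Elliptic regularity for $\bar\partial_J$ with totally real boundary conditions then shows each solution is smooth, so the chart-maps live inside $C^\infty$ and the dimension is finite. The final and most delicate task is to identify
\[
\ind(D_u)=\mu(\phi).
\]
For this I would argue via spectral flow: deform the operator $D_u$, rel.\ asymptotic ends, to an operator with constant coefficients for which the index can be read off from the endpoint asymptotic operators; the spectral flow along this deformation equals the Maslov index of the path of boundary Lagrangians pulled back by $u$ (Section \ref{subsection floer case}). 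By the definition of $\mu(\phi)$ via the bundle pair $(u^\ast TM,F)$ with boundary data $u^\ast TL_1, u^\ast TL_2, \la_\hx, \la_\hy$, this spectral flow agrees with $\mu(\phi)$ up to a normalisation coming from the choice of $\la_\hx$ and $\la_\hy$ — which is precisely why the corners at $\hx,\hy$ were equipped with the $\la_\hx,\la_\hy$ paths in the definition of $\mu$.

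\textbf{Main obstacle.} The genuine difficulty is the last step: the equality $\ind(D_u)=\mu(\phi)$. Everything else is a bookkeeping exercise once the right Banach setup is in place, but matching the analytic index of a Cauchy--Riemann-type operator on a strip to the topologically defined Maslov index of a bundle pair requires the spectral-flow/Maslov-index dictionary of Section \ref{subsection floer case} together with a careful choice of the asymptotic Lagrangian paths $\la_\hx,\la_\hy$ so that their contributions cancel correctly.
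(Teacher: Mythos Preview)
Your proposal follows essentially the same approach as the paper's proof, which is spread across Sections \ref{subsection sobolev}--\ref{subsection floer case}: Banach bundle setup, Fredholm property of the linearised $\bar\partial_J$, Sard--Smale transversality via a universal moduli space, the implicit function theorem for Banach bundles, elliptic regularity, and the spectral-flow/Maslov-index identification. The only minor technical differences are that the paper uses weighted Sobolev spaces $W^{1,p}_\delta$ rather than your reference-map setup, and achieves transversality by allowing $t$-dependent almost complex structures $J\in C^\infty([0,1],\mJ(M,\om))$ rather than invoking somewhere-injectivity.
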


Theorem \ref{thm manifold} motivates the definition of the following grading on the intersection points of the two Lagrangians.
 
\begin{deff} \label{def grading function}
Define a {\it grading function}
\[
\gr \colon L_1 \cap L_2 \times L_1 \cap L_2 \to \bZ/ N_{L_1,L_2} \bZ
\]
by
\[
\gr(\hx,\hy):=\mu(\phi) \mod N_{L_1,L_2}, \hspace{0.5cm} \text{for any } \phi \in \pi_2(\hx,\hy).
\]
\end{deff}

\begin{rmk}
Firstly, note that the additivity of the Maslov index from Equation \eqref{additivity of Maslov} implies that for any $\hx,\hy,\hz \in L_1 \cap L_2$ we have
\[
\gr(\hx,\hz)=\gr(\hx,\hy) + \gr(\hy,\hz).
\]
Secondly, note that Theorem \ref{thm manifold} says that there is a well-defined dimension of $\mM(\hx,\hy)$ modulo a Maslov number:
\[
\dim \mM(\hx,\hy) = \gr(\hx,\hy) \mod N_{L_1,L_2}.
\]
\end{rmk}

Let $\hx,\hy$ be two points in the intersection $L_1 \cap L_2$.   If $u \in \mM(\hx,\hy)$, then also $u' \in \mM(\hx,\hy)$ for every map $u'(s,t):=u(s+l,t)$ where $l \in \bR$.  Thus, if $\hx \neq \hy$, then $\mM(\hx,\hy)$ admits a free $\bR$ action.  Denote by $\widehat \mM (\hx, \hy)$ the quotient manifold $\mM(\hx,\hy)/\bR$.  Now 
\[
\dim \widehat \mM(\hx,\hy) = \gr(\hx,\hy)-1 \mod N_{L_1,L_2}.
\]

\subsection{The Floer chain complex} \label{subsection floer chain} Let us postpone discussing the proof of Theorem \ref{thm manifold} for a while and go on to explain how to define the Floer complex.

\begin{deff}
The Floer chain complex $CF(L_1,L_2)$ is defined to be 
\[
CF(L_1,L_2) := \bigoplus_{\hx \in L_1 \cap L_2} \bZ_2 \langle \hx \rangle.
\]
\end{deff}
So $CF(L_1,L_2)$ is the finite-dimensional $\bZ_2$-vector space whose basis are the intersection points $L_1 \cap L_2$.

\begin{rmk} \label{rmk coherent orientation}
Instead of working with $\bZ_2$ coefficients, we could also work with $\bZ$ coefficients.  However, that would require taking the {\it algebraic count} of $\wM(\hx,\hy;\phi)$ instead of simply the parity, which is more complicated and we do not discuss it here.  For such an algebraic count to be well-defined we need to worry about {\it coherent orientation} of the moduli spaces.  See \cite{FH} for the definition of coherent orientation and further explanations.  
\end{rmk}

\begin{claim}\label{claim compactness} If  for $\hx,\hy \in L_1 \cap L_2$ and for $\phi \in \pi_2(\hx,\hy)$ we have that $\mu(\phi)=1$, then $\wM(\hx,\hy; \phi)$ consists of finitely many points.
\end{claim}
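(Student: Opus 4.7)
The plan is to combine Theorem \ref{thm manifold} with a Gromov-type compactness argument, and then use the assumptions \textbf{(F2)--(F4)} to rule out every possible degeneration of sequences in $\wM(\hx,\hy;\phi)$.

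First, by Theorem \ref{thm manifold}, $\mM(\hx,\hy;\phi)$ is a smooth manifold of dimension $\mu(\phi)=1$. Since $\mu(\phi)=1>0$, no element of $\mM(\hx,\hy;\phi)$ can be a constant map, so the $\bR$-action by reparametrisation in the $s$-variable is free. Hence the quotient $\wM(\hx,\hy;\phi)=\mM(\hx,\hy;\phi)/\bR$ is a smooth $0$-dimensional manifold. A $0$-dimensional manifold has finitely many points if and only if it is compact, so the claim reduces to showing that $\wM(\hx,\hy;\phi)$ is compact.

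Next, I would invoke Gromov compactness (to be proved later as part of Section \ref{subsection gromov}). Given a sequence $[u_n]\in\wM(\hx,\hy;\phi)$, after passing to a subsequence, one of the following occurs: either $[u_n]$ converges smoothly to an element of $\wM(\hx,\hy;\phi)$, or else one sees one of three possible degenerations: (a) \emph{strip breaking} into a broken trajectory $(v^1,\dots,v^k)$ with $k\ge 2$, where $v^j\in\mM(\hx_{j-1},\hx_j;\phi_j)$ and $\phi=\phi_1\sharp\cdots\sharp\phi_k$; (b) a \emph{disk bubble}, i.e.\ the attachment of a non-constant $J$-holomorphic disk with boundary on $L_1$ or $L_2$; or (c) a \emph{sphere bubble}, i.e.\ the attachment of a non-constant $J$-holomorphic sphere in $M$.

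The hard part of the proof, and the reason for the hypotheses \textbf{(F2)--(F4)}, is ruling out (a)--(c). For (a), each $v^j$ lives in a moduli space on which $\bR$ acts freely (since the endpoints are distinct), so the corresponding $\widehat{\mM}$ is non-empty of dimension $\mu(\phi_j)-1\ge 0$, giving $\mu(\phi_j)\ge 1$. Additivity of the Maslov index (Equation \eqref{additivity of Maslov}) then forces
\[
1=\mu(\phi)=\sum_{j=1}^{k}\mu(\phi_j)\ge k,
\]
contradicting $k\ge 2$. For (b) and (c), the limit configuration has the form of a main component $u_\infty$ together with bubbles $w_1,\dots,w_m$, with $\mu(\phi)=\mu(u_\infty)+\sum_{i}\mu(w_i)$. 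The main component lies in some moduli space of non-negative dimension, so $\mu(u_\infty)\ge 0$. Each disk bubble $w_i$ has $\mu(w_i)\in N_{L_k}\bZ_{>0}$, hence $\mu(w_i)\ge N_{L_k}\ge 3$ by \textbf{(F4)}, and each sphere bubble has $\mu(w_i)=2c_1([w_i])$ by Remark \ref{rmk minimal Chern}; by the monotonicity \textbf{(F2)} (and its consequence that $M$ itself is monotone) combined with $N_L\mid 2N_M$ and \textbf{(F4)}, this is again $\ge 2N_M\ge 2$ and in fact contributes a strictly positive Maslov index. In either case the sum would exceed $1=\mu(\phi)$, a contradiction. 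I expect the main technical obstacle to be the Gromov compactness input itself---identifying exactly which bubbling/breaking scenarios can arise and verifying the index identities above---but once that machinery is in place, the combinatorial estimates on Maslov indices conclude the argument immediately.
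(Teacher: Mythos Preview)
Your proposal is correct and follows essentially the same approach as the paper: reduce finiteness to compactness of the $0$-dimensional quotient, invoke Gromov compactness (the paper's Theorem \ref{thm gromov compactness II}), and rule out bubbling and breaking via Maslov-index estimates coming from monotonicity. The one point worth noting is that the paper is deliberate about using only \textbf{(F1--3)} here (see Remark \ref{rmk two things} and Theorem \ref{thm compactness}), reserving \textbf{(F4)} solely for the $\mu(\phi)=2$ case in Theorem \ref{theorem bdd^2=0}; your invocation of \textbf{(F4)} to get $\mu(w_i)\ge 3$ for disk bubbles is safe but stronger than needed, since monotonicity alone already forces each bubble to contribute positively, and combined with the refined conclusions (iii)--(v) of Theorem \ref{thm gromov compactness II} this suffices to reach a contradiction when $\mu(\phi)=1$.
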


Denote by $\#_2 \wM (\hx,\hy; \phi)$ the parity of $\wM(\hx,\hy; \phi)$.
 
 The  differential $\partial \colon CF(\hx,\hy) \to CF(\hx,\hy)$ is given by 
\begin{equation} \label{eq deff of differential}
\partial \hx := \sum_{\hy \in L_1 \cap L_2} \left( \sum_{\{\phi \in \pi_2(\hx,\hy) \mid \mu(\phi)=1\}} \#_2 \wM (\hx,\hy;\phi) \right)\cdot \hy.
\end{equation}
It is easy to see that $\bdd^2=0$ if we believe the following claim.

\begin{claim} \label{claim boundary} If $\wM(\hx,\hy;\phi)$ is 1-dimensional, then it can be compactified into a manifold $\overline{\wM(\hx,\hy;\phi)}$ in such a way that 
\begin{equation} \label {amazing fact 3}
\bdd \overline{\wM(\hx,\hy; \phi)}= \bigcup_{\hz \in L_1 \cap L_2 } \bigcup_{(\phi^-, \phi^+)} \wM(\hx,\hz;\phi^-) \times \wM(\hz,\hy; \phi^+),
\end{equation}
where the second union is over all pairs $(\phi^-,\phi^+) \in \pi_2(\hx,\hz) \times \pi_2(\hz,\hy)$ such that $\mu(\phi^\pm)=1$.
\end{claim}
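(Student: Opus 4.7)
The plan is to apply a Gromov-type compactness theorem to the two-dimensional manifold $\mM(\hx,\hy;\phi)$ (by Theorem \ref{thm manifold}, $\dim \mM = \mu(\phi) = 2$ since $\wM = \mM/\bR$ is one-dimensional), and then to match each degenerate limit with a broken trajectory via a gluing argument. The first step is to note that every $u \in \mM(\hx,\hy;\phi)$ has the same symplectic area $\int u^*\om$, depending only on $\phi$ since $\om$ is closed and any two representatives of $\phi$ are homologous rel boundary; this supplies the uniform energy bound needed to invoke the compactness theorems of Section \ref{subsection gromov}.

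Given a sequence $u_n \in \mM(\hx,\hy;\phi)$, Gromov compactness produces, after passing to a subsequence and reparametrising by appropriate $\bR$-translations, a limit cusp configuration: a chain of principal strips $v_0,\ldots,v_k$ with $v_i \in \mM(\hz_{i-1},\hz_i;\phi_i)$ connecting intermediate intersection points $\hx=\hz_0,\hz_1,\ldots,\hz_k=\hy$, together with some collection of sphere bubbles and disk bubbles $B_1,\ldots,B_\ell$. Next I would rule out all bubbling using assumptions {\bf (F2)} and {\bf (F4)}. By additivity of the Maslov index under concatenation and bubble attachment,
\[
\sum_{i=0}^k \mu(\phi_i) \;+\; \sum_{j=1}^\ell \mu(B_j) \;=\; \mu(\phi) \;=\; 2,
\]
and positivity of energy for pseudo-holomorphic maps combined with monotonicity forces every summand to be non-negative, and strictly positive for each non-constant component. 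However, by {\bf (F4)} a disk bubble attached to $L_i$ contributes at least $N_{L_i}\geq 3$, while a sphere bubble contributes at least $2N_M$, which is itself at least $3$ because $N_{L_i}\mid 2N_M$ and $N_{L_i}\geq 3$. No bubble fits within the budget of $2$, so the Gromov limit is a pure strip-broken trajectory.

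The combinatorics now pin down the structure of the limit. Each non-constant strip component has $\mu(\phi_i)\geq 1$, and the total equals $2$, so we are forced into one of two cases: either $k=0$ (a single unbroken strip, corresponding to an interior point of $\wM(\hx,\hy;\phi)$), or $k=1$ with $\mu(\phi_0)=\mu(\phi_1)=1$ and $\phi_0\,\sharp\,\phi_1=\phi$ (a single breaking at some $\hz\in L_1\cap L_2$). This matches exactly the right-hand side of \eqref{amazing fact 3} and shows that the set-theoretic compactification
\[
\overline{\wM(\hx,\hy;\phi)} \;=\; \wM(\hx,\hy;\phi) \;\cup\; \bigcup_{\hz,\,(\phi^-,\phi^+)} \wM(\hx,\hz;\phi^-) \times \wM(\hz,\hy;\phi^+)
\]
is sequentially compact.

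The main remaining task, and the principal analytic obstacle, is the \emph{gluing theorem} which asserts that $\overline{\wM(\hx,\hy;\phi)}$ is a topological $1$-manifold whose boundary points are exactly the broken trajectories. Concretely, given a pair $(u^-,u^+)\in \wM(\hx,\hz;\phi^-)\times \wM(\hz,\hy;\phi^+)$, one exploits the exponential decay of $u^\pm$ near the asymptote $\hz$ to pre-glue them along a neck of length $R\gg 0$ into an approximately $J$-holomorphic strip $u_R$, and then corrects $u_R$ to a genuine element $\widetilde u_R \in \mM(\hx,\hy;\phi)$ via an implicit-function-theorem argument on a weighted Sobolev space of sections of $u_R^*TM$, using the Fredholm framework of Section \ref{subsection fredholm}. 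The output is an embedded half-open arc $[R_0,\infty)\hookrightarrow \wM(\hx,\hy;\phi)$ accumulating at $(u^-,u^+)$, and a complementary uniqueness statement (every sequence in $\wM(\hx,\hy;\phi)$ Gromov-converging to $(u^-,u^+)$ eventually lies in this glued family) identifies a half-open collar. The energy and index accounting above is fairly routine once one has Gromov compactness in hand; the construction and injectivity of the gluing map is where the real work lies, and completes the proof of \eqref{amazing fact 3}.
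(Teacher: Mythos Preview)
Your argument is correct and follows essentially the same route as the paper: apply Gromov compactness (Theorem~\ref{thm gromov compactness II}) to a sequence in $\mM(\hx,\hy;\phi)$ with $\mu(\phi)=2$, use monotonicity {\bf(F2)} and the minimal Maslov bound {\bf(F4)} to exclude sphere and disk bubbles from the index budget of~$2$, conclude that the only degenerations are once-broken trajectories, and then invoke the Floer gluing theorem for the reverse inclusion and the manifold-with-boundary structure. The paper packages the compactness half as Theorem~\ref{theorem bdd^2=0} and, like you, declines to prove gluing, calling it ``analytically rather technical''; your sketch of the pre-gluing and implicit-function-theorem correction is a reasonable outline of what that omitted argument would contain.
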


For the proof of Claim \ref{claim boundary} see Theorem \ref{theorem bdd^2=0}.  
It can even be shown that any pair $(\phi^-,\phi^+)$ occurring in the right-hand side of Equation \eqref{amazing fact 3} also has the property that $\phi=\phi^- \sharp \phi^+$.

Now it follows from the definition that 
\[
\bdd^2 \hx= \sum_{\hy,\hz \in L_1 \cap L_2} \left( \sum_{(\phi^-,\phi^+)} \#_2\wM(\hx,\hz; \phi^-) \cdot \#_2 \wM(\hz,\hy ;\phi^+) \right) \cdot \hz,
\]
and thus by Equation \eqref{amazing fact 3}, it follows that 
\[
\bdd^2 \hx= \sum_{y \in L_1 \cap L_2} \left( \sum_{\{\phi \in \pi_2(\hx,\hy) \mid \mu(\phi)=2\}} \#_2 \bdd \overline{ \wM(\hx,\hy;\phi)} \right)=0,
\]
since $ \overline{\wM(\hx,\hy;\phi)}$ is a compact 1-manifold and so its boundary consists of an even number of points.

\subsection{Grading} \label{subsection grading}  It is possible to grade the Floer homology $HF(L_1,L_2)$ modulo $\bZ/N_{L_1,L_2} \bZ$; in particular, there is an absolute grading function
\[
\abs \cdot \colon L_1 \cap L_2 \to \bZ/N_{L_1,L_2} \bZ.
\]
 First, fix an arbitrary $\hx_0 \in L_1 \cap L_2$ and define $\abs {\hx_0}:=0$.  Then for any other $\hy \in L_1 \cap L_2$ define $\abs{\hy}:=\gr(\hx_0,\hy)$, where $\gr$ was given in Definition \ref{def grading function}.  Thus the grading depends on our initial choice of $\hx_0 \in L_1 \cap L_2$.  A different choice of $\hx_0$ would simply shift the grading. Denote by 
\[
CF_k(L_1,L_2) := \bigoplus_{\abs \hx = k} \bZ_2 \langle \hx \rangle.
\]

The key point is that the differential lowers the grading by one 
\[
\bdd \colon CF_* \to CF_{*-1},
\]
 and thus $HF(L_1,L_2)$ can also be graded using $\abs \cdot$.  We emphasise that the grading $\abs \cdot$ depends on the arbitrarily chosen $\hx_0$ (but only up to shift) and that this is only graded modulo $N_{L_1,L_2}$.

\subsection{The $W^{k,p}$ norms and Sobolev spaces} \label{subsection sobolev} We  move on to introduce some of the analysis behind the Floer machinery. We recommend Appendix B of \cite{McDS2} for a detailed exposition of the analysis below, written especially for symplectic geometers wanting to apply it to pseudo-holomorphic curves. 

As before, let $\Om$ denote a measurable bounded domain in some Euclidean space $\bR^l$.  Let $u \colon \Om \to \bR^N$ be a measurable map.  Then recall that the $L_p$ norm of $u$ for $p \in \bN$ is defined to be
\[
\norm u _p:=\left( \int_\Om \abs u ^p dx_1\ldots dx_l \right) ^{1/p},
\]
where $(x_1, \ldots, x_l)$ are taken to be coordinates of $\bR^l$. We use the following notation 
\[
L_p(\Om,\bR^N):=\{u \colon \Om \to \bR^N \mid u \text{ measurable}, \norm u _p <\infty\}.
\]
If the domain space $\Om$ is replaced with a compact manifold $\Si$, and the target space $\bR^N$ is replaced with a vector bundle $E$, then the definition works in a similar way by taking a finite atlas of charts on $\Si$, and making use of the linear structure on $E$.  Thus, we can speak of the space $\Ga^p(\Si,E)$ of measurable sections $u \colon \Si \to E$ with finite $L_p$-norm.  It is more complicated to define the $L_p$ spaces when the target space does not carry a linear structure; if $\Si$ and $M$ are both compact manifolds, then one can also obtain a well-defined space $L_p(\Si,M)$ by choosing a Riemannian metric on $M$ that gives rise to a Borel measure on $M$.  Compactness of $M$ implies that the obtained space $L_p(\Si,M)$ is independent of the choice of Riemannian metric on $M$.  Alternatively, we could define $L_p(\Si,M)$ by first choosing an embedding of $M$ into $\bR^N$, for some $N$, and then defining $L_p(\Si,M)$ as a subspace of $L_p(\Si,\bR^N)$.  Again, compactness of $M$ implies that the definition is independent of the choice of embedding.

The {\it Sobolev norm} is a generalisation of the $L_p$ norm.  In particular, the $(k,p)$ Sobolev norm of smooth map $u\colon \Om \to \bR^N$ is defined to be
 \[
 \norm u _{k,p} =\norm u _{W^{k,p}}:= \sum_{\al \leq k}\norm{ \bdd^\al u}_p.
 \]
 Here, $\al=\al_1 + \cdots + \al_j$ and $ \bdd^\al u:=\bdd^{\al_1}_{x_1} \cdots \bdd^{\al_j}_{x_j}$.  Now, the {\it Sobolev space} $W^{k,p}(\Om,\bR^N)$ is defined as the completion of the space 
 \[
 \{ u \in C^\infty(\Om,\bR^N) \mid \norm{u}_{k,p} <\infty \}
 \]
 with respect to the $(k,p)$ Sobolev norm.  As before, the definition extends to Sobolev spaces $W^{k,p}(\Si,E)$, where $E$ is a vector bundle over a compact manifold $\Si$.  Indeed, this is made possible because of the general fact that if $A \colon \bR^l \to GL(N,\bR)$ is a smooth function, $h \in C^\infty(\bR^l,\bR^l)$ and $u \in W^{k,p}(\Om,\bR^N)$, then $A \cdot (u \circ h)$ is also in $W^{k,p}(\Om,\bR^N)$ and there is an upper bound on its norm given by
 \[
 \norm{A \cdot (u \circ h)}_{k,p} \leq c \norm{u}_{k,p},
 \]
 where $c$ is a constant that does not depend on $u$ (see Remark B.1.23 \cite{McDS2}).  As a result, the Sobolev space $W^{k,p}(\Si,E)$ can be defined by taking a finite atlas of charts on $\Si$ (since a map with finite $W^{k,p}$ norm in one chart has finite norm in any other chart).  This leads to definition of the space $\Ga^{k,p}(\Si,E)$ as completion of the space $\Ga^\infty (\Si,E)$ of the smooth sections $u \colon \Si \to E$ in the $\norm \cdot _{k,p}$ norm.
 
 As before, defining the Sobolev spaces $W^{k,p}$ is more difficult if the target space does not carry a linear structure.  In particular, if $M$ and $\Si$ are compact manifolds, then the space $W^{k,p}(\Si,M)$ is well-defined when
 \begin{equation} \label{eq bound on dim}
 kp > \dim \Si.
 \end{equation}
Indeed, when $\Om$ is a bounded measurable domain in $\bR^l$ and $h \in C^k(\bR,\bR)$, then the map $C^\infty(\Om,\bR) \to C^\infty (\Om,\bR)$ given by $u \mapsto h \circ u$ extends to a map $W^{k,p}(\Om,\bR) \to W^{k,p}(\Om,\bR)$ only when $kp >l$; see \cite[Prop.\,B.1.19]{McDS2} and \cite[Rmk.\,B.1.24]{McDS2}.  Thus, if Equation \eqref{eq bound on dim} holds, then we can define the Sobolev space $W^{k,p}(\Si,M)$ by taking charts on both $\Si$ and $M$ (which are not defined if $kp<l$).  Similarly, we could define $W^{k,p}(\Si,M)$ by first embedding $M$ into $\bR^N$ for some $N$ and then defining $W^{k,p}(\Si,M)$ as a subspace of $W^{k,p}(\Si,\bR^N)$, but this is only independent of the choice of embedding when $kp>\dim \Si$.  

The Sobolev embedding theorem \cite[Sec.\,B.1]{McDS2} implies that for $\Om \subseteq \bR^l$, there is a compact embedding $W^{k,p}(\Om,\bR^N) \hookrightarrow C^0(\Om,\bR^N)$, when $kp>l$.  In particular, for $kp>\dim \Si$, the well-defined space $W^{k,p}(\Si,M)$ consists of continuous maps.  

In fact, if Equation \eqref{eq bound on dim} holds, then the space $W^{k,p}(\Si,M)$ carries the structure of a {\it Banach manifold}, that is, a manifold locally modelled on a Banach space.  It can be shown (see \cite{Eli}), that 
\[
T_uW^{k,p}(\Si,M)=\Ga^{k,p}(\Si,u^*TM).
\]
 
In our applications, we generally work with the space $W^{k,p}(\Si,M)$, where $\Si$ is a {\it non-compact} Riemann surface.  In this case, we need to impose additional conditions `at infinity', in order to get a well-defined Banach manifold.  One way to do this in the case when $\Si=S=\bR\times[0,1]$ is the following \cite[Sec.\,3]{F2}: fix a smooth cut-off function $\beta \colon \bR \to \bR$, such that $\be(s)=\abs s$ for $\abs s \geq 2$ and $\be(s)=0$ for $\abs s \leq 1$.  Fix $\de>0$.  Then define the norm $\norm \cdot _{k,p;\de}$ for $u \in C^\infty(S,M)$ by setting 
\[
\norm u _{k,p;\de}:=\norm{e^{\de \be (s)} u }_{k,p}.
\]
Then the {\it weighted Sobolev space} $W^{k,p}_\de(S,M)$ is defined to be the completion of $C^\infty(S,M)$ under the norm $\norm \cdot _{k,p;\de}$.  The point of this construction is that, provided $kp>2=\dim S$, the space $W^{k,p}_\de(S,M)$ is again a Banach manifold with tangent space 
\[
T_u W^{k,p}_\de(S,M)=\Ga^{k,p}_\de(S,u^*TM),
\]
where the space $\Ga^{k,p}_\de(S,u^*TM)$ is defined in a similar fashion \cite[Thm.\,3]{F2}.

\subsection{Properties of $J$-holomorphic curves}\label{subsection J-holo}
We assume for the remainder of this article that {\bf(F1)} holds.  Suppose also that $p>2$.  Define the space 
\[
W_\de^{1,p}(S,M;L_1,L_2):=\{u \in W^{1,p}_\de(S,M) \mid u(\bR\times \{0\}) \in L_1, u(\bR \times \{1\}) \in L_2\}.
\]
In words: we require that $u$ maps the `top' and `bottom' boundary components of $S$ into $L_1$ and $L_2$, respectively.  Since  $W_\de^{1,p}(S,M;L_1,L_2)$ is a submanifold of $W_\de^{1,p}(S,M)$  it is a Banach manifold, and its tangent space $T_uW_\de^{1,p}(S,M;L_1,L_2)$ at $u$ is the space of sections $\Ga^{1,p}_\de(S,u^*TM;u^*TL_1,u^*TL_2)$ defined to be
\[
\left\{ \xi \in \Ga^{1,p}_\de(S,u^*TM) \mid \xi(s,0)\in T_{u(s,0)}L_2, \xi(s,1)\in T_{u(s,1)}L_2, \forall s \in \bR \right\}.
\]

Now, define the Banach bundle $\mE_\de^p(S,M) \to W_\de^{1,p}(S,M;L_1,L_2)$ with fibre
\[
\mE_\de^p(S,M)_u:=\Ga_\de^{0,p}(S,u^*TM),
\]
and denote by $\ep_0$ its zero section.
The operator $\bar \bdd_J$ can be regarded as section
\[
\bar \bdd^p_J \colon W_\de^{1,p}(S,M;L_1,L_2) \to \mE_\de^p(S,M),
\]
with `$p$' on the $\bar \bdd _J^p$  just temporary notation to indicate the domain of $\bar \bdd _J$.

A key property of the operator $\bar \bdd^p_J $ is that it is {\it elliptic}, and thus enjoys the following {\it elliptic regularity property} given in Theorem B.4.1 \cite{McDS2}
\[
\text{for } u \in W_\de^{1,p}(S,M;L_1,L_2) \text{ and } \bar \bdd^p_J u=0 \implies u\in C^\infty (S,M).
\]
Therefore, the kernel of $\bar \bdd^p_J $ does not depend on $p$.  Since we are only interested in $J$-holomorphic maps, this shows that it does not matter what value of $p$ we take so long as $p>2$.  So from now on, we fix a particular $p>2$ (say, $p=3$) and always consider $\bar \bdd_J $ as an operator on this particular space $W_\de^{1,3}(S,M;L_1,L_2)$ thus dropping the superscript `$p$'.

Next, we want to show that $\mM_J(M,L_1,L_2)$ is a subset of of $\bar \bdd_J ^{-1}(\ep_0)$.  Since we know that the elements of $\bar \bdd_J ^{-1}(\ep_0)$ are smooth, we certainly have,
\[
\bar \bdd_J ^{-1}(\ep_0) \subseteq \mM_J(M,L_1,L_2).
\]
In order for the reverse inclusion to hold, every element $u \in  \mM_J(M,L_1,L_2)$ must satisfy the exponential decay condition required so that $\norm u _{1,p;\de} < \infty$.  In fact, with some work, it can be shown that every $J$-holomorphic curve $u$ in $ \mM_J(M,L_1,L_2)$ decays exponentially for some pair of intersection points $(\hx,\hy)$, which are also called the {\it asymptotes of $u$} \cite[Sec.\,14.1]{Oh book}.    As a consequence we have that 
\[
\bar \bdd_J^{-1}(\ep_0)= \mM_J(M,L_1,L_2),
\]
and 
\[
\mM_J(M,L_1,L_2):=\bigcup_{\hx,\hy \in L_1 \cap L_2} \mM_J(\hx,\hy).
\]

\subsection{The Fredholm property} \label{subsection fredholm}

Recall that a linear operator $F \colon V \to W$ between two Banach space $V,W$  is called {\it Fredholm} if the $ \Ker F$ and $\Coker F$ are finite-dimensional.  The {\it index of $F$} is 
\[
\ind F = \dim \Ker F - \dim \Coker F.
\]
  Clearly, if $V$ and $W$ are finite-dimensional, then $F$ is trivially Fredholm.  In particular, when $f\colon M \to N$ is a smooth map between finite-dimensional manifolds $M$ and $N$, and $q \in N$ is any regular value of $f$, then the implicit function theorem tells us that $f^{-1}(q)$ is a submanifold of $M$ of dimension 
\[
\dim \Ker df(p)=\ind df(p),
\]
 where $p$ is any point in $f^{-1}(q)$.
 
 In general, if $f \colon M \to N$ is a smooth Fredholm map between connected (possibly infinite-dimensional) manifolds, then the Fredholm index of $df(x)$ is independent of the choice of $x \in M$, hence it makes sense to define the {\it Fredholm index of $f$} to be the Fredholm index of $df(x)$ for some (and so any) $x\in M$.  However, the same is not true of $\dim \Ker df(x)$ and $\dim \Coker df(x)$ -- only their difference is independent of the choice of $x$.

Now, let $E \to M$ be a vector bundle over $M$, and $s \colon M \to E$ a section. Suppose that $s$ is transverse to the zero section $e_0$.  Then $Z:=s^{-1}(e_0)$ is a submanifold of $M$.  Saying that $s$ is transverse to $e_0$ is equivalent to making a statement about the vertical derivative.  Indeed, for every point $x\in Z$, the {\it vertical derivative} 
\[
Ds(x) \colon T_xM \to E_x
\]
is defined to be the linear map $\proj_{E_x} \circ ds(x)$, which is the composition of the differential $ds(x) \colon T_xM \to T_{s(x)}E$ and the fibre projection $\proj_{E_x} \colon T_{s(x)}E\to E_x$. (Note that in the statement of $\proj_{E_x}$ the splitting $T_{s(x)}=E_x \oplus T_xM$ is guaranteed only because $x \in Z$, that is $s(x) \in e_0$.)  So $s$ is transverse to $e_0$ if and only if $Ds(x)$ is surjective for all $x \in Z$.  Moreover, when this is the case, 
\[
\dim Z = \ind Ds(X), \text{ for any } x\in Z.
\]

The key point of working  with Banach bundles over Banach manifolds is that the same result also holds in this setting.

\begin{thm}  [Implicit Function Theorem for smooth sections of Banach bundles]  \label{thm implicit} Let $\mE \to \mM$ be a Banach bundle over a Banach manifold, and $s \in \Ga^\infty (\mM,\mE)$. Denote by \linebreak$Z:=s^{-1}(\text{zero section})$.  Suppose that for all $x \in Z$, the vertical derivative $Ds(x)$ is a surjective Fredholm operator.  Then $Z$ is a smooth submanifold of $\mM$ of finite dimension $\dim Z=\ind Ds(x)$ for some (and hence any) point $x \in Z$. 
\end{thm}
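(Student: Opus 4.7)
The plan is to reduce to the classical Banach-space Implicit Function Theorem by working locally and trivialising the bundle. Since being a submanifold is a local property, I fix $x_0 \in Z$ and pass to a chart. Choose a coordinate neighbourhood $U$ of $x_0$ in $\mM$ modelled on a Banach space $X$ (sending $x_0$ to $0$), together with a local trivialisation $\mE|_U \cong U \times E$ where $E$ is the Banach space fibre. Under these identifications the section $s$ becomes a smooth map $F \colon U \to E$ with $F(0) = 0$, and a direct check (using that the zero section is the image of the map $x \mapsto (x,0)$ under the trivialisation) shows that the vertical derivative $Ds(x_0)$ corresponds to $dF(0) \colon X \to E$. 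By hypothesis $L := dF(0)$ is surjective Fredholm.

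Next I split $X$ to set up the Implicit Function Theorem. Since $\Ker L$ is finite-dimensional it admits a closed topological complement $X_1$, so $X = \Ker L \oplus X_1$. The restriction $L|_{X_1} \colon X_1 \to E$ is a continuous linear bijection between Banach spaces, hence an isomorphism by the open mapping theorem. Writing points of $X$ as $(y_0, y_1) \in \Ker L \times X_1$, the partial derivative $\partial F/\partial y_1$ at $(0,0)$ equals $L|_{X_1}$ and is therefore a toplinear isomorphism. The classical Banach-space Implicit Function Theorem (proved via the contraction mapping principle on $E$) now provides open neighbourhoods $V_0 \subseteq \Ker L$ and $V_1 \subseteq X_1$ of the origins and a smooth map $g \colon V_0 \to V_1$ with $g(0) = 0$ such that, inside $V_0 \times V_1$,
\[
F(y_0, y_1) = 0 \iff y_1 = g(y_0).
\]
Thus $Z \cap (V_0 \times V_1)$ is the graph of $g$, which is a smooth submanifold diffeomorphic to $V_0$, a finite-dimensional open set of dimension $\dim \Ker L$.

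The dimension formula is then immediate: since $L$ is surjective, $\Coker L = 0$, so $\ind L = \dim \Ker L - \dim \Coker L = \dim \Ker L$, giving $\dim Z = \ind Ds(x_0)$ at the chosen point. That the value $\ind Ds(x)$ does not depend on the particular $x \in Z$ is standard: the Fredholm index is locally constant on the space of Fredholm operators (being invariant under small perturbations in the operator norm), and $x \mapsto Ds(x)$ is continuous, so the index is constant on each connected component of $Z$.

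The only genuinely delicate ingredient is step two, the existence of a closed complement $X_1$ to $\Ker L$; this rests on the fact that any finite-dimensional (hence closed) subspace of a Banach space admits a closed complement, which can be produced by extending a basis-dual-basis pair via Hahn--Banach. Everything else is routine once the Fredholm splitting is in place, and no use of the compatibility condition from $\mJ(M,\omega)$ or the Maslov index is needed here — the theorem is purely functional-analytic, and its power in the sequel comes from applying it to the section $\bar\partial_J$ on the Banach manifold $W^{1,p}_\de(S,M;L_1,L_2)$.
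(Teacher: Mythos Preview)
Your proof is correct and follows exactly the route the paper indicates: the paper does not prove this theorem but simply refers to \cite[Thm.~A.3.3]{McDS2} for the Banach-space case and remarks that ``the more general result for Banach bundles follows by taking charts''; you have faithfully carried out that reduction and supplied the standard splitting-plus-IFT argument in the chart. One small remark: your final paragraph shows the index is constant on each connected component of $Z$, which is all the local argument gives and all that is actually needed in the applications (cf.\ Claim~\ref{claim manifold}, which explicitly allows components of differing dimensions).
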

For a proof see Theorem A.3.3. \cite{McDS2}; technically they give a proof only for Banach spaces, but the more general result for Banach bundles follows by taking charts.

Going back to our setting, define for fixed $\hx,\hy \in L_1 \cap L_2$ and $\phi \in \pi_2(\hx,\hy)$ the space 
\[
\mB:=\mB(\hx,\hy;\phi):= \{ u \in W^{1,p}_\de(S,M;L_1,L_2) \mid u(-\infty, \cdot) \cong \hx, u(+\infty,\cdot) \cong \hy, u \in \phi \}.
\]
Let $\mE$ denote the Banach bundle over $\mB$ with fibre $\mE_u :=\Ga_\de^p(S,u^*TM)$.  As before, we can consider $\bar \bdd_J$ as a section of $\mE \to \mB$ and the moduli space $\mM_J(\hx,\hy;\phi)$ is its zero section.  Since $\bar \bdd _J$ is elliptic, the vertical derivative 
\[
D \bar \bdd_J(u) \colon T_u\mM_J(\hx,\hy;\phi) \to \mE_u
\]
is a Fredholm operator.  This is essentially a special case of the Riemann-Roch theorem; see Theorem C.1.10 \cite{McDS2} for a more general result.

\subsection{Transversality} \label{subsection transversality}
In general there is no reason why $D\bar \bdd_J(u)$ should be surjective for all $u\in\mM_J(\hx,\hy;\phi)$, and so allow us to conclude that $\mM_J(\hx,\hy;\phi)$ is a smooth manifold. It remains to show that we can find an almost complex structure $J$ such that $D\bar \bdd_J(u)$ is surjective. We do so by applying the Sard-Smale theorem to an appropriate Fredholm map between Banach manifold. The details are as follows.  Define
\[
\mJ:=\left\{ J \in C^\infty\left([0,1], \Ga^{1,p}(M,\End(TM)\right) \mid J(t) \text{ is $\om$-compatible al. cx. st. } \forall t\in[0,1]\right\}.
\]
Now, consider the extended operator $\mF \colon \mJ \times \mB(\hx,\hy;\phi) \to \mE$ given by 
\[
\mF(J,u)(s,t):=\left(\bar \bdd _{J(t)}u\right)(s,t).
\]
Since $D\bar \bdd_{J(t)}(u)$ is always Fredholm, it follows that the same is true for $D\mF(J,u)$.  The advantage of working with the extended operator is that the extra freedom coming from $\mJ$ is enough to show that $D\mF(J,u)$ {\it is} transverse to the zero section of $\mE$.  The proof is not easy; see \cite[Thm.\,3.2]{McDS2} for a similar, although slight simpler result, or \cite{FHS} for detailed proofs in the related setting of Hamiltonian Floer homology.   In any case, it now follows from Theorem \ref{thm implicit} that the so called {\it universal moduli space} $\Muniv(\hx,\hy;\phi):=\mF^{-1}(\text{zero section})$ is a Banach manifold.  

Consider the projection 
\[
\pi \colon \Muniv(\hx,\hy; \phi) \to \mJ.
\]
Then $\pi$ is a map between Banach manifolds.  Since $\pi$ is a projection, we see that $\pi$ is also Fredholm with 
\[
\ind \pi(J,u) = \ind D\bar \bdd_J(u).
\]
By the infinite-dimensional version of the Sard-Smale theorem we have that a generic $J \in \mJ$  is a regular value for $\pi$.  But since $\pi$ is a projection, it is easy to see that $J$ is a regular value of $\pi$ if and only if $D \bar \bdd_J(u)$ is surjective for all $u \in \mM_J(\hx,\hy;\phi)$ (see \cite[Lem.\,3.4]{We}).  

We have now essentially completed the proof of half of Theorem \ref{thm manifold}.  There is one caveat.  We have only discussed the result for $J$ of class $W^{1,p}$, whereas in Theorem \ref{thm manifold}, the result was stated for smooth $J$.  The result is still true in the smooth setting, but requires an additional argument that we omit for simplicity; we refer the reader to \cite[pp.\,54--55]{McDS2} for details.  (The extra difficulty comes from the fact that the space $C^\infty([0,1],\mJ(M,\om))$ is not a Banach manifold).  

\begin{rmk}
Since $\pi_2(\hx,\hy)$ is countable, and the intersection of countably many generic sets is also generic, it follows that generically the full moduli space 
\[
\mM_J(\hx,\hy):= \bigcup_{\phi \in \pi_1(\hx,\hy)} \mM_J(\hx,\hy;\phi)
\]
also admits the structure of a smooth manifold, as asserted in Claim \ref{claim manifold}.
\end{rmk}

\subsection{The spectral flow} \label{subsection spectral}

In order to complete the proof of Theorem \ref{thm manifold}, it remains to show that for any $\hx,\hy \in L_1 \cap L_2$, any $J \in \mJ(M,\om)$ and any $\phi \in \pi_2(\hx,\hy)$, we have 
\begin{equation} \label{eq fredholm same as maslov}
 \ind D\bar \bdd_J(u)=\mu(\phi) \hspace{1cm} \text{for any } u \in \mM_J(\hx,\hy;\phi).
\end{equation}
Actually, under Assumption {\bf(F1)}, Equation \eqref{eq fredholm same as maslov} is true regardless of whether $D \bar \bdd_J(u)$ is surjective or not.

For the remainder of this subsection, fix $u\in \mM_J(\hx,\hy;\phi)$.  In order to compute the index of $D \bar \bdd_J(u)$ we introduce the {\it spectral flow}, following Robbin and Salamon \cite{RS}.

Let $W \subset H$ be two Hilbert spaces with $W$ dense in $H$.  Set $\mA(W,H)$ to be the set of all maps $A(s)_{s \in \bR}$ such that for each $s \in \bR$: a) the map $A(s)$ is an unbounded self-adjoint linear operator on $H$ with domain $W$, b) $s \mapsto A(s)$ is continuously differentiable, and c) there exist invertible operators $A^{\pm} \in \mL(W,H)$ such that
\[
\lim_{s \to \pm \infty} \norm{A(s) - A^\pm}_{\mL(W,H)}=0.
\] 
Given such a family $A(s)_{s\in\bR}$, define a map
\[
D_A \colon W^{1,2}(\bR,W) \cap L^2(\bR,H) \to L^2(\bR,H),
\]
by setting 
\begin{equation} \label{def of DA}
(D_A w)(s):=\frac{\bdd w}{\bdd s} (s) + A(s) \cdot w(s).
\end{equation}
The operator $D_A$ is actually a Fredholm operator; although, strictly speaking, slightly more stringent conditions are needed on $A(s)_{s\in \bR}$ for this to be true, specifically see Conditions (A-1), (A-2) and (A-3) \cite[p.\,7]{RS}.

Given Hilbert spaces $W$ and $H$ as above, there is a unique way of defining the a map $\mu_s \colon \mA(W,H) \to \bZ$  called the {\it spectral flow} $\mu_s$  so that certain axioms are satisfied; for the precise statement see \cite[Thm.\,4.23]{RS}.  Morally speaking, the spectral flow of a family of maps $A(s)_{s \in \bR}$ counts the change in the number of negative eigenvalues of $A(s)$ as $s$ ranges from $-\infty$ to $+\infty$. 

 A famous theorem by Atiyah, Patodi and Singer \cite{APS} (see also \cite[Thm.\,A]{RS}), says that the spectral flow of a given $A(s)_{s \in \bR} \in \mA(W,H)$ is precisely equal to the index of a Fredholm operator $D_A$ of the form  Equation \eqref{def of DA}, that is, 
 \begin{equation}
 \ind D_A = \mu_s\left(A(s\right)_{s\in \bR}).
 \end{equation}

 \subsection{An interlude on Morse theory} \label{subsection Morse theory} Consider briefly the finite-dimensional Morse theoretic case: let $M$ be a closed $n$-manifold and $f \colon M\to \bR$ a Morse function.  Denote by $\nabla f$ the gradient of $f$ with respect to a given metric on $M$, and denote by $\varphi_t$ the flow of $-\nabla f$.  Then to equip $M$ with a metric that is {\it Morse-Smale} for $f$ means that the stable and unstable manifolds of $\varphi_t$ are always transverse:
 \begin{equation} \label{eq transversality}
  W^u(\hx) \pitchfork W^s(\hy), \hspace{1cm} \text{ for all } \hx,\hy \in \text{\{critical points of } f\}.
 \end{equation}

In this setup we can do Morse homology with $f$; in particular, define a $\bZ$ graded $\bZ_2$-vector space with generators the critical points of $f$, and graded by the Morse index $\nu(\hx):=\dim W^u(\hx)$.  The boundary operator $\bdd$ counts the parity of the set $(W^u(\hx) \cap W^s(\hy))/\bR$ whenever $\nu(\hx)-\nu(\hy)=1$.  Note that here we are dividing out by the $\bR$-action as in the Floer case, and that the transversality assumption from Equation \eqref{eq transversality} guarantees that $(W^u(\hx) \cap W^s(\hy))/\bR$ is a finite set.

Further, in the spirit of Floer homology, we can instead define $\mM(\hx,\hy)$ to be the zero set of a suitable section $\si$ of a Banach bundle that is now defined for $h\colon \bR \to M$ by setting
\[
\si(h):=\bdd_s h + \nabla f(h).
\]
There is an identification of $\mM(\hx,\hy)$ with $(W^u(\hx) \cap W^s(\hy))$ via $h \mapsto h(0)$.

We can then consider the vertical derivative $D \si(h)$ at zero of $h$.  Let $\nabla$ be the Levi-Civita connection with respect to some (background) Riemannian metric on $M$.   Then $D \si(h)$ is the operator
\begin{gather*}
D \si(h) \colon \Ga^{1,2}(\bR,h^*TM) \to \Ga^{0,2} (\bR,h^*TM), \\
D \si (h)(\xi)=\nabla_{\frac{\bdd h} {\bdd s}} \xi + \nabla _\xi(\nabla f(h)),
\end{gather*}
see \cite[Chap.\,2]{Sch}.  As in the Floer case, the operator $D \si(h)$ is Fredholm, and the dimension of $\mM(\hx,\hy)$ is equal to the Fredholm index of $D\si(h)$.  We already know that the dimension of $\mM(\hx,\hy)$ is the dimension of $(W^u(\hx) \cap W^s(\hy))$, namely $\nu(\hx)-\nu(\hy)$, but let us now explain how this can also be computed using the spectral flow.  Indeed, trivialise the bundle $h^*TM \to \bR$ using parallel translation with respect to $\nabla$.  This gives an operator of the form $D_A$ considered above, where we take $W=H=\bR^n$.  In particular,
\[
D_A \colon W^{1,2}(\bR,\bR^n) \to L^2(\bR,\bR^n),
\]
where under this trivialisation,
\[
A(s)(w)=\nabla_w \nabla f(h(s)).
\]
Here the limit operators $A^\pm$ are precisely the Hessians of $f$ at the critical points $\hx,\hy$:
\[
A^-=\text{Hess}(f;\hx), \hspace{1cm} A^+=\text{Hess}(f;\hy).
\]
Therefore, the Fredholm index of $D\si(h)$ is equal to the spectral flow of $A(s)_{s \in \bR}$.  But then in this case the spectral flow is easy to compute, since it is just the change in the number of negative eigenvalues of $A(s)$ as $s$ ranges from $-\infty$ to $+\infty$, which is precisely $\nu(\hx) - \nu(\hy)$.

\subsection{The Floer case}\label{subsection floer case}

We now return to the Floer setup.  Recall that we have fixed a flow line $u$ from $\hx$ to $\hy$ and a homotopy class $\phi\in \pi_2(\hx,\hy)$.  Take a symplectic trivialisation $\Phi \colon S \times \bR^{2n} \to u^*TM$, that is, take a diffeomorphism
\[
\Phi_{s,t} \colon (\bR^{2n}),\om_{std}) \to T(_{u(s,t)}M,\om), 
\]
such that $\Phi_{s,t}^*(\om)=\om_{std}$.  In this trivialisation the vertical derivative $\bD \bar \bdd_J(u)$ becomes the map
\begin{gather*}
D_A \colon W^{1,2}(S,\bR^{2n}) \to L^2(S,\bR^{2n}),
\end{gather*}
where this time
\[
A(w)(s,t)=i \frac{\bdd w}{\bdd t}(s,t) + S(s,t) \cdot w(s,t),
\]
for $S(s,t) \in GL(2n,\bR)$ a family of matrices determined by the trivialisation whose asymptotes $S(\pm \infty, \cdot)$ are symmetric.  As before we compute the index of $D \bar \bdd _J(u)$ using the spectral flow of $A(s)_{s\in \bR}$.  Strictly speaking, there are additional complication that we are sweeping under the carpet.  For example, $A(s)$ is not self-adjoint for $s \neq \pm \infty$.  However, $A(s)$ is self-adjoint (if and only) if $S(s,\cdot)$ is symmetric for all $t$, which can be achieved by a compact perturbation of $S(s,t)$, and compact perturbations do not affect the Fredholm index.  There are other complications as well: see \cite[Sec.\,7]{RS} for the details.

It remains to compute the spectral flow $\mu_s\left(A(s)\right)_{s \in \bR}$.  This is where the Maslov index comes in: it can be shown that 
\[
\mu_s\left(A(s)\right)_{s\in\bR}=\mu(\phi)
\]
This is a non-trivial result, and we do not discuss it here; see \cite[Thm.\,1]{F1} for a proof, or \cite[Sec.\,4]{Se} for a friendly discussion.  This completes the proof of Theorem \ref{thm manifold}.

\subsection{Energy}\label{subsection energy}

Let us now briefly recall the definition of the {\it energy of a map $u$} from a Riemann surface into a Riemannian manifold. Let $(X,g)$ be a Riemannian manifold and $(\Si,j)$ a Riemannian surface equipped with a complex structure $j$.  The almost complex structure $j$ defines a {\it conformal class} of Riemannian metrics on $\Si$ -- namely, the set of metrics $h$ that satisfy $h(v,jv)=0$ and $h(jv,jv)=h(v,v)$ for all $v$.  The {\it energy with respect to $(g,j)$ of a smooth map $u \colon \Si \to X$} is defined to be
\[
E_{g,j}(u):= \frac{1}{2} \int_\Si \norm{du}^2_{g,h} \vol_h,
\]
where $h$ is any conformal metric (see for example \cite[Sect.\,8]{Jost}).  Here the {\it energy density} 
\[
 \norm{du}^2_{g,h} \colon \Si \to \bR
 \]
  is given by 
\[
 \norm{du}^2_{g,h} := \frac{g(du_z(v),du_z(v)) + g(du_z(jv),du_z(jv))}{h(v,v)}, \hspace{1cm} \text{for any } v\neq 0 \in T_z\Si.
\]
It is easy to check that the value of the right-hand side does not depend on the choice of non-zero $v \in T_z\Si$, hence $ \norm{du}^2_{g,h} $ is well-defined.  The 2-form $ \norm{du}^2_{g,h}  \vol_h$ only depends on the conformal class of $h$.  Indeed, if $h'$ is another metric conformably equivalent to $h$, then we can write $h'=\rho h$ for some positive function $\rho$.  Then 
\[
 \norm{du}^2_{g,h'} =\frac{1}{\rho}  \norm{du}^2_{g,h}  \hspace{1cm} \text{and} \hspace{1cm} \vol_{h'} = \rho \vol_h,
\]
hence 
\[
 \norm{du}^2_{g,h} \vol_h= \norm{du}^2_{g,h'} \vol_{h'}.
\]

We are interested in the energy in the special case where $X$ is our symplectic manifold $M$, and $g$ is given by $\om(\cdot, J \cdot)$, where $J$ is a compatible almost complex structure on $M$.  We write $E_{J,j}(\cdot)$ instead of $E_{\om(\cdot, J \cdot), j}(\cdot)$, or even just $E_J(\cdot)$ when $j$ is clear.  In the case when $u \colon \Si\to M$ is $(J,j)$-holomorphic, then  
\begin{equation} \label{eq energy}
E_{J,j}(u)=\int_\Si u^*\om.
\end{equation}
Indeed, if we choose local complex coordinates $s+it$ and a metric $h$ at a point $z \in \Si$ so that 
\[
j\left(\frac{\bdd}{\bdd s}\right)=\frac{\bdd}{\bdd t} \hspace{0.5cm} \text{and} \hspace{0.5cm}j\left(\frac{\bdd}{\bdd t}\right)=-\frac{\bdd}{\bdd s},  \hspace{0.5cm} \text{and} \hspace{0.5cm}
h_z\left(\frac{\bdd}{\bdd s}, \frac{\bdd} {\bdd s}\right)=1,
\]
then 
\[
\vol_h(z)=ds \wedge dt.
\]
Further, if we compute $ \norm{du}^2_{J,j}(z)$ using the non-zero $v:=\frac{\bdd}{\bdd s}$ we get
\begin{equation*}
\begin{split}
 \norm{du}^2_{J,j}(z) \vol_h(z) &= \left(g \left(du_z \left(\frac{\bdd}{\bdd s}\right), du_z \left(\frac{\bdd}{\bdd s}\right) \right)+g\left(du_z \left(\frac{\bdd}{\bdd t}\right), du_z \left(\frac{\bdd}{\bdd t}\right) \right)\right) ds \wedge dt \\
 & = \left( \om \left( \frac{\bdd u}{\bdd s}, J \frac{\bdd u}{\bdd s} \right)+ \om  \left( \frac{\bdd u}{\bdd t}, J \frac{\bdd u}{\bdd t} \right) \right) ds \wedge dt\\
 & = \left( \om \left( \frac{\bdd u}{\bdd s}, \frac{\bdd u}{\bdd t} \right)+ \om  \left( \frac{\bdd u}{\bdd t}, - \frac{\bdd u}{\bdd s} \right) \right) ds \wedge dt\\
& = 2\om \left( \frac{\bdd u}{\bdd s}, \frac{\bdd u}{\bdd t}\right) ds \wedge dt \\
&= 2u^*\om(z),
\end{split}
\end{equation*}
where the second and third equality use the fact that $u$ is $(J,j)$-holomorphic.

This prompts the following definition.
\begin{deff}
Given $\hx,\hy \in L_1 \cap L_2$ and $\phi \in \pi_2(\hx,\hy)$, define the {\it $\om$-area} of $\phi$, written $\mA_\om(\phi)$, to be 
\[
\mA_\om(\phi):=\int_\bD v^* \om,
\]
where $v\colon \bD \to M$ is any smooth map satisfying
\begin{align*}
& v(\bdd^- \bD) \subset L_1, \hspace{1cm}  v(\bdd^+ \bD) \subset L_2, \\
& v(-i)=\hx,  \hspace{1.5cm} v(i)=\hy,
\end{align*}
\end{deff}

The following corollary is now immediate.

\begin{cor} \label{cor energy bound}
Suppose $J$ is $\om$-compatible, and $\hx,\hy \in L_1 \cap L_2$ and $\phi \in \pi_2(\hx,\hy)$.  Suppose $v \in \mM_J(\hx,\hy;\phi)$.  Then 
\[
E_J(v)=\mA_\om(\phi).
\]
In particular, if $\hx \neq \hy$, then a class $\phi \in \pi_2(\hx,\hy)$ only admits holomorphic representatives when $\mA_\om (\phi)>0$. 
\end{cor}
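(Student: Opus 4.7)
The plan is to combine the pointwise identity $\norm{du}^2_{J,j}\vol_h = 2u^{*}\om$ derived in the preceding paragraph (valid for any $(J,j)$-holomorphic map) with the fact that the $\om$-area depends only on the homotopy class of the representative, and to conclude strict positivity from non-negativity of the energy density. So the three steps are: well-definedness of $\mA_\om$, the energy identity, and the positivity clause.

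First I would verify that $\mA_\om(\phi)$ is independent of the chosen representative. Given two smooth maps $v_0, v_1 \colon \bD \to M$ both representing $\phi$, pick a homotopy $H \colon [0,1]\times \bD \to M$ between them rel the prescribed boundary data. Since $d\om=0$, applying Stokes' theorem to the 3-chain $H$ gives
\[
\int_{\bD} v_1^{*}\om \;-\; \int_{\bD} v_0^{*}\om \;=\; \int_{\text{side}} H^{*}\om,
\]
where the side boundary consists of pieces mapping into $L_1$, into $L_2$, or to the constants $\hx,\hy$. Each of these pulls $\om$ back to zero, because both $L_i$ are Lagrangian and $\om$ vanishes trivially at a point. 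Hence $\mA_\om(\phi)$ is well-defined on $\pi_2(\hx,\hy)$.

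Next, regard $v \in \mM_J(\hx,\hy;\phi)$ as a map $\bD \to M$ via the biholomorphism of Equation \eqref{eq identification}, so that $v$ is $(J,j)$-holomorphic on $\bD^{\circ}$ with $j$ the standard complex structure. Equation \eqref{eq energy} then applies directly to give $E_J(v) = \int_{\bD} v^{*}\om$, and since $v$ is itself a smooth representative of $\phi$, this equals $\mA_\om(\phi)$. For the ``in particular'' clause, recall that $g_J := \om(\cdot,J\cdot)$ is a Riemannian metric, so the energy density $\norm{dv}^{2}_{J,j}$ is non-negative and vanishes at a point $z$ exactly where $dv_z=0$; if $E_J(v)=0$ then $dv\equiv 0$ on the connected domain $\bD$, forcing $v$ to be constant, which contradicts $v(-i)=\hx\ne\hy=v(i)$. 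Therefore $E_J(v)>0$, and by the main identity $\mA_\om(\phi)>0$. The only technical point is the Stokes step in the well-definedness argument: the boundary of $H$ has corners at $\pm i$ where the boundary conditions meet, and one should either smooth these corners before invoking Stokes or observe that the corner loci are of measure zero and so contribute nothing to the integral of a bounded $2$-form.
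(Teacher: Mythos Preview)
Your proof is correct and is precisely the elaboration of what the paper calls ``immediate'': the paper states the corollary right after establishing $E_{J,j}(u)=\int_\Si u^*\om$ for $(J,j)$-holomorphic maps (Equation~\eqref{eq energy}) and offers no further argument. Your three ingredients---well-definedness of $\mA_\om(\phi)$ via Stokes and the Lagrangian condition, the energy identity from \eqref{eq energy}, and strict positivity from non-negativity of the density---are exactly the details one supplies to justify that word ``immediate''.
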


\subsection{Gromov compactness} \label{subsection gromov}

Let $(u_n)_{n \in \bN} \subset \mM(\hx,\hy;\phi)$ be a sequence of $J$-holomorphic curves.  Then $z \in S$ is called a {\it singular point} of the sequence $(u_n)$ if there is a sequence of points $z_n \in S$ such that $\norm{du_n(z_n)}_{L^\infty} \to \infty$.   The set $\De$ of singular points of $(u_n)$ is divided into three subsets: 
\[
\De=\De^{\Int} \cup \De^1 \cup \De^2.
\]
  Here $\De^{\Int}$ is the set of singular points contained in $\bR \times (0,1)$, $\De^1$ is the set of singular points in $\bR\times \{0\}$, and $\De^2$ is the set of singular points in $\bR \times \{1\}$. 

We now state the two key compactness theorems that we collectively call {\it Gromov compactness}, although in this context they are due to Floer \cite{F2}.  However, the versions quoted below are somewhat stronger than the original result Floer proved -- specifically, conclusion (iv) containing Equation \eqref{eq energy of bubbles} in Theorem \ref{thm gromov compactness II} which uses the so called `hard-rescaling' (proved in \cite[Chap.\,4]{McDS2}).

\begin{thm} [Gromov compactness I] \label{thm gromov compactness I}
Fix $\hx,\hy \in L_1 \cap L_2$ and fix $E \geq 0$.  Then the space $\mM^{\leq E}(\hx,\hy)$ consisting of all flow lines $u \in \mM(\hx,\hy)$ with $E_J(u) \leq E$ is pre-compact in the $C^\infty_{loc}(S,M)$ topology.  In particular, there are at most finitely many classes $\phi \in \pi_2(\hx,\hy)$ for which $\mA_\om(\phi) \leq E$, $\mM(\hx,\hy;\phi)\neq \emptyset$.
\end{thm}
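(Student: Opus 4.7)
The plan is to establish $C^\infty_{loc}$ pre-compactness (away from a finite bubble locus) via the classical combination of an $\varepsilon$-regularity estimate for $J$-holomorphic curves together with elliptic bootstrapping, and then to deduce the finiteness of homotopy classes using monotonicity. The central analytic input is the \emph{mean value inequality}: there exist constants $\hbar>0$ and $C>0$, depending only on $(M,\om,J,L_1,L_2)$, such that any $J$-holomorphic $u\colon B_r(z_0)\to M$ (respecting the Lagrangian boundary condition whenever $B_r(z_0)$ meets $\bdd S$) satisfies
\[
\sup_{B_{r/2}(z_0)}|du|^2 \;\leq\; \frac{C}{r^2}\int_{B_r(z_0)\cap S}|du|^2 \quad\text{whenever}\quad \int_{B_r(z_0)\cap S}|du|^2<\hbar;
\]
see \cite[Chap.\,4]{McDS2}. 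Given $(u_n)\subset\mM^{\leq E}(\hx,\hy)$, define the bubble locus
\[
\Delta := \bigl\{\, z\in S \;:\; \forall\, r>0,\ \limsup_{n\to\infty}\int_{B_r(z)\cap S}|du_n|^2 \geq \hbar \,\bigr\}.
\]
The energy identity $E_J(u_n)=\mA_\om([u_n])\leq E$ (Corollary \ref{cor energy bound}) combined with a standard covering argument gives $|\Delta|\leq 2E/\hbar$, so $\Delta$ is finite.

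For every $z\in S\setminus\Delta$ one finds $r>0$ and a subsequence for which $\int_{B_r(z)\cap S}|du_n|^2<\hbar$ eventually, and the mean value inequality then yields a uniform $C^1$ bound for $u_n$ on $B_{r/2}(z)$. Since $\bar\bdd_J u_n=0$ is a quasilinear elliptic system, elliptic regularity --- in the interior, and at the Lagrangian boundary via Morrey-style estimates or a Schwarz-reflection doubling across $L_1$ and $L_2$ --- upgrades the $C^1$ bound to uniform $C^k$ bounds on every compact subset of $S\setminus\Delta$, for every $k\in\bN$. Arzel\`a--Ascoli together with a diagonal argument over an exhaustion of $S\setminus\Delta$ then produces a subsequence converging in $C^\infty_{loc}(S\setminus\Delta,M)$ to some $J$-holomorphic map $u_\infty$, establishing the pre-compactness claim.

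For the ``in particular'' clause, suppose for contradiction that infinitely many distinct $\phi_k\in\pi_2(\hx,\hy)$ with $\mA_\om(\phi_k)\leq E$ admit representatives $u_k\in\mM(\hx,\hy;\phi_k)$. Applying the preceding step to $(u_k)$ produces a $C^\infty_{loc}(S\setminus\Delta)$ limit; for large $k,\ell$ the classes $\phi_k$ and $\phi_\ell$ then differ only by ``bubble classes'' concentrated at the finite set $\Delta$, namely by elements of $\pi_2(M)$ or $\pi_2(M,L_i)$. Monotonicity \textbf{(F2)} combined with the discussion following Definition \ref{deff monotone} forces $\mA_\om$ on any non-trivial bubble class to be a positive multiple of a fixed constant $c>0$, so each distinct bubble contributes at least $c$ to the total $\om$-area; since the total is bounded by $E$, only finitely many bubble configurations can occur, contradicting the infinitude of the $\phi_k$. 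The technical heart of the whole scheme is the mean value inequality at the Lagrangian boundary: the interior case is by now classical, but the boundary version crucially exploits the totally real condition --- without it, $\Delta$ could fail to be discrete and the entire argument collapses.
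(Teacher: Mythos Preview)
The paper does not give its own proof of this theorem: it is stated as a known result, attributed to Floer \cite{F2} (with the stronger versions involving hard rescaling coming from \cite[Chap.\,4]{McDS2}), and Remark~\ref{rmk two things} makes clear that both Gromov compactness statements are meant to hold under assumption {\bf (F1)} alone. Your outline is the standard one and is essentially correct for the pre-compactness part, modulo one point of precision: what you actually prove is pre-compactness in $C^\infty_{loc}(S\setminus\Delta,M)$, not in $C^\infty_{loc}(S,M)$. When $\Delta\neq\emptyset$ the derivatives blow up and no subsequence converges on compact sets meeting $\Delta$, so the theorem as literally stated is slightly informal; the precise content is exactly what you wrote, and the paper's Theorem~\ref{thm gromov compactness II} makes this explicit.

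The genuine issue is your proof of the ``in particular'' clause. You invoke monotonicity {\bf (F2)} to quantise the bubble areas, but the theorem is supposed to hold under {\bf (F1)} only --- the paper explicitly postpones {\bf (F2)--(F4)} until after the Gromov compactness theorems. The argument that avoids {\bf (F2)} is this: by the full bubble-tree version of Gromov compactness (hard rescaling, \cite[Chap.\,4]{McDS2}), a subsequence of your $u_k$ converges to a stable map whose \emph{total} homotopy class (principal component glued to all bubble trees) equals $\phi_k$ for all $k$ in the subsequence. Hence infinitely many of the $\phi_k$ coincide, contradicting their distinctness. Your version is not wrong, but it proves a weaker statement than what the paper asserts, since it imports an extra hypothesis.
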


The following theorem can be viewed as a refinement of the previous one, which explains more precisely the stated compactness properties.

\begin{thm}  [Gromov compactness II] \label{thm gromov compactness II}  Fix $\hx,\hy \in L_1 \cap L_2$ and $\phi \in \pi_2(\hx,\hy)$.
Let $(u_n)_{n \in \bN} \subset \mM(\hx,\hy;\phi)$ and $(s_n)_{n \in \bN} \subseteq \bR$ be a sequence of real numbers.  Set $w_n(s,t):=u_n(s+s_n,t)$.  Then (after possibly passing to a subsequence) there exist:
\begin{enumerate}
\item points $\hx',\hy' \in L_1 \cap L_2$ and a class $\psi \in \pi_2(\hx',\hy')$,
\item a finite set $\De \subseteq S$,
\item (another) subsequence $( {n(i)})_{i \in \bN}$, 
\item a curve $w \in \mM_J(\hx',\hy'; \psi)$, 
\end{enumerate}
such that the following statements hold.
\begin{enumerate}
\item On every compact set $K \subset S - \De$, the sequence $(w_{n(i)})_{i \in \bN}$ converges in the $C^\infty$ topology on $K$ to $w$.
\item  The set $\De$ is the set of singular points of $(w_{n(i)})_{i \in \bN}$.  If we write $\De = \De^{\Int} \cup \De^
1 \cup \De^2$, then for each $z \in \De^{Int}$ there exists a non-constant $J$-holomorphic map $v_z \colon S^2 \to M$, and for each $z \in \De^k$ ($k=1,2$), there exists a non-constant $J$-holomorphic map $v_z \colon (\bD, \bdd \bD)\to (M,L_k)$.
\item If $\hx' \neq \hy'$, then $\mu (\psi) \geq 1$. If either $\hx' \neq \hx$ or $\hy' \neq \hy$, then $\mu (\psi) < \mu(\phi)$.  If both $\hx' \neq \hx$ and $\hy' \neq \hx$, then $\mu(\psi) < \mu(\phi) -1$.  
\item We have that 
\begin{equation} \label{eq energy of bubbles}
E_J(w)+\sum_{z \in \De} E_J(v_z) \leq \lim \sup_i E_J(w_{n(i)}).
\end{equation}
If $\De = \emptyset$, then \eqref{eq energy of bubbles} is an equality. In particular, if $\hx\neq \hy$ but $\hx'=\hy'$ (so that the $w_n$-s are non-constant, but $w$ is constant), then $\De \neq \emptyset$. 
\item We have that 
\begin{equation} \label{equation on maslov indices}
\mu(\psi) + 2 \sum_{z \in \De^{Int}} c_1(v_{z^*}([S^2])) +\sum_{z \in \De^1} \mu_{L_1} (v_z) + \sum_{z \in \De^2} \mu_{L_2}(v_z) \leq \mu(\phi).
\end{equation}
\end{enumerate}
\end{thm}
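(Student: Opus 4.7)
The plan is the standard three-step Gromov compactness argument combined with careful Maslov and energy bookkeeping: first extract a $C^\infty_{loc}$ convergent subsequence away from a finite ``blow-up set'' $\Delta \subset S$, then identify the concentrated energy at $\Delta$ as sphere and disk bubbles via conformal rescaling, and finally use exponential decay of finite-energy Floer strips to produce the asymptotes $\hx', \hy'$. The key inputs are Corollary \ref{cor energy bound}, which gives $E_J(w_n) = \mA_\om(\phi)$ uniformly in $n$ (translation preserves energy); compactness of $M$, which gives a uniform $C^0$ bound; elliptic regularity for $\bar\partial_J$; and the transversality assumption (F1) and monotonicity (F2), with (F4) entering only at the end to secure the strict Maslov inequalities in (iii).

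Define $\Delta := \{z \in S \mid \exists\, z_n \to z \text{ with } |dw_n(z_n)| \to \infty\}$. Elliptic bootstrapping on $S \setminus \Delta$ yields a $C^\infty_{loc}$ convergent subsequence with $J$-holomorphic limit $w$ satisfying the Lagrangian boundary conditions wherever they apply; finite energy of $w$ combined with (F1) gives exponential decay at the two ends to some $\hx', \hy' \in L_1 \cap L_2$, producing $\psi := [w] \in \pi_2(\hx', \hy')$. At each $z \in \Delta$ I would perform the standard conformal rescaling $v_n(\zeta) := w_n(z_n + \zeta/R_n)$ with $R_n := |dw_n(z_n)|$ and $z_n$ chosen with nearly maximal $R_n$ near $z$: the $v_n$ have bounded gradient on every compact subset of the rescaled domain, so a subsequence converges in $C^\infty_{loc}$ to a non-constant $J$-holomorphic map from $\bC$ (interior case) or a closed half-plane with boundary in $L_k$ (boundary case), which by removable singularities extends to $v_z \colon S^2 \to M$ or $v_z \colon (\bD, \bdd\bD) \to (M, L_k)$. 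Monotonicity (F2) forces every non-constant bubble to carry a uniform minimum amount of energy $\hbar > 0$, so the total energy bound makes $\Delta$ finite. This yields (i) and (ii).

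It remains to prove the index identities (iii)--(v). Equation (v) follows from homotopy invariance of the Maslov index of a bundle pair: parametrize $\phi$ by $f \colon [0,1]^2 \to M$, excise small disks around each blow-up in a compatible parametrization, and observe that the complement carries Maslov index $\mu(\psi)$ while each excised region contributes $\mu_{L_k}(v_z)$, or $2c_1(v_{z*}[S^2])$ for interior bubbles via Remark \ref{rmk minimal Chern}; the inequality (rather than equality) leaves room for implicit broken trajectories from $\hx$ to $\hx'$ or from $\hy'$ to $\hy$ that escape when $s_n \to \pm \infty$. The inequality in (iv) is a Fatou-type statement, but the equality assertion when $\Delta = \emptyset$ is the chief technical obstacle: one must rule out energy disappearing into invisible necks. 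This is exactly the hard-rescaling result of \cite[Chap.\,4]{McDS2}, which iteratively rescales around every scale at which energy concentrates and produces a complete bubble tree accounting for all of the energy. Finally, (iii) follows from (v) combined with (F4) and monotonicity: a non-constant disk bubble contributes at least $N_{L_k} \geq 3$ to the Maslov index and a non-constant sphere bubble at least $2N_M$, while any implicit escape trajectory between distinct intersection points contributes at least $1$ by the first clause of (iii) applied to it; each changed asymptote therefore lowers $\mu(\psi)$ by at least $1$, giving both strict inequalities. The opening clause $\mu(\psi) \geq 1$ when $\hx' \neq \hy'$ is immediate: non-constancy of $w$ forces $\mA_\om(\psi) > 0$, so by monotonicity and integrality $\mu(\psi) \geq 1$.
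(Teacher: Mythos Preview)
The paper does not prove this theorem. It is stated as a known result attributed to Floer \cite{F2}, with the remark that the sharper statement in (iv) involving Equation~\eqref{eq energy of bubbles} requires the ``hard-rescaling'' argument of \cite[Chap.~4]{McDS2}. There is therefore no proof in the paper to compare your proposal against.

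That said, your sketch follows the standard outline but imports hypotheses that the theorem does not require, and this leads to genuine errors. The paper explicitly notes (Remark~\ref{rmk two things}) that Theorem~\ref{thm gromov compactness II} holds assuming only \textbf{(F1)}; the extra hypotheses \textbf{(F2)--(F4)} are invoked only afterwards, in Theorems~\ref{thm compactness} and~\ref{theorem bdd^2=0}. Two places where this matters:
\begin{itemize}
\item You use monotonicity \textbf{(F2)} to obtain the uniform lower energy bound $\hbar>0$ for bubbles and hence the finiteness of $\Delta$. This is the wrong mechanism: the lower bound is a purely analytic fact (mean value inequality together with removal of singularities), valid for any compatible $J$ on a compact symplectic manifold, with no monotonicity needed.
\item Your argument for (iii) conflates bubbling at points of $\Delta$ with trajectory breaking at the ends. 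The strict inequalities $\mu(\psi)<\mu(\phi)$ and $\mu(\psi)<\mu(\phi)-1$ when the asymptotes change have nothing to do with bubble contributions or with \textbf{(F4)}: they come from the fact that the ``lost'' pieces at $s\to\pm\infty$ are themselves non-constant Floer strips between distinct intersection points, each carrying Maslov index at least $1$ (since for generic $J$ the moduli space has dimension $\mu$ and admits a free $\bR$-action), together with additivity of $\mu$ under $\sharp$. Likewise, the clause $\mu(\psi)\ge 1$ for $\hx'\neq\hy'$ follows from this transversality-and-dimension argument, not from monotonicity. Your invocation of \textbf{(F4)} here is misplaced; that hypothesis enters only later, in the proof of Theorem~\ref{theorem bdd^2=0}.
\end{itemize}
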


\begin{rmk} \label{rmk two things}
This is about as far as we can get if we only assume {\bf (F1)}.  In order for the Floer homology to be well-defined, we need to know the following two things.
\begin{enumerate}
\item  For any $\hx,\hy \in L_1 \cap L_2$, there are at most finitely many classes $\phi \in \pi_2(\hx,\hy)$ such that $\mu(\phi)=1$, and such that $\mM(\hx,\hy;\phi)$ is non-empty (so that the sum in the definition of the differential given in Equation \eqref{eq deff of differential} is finite).  This requires {\bf(F1--3)} to hold, and is proved in Corollary \ref{for finitely many classes} below.  

\item If $\mu(\phi)=1$ or $\mu(\phi)=2$, and $(u_n)_{n \in \bN} \subseteq \mM(\hx,\hy;\phi)$, then there are no singular points. For $\mu(\phi)=1$, together with {\bf(F1--3)}, this implies that $\mM(\hx,\hy;\phi)$ is compact; we give the proof in Theorem \ref{thm compactness}.  For $\mu(\phi)=2$, together with {\bf(F1--4)}, this is used to prove that $\bdd^2=0$; see Theorem \ref{theorem bdd^2=0}.
\end{enumerate}
\end{rmk}

As a matter of terminology, we say that {\it bubbling} cannot happen if there are no singular points.  Indeed, Theorem \ref{thm gromov compactness II} says that at each singular point, at least one `bubble' appears: that is, a holomorphic sphere or a holomorphic (boundary) disk.

\begin{rmk} \label{rmk c1 of sphere}
Note that asking $L$ to be positively monotone implies that for any map $v \colon S^2 \to M$ satisfying $v(\text{north pole}) \in L$, (regarded as map $v \colon (\bD, \bdd \bD) \to (M,L)$ as explained in Remark \ref{rmk minimal Chern}), we have
\begin{equation} \label{eq c1 of sphere2}
2c_1(v_*([S^2]))=\mu_L(v)=c \int_\bD v^*\om.
\end{equation}
 In particular, if $v$ is a non-constant $J$-holomorphic function, then $c_1(v_*([S^2]))$ is positive since $c>0$ and $\int_\bD v^* \om=E(v)>0$.
\end{rmk}

\begin{lemma} \label{lemma monotone constant}  Suppose $(M,\om)$ admits a monotone Lagrangian $L$ with positive constant $c$ as in Definition \ref{deff monotone}, and suppose $N_L=k \in \bN$.  Then $2N_M$ is divisible by $k$.  In particular, $N_M \geq \lceil k \rceil /2$.  Moreover, if $L'$ is another monotone Lagrangian, then $L'$ is monotone with the same positive constant $c$.
\end{lemma}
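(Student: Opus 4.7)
The plan is to reduce both assertions to the identity
\[
\mu_L(v) = 2 c_1(v_*[S^2])
\]
valid for any $v \colon S^2 \to M$ with $v(\text{north pole}) \in L$, together with the analogous identity for $L'$. These are the sphere-versions of the formula in Remark \ref{rmk minimal Chern}, already invoked in the paragraph preceding the lemma; they specialise from the pair-of-Lagrangians setting to the single-Lagrangian setting by taking $L_1 = L_2 = L$ (equivalently, by regarding a sphere with marked north pole as a disk whose boundary collapses to that marked point, with boundary condition the single point of $L$). Connectedness of $M$ then guarantees that any class $\alpha \in H_2^{S}(M,\bZ)$ admits a representative satisfying the base-point condition, in either $L$ or $L'$.

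For the first assertion I would fix an arbitrary $\alpha \in H_2^{S}(M,\bZ)$, pick a representative $v \colon S^2 \to M$ with $v(\text{north pole}) \in L$, and read off
\[
2 c_1(\alpha) \;=\; \mu_L(v) \;\in\; N_L \bZ \;=\; k\bZ.
\]
Varying $\alpha$ shows that $k$ divides every value of $2c_1$ on $H_2^{S}(M,\bZ)$, hence $k \mid 2N_M$. Since $k$ is already an integer, this immediately gives $N_M \geq k/2 = \lceil k \rceil / 2$.

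For the second assertion, assume $L'$ is monotone with some constant $c' > 0$ and apply the two monotonicity relations to a sphere $v \colon S^2 \to M$. Choosing $v(\text{north pole}) \in L$ gives
\[
c \int_{S^2} v^*\om \;=\; \mu_L(v) \;=\; 2 c_1(v_*[S^2]),
\]
while rearranging the representative (using connectedness) so that $v(\text{north pole}) \in L'$ yields
\[
c' \int_{S^2} v^*\om \;=\; \mu_{L'}(v) \;=\; 2 c_1(v_*[S^2]).
\]
Subtracting gives $(c-c')\int_{S^2} v^*\om = 0$ for every sphere class, and so $c = c'$ as soon as $[\om]$ is non-trivial on $H_2^{S}(M,\bZ)$. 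In the degenerate sub-case where $[\om]$ kills every sphere, the first displayed equation also forces $c_1$ to vanish on $H_2^{S}(M,\bZ)$, both monotonicity relations become $0=0$ on spheres, and we are free to declare $L'$ monotone with the common constant $c$ without modifying any of its disk relations.

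The main obstacle is essentially bookkeeping: one has to be a little careful that the Remark \ref{rmk minimal Chern} identity genuinely applies in the single-Lagrangian setting and that the base-point of the sphere can be moved between $L$ and $L'$ using only the connectedness of $M$. Once those two points are checked, the divisibility $k \mid 2 N_M$, the bound on $N_M$, and the equality $c=c'$ all fall out of the same two-line computation.
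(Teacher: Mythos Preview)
Your proof is correct and takes essentially the same approach as the paper, whose entire argument is the single line ``Immediate from Equation~\eqref{eq c1 of sphere2}.'' You have simply unpacked that equation in detail, and you even address the degenerate symplectically-aspherical edge case that the paper glosses over.
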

\begin{proof}
Immediate from Equation \eqref{eq c1 of sphere2}.
\end{proof}

Recall that {\bf (F3)} asks that at least one of the Lagrangians $L_k$ has the property that \linebreak $\iota_{k*}(\pi_1(L_k)) \subseteq \pi_1(M)$ is torsion, where $\iota_{k*}$ is the induced map on $\pi_1$ arising from the inclusion $\iota_k \colon L_k \hookrightarrow M$.  Equivalently, if $\om \colon S^1 \to L_k$ is any loop, then there exists $m \in \bN$ such that the iterated loop $\ga^m \colon S^1 \to L_k$ bounds a disk in $M$ (not in $L_k$!).

\begin{lemma} \label{lemma energy bound}
Assume {\bf (F1--3)} hold.  Fix $k\in \bZ$, and two points $\hx,\hy \in L_1 \cap L_2$.    Then there exists a constant $C_{k}>0$ such that for any $\phi \in \pi_2(\hx,\hy)$ with $\mu(\phi)=k$ we have
\[
\mM_J(\hx,\hy; \phi) \neq \emptyset \implies \mA_\om(\phi)=C_k.
\]
\end{lemma}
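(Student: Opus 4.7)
The plan is to reduce the statement to showing that for any $\theta \in \pi_2(\hx, \hx)$, $\mu(\theta) = 0$ implies $\mA_\om(\theta) = 0$, and then to establish this by capping off the two boundary loops of a representative of $\theta$ (using assumption {\bf (F3)}) to form a sphere, where monotonicity applies.

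First, since $\pi_2(\hx, \hx)$ acts freely and transitively on $\pi_2(\hx, \hy)$, any two classes $\phi, \phi' \in \pi_2(\hx, \hy)$ with $\mu(\phi) = \mu(\phi') = k$ differ by a unique $\theta \in \pi_2(\hx, \hx)$ via $\phi = \theta \sharp \phi'$. Additivity of $\mu$ (Equation \eqref{additivity of Maslov}) and $\mA_\om$ under $\sharp$ then reduces the lemma to showing that $\mu(\theta) = 0$ implies $\mA_\om(\theta) = 0$ for any $\theta \in \pi_2(\hx, \hx)$.

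Next, I would represent $\theta$ by $f \colon [0,1] \times [0,1] \to M$ and view it as a cylinder map $f \colon [0,1] \times S^1 \to M$. By the identity from the proof of Lemma \ref{two homotopy classes modulo}, $\mu(\theta) = \mu_{L_1, L_2}(f)$, which is therefore $0$. The two boundary circles $\gamma_1 := f|_{\{0\} \times S^1} \subset L_1$ and $\gamma_2 := f|_{\{1\} \times S^1} \subset L_2$ are freely homotopic in $M$ via $f$, so $[\gamma_1] = [\gamma_2] \in \pi_1(M)$. Invoking {\bf (F3)}, I may assume $\iota_{1*}(\pi_1(L_1))$ is torsion, so some $m \in \bN$ satisfies $[\gamma_1]^m = 1$ and hence also $[\gamma_2]^m = 1$ in $\pi_1(M)$. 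I would then choose disks $D_i \colon (\bD, \bdd \bD) \to (M, L_i)$ with $D_i|_{\bdd \bD}$ parametrising $\gamma_i^m$, and cap off the $m$-fold iterate $f^m$ along $\{0\} \times S^1$ and $\{1\} \times S^1$ with $D_1$ and $D_2$, respectively, to obtain a sphere $\Sigma \colon S^2 \to M$.

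The heart of the argument is to compare two expressions for $2 c_1(\Sigma_*[S^2])$. On one hand, additivity of the Maslov index of bundle pairs under gluing, together with the sphere case of Equation \eqref{eq division by two}, yields
\[
2 c_1(\Sigma_*[S^2]) = \mu_{L_1, L_2}(f^m) + \mu_{L_1}(D_1) + \mu_{L_2}(D_2) = 0 + c \mA_\om(D_1) + c \mA_\om(D_2),
\]
using $\mu_{L_1,L_2}(f^m) = m \mu_{L_1,L_2}(f) = 0$ and monotonicity of $L_1, L_2$ with common constant $c$ (from {\bf (F2)} and the discussion following Definition \ref{deff monotone}). On the other hand, monotonicity of $M$ (inherited from that of the Lagrangians) gives
\[
2 c_1(\Sigma_*[S^2]) = c \mA_\om(\Sigma) = c\left( m \mA_\om(\theta) + \mA_\om(D_1) + \mA_\om(D_2) \right).
\]
Equating the two and cancelling the common disk contributions forces $c m \mA_\om(\theta) = 0$, and hence $\mA_\om(\theta) = 0$. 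Positivity of $C_k$ when $\mM_J(\hx,\hy;\phi) \neq \emptyset$ and $\hx \neq \hy$ then follows from Corollary \ref{cor energy bound}.

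The main obstacle I anticipate is justifying the additivity of the Maslov index under the gluing used above: one must check that symplectic trivialisations of $f^{m*}TM$, $D_1^* TM$, and $D_2^* TM$ can be chosen to agree along the shared boundaries where the Lagrangian subbundles coincide, so that the windings of the Lagrangian loops add correctly, and that the closed-up bundle pair on $S^2$ indeed has Maslov index equal to $2c_1$. These are standard consequences of the definitions in Section \ref{subsection maslov} but require care.
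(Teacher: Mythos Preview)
Your argument is correct and follows the same strategy as the paper: reduce to showing $\mu(\theta)=0 \Rightarrow \mA_\om(\theta)=0$ for $\theta \in \pi_2(\hx,\hx)$, use {\bf (F3)} to cap off a boundary loop, and exploit monotonicity together with additivity of the Maslov index and the $\om$-area. The only cosmetic difference is that the paper caps off just the $L_1$-side of $v_1 \# v_2^{-1}$ to obtain a single disk with boundary in $L_2$ and then applies monotonicity of $L_2$, whereas you cap both ends to form a sphere and invoke monotonicity of $M$; this is a minor packaging variation rather than a genuinely different route.
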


\begin{proof}
Without loss of generality let us suppose that $\iota_{1*}(\pi_1(L_1))$ is torsion.  Suppose $v_1, v_2 \in \mM(\hx,\hy)$ satisfy $\mu(v_1)=\mu(v_2)$.  Consider $\ga \colon S^1 \to L_2$ defined by $\ga:=v_1|_{\bdd^- \bD} * v_2^{-1}|_{\bdd^- \bD}$.  There exists $m \in \bN$ such that $\ga^m$ bounds a disk $w \colon (\bD,\bdd \bD) \to (M,L_1)$.  Now we stitch the (trivial) $m$-fold covers of $v_1$ and $v_2$ together with $w$ to get another disk $f \colon (\bD, \bdd \bD) \to (M,L_2)$ with boundary in $L_2$.  Note that 
\begin{equation*}
\begin{split}
\int_\bD f^* \om &= m \int_\bD v_1^* \om - m \int_\bD v_2^*\om+\int_\bD w^*\om \\
&= mE(v_1) - mE(v_2) + \int_\bD w^*\om,
\end{split}
\end{equation*}
where the second equality comes from Equation \eqref{eq energy}.  

By monotonicity of $L_2$ we have that 
\[
\mu_{L_2}(f)=c \int_\bD f^* \om=c\cdot m \left( E(v_1)-E(v_2) \right) + c \int_\bD w^* \om.
\]
On the other hand, by additivity of the Maslov index from Equation \eqref{additivity of Maslov} we can also write
\begin{equation*}
\begin{split}
\mu_{L_2}(f) &= m \mu _{L_1,L_2}(v_1) - m \mu_{L_1,L_2}(v_2) + \mu_{L_1}(w)\\
&= m \mu(\phi) - m \mu(\phi) + \mu_{L_1}(w) \\
&=c \int_\bD w^* \om,
\end{split}
\end{equation*}
where in the last equation we use Lemma \ref{lemma monotone constant} to say that the monotonicity constant $c$ is the same for both $L_1$ and $L_2$.  
Finally, combining the two expressions for $\mu_{L_1}(f)$ we find that 
\[
c\cdot m\left( E(v_1) - E(v_2)\right)=0.
\]
Since $m \neq0$ and $c \neq 0$, it follows that $E(v_1)=E(v_2)$. Hence if $v_1 \in \phi_1$ and $v_2 \in \phi_2$, then $\mA_\om(\phi_1)=\mA_\om(\phi_2)$.  

\end{proof}

\begin{cor} \label{for finitely many classes}
Fix $\hx,\hy \in L_1 \cap L_2$ and $k \in \bZ$.  There are at most finitely many classes $\phi \in \pi_2(\hx,\hy)$ such that $\mu(\phi)=k$ and such that $\mM(\hx,\hy;\phi)$ is non-empty. 
\end{cor}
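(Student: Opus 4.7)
The plan is to combine the energy-rigidity statement of Lemma \ref{lemma energy bound} with the finiteness clause of Gromov Compactness I (Theorem \ref{thm gromov compactness I}). In more detail, Lemma \ref{lemma energy bound} tells us that for the fixed integer $k$, there is a single constant $C_k > 0$ such that every class $\phi \in \pi_2(\hx,\hy)$ with $\mu(\phi) = k$ and $\mM_J(\hx,\hy;\phi) \neq \emptyset$ has the same $\om$-area $\mA_\om(\phi) = C_k$. In particular, every such class satisfies a uniform upper bound on $\om$-area.

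Now take any $J$-holomorphic representative $u \in \mM_J(\hx,\hy;\phi)$ of such a class. By Corollary \ref{cor energy bound}, we have
\[
E_J(u) = \mA_\om(\phi) = C_k,
\]
so every representative $u$ lies in the space $\mM^{\leq C_k}_J(\hx,\hy)$ of flow lines of energy at most $C_k$. Theorem \ref{thm gromov compactness I} asserts precisely that there are only finitely many classes $\phi \in \pi_2(\hx,\hy)$ with $\mA_\om(\phi) \leq C_k$ and $\mM_J(\hx,\hy;\phi) \neq \emptyset$. Since our collection of $\phi$-s with $\mu(\phi) = k$ and non-empty moduli space is contained in this finite set, the corollary follows.

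In this argument there is essentially no obstacle to overcome: all the hard analysis has already been packaged into Lemma \ref{lemma energy bound} (which encodes monotonicity and the torsion hypothesis \textbf{(F3)}) and into the Gromov compactness statement. The only small conceptual point to flag is that Lemma \ref{lemma energy bound} \emph{requires} $\mu(\phi)$ to be fixed in order for the areas to coincide; different values of $k$ give different constants $C_k$, so the finiteness is obtained one Maslov level at a time, exactly as stated.
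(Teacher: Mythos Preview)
Your proof is correct and follows exactly the paper's approach, which simply cites Theorem \ref{thm gromov compactness I} and Lemma \ref{lemma energy bound}; you have just spelled out how these two ingredients combine. The detour through Corollary \ref{cor energy bound} is harmless but unnecessary, since the ``in particular'' clause of Theorem \ref{thm gromov compactness I} is already phrased in terms of the $\om$-area $\mA_\om(\phi)$ rather than the energy $E_J(u)$.
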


\begin{proof}
Theorem \ref{thm gromov compactness I} and Lemma \ref{lemma energy bound}.
\end{proof}

\begin{rmk}
Note that the proof of Lemma \ref{lemma energy bound} only needs that the monotonicity constant was non-zero.  Thus the proof would still go through if $c$ was negative.  Nevertheless. we need to assume that $c>0$ in order for Remark \ref{rmk c1 of sphere} to be valid. 
\end{rmk}

We can now finally prove Claim \ref{claim compactness}.

\begin{thm} \label{thm compactness}
Assume that {\bf (F1--3)} hold.  Then the manifold
\[
\bigcup_{\{\phi \in \pi_2(\hx,\hy) | \mu(\phi)=1\}} \wM(\hx,\hy;\phi)
\]
 is compact.
\end{thm}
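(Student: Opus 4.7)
The plan is to reduce to one homotopy class via Corollary \ref{for finitely many classes}, extract a broken--strip Gromov limit for a sequence in $\wM(\hx,\hy;\phi)$, and then force via a Maslov--index budget argument (using monotonicity {\bf(F2)}) that the only possible limit is a single non--trivial strip in $\mM(\hx,\hy;\phi)$.

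\smallskip

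\noindent
First, by Corollary \ref{for finitely many classes} there are only finitely many $\phi \in \pi_2(\hx,\hy)$ with $\mu(\phi)=1$ and $\mM(\hx,\hy;\phi) \neq \emptyset$, so a finite union of compact spaces reduces the problem to showing that $\wM(\hx,\hy;\phi)$ is compact for one fixed $\phi$ with $\mu(\phi)=1$. Take $[u_n] \in \wM(\hx,\hy;\phi)$ and lift to representatives $u_n \in \mM(\hx,\hy;\phi)$. By Lemma \ref{lemma energy bound} all the $u_n$ have the common energy $\mA_\om(\phi)$, so the hypothesis of Theorem \ref{thm gromov compactness I} is satisfied. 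Iterate Theorem \ref{thm gromov compactness II} by varying the reparametrization centres $s_n^{(j)}$ so as to track every ``window'' where energy can concentrate; this produces a broken--strip limit, i.e. a chain of $J$--holomorphic strips $w_i \in \mM_J(\hz_{i-1},\hz_i;\psi_i)$ with $i=1,\dots,\ell$ and $\hz_0=\hx$, $\hz_\ell=\hy$, together with a finite collection of sphere bubbles at interior singular points and disk bubbles at boundary singular points.

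\smallskip

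\noindent
Next, combining the gluing/additivity rule \eqref{additivity of Maslov} for the Maslov index of concatenated strips with the bubble--accounting inequality \eqref{equation on maslov indices} applied at each concentration point yields the total balance
\[
\sum_{i=1}^\ell \mu(\psi_i) \;+\; 2\!\!\sum_{z \in \De^{\Int}} \!\! c_1(v_{z*}[S^2]) \;+\!\! \sum_{z \in \De^1} \!\! \mu_{L_1}(v_z) \;+\!\! \sum_{z \in \De^2} \!\! \mu_{L_2}(v_z) \;=\; \mu(\phi) \;=\; 1.
\]
Monotonicity {\bf(F2)} together with Remark \ref{rmk c1 of sphere} and Lemma \ref{lemma monotone constant} then gives three strict positivity statements: each non--constant $J$--holomorphic sphere bubble contributes $2c_1 \geq 2N_M \geq 2$; each non--constant disk bubble on $L_k$ contributes $\mu_{L_k} \geq N_{L_k} \geq 1$; and each non--trivial strip piece ($\hz_{i-1}\neq \hz_i$) contributes $\mu(\psi_i) \geq 1$ by Theorem \ref{thm gromov compactness II}(iii). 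Since the total equals $1$, exactly one such contribution can occur.

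\smallskip

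\noindent
The ``one disk bubble, all strips constant'' scenario would force $\hx = \hz_0 = \hz_1 = \cdots = \hz_\ell = \hy$, contradicting the standing assumption $\hx \neq \hy$ implicit in the definition of $\wM$. Hence the limit must be a single non--trivial strip $w_1$ with $\mu(\psi_1)=1$ and no bubbles, and matching boundary data forces $\psi_1=\phi$. With $\De=\emptyset$ the energy equality in Theorem \ref{thm gromov compactness II}(iv) upgrades the $C^\infty_{\mathrm{loc}}$--convergence to $C^\infty$--convergence on all of $S$ modulo the $\bR$--action, so $[u_n]\to [w_1]$ in $\wM(\hx,\hy;\phi)$, proving compactness.

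\smallskip

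\noindent
\textbf{Main obstacle.} The delicate point is the iteration of Theorem \ref{thm gromov compactness II} needed to assemble the \emph{full} broken trajectory from the sequence (the theorem as stated captures only one window at a time), together with the verification that the inequality \eqref{equation on maslov indices} becomes an equality once summed over all pieces. Notably {\bf(F4)} is not invoked: it is the topological constraint $\hx \neq \hy$, rather than a lower bound on $N_{L_k}$, that rules out the stray disk--bubble scenario.
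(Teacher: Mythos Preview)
Your argument follows the same core idea as the paper --- a Maslov--index budget combined with monotonicity to rule out bubbling --- but you over--engineer it. The paper applies Theorem~\ref{thm gromov compactness II} \emph{once}, to an arbitrary reparametrisation sequence, obtaining a single limit strip $w\in\mM(\hx',\hy';\psi)$ and a bubble set $\De$. It then uses part~(v) to bound the bubble contributions (forcing $\De=\emptyset$ since $\mu(\phi)=1$), part~(iv) to conclude $\hx'\neq\hy'$ from $\De=\emptyset$, and part~(iii) to pin down $\hx'=\hx$, $\hy'=\hy$ (since otherwise $\mu(\psi)<\mu(\phi)$). No iteration, no assembly of a full broken trajectory, and only the \emph{inequality} \eqref{equation on maslov indices} is needed.

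Your write--up, by contrast, asserts the Maslov balance as an \emph{equality}, which Theorem~\ref{thm gromov compactness II} does not give you (and you yourself flag this). The equality is not needed: the inequality already forces at most one positive contribution, and the constraint $\hx\neq\hy$ forces at least one non--constant strip. More importantly, the iteration step (``track every window where energy concentrates'') is exactly what part~(iii) of Theorem~\ref{thm gromov compactness II} is designed to let you avoid at this index level: the asymptote matching is built into the statement, so you can work with a single window. Your route is not wrong, but it imports the full Floer--gluing/broken--trajectory picture where a one--shot application suffices; the paper's argument is both shorter and stays strictly within what has been stated.
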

\begin{proof}
Let $(u_n)_{n \in \bN}$ be any sequence in $\mM(\hx,\hy; \phi)$ and $(s_n)_{n \in \bN}$ be any reparametrisation sequence.   Then we must show that there exists a subsequence $(n(i))_{i \in \bN}$ and $u \in \mM(\hx,\hy;\phi)$ such that if $w_n(s,t):=u_n(s+s_n,t)$, then $w_{n(i)} \to u$ on every compact subset of $S$.  

We apply Theorem \ref{thm gromov compactness II}.  This gives us the desired sequence $(n(i))_{i \in \bN}$.  Suppose $w_{n(i)} \to u$ on $S - \De$, with $u \in\mM(\hx',\hy')$.  We need to show that $\De =\emptyset$ and that $\hx=\hx'$, and $\hy=\hy'$. 

Firstly, note that if $u$ a holomorphic map in $\psi \in \pi_2(\hx',\hy')$, then we have $\mu(\psi)\geq 0$.

Secondly, if $\De \neq \emptyset$, then $\De^{\Int} \neq \emptyset$ or $\De^i \neq \emptyset$ for $i=1,2$.  If $z \in \De^{\Int}$, then by Remark \ref{rmk c1 of sphere}, (or if $\De^i \neq \emptyset$ by Remark \eqref{rmk c1 of sphere},) we have that the left-hand side of Equation \eqref{equation on maslov indices} is at least 2.  Therefore, $\De=\emptyset$.  

Now, since $\De =\emptyset$, by Theorem \ref{thm gromov compactness II} we have $\hx' \neq \hy'$, and hence $\mu(\psi) \leq 1$.  Thus $\mu (\psi)=\mu(\phi)$, so we have $\hx'=\hx$ and $\hy'=\hy$.   Therefore, $u \in \mM(\hx,\hy;\phi)$.  
\end{proof}

The next result is the only place where we use {\bf (F4)}, and is half of the proof of Claim \ref{claim boundary}.

\begin {thm} \label{theorem bdd^2=0}
Assume that {\bf (F1--4)} holds.  Fix $\hx,\hy \in L_1 \cap L_2$, and fix $\phi \in \pi_2(\hx,\hy)$ with $\mu(\phi)=2$.  Fix $(u_n)_{n \in \bN} \subset \mM(\hx,\hy; \phi)$, and suppose that $(u_n)$ has no subsequence converging to an element of $\mM(\hx,\hy;\phi)$.  Then there exists $\hz \in L_1 \cap L_2$, and $\phi^- \in \pi_2(\hx,\hz)$ and $\phi^+ \in \pi_2(\hz,\hy)$ such that $\mu(\phi^\pm)=1$ and such that $\phi=\phi^- \sharp \phi^+$ have the following property. For any two sequences 
\[
(s_n^\pm)_{n \in \bN} \subset \bR \hspace{0.5cm} \text{with} \hspace{0.5cm} s_n^\pm \to \pm \infty,
\]
if $w_n^\pm(s,t):=u_n(s+s_n^\pm,t)$, then 
\begin{enumerate}
\item a subsequence of the $w_n^-$ converges to some $w^- \in \mM(\hx,\hz; \phi^-)$, and
\item a subsequence of the $w_n^+$ converges to some $w^+ \in \mM(\hz,\hy;\phi^+)$.
\end{enumerate}
\end{thm}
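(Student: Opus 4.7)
The strategy is the standard breaking argument for index-$2$ Floer trajectories, combining the no-bubbling consequences of conditions \textbf{(F2--4)} with an analysis of where the energy of the $u_n$ concentrates along the strip.

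First I would rule out bubbling. Applying Theorem \ref{thm gromov compactness II} to any reparametrized sequence $w_n(s,t) := u_n(s+s_n,t)$ (including $s_n \equiv 0$), condition \textbf{(F4)} gives $\mu_{L_k}(v_z) \geq N_{L_k} \geq 3$ for each disk bubble, and Remark \ref{rmk c1 of sphere} together with Lemma \ref{lemma monotone constant} gives $2c_1(v_{z*}[S^2]) \geq 2N_M \geq N_{L_k} \geq 3$ for each sphere bubble. In particular every bubble contribution on the left-hand side of \eqref{equation on maslov indices} is at least $3$, while the right-hand side equals $\mu(\phi)=2$. Hence $\Delta = \emptyset$ for every reparametrization of $(u_n)$.

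Next I would locate the break point $\hz$. With no bubbling the unreparametrized sequence $(u_n)$ admits a $C^\infty_{\mathrm{loc}}$ limit $w_0 \in \mM(\hx_0',\hy_0';\psi_0)$ after passing to a subsequence. The no-convergence hypothesis, together with continuity of the asymptotes, forces $(\hx_0',\hy_0') \neq (\hx,\hy)$, so genuine escape of trajectory to one or both ends of the strip occurs. To extract this escaping data, I would fix small pairwise disjoint open neighbourhoods in $M$ around each point of $L_1 \cap L_2$ and exploit the exponential decay of finite-energy pseudo-holomorphic strips at their asymptotes, a consequence of the weighted Sobolev setup in Section \ref{subsection sobolev}. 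Combined with a pigeonhole argument on the finite set $L_1 \cap L_2$, this produces a single intermediate point $\hz \in L_1 \cap L_2$ and intervals $[a_n,b_n] \subset \bR$ with $b_n - a_n \to \infty$ such that, after passing to a further subsequence, $u_n([a_n,b_n] \times [0,1]) \subset U_\hz$ (where $U_\hz$ denotes the chosen ball around $\hz$) while the portions of $u_n$ to the left of $a_n$ and to the right of $b_n$ remain asymptotic to $\hx$ and $\hy$, respectively. Setting $s_n^- := a_n$ and $s_n^+ := b_n$ and applying Theorem \ref{thm gromov compactness II} once more to $w_n^\pm$, I obtain subsequential $C^\infty_{\mathrm{loc}}$ limits $w^- \in \mM(\hx,\hz;\phi^-)$ and $w^+ \in \mM(\hz,\hy;\phi^+)$ for some classes $\phi^\pm$.

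Finally, since $u_n$ is, up to the shrinking middle piece inside $U_\hz$, the concatenation of $w_n^-$ and $w_n^+$, in the homotopy limit one has $\phi = \phi^- \sharp \phi^+$. Additivity of the Maslov index (equation \eqref{additivity of Maslov}) then yields $\mu(\phi^-) + \mu(\phi^+) = \mu(\phi) = 2$, and since $\hx \neq \hz \neq \hy$, clause (iii) of Theorem \ref{thm gromov compactness II} forces $\mu(\phi^\pm) \geq 1$, hence both equal $1$. The main obstacle is the second paragraph: rigorously producing the break point $\hz$ and the sequences $s_n^\pm$ requires combining the no-bubbling estimates with exponential-decay estimates at the Lagrangian asymptotes and the finiteness of $L_1 \cap L_2$, in order to guarantee that the same intermediate $\hz$ is visited for arbitrarily long $s$-intervals by a subsequence of $(u_n)$.
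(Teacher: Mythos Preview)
Your proposal is correct and follows the same approach as the paper: apply Theorem~\ref{thm gromov compactness II} to reparametrized sequences and use the Maslov-index inequality \eqref{equation on maslov indices} together with \textbf{(F4)} and Lemma~\ref{lemma monotone constant} to rule out bubbling when $\mu(\phi)=2$. The paper's own proof is in fact briefer than yours---it only spells out the no-bubbling estimate and leaves the extraction of the break point $\hz$ and the verification that $\mu(\phi^\pm)=1$ implicit---so your second and third paragraphs supply details the paper omits rather than constituting a different route.
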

\begin{proof}
Let $(s_n^+)$ denote a sequence converging to $+\infty$ and set $w_n^+(s,t):=u_n(s+s_n^+,t)$.  By Theorem \ref{thm gromov compactness II}, up to a subsequence, we may assume that $w_n \to w$ on $S - \De$ for some $w \in \mM(\hx,\hy')$ and some finite subset $\De$.  We show that $\hx'=\hy$ and that $\De=\emptyset$. Since $\mu(\phi)=2$, the right-hand side of Equation \ref{equation on maslov indices} is 2.  The assumption {\bf (F4)} implies that if $\De \neq \emptyset$, then the left-hand side is at least 3.  Indeed,  {\bf (F4)} implies that $\De^1=\De^2=\emptyset$, and from Lemma \ref{lemma monotone constant} we have that $N_M \geq 2$, and hence if $\De^{\text{int}} \neq \emptyset$, then the left-hand side would be at least $4$.
\end{proof}

\begin{figure}[h]
\centering
\includegraphics [scale=0.50]{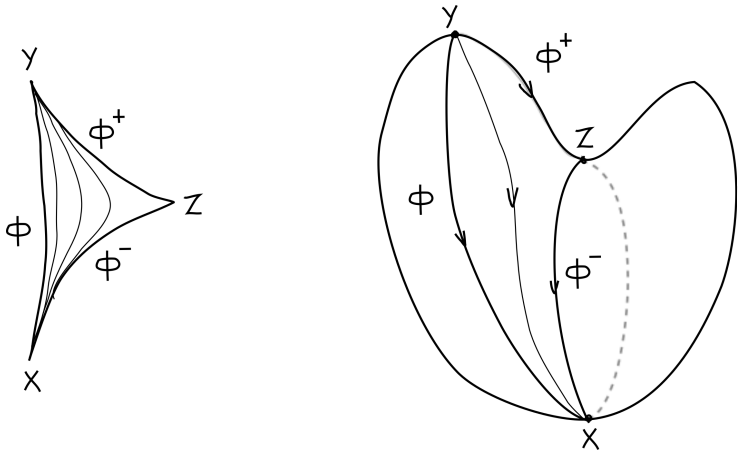}
\caption{Illustrations of broken trajectories: abstract (left), concrete (Morse homology) trajectories on a sphere (right).}
\label{fig broken trajectories}
\end{figure}

Thus we have shown that if $\wM(\hx,\hy;\phi)$ is $1$-dimensional, then it can be compactified into a manifold $\overline{\wM(\hx,\hy;\phi)}$ in such a way that 
\begin{equation}
\bdd \overline{\wM(\hx,\hy;\phi)} \subset \bigcup_{z \in L_1 \cap L_2} \bigcup_{(\phi^-,\phi^+)} \wM(\hx,\hz;\phi^-) \times \wM(\hz, \hy; \phi^+),
\end{equation}
where the second union is over all pairs $(\phi^-,\phi^+) \in \pi_2(\hx,\hz) \times \pi_2(\hz,\hy)$ such that $\mu(\phi^\pm)=1$.  In order to prove the converse, we need to know that given an element of the right-hand side we can obtain it as a limit of curves in $\wM(\hx,\hy;\phi)$.  This is the content of the {\it Floer gluing theorem}, which is in some sense a converse to Gromov compactness.  However, we do not discuss gluing, as the argument is analytically rather technical and so goes beyond the scope of this article.

\section{Oszv\'ath and Szab\'o's Heegaard Floer homology} \label{section HF}  

In this section we explain the construction behind the `hat' flavour for {\it Heegaard Floer homology} $\hfhat$  building on the theory developed in the previous section.  To begin with, fix a connected, closed, oriented 3-manifold $Y$, and let $(\Si,\hal,\hbe,z)$ be a {\it pointed} Heegaard diagram of $Y$.  We define precisely such diagrams in Section \ref{subsection heegaard diagrams}, but for now it suffices to know that $\Si$ is a genus $g$ surface, $\hal,\hbe$ are appropriate $g$-tuples of curves on $\Si$, and the points $z \in \Si$ lies in the complement of those curves.  The manifold $Y$ is easily reconstructed from the Heegaard diagram, by thickening the surface to $\Si \times I$, and then by gluing $2$-handles along the $\al$-curves on $\Si \times \{0\}$ and along the $\be$-curves on $\Si \times \{1\}$.

Our aim is to explain how $\hfhat(Y)$ can be viewed as a special case of the Lagrangian Floer homology constructed in Section \ref{section Lag FH}.  Specifically, we associate to $Y$, via the diagram $(\Si,\hal,\hbe,z)$, a symplectic manifold $(M,\om)$ and two Lagrangian submanifolds $\bT_\al,\bT_\be \subset M$.  Just briefly: $M$ is the symmetric product of $g$ copies of $\Si$, and $\bT_\al$ is the direct product of all the $\al$ curves (similarly for $\bT_\be$).  Then we define 
\[
\hfhat(Y):=HF^0(M,\bT_\al,\bT_\be).
\]
The `$0$' refers to the fact that we only count homotopy classes $\phi$ satisfying a particular intersection number requirement $n_z(\phi)=0$; we explain this shortly (or see Definition \ref{deff nz}).  

\begin{figure}[h]
\hspace{1cm}
\xymatrix{
& \text{closed, oriented 3-manifold } Y \ar[d] \\
&  \text{Heegaard diagram } (\Si,\hal,\hbe)  \ar@/_/[dl] \ar@/^/[dr]
   \ar@{.>}[dd]|-{}            \\
 M:=\Sym^g \Si \ar@/_/[dr]  &  & L_1:=\bT_\al, L_2:=\bT_\be \ar@/^/[dl]     \\
     & HF(M,\bT_\al,\bT_\be) \ar[d]^{\text{invariance of choices}}          \\
		& \hfhat(Y)
	     }
\caption{Construction of Heegaard Floer homology.}
\label{fig construction of HF}
\end{figure}
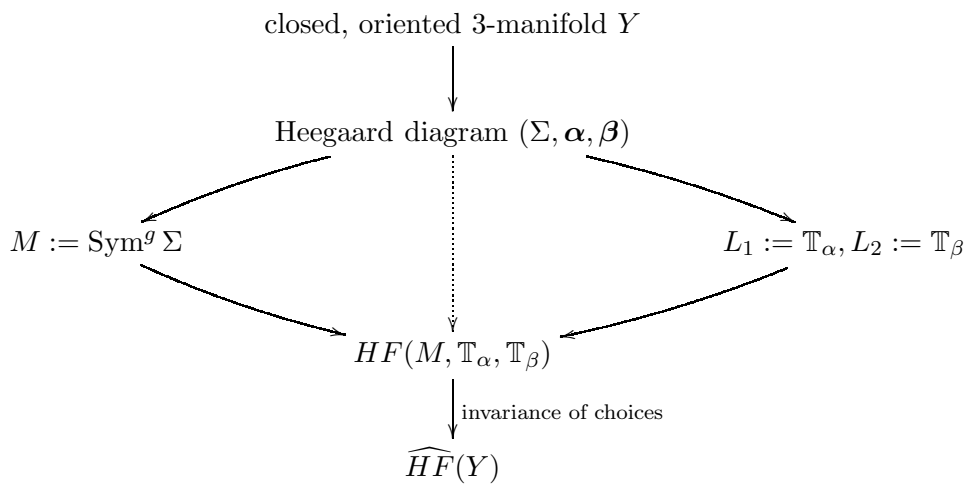

A very important aspect of Heegaard Floer homology is the decomposition of $\hfhat(Y)$ along $\Spin^c$ structures.  In this first instance, think of $\Spin^c$ structures as homotopy classes of nonsingular vector fields on $Y$ (Section \ref{subsection spinc}).  The $\Spin^c$ structure $\fs$ and the point $z \in \Si$ determine a subset
\[
Z(\fs,z) \subset \bT_\al \cap \bT_\be.
\]
This gives rise to a smaller chain complex $CF^\fs(M, \bT_\al,\bT_\be) \subset CF(M, \bT_\al,\bT_\be)$ defined to be
\[
CF^\fs(M, \bT_\al,\bT_\be):=\bigoplus_{x \in Z(\fs,z)} \bZ_2 \langle x \rangle.
\]
In fact, $CF^\fs(M, \bT_\al,\bT_\be)$ is preserved by $\bdd$, since $\pi_2(\hx,\hy) = \emptyset$ unless $\hx$ and $\hy$ both belong to same subset $Z(\fs,z)$(see Corollary \ref{cor same subset}).  Recall that $\pi_2(\hx,\hy)$ is the set of homotopy classes of disks connecting $\hx$ and $\hy$; see Definition \ref{deff of pi2}.  In the Heegaard Floer setting, the disks connecting $\hx$ and $\hy$ are also called {\it Whitney disks}. In any case, it makes sense to define $HF^{\fs,0}(M,\bT_\al,\bT_\be)$.  The complexes are independent of the choice of base point $z$, Heegaard diagram and any other variables that occur in the construction, so we are justified in writing
\[
\hfhat(Y,\fs):=HF^{\fs,0}(M,\bT_\al,\bT_\be).
\]
Then 
\[
\hfhat(Y)=\bigoplus_{\fs \in \Spin^c(Y)}\hfhat(Y,\fs).
\]
See Figure \ref{fig construction of HF} for a schematic summary of what we have said so far.

In order for $HF(M,\bT_\al,\bT_\be)$ to be well-defined, as explained in Section \ref{section Lag FH}, we need additional conditions on the Lagrangians $\bT_\al$ and $\bT_\be$, referred to as Conditions {\bf (F1--4)}.  We may assume that {\bf (F1)} holds since in our Heegaard diagram $(\Si,\hal,\hbe)$ each $\al$-curve is transverse to each $\be$-curve, which implies the transversality of $\bT_\al$ and $\bT_\be$.  From Remark \ref{rmk two things} we know that Condition {\bf (F1)} is sufficient to prove that the moduli spaces $\mM(\hx,\hy;\phi)$ are all finite-dimensional manifolds (of dimension $\mu(\phi)$), but that we also need to know two more things for the Floer homology $HF(\bT_\la,\bT_\be)$ to be well-defined.  (We drop the notation for $M$, as it is understood.)   For the convenience of the reader we repeat these two items here, slightly modified to fit the notation of this section.

\begin{enumerate}
\item There are at most finitely many classes $\phi \in \pi_2(\hx,\hy)$ such that $\mu(\phi)=1$ and such that $\mM(\hx,\hy;\phi)$ is non-empty.

\item If $\mu(\phi)=1$ or $\mu(\phi)=2$ and $(u_n)_{n \in \bN} \subseteq \mM(\hx,\hy;\phi)$, then there are no singular points (that is, no bubbling).
\end{enumerate}

Previously we used Conditions {\bf (F2--4)} to prove these  assertions.  Sadly, these conditions simply do not hold in the current setup, and as stated (i) and (ii) above are not true.  However, all is not lost.  A choice of a point $z \in \Si$ determines a certain codimension-2 submanifold $V_z \subset M$, which can be used to restrict the homotopy classes of pseudo-holomorphic curves that we count.  Specifically, given $\hx,\hy \in \bT_\al \cap \bT_\be$ and $\phi \in \pi_2(\hx,\hy)$, there is a well-defined notion of {\it algebraic intersection number} $n_z(\phi)$ of $\phi$ with $V_z$.  Define
\[
\pi_2^0(\hx,\hy):=\{\phi \in \pi_2(\hx,\hy) \mid n_z(\phi)=0\}.
\]  
We show that if we restrict to elements of $\pi_2^0(\hx,\hy)$, then both conditions (i) and (ii) stated above are true.  Most of the work goes into proving (i); the proof of (ii) is much simpler than the corresponding proof in Section \ref{section Lag FH}.  Namely, we see that there are {\it no} non-constant holomorphic spheres or disks with boundary in $\bT_\al$ (respectively, $\bT_\be$) that satisfy the condition $n_z=0$.  Thus, bubbling is automatically excluded (see Lemma \ref{lemma bubbling} and paragraph preceding it).

Before going any further we make two remarks concerning our treatment of Heegaard Floer homology.

\begin{rmk}
In this article we consider only the `hat' flavour of Heegaard Floer homology, and thus we always impose the condition $n_z=0$.  Dropping this condition leads to the more complicated construction of the `infinity' flavour $\hfi(Y)$ of Heegaard Floer homology, which does not occur in sutured Floer homology so we do not discuss it here other than in the following few sentences.  

Conditions (i) and (ii) above are not true without the assumption $n_z=0$.  To overcome the lack of finiteness, Ozsv\'ath and Szab\'o instead use a {\it Novikov ring of coefficients} to record the area $\mA_\om(\phi)$ of $\phi$.  The point is that while there may be infinitely many classes $\phi$ with $\mu(\phi)=1$ and $\mM(\hx,\hy;\phi) \neq \emptyset$, for any constant $E>0$ there are only finitely many classes with $\mA_\om(\phi)<E$.  Secondly, as far as Condition (ii) is concerned, it can be shown that whilst disk bubbles may appear when $\mu(\phi)=2$, there are always an {\it even} number of them (\cite[Thm.\,3.15]{OSmain}), which means that they do not affect the boundary operator.
\end{rmk}

\begin{rmk}
In Ozsv\'ath and Szab\'o's original construction of Heegaard Floer homology, they never equipped the manifold $M$ (defined precisely in Section \ref{subsection symmetric}) with a symplectic form.  This may seem odd to the reader, but note that the symplectic form $\om$ does not enter the $J$-holomorphic curve equation, and thus it is conceivable that it maybe done away with.  Actually, the symplectic form (and the fact that the submanifolds $\bT_\al$ and $\bT_\be$ are Lagrangians) are really only used to provide an energy bound on Floer trajectories (see Corollary \ref{cor energy bound}).

Whilst $M$ certainly does carry symplectic forms, there is in some sense no `natural' symplectic form on $M$ (see Section \ref{subsection symmetric} for more details).  Thus, instead Ozsv\'ath and Szab\'o chose to regard $M$ as an {\it orbifold} $N/G$ (that is, the quotient of a manifold $N$ by a finite group $G$), despite the fact that $M$ is actually smooth.  The point is that $N$ {\it has} a `natural' symplectic form, which allowed them to obtain energy bounds by lifting trajectories to $N$ and then applying Corollary \ref{cor energy bound} (this is the content of Lemma 3.5 \cite{OSmain}).

However, Perutz showed that it {\it is} possible to equip $M$ with a  symplectic form whose cohomology class is suitably `natural', which avoids lifting trajectories to $N$ \cite{P}.  At least as far as the construction of Heegaard Floer homology is concerned, this does not gain all that much (but see \cite{P} for nice results that do require this extra theory).  Nonetheless, it seems a better use of the setup of this article to introduce Heegaard Floer homology by making use of Perutz's result, since it allows us to view $\hfhat$ as a special case of Lagrangian Floer homology.
\end{rmk}

The remainder of this section is organised as follows.
\begin{itemize}
\item [\ref{subsection heegaard diagrams}] Definition of pointed Heegaard diagram $(\Si,\hal,\hbe,z)$.
\item [\ref{subsection symmetric}] Construction of symplectic manifold $(M,\om)$ and Lagrangians $\bT_\al,\bT_\be$; Perutz's result.  
\item [\ref{subsection spinc}] $\Spin^c$ structures and subset $Z(\fs,z)$ referred to above.
\item[\ref{subsection closed domains}] Study of domains, definition of $\pi_1^0(\hx,\hy)$, statement (without proof) of deep result that relates $\Spin^c$ structures and Maslov index $\mu(\phi)$ (Theorem \ref{thm deep}), and why  (i) holds (with the restriction to $\pi_2^0(\hx,\hy)$). 
\item [\ref{subsection bubbling}] Why (ii) holds (with restriction to $\pi_2^0(\hx,\hy)$). 
\item [\ref{subsection knot floer homology}] Brief look at knot Floer homology (a similar Floer theory, only for studying knots in 3-manifolds).
\end{itemize}

\subsection{Closed $3$-manifolds and Heegaard diagrams}   \label{subsection heegaard diagrams}

Let $Y$ be a closed oriented 3-manifold.  Then $Y$ admits a {\it Heegaard splitting}, that is, $Y=H_1\cup_{\bdd H_1 \cap H_2} H_2$ where $H_1$ and $H_2$ are two genus $g$ handlebodies, for some $g$.  One way to show this by taking a  triangulation of $Y$ and define $H_1$ to be the closure of a tubular neighbourhood of the 1-skeleton. The complement of $H_1$ in $Y$ is precisely another handlebody of the same genus.  

Another way to see that $Y$ admits a Heegaard splitting, is to consider a self-indexing Morse function $f \colon Y \to \bR$.  Recall: a smooth function $f \colon Y \to \bR$ is a {\it Morse function} if all its critical points are non-degenerate, that is, the Hessian of $f$ at a critical point $x$ is nonsingular.  The {\it index} of $x$ is the dimension of the maximal negative-definite subspace of the Hessian at $x$.  The Morse function $f$ is called {\it self-indexing} if for each critical point $x \in Y$ we have $f(x)=\ind (x)$.  See Section \ref{subsection Morse theory} for more notation and terminology of Morse theory. 

In fact, there is always a self-indexing Morse function $f$ on $Y$ with exactly one critical point $p_{min}$ of index 0, exactly one critical point $p_{max}$ of index 3, and $g$ critical points of index 1, and $g$ critical points of index 2, for some $g \in \bN$ (see \cite[Thm.\,4.8]{Milnor}).  If we take such a Morse function $f$, then we obtain a Heegaard splitting
\[
H_1:=f^{-1}([0,3/2]), \hspace{1cm} H_2:=f^{-1}([3/2,3]), \hspace{1cm} \Si:=f^{-1}(\{3/2\}).
\]

A Heegaard splitting gives rise to the concept of a {\it Heegaard diagram} $(\Si, \hal, \hbe)$ for $Y$. Here $\hal$ is a collection of pairwise disjoint, linearly independent curves  $\{\al_1, \ldots, \al_g\}$ on $\bdd H_1$ such that if we attach  3-dimensional 2-handles along those curves, we recover $H_1$.  Here $g$ is the genus of $\Si$.  Similarly, $\hbe$ is a collection of attaching circles on $\bdd H_2$ that allow us to recover $H_2$.  Now, if the Heegaard splitting was derived from a self-indexing Morse function on $Y$, then there is an easy way to describe the $\al$ and $\be$ curves.  Indeed, denote by $p_i$ and $q_i$ the critical points of index 1 and index 2, respectively.  Choose a Riemannian metric $\rho$ on $Y$ such that $(f,\rho)$ is Morse-Smale.  Then, recall that $\Si:=f^{-1}(\{3/2\})$, and set
\begin{equation} \label{eq alpha and beta curves}
\al_i:=W^s(p_i,-\nabla f) \cap \Si, \hspace{1cm} \be_i:=W^u(q_i, -\nabla f)\cap \Si.
\end{equation}
That is, $\al_i$ is the set of points in the intersection of $\Si$ and the flow lines of $-\nabla f$ flowing into $p_i$ (the stable manifold $W^s$).  Similarly, $\be_i$ is the set of points in the intersection of $\Si$ and the flow lines of $-\nabla f$ flowing out of $q_i$ (the unstable manifold $W^u$). In this case we say that $(f,\rho)$ is {\it compatible} with the Heegaard diagram $(\Si,\hal,\hbe)$.  In fact, every Heegaard decomposition can be realised in this way.

There are many Heegaard diagrams of $Y$, but it can be shown that two Heegaard diagrams define the same manifold if and only if they are related by three basic types of move called {\it isotopy, handle-slide,} and {\it (de)stabilisation}.  Let us explain the meanings of these three operations. Two sets $\hal$ and $\hal'$ of curves are {\it isotopic} if they are isotopic through a one-parameter family of curves, such that at every point the curves are pairwise disjoint.  (Similarly for $\hbe$ and $\hbe'$.)  Next, consider two disjoint curves $\al_1$ and $\al_2$ in $\Si$ that bound a pair of pants in $\Si$ together with a third curve $\ga$. Then a {\it handle-slide} of $\al_1$ over $\al_2$ results in $\ga$.  That is, a collection of curves $\hal: = (\al_1,\ldots, \al_g)$ is connected to $\hal':=(\al_1,\ldots, \al_{g-1}, \ga)$ via a handle-slide, if handle-sliding $\al_g$ over some other curve $\al_i$ results in $\ga$.  Lastly, we can {\it stabilise} a diagram $(\Si,\hal,\hbe)$ by taking the connected sum of $\Si$ and a torus with a meridian $\al$ and a longitude $\be$, thereby increasing the genus of $\Si$ and the number of $\hal$ and $\hbe$ curves by one.  

Finally, a {\it a pointed Heegaard diagram} $(\Si,\hal,\hbe,z)$ of $Y$ is a Heegaard diagram $(\Si,\hal,\hbe)$ of $Y$ together with a choice of base point $z \in \Si - \left\{ \bigcup_i \al_i \cup \bigcup_j \be_j\right\}$.

\subsection{Symmetric products}\label{subsection symmetric}

Recall that a {\it K\"ahler form} on a even-dimensional manifold $M$ is a 2-form $\om$ which is symplectic, and such that there exists an integrable almost complex structure $j$ compatible with $\om$.  In particular, if $M$ admits a K\"ahler form $\om$, then $M$ is both a symplectic and a complex manifold.  The reason that we now care about integrable almost complex structures lies in the proof of the non-negativity result of Corollary \ref{cor finite set}.

Let $\Si$ be a surface of genus $g$.  For simplicity assume that $g\geq 3$; we make this assumption because the cases $g=1,2$ are slightly different (but not harder) and would make our exposition messier (for example, see above Equation \eqref{eq c1 of sphere} for a statement that would not hold).

\begin{rmk}
{\it A posteriori}, since the Heegaard Floer homology depends only on $Y$ and not on the Heegaard decomposition $(\Si,\hal,\hbe)$, the genus of $\Si$ can be increased simply by stabilising it, as described in Section \ref{subsection heegaard diagrams}, to deal with the cases when $g=1,2$.  Of course, this can only be done {\it after} it has already been shown that Heegaard Floer homology is well-defined for all values of $g$.  However, it may reassure the reader that, at least as the final invariant is concerned, it is not unreasonable to restrict our exposition to the case when $g \geq 3$.  However, we do not discuss the independence of Heegaard Floer homology from the Heegaard decomposition, and refer the reader to Oszv\'ath and Szab\'o's paper for details on this topic \cite{OSmain}.
\end{rmk}

There is an abundance of K\"ahler structures on $\Si$.  Indeed, {\it let us now fix once and for all a complex structure $j$ on $\Si$}, such that $(\Si, j)$ is a 1-dimensional complex manifold.  Then any Riemannian metric $\langle \cdot, \cdot \rangle  $ on $\Si$ gives rise to a K\"ahler form $\eta(\cdot, \cdot):=-\langle j \cdot, \cdot \rangle$ on $\Si$.  Indeed, clearly $\eta$ is non-degenerate, and  and $d\eta=0$ since $\eta \in \om^{\mbox{top}}(\Si)$.   Thus, $\eta$ is a symplectic form.

Fix a particular such K\"ahler form $\eta$.  Let $\Si^g$ be the $2g$ dimensional manifold $\underbrace{\Si \times \cdots \times \Si}_{g \text{ copies}}$.  Then $\Si^g$ is itself K\"ahler and  $\Om := \eta \times \cdots \times \eta$ is a  K\"ahler structure on $\Si^g$, which is compatible with the integrable complex structure $\bJ:=j \times \cdots \times j$.

Let $\fS_l$ denote the symmetric group on $l$ points, for some $l \in \bN$.  There is a natural action of $\fS_g$ on $\Si^g$ that permutes the coordinates.  We denote the quotient space under this action by $M=\Sym^g\Si$, referred to as the {\it $g$th symmetric product of $\Si$}. Denote by $\pi \colon \Si^g \to M$ the quotient map.  Conveniently, $M$ is a smooth manifold of dimension $2g$.  Indeed, each point of $M$ can be identified with an unordered $g$-tuple of complex numbers, so locally $M$ can be seen as an open subset of the space of monic polynomials of degree $g$ over $\bC$ (by sending a polynomial to its zero set).  But the space of these polynomials can be identified with $\bC^g$, and hance $M$ locally looks like $\bC^g$.  In fact, the same argument shows that if $S$ is any surface and $l$ is any positive integer, the space  $\underbrace{\Si \times \cdots \Si}_{l \text{ times}}/\fS_l$ is also a manifold. It can be shown that the smooth structure on $M$ depends on our original choice of $j$, but its diffeomorphism type does not.  We refer the reader to the original paper \cite{Mac} for the proof of this result.

Let $\pi \colon \Si^g \to M$ denote the quotient map.  Let 
\[
E :=\{(z_1,\ldots,z_g) \in \Si^g\mid z_i=z_k \text{ for some } i \neq k\},
\] 
and let $D:=\pi(E)$.  Then the restriction of $\pi$ to $\Si^g-E$ is a covering space over $M-D$ of rank $g!$, and hence $\pi \colon \Si^g \to M$ is a branched cover.

The following lemma is proved in Section 2 of \cite{OSmain}; we omit the proof which is essentially a piece of algebraic topology.

\begin{lemma} \label{lemma homotopy is homology}
The group $\pi_1(M)$ is abelian, and hence $\pi_1(M)\cong H_1(M)$.  Moreover, $H_1(M) \cong H_1(\Si)$ where an isomorphism $H_1(M) \to H_1(\Si)$ is given by the map
\[
[\ga] \to \sum_{j=1}^g [\ga_j].
\]
Here $\ga \colon S^1 \to M$ is any continuous map, which after a perturbation within its homotopy class, may be assumed not to intersect the diagonal $D$.  So $\ga$ can be thought of as map into $\Si^g$, or equivalently, as a collection of maps $\ga_j\colon S^1\to \Si$ for $1 \leq j \leq g$.
\end{lemma}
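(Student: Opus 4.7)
The plan is to prove the isomorphism $H_1(M) \cong H_1(\Si)$ directly via the stated map $\Phi \colon H_1(M) \to H_1(\Si)$, and deduce commutativity of $\pi_1(M)$ by showing that $\pi_1(M)$ is generated by the image of the inclusion $\iota \colon \Si \hookrightarrow M$, $x \mapsto [x, p_2, \ldots, p_g]$, and that this image is central.

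First I would verify that $\Phi$ is well-defined. Since $D \subset M$ has real codimension $2$, any loop in $M$ is homotopic to one in $M - D$ and lifts uniquely (given a choice of preimage of the basepoint) to a path $\tilde \ga$ in $\Si^g - E$ from $(p_1, \ldots, p_g)$ to $(p_{\sigma(1)}, \ldots, p_{\sigma(g)})$ for some $\sigma \in \fS_g$. The coordinate paths $\tilde \ga_j$ are paths in $\Si$ from $p_j$ to $p_{\sigma(j)}$, so $\partial \sum_j \tilde \ga_j = \sum_j (p_{\sigma(j)} - p_j) = 0$ and the sum is a $1$-cycle in $\Si$. Different lifts of the same loop in $M - D$ differ by a global permutation of coordinates and hence produce the same sum; homotopies of loops in $M$ that cross $D$ transversely contribute small ``meridian'' loops around $D$ whose two non-trivial coordinate paths concatenate into a small null-homologous loop in $\Si$. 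Surjectivity is then immediate: given $[\al] \in H_1(\Si)$ represented by a loop at $p_1$ disjoint from $\{p_2, \ldots, p_g\}$, the loop $\hat\al(t) := [\al(t), p_2, \ldots, p_g]$ satisfies $\Phi([\hat\al]) = [\al]$. Equivalently, $\Phi \circ \iota_* = \id_{H_1(\Si)}$.

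For the remaining ingredients---injectivity of $\Phi$ and commutativity of $\pi_1(M)$---I would use the \emph{swap trick}. Given a path $\tau$ in $\Si$ from $p_i$ to $p_j$ disjoint from the other basepoints, the pair $(\tau, \bar\tau)$ inserted into coordinates $i$ and $j$ of $\Si^g$ descends to a loop $\Theta_{ij}(\tau)$ in $M$ based at $x_0$, since the endpoints $(\ldots, p_j, \ldots, p_i, \ldots)$ and $(\ldots, p_i, \ldots, p_j, \ldots)$ agree in $\Sym^g \Si$. Given any loop $\ga$ in $M - D$, by decomposing the lift $\tilde\ga$ into finitely many sub-segments on each of which only one coordinate moves appreciably, and by inserting swap paths to transport whichever coordinate is moving back to the first slot, I would express $[\ga] \in \pi_1(M)$ as a product of elements of $\iota_*(\pi_1(\Si))$ and swap loops $\Theta_{ij}(\tau)$. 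Conjugation by $\Theta_{ij}(\tau)$ in $\pi_1(M)$ intertwines $\iota_i$ and $\iota_j$ (up to a change of basepoint implemented by $\tau$), so together with the fact that loops in distinct coordinates of $\Si^g$ already commute, this forces $\iota_*(\pi_1(\Si))$ to be central in $\pi_1(M)$, and similarly the swap loops are central. Hence $\pi_1(M)$ is abelian. Finally, $\Phi(\Theta_{ij}(\tau)) = [\tau] + [\bar\tau] = 0$ in $H_1(\Si)$, so swap loops lie in $\ker \Phi$; combined with $\Phi \circ \iota_* = \id$ and the generation statement, this gives injectivity of $\Phi$. Hurewicz then identifies $\pi_1(M)$ with $H_1(M)$, completing the proof.

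The main obstacle is the basepoint bookkeeping in the swap-trick argument: one must verify that conjugation by $\Theta_{ij}(\tau)$ actually produces $\iota_j(\ga')$ for $\ga'$ a prescribed conjugate of $\ga$ by $\tau$, rather than merely an indirect relation in $\pi_1(M)$, and that the decomposition of an arbitrary lift $\tilde\ga$ into single-coordinate motions interleaved with swaps can be carried out globally without creating spurious commutators.
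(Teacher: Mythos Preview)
The paper does not actually prove this lemma; it defers to Section~2 of \cite{OSmain} and omits the argument. Your plan is close to the standard one, and the pieces about well-definedness of $\Phi$, the section $\iota$, and generation of $\pi_1(M)$ by $\iota_*(\pi_1(\Si))$ together with swap loops are all fine.

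There is, however, a real gap in the commutativity step, and it is more than bookkeeping. From ``$\iota_1$ and $\iota_2$ commute (distinct coordinates)'' together with ``conjugation by $\Theta_{12}$ interchanges $\iota_1$ and $\iota_2$'' you \emph{cannot} deduce that $\iota_1(\pi_1(\Si))$ is abelian or central. Abstractly, in $(A\times A)\rtimes\bZ/2$ with the involution swapping factors, both hypotheses hold and yet $A$ may be arbitrary; neither factor is central. The missing ingredient is the one feature of $\Sym^g\Si$ that $\Si^g$ lacks: points are allowed to coincide. Concretely, a swap loop $\Theta_{12}(\tau_0)$ along a \emph{short} arc $\tau_0$ is a meridian of the diagonal $D$ and hence bounds a disc in $M$; the conjugation identity then collapses to $\iota_1(\al)=\iota_2(\bar\tau_0\,\al\,\tau_0)$ in $\pi_1(M)$, whence
\[
\iota_1(\al)\,\iota_1(\be)=\iota_1(\al)\,\iota_2(\bar\tau_0\be\tau_0)=\iota_2(\bar\tau_0\be\tau_0)\,\iota_1(\al)=\iota_1(\be)\,\iota_1(\al).
\]
This same fact shows every swap loop already lies in $\iota_*(\pi_1(\Si))$, which you also need for injectivity of $\Phi$: knowing only that $\Theta\in\ker\Phi$ does not by itself force $[\Theta]=0$ in $H_1(M)$. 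The slickest way to package all of this avoids swap loops entirely: take the basepoint $[p,p,p_3,\dots,p_g]$ on the diagonal so that $\iota_1=\iota_2$ tautologically, and observe that the continuous map $[0,1]^2\to M$, $(s,t)\mapsto\{\al(s),\be(t),p_3,\dots,p_g\}$, directly exhibits a null-homotopy of $[\iota(\al),\iota(\be)]$.
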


There is a well-defined 2-form $\tilde \om$ on $M-D$ defined by 
\[
\tilde \om:=\pi_*(\Om)|_{M - D}.
\]
In fact, $\tilde \om$ is a K\"ahler form on $M-D$.  To see this, note that there is a well-defined integrable almost complex structure $\fj$ on $M$ that is defined by the requirement that $\pi \colon (\Si^g,\bJ) \to (M,\fj)$ is $(\bJ,\fj)$-holomorphic, and it is easy to see that $\fj|_{M-D}$ is compatible with $\tilde \om$.  We emphasise that $\fj$ is well-defined on all of $M$, whereas $\tilde \om$ is only defined on $M-D$.  However, it is easy to see that $\tilde \om$ does still determine a well-defined cohomology class $[\tilde \om] \in H^2(M;\bR)$.

Now fix any small neighbourhood $N(D)$ of $D$ in $M$.  A result due to Perutz \cite[Sec.\,7]{P} (that builds on earlier work of Varouchas) tells us that there exists a 2-form $\om \in \Om^2(M)$ with the properties that:
\begin{enumerate}
\item $\om = \tilde \om $ on $M - N(D)$,
\item $[\om]=[\tilde \om]$ in $H^2(M;\bR)$,
\item $\om$ is a K\"ahler form on all of $M$.
\end{enumerate}
In words, we can produce from $\tilde\om$ a new K\"ahler form $\om$ on $M$ that agrees with $\tilde \om$ on $M-D$ (and so is compatible with $\fj$ on $M-D$), and that determines the same cohomology class as $\tilde \om$ in $H^2(M;\bR)$.

Suppose now that $\hal:=(\al_1,\cdots, \al_g)$ and $\hbe:=(\be_1, \cdots, \be_g)$ are two $g$-tuples of embedded circles in $\Si$ such that the curves in each set are pairwise disjoint.  Then they determine two embedded tori  $\al_1 \times \ldots \times \al_g$ and $\be_1 \times \ldots \times \be_g$ in $\Si^g$ (which are Lagrangian with respect to $\Om)$, and are both contain in $\Si^g-E$, since the curves in each of the two sets $\hal$ and $\hbe$ are pairwise disjoint.  

Define $\bT_\al:=\pi(\al_1 \times \ldots \times \al_g)$ and $\bT_\be:=\pi(\be_1 \times \ldots \times \be_g)$, so that $\bT_\al$ and $\bT_\be$ are embedded tori in $M=\Si^g/\fS_g$.  We can always choose a neighbourhood $N(D)$ of $D$, so that 
\[
\bT_\al \cap N(D)=\emptyset, \hspace{1cm} \bT_\be \cap N(D)=\emptyset.
\]
Then $\bT_\al$ and $\bT_\be$ are Lagrangian submanifolds of $(M,\om)$, where $\om$ is  Perutz's K\"ahler form.

\subsection{$\Spin^c$ structures} \label{subsection spinc}

In this section we give a precise definition of the set $\Spin^c$ structures on $Y$, denoted by $\Spin^c(Y)$.  There are various ways to define them, but for us the most useful is the topological interpretation of $\Spin^c$ structures as nonsingular vector fields on $Y$ which is due to Turaev \cite{Tu90}.  

 Two nowhere vanishing vector fields $v_1$ and $v_2$ are said to be {\it homologous}, denoted by $v_1 \sim v_2$, if there is a ball $B$ in $Y$ with the property that $v_1|_{Y-B}$ is homotopic to $v_2|_{Y-B}$.    This is an equivalence relation, and $\Spin^c(Y)$ is defined to be the set of equivalence classes of nonsingular vector fields on $Y$.

There is a bijective correspondence of $\Spin^c(Y)$ and $H^2(Y) \cong H_1(Y)$.  Indeed, since $Y$ is 3-dimensional, we can take a trivialisation $\varphi \colon TY \to Y \times \bR^3$.  Then we can define a map $f_{\varphi,v} \colon Y \to S^2$, by setting $f_{\varphi, v}(p):=\frac{\varphi(v_p)}{\norm{\varphi(v_p)}}$, which is well-defined since $v$ is everywhere nonsingular.  If $v_1 \sim v_2$, then the maps $f_{\varphi,v_1}$ and $f_{\varphi,v_2}$ are homotopic on $Y-B$.  

Denote by $\Pi(Y,S^2)$ the set of equivalence classes of maps $Y \to S^2$ that are homotopic away from some ball $B$.  Then standard obstruction theory shows that $\Pi(Y,S^2)$ is classified by $H^2(Y)$ via the map
\[
f \mapsto f^*(\mu),
\]
where $f \in \Pi(Y,S^2)$, the map $f^*$ is the standard map on cohomology $H^2(S^2) \to H^2(Y)$ derived from $f$, and $\mu := PD\circ[S^2]$ (see for example \cite[Chap.\,7]{DK}).  Set $\theta_\varphi(v):=f^*(\mu)$. Note that any two trivialisations $\varphi$ and $\varphi'$ differ by a map $\psi \colon Y \to SO(3)$, which means that if $\varphi(v_p)=(p,w)$, then $\varphi'(v_p)=(p,\psi(p)\cdot w)$.  So we have
\[
f^*_{\varphi,v}(\mu) - f^*_{\varphi',v}(\mu) =\psi^*(\kappa),
\]
where $\kappa$ is the generator of $H^2(SO(3))\cong \bZ_2$.  Now we have that 
\[
\theta_\varphi(v_1) - \theta_\varphi(v_2)= \left(\theta_{\varphi'}(v_1)+\psi^*(\kappa)\right)-
\left(\theta_{\varphi'}(v_2)+\psi^*(\kappa)\right)=\theta_{\varphi'}(v_1) - \theta_{\varphi'}(v_2),
\]
and thus the difference $\theta(v_1,v_2):=\theta_\varphi(v_1) - \theta_\varphi(v_2)$ is always independent of the trivialisation.  Instead of $\theta(v_1,v_2)$ we write $v_1 - v_2$.

Hence, there is always a free, transitive action of $H^2(Y)$ on $\Spin^c(Y)$.  If $a \in H^2(Y)$, then $a+v \in \Spin^c(Y)$ is defined to be the vector field such that $\theta(a+v,v)=a$.

\begin{rmk}
If $H^2(Y)$ has no 2-torsion, then $\theta_\varphi(v)$ is {\it always} independent of the choice of $\varphi$.  Indeed,
\begin{align*}
2 (\theta_\varphi(v)-\theta_{\varphi'}(v)) &= (\theta_\varphi(v)-\theta_\varphi(-v))-(\theta_{\varphi'}(v)-\theta_{\varphi'}(-v)) \\
&=\theta(v,-v) - \theta(v,-v)\\
& = 0.
\end{align*}
\end{rmk}

Suppose $\hx,\hy \in \bT_\al \cap \bT_\be$.  Given two paths, one from $\hx$ to $\hy$ in $\bT_\al$ and the other from $\hy$ to $\hx$ in $\bT_\be$, we can concatenate them to form a loop $\ga \colon S^1 \to M$.   Clearly if $[\ga]\neq 0 \in \pi_1(M)$ for every $\ga$, then $\pi_2(\hx,\hy)=\emptyset$.   Thus we are led to consider the well-defined homology class in $H_1(M)$ obtained from $[\ga]$:
\[
\tilde \varep (\hx,\hy) \in \frac{H_1(M)} {H_1(\bT_\al) \oplus H_1(\bT_\be)}.
\]
Under the isomorphism $H_1(M)\cong H_1(\Si)$, from Lemma \ref{lemma homotopy is homology}, $\tilde \ep (\hx,\hy)$ determines the class
\[
\varep(\hx,\hy) \in \frac{H_1(\Si)}{[\al_1],\cdots, [\al_g], [\be_1],\cdots,[\be_g]} \cong H_1(Y).
\]
Now, because Lemma \ref{lemma homotopy is homology} says that $\pi_1(M)\cong H_1(M)$, we have that $\ga$ can be extend to a map $\bD \to M$ if and only if $\tilde \varep(\hx,\hy)=0$, and hence if and only if $\varep(\hx,\hy)=0$.  So we have the following corollary.

\begin{cor} \label{cor pi2 and ep}
The set $\pi_2(\hx,\hy)$ is non-empty if and only if $\varep(\hx,\hy)=0 \in H_2(Y)$.
\end{cor}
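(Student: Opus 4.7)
The plan is to reduce the existence of a Whitney disk to the vanishing of the homology class $\tilde\varep(\hx,\hy) \in H_1(M)/(H_1(\bT_\al)\oplus H_1(\bT_\be))$, and then translate this statement into the vanishing of $\varep(\hx,\hy) \in H_1(Y)$ via Lemma \ref{lemma homotopy is homology} together with the isomorphism $H_1(\Si)/\langle[\al_1],\ldots,[\al_g],[\be_1],\ldots,[\be_g]\rangle \cong H_1(Y)$ coming from the Heegaard decomposition.

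First, I would make the identification between Whitney disks and nullhomotopies of $\ga$ precise. Given paths $a \colon [0,1] \to \bT_\al$ from $\hx$ to $\hy$ and $b \colon [0,1] \to \bT_\be$ from $\hy$ to $\hx$, view the square $[0,1]\times[0,1]$ as a disk whose boundary is the loop that goes constantly at $\hx$ along $\{0\}\times[0,1]$, along $b^{-1}$ on $[0,1]\times\{1\}$, constantly at $\hy$ on $\{1\}\times[0,1]$, and along $a^{-1}$ on $[0,1]\times\{0\}$. This boundary loop is homotopic (rel nothing, since the constant segments contribute nothing) to $\ga^{-1}$, where $\ga = a\cdot b$. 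Therefore a representative $f \colon [0,1]\times[0,1] \to M$ of a class in $\pi_2(\hx,\hy)$ exists (for our fixed choice of $a,b$) if and only if $[\ga] = 0 \in \pi_1(M)$.

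Next, I would justify passage from $\pi_1(M)$ to $H_1(M)$ via Lemma \ref{lemma homotopy is homology}, then observe that we are free to pick {\it any} paths $a,b$ in $\bT_\al,\bT_\be$. Changing $a$ to $a'$ alters $\ga$ by the loop $a\cdot (a')^{-1}$ in $\bT_\al$, whose class lies in $H_1(\bT_\al)$; similarly for $b$. Hence the class $[\ga] \in H_1(M)$ is ambiguous precisely by $H_1(\bT_\al)\oplus H_1(\bT_\be)$, so it is the quotient class $\tilde\varep(\hx,\hy)$ that is well-defined, and a Whitney disk exists for some choice of paths if and only if $\tilde\varep(\hx,\hy) = 0$.

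Finally, I would transport this via the composition of isomorphisms $H_1(M) \cong H_1(\Si)$ from Lemma \ref{lemma homotopy is homology} and $H_1(\Si)/\langle[\al_i],[\be_j]\rangle \cong H_1(Y)$ (the latter being a standard consequence of the Heegaard decomposition, obtained by Mayer--Vietoris on $Y = H_1 \cup H_2$ and the fact that the $\al$-curves, respectively $\be$-curves, generate the kernel of $H_1(\Si) \to H_1(H_i)$). Under these identifications the image of $H_1(\bT_\al)\oplus H_1(\bT_\be)$ is precisely the subgroup generated by the $[\al_i]$ and $[\be_j]$, so $\tilde\varep(\hx,\hy)=0$ if and only if $\varep(\hx,\hy)=0 \in H_1(Y)$. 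The main point requiring some care is the first step: verifying that a Whitney disk is {\it the same data} as a nullhomotopy of a suitably chosen $\ga$, and conversely that a nullhomotopy can be reparametrised as a square with the prescribed boundary behaviour (including the constant vertical edges at $\hx$ and $\hy$). Once this dictionary is set up, everything else follows from the homological identifications already available in the excerpt.
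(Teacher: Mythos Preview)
Your proposal is correct and follows essentially the same approach as the paper. The paper's proof is the sentence immediately preceding the corollary: using that $\pi_1(M)\cong H_1(M)$ from Lemma~\ref{lemma homotopy is homology}, the loop $\ga$ extends to a disk if and only if $\tilde\varep(\hx,\hy)=0$, hence if and only if $\varep(\hx,\hy)=0$; you have simply fleshed out each step of that argument in more detail, including the bookkeeping about how changing the paths $a,b$ alters $[\ga]$ by elements of $H_1(\bT_\al)\oplus H_1(\bT_\be)$.
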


There is an alternative way to describe the class $\varep(\hx,\hy)$ that proves to be useful in what follows.  Recall from Section \ref{subsection heegaard diagrams} that we can choose a Morse function $f$ and a Riemannian metric $\rho$ on $\Si$ compatible with the Heegaard diagram $(\Si,\hal,\hbe)$.  We use the notation from Equation \eqref{eq alpha and beta curves}.  Set
\[
\ga_{jk}:=W^u(q_j,-\nabla f) \cap W^s(p_k,-\nabla f),
\]
so that $\ga_{jk}$ is a flow line in $Y$, whose closure $\bar \ga_{jk}$ may be thought of as a map $\bar \ga_{jk} \colon [0,1] \to Y$ with $\bar \ga_{jk}(0)=q_j$ and $\bar \ga_{jk}(1)=p_k$.  

Set $x_{jk}:=\ga_{jk} \cap \Si$.  Then an intersection point $\hx \in \bT_\al \cap \bT_\be$ is just a collection $\{x_{j_1k_1}, \cdots, x_{j_gk_g}\}$ of points $x_{j_i,k_i}$.  Thus, given $\hx=\{x_{j_1k_1}, \ldots, x_{j_gk_g}\}$ and $\hy=\{x_{j'_1k'_1}, \ldots, x_{j'_gk'_g}\}$, we can consider the 1-cycle in $Y$ given by
\[
\bar \ga_{j_1k_1} + \cdots +\bar \ga_{j_gk_g}-\bar \ga_{j'_1k'_1} - \cdots - \bar \ga_{j'_gk'_g}.
\]
This determine a homology class in $H_1(Y)$ that is seen to be $\varep(\hx,\hy)$.

This second way of defining $\varep(\hx,\hy)$ makes the relationship between $\varep(\hx,\hy)$ and $\Spin^c(Y)$ particularly clear.  Suppose $\hx \in \bT_\al \cap \bT_\be$ with $\ga_{j_ik_i}$ as above.  Take tubular neighbourhoods $N(\bar \ga_{j_ik_i})$ of each $\bar \ga_{j_ik_i}$.  Fix a flow line $\ga_0$ of $-\nabla f$ from $p_{min}$ to $p_{max}$ passing through the base point $z$, and let $N(\bar \ga_0)$ denote the tubular neighbourhood of $\bar \ga_0$.  Note that each of these tubular neighbourhoods is homeomorphic to a ball.  Set
\[
B:=N(\bar \ga_{j_1k_1}) \cup \cdots \cup N(\bar \ga_{j_gk_g}) \cup N(\bar \ga_0).
\]
Note that $- \nabla f$ does not vanish on $Y-B$, and in particular on $\bdd B$.  Moreover, by the Poincar\'e-Hopf theorem, the map $-\nabla f|_{\bdd B}$ extends to a non-vanishing vector field on all of $B$ since the sum of the indices of the zeros of $-\nabla f$ in $B$ is zero.  Indeed, each $N(\bar \ga_{j_ik_i})$ and $N(\bar \ga_0)$ connect critical points of $f$ that have different parity.  After performing this extension, we obtain a non-vanishing unit vector field $v$ on $Y$, which defines a $\Spin^c$ structure $\fs$ on $Y$.  Thus, there is a well-defined map 
\[
s_z \colon \bT_\al \cap \bT_\be \to \Spin^c(Y),
\]
given by $s_z(\hx):=\fs$, where $\fs=[v]$ and $v$ is the vector filed obtained as explained above.  For some $\fs \in \Spin^c(Y)$, we  denote by $Z(\fs,z)$ the set of intersection points $\hx \in \bT_\al \cap \bT_\be$ with $s_z(\hx)=\fs$.  

The following result is essentially clear from the second definition of $\varep(\hx,\hy)$ above; we refer the reader to Lemma 2.19 \cite{OSmain} for a detailed proof.

\begin{lemma} \label{lemma pi2 and ep}
Given two points $\hx,\hy \in \bT_\al \cap \bT_\be$, we have that $s_z(\hx)=s_z(\hy)$ if and only if $\varep(\hx,\hy)=0$.  Thus if $\hx \in Z(\fs,z)$, then 
\[
Z(\fs,z)=\{\hy \in \bT_\al \cap \bT_\be \mid \varep(\hx,\hy) =0\}.
\]
\end{lemma}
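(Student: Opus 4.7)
The plan is to establish the stronger statement that $s_z(\hx) - s_z(\hy) \in H^2(Y)$ coincides, under Poincar\'e duality $H^2(Y) \cong H_1(Y)$, with the class $\varep(\hx,\hy)$. Once this identification is in place, the ``if and only if'' is immediate, because the $H^2(Y)$-action on $\Spin^c(Y)$ is free, so $s_z(\hx) = s_z(\hy)$ if and only if their difference vanishes in $H^2(Y)$, which by Poincar\'e duality is equivalent to $\varep(\hx,\hy) = 0 \in H_1(Y)$. This reduces the lemma to a localised obstruction-theoretic computation.

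To carry out this computation, I would first enlarge the region $B$ used in the construction of $s_z$ so that it simultaneously contains tubular neighbourhoods of all of the flow lines $\bar\ga_{j_ik_i}$ coming from $\hx$, all of the flow lines $\bar\ga_{j'_ik'_i}$ coming from $\hy$, and of the basepoint flow line $\bar\ga_0$. Outside this common region, both $v_\hx$ and $v_\hy$ can be taken to equal $-\nabla f / |\nabla f|$, so they agree on $Y - B$. Thus $v_\hx$ and $v_\hy$ are homologous as $\Spin^c$ structures precisely when their extensions across $B$ are homotopic, and the obstruction to this lives in $H^2(B, \bdd B) \cong H^2(Y, Y - B)$, mapping to $H^2(Y)$ in the obvious way.

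Next I would identify this obstruction class explicitly. Since $B$ is a disjoint union of balls, each of which is a regular neighbourhood of one of the flow lines $\bar\ga_{j_ik_i}$, $\bar\ga_{j'_ik'_i}$, or $\bar\ga_0$, it suffices to compute the local obstruction on each component. A standard Poincar\'e--Hopf style computation (cf.\ the discussion of the map $v \mapsto f_{\varphi,v}$ in Section \ref{subsection spinc}) shows that the obstruction class on a ball containing a single index $1$--index $2$ flow line $\bar\ga$ is Poincar\'e dual to the relative $1$-cycle $\bar\ga$ itself, while the contributions from $\bar\ga_0$ cancel (appearing in the construction of both $v_\hx$ and $v_\hy$). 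Summing, the total obstruction is Poincar\'e dual to $\sum \bar\ga_{j_ik_i} - \sum \bar\ga_{j'_ik'_i}$, which is exactly $\varep(\hx,\hy)$.

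The main obstacle is the final identification of the local obstruction on each ball with the Poincar\'e dual of the corresponding flow line. The subtlety is that one must show the contributions from $\bar\ga_0$ really do cancel in the difference $v_\hx - v_\hy$ (so that the basepoint does not affect the answer) and that the sign conventions in the identifications $H^2(Y) \cong H_1(Y)$ and in the definition of $\varep$ are compatible. Once that local computation is pinned down, the global conclusion follows by additivity of obstructions on disjoint regions, giving the desired equivalence.
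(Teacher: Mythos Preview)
Your approach is correct and is essentially the one the paper has in mind: the paper does not give its own proof but says the result ``is essentially clear from the second definition of $\varep(\hx,\hy)$'' (the flow-line description) and refers to \cite[Lem.\,2.19]{OSmain}, whose argument is precisely the obstruction-theoretic comparison you outline, establishing $s_z(\hx)-s_z(\hy)=\PD(\varep(\hx,\hy))$ by localising to a region containing the flow lines for both $\hx$ and $\hy$ and noting that the basepoint flow line $\bar\ga_0$ contributes identically to both vector fields.
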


The following corollary is immediate from Corollary \ref{cor pi2 and ep} and Lemma \ref{lemma pi2 and ep}.

\begin{cor} \label{cor same subset}
Given two points $\hx,\hy \in \bT_\al \cap \bT_\be$, we have that $\pi_2(\hx,\hy) \neq \emptyset $ if and only if $s_z(\hx)=s_z(\hy)$.
\end{cor}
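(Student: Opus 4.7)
The plan is to simply chain together the two results that immediately precede the corollary, since each of them already matches one half of the biconditional via the intermediate quantity $\varep(\hx,\hy) \in H_1(Y)$.

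First I would recall Corollary \ref{cor pi2 and ep}, which provides the equivalence
\[
\pi_2(\hx,\hy) \neq \emptyset \iff \varep(\hx,\hy) = 0 \in H_1(Y).
\]
Next I would apply Lemma \ref{lemma pi2 and ep}, which gives the equivalence
\[
s_z(\hx) = s_z(\hy) \iff \varep(\hx,\hy) = 0.
\]
Transitivity of these two biconditionals yields the desired statement
\[
\pi_2(\hx,\hy) \neq \emptyset \iff s_z(\hx) = s_z(\hy).
\]

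There is no real obstacle here, since the genuine content has already been packaged into the preceding results: Corollary \ref{cor pi2 and ep} rests on the identification of $\pi_1(M)$ with $H_1(M)$ via Lemma \ref{lemma homotopy is homology}, together with the observation that a loop $\ga \colon S^1 \to M$ obtained by concatenating a path in $\bT_\al$ from $\hx$ to $\hy$ with a path in $\bT_\be$ from $\hy$ to $\hx$ bounds a disc exactly when its class in the quotient $H_1(M)/(H_1(\bT_\al) \oplus H_1(\bT_\be)) \cong H_1(Y)$ vanishes; while Lemma \ref{lemma pi2 and ep} is the topological content that the $\Spin^c$ structure $s_z(\hx)$ depends on $\hx$ only through $\varep(\hx,\cdot)$. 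So the corollary really is a formal consequence, and the proof is one line: combine the two equivalences.
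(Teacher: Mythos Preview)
Your proposal is correct and matches the paper's own treatment exactly: the paper states that the corollary is immediate from Corollary \ref{cor pi2 and ep} and Lemma \ref{lemma pi2 and ep}, which is precisely the two-step chain through $\varep(\hx,\hy)=0$ that you describe.
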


There is also a well-defined map $c_1 \colon \Spin^c(Y) \to H^2(Y)$.  Indeed, if $v$ is a nonsingular vector field, then the orthogonal complement $v^\perp$ can be regarded as a complex line bundle over $Y$, and thus the first Chern class $c_1(v^\perp)$ is a well-defined element of $H^2(Y)$.  It is easy to check that $v_1 \sim v_2$ implies that $v_1^\perp$ and $v_2^\perp$ are isomorphic as complex line bundles and hence $c_1(v_1^\perp)=c_1(v_2^\perp)$.

\subsection{Domains and $\pi_2(\hx,\hx)$} \label{subsection closed domains}

Let $(\Si, \hal,\hbe,z)$ be a pointed Heegaard diagram of a closed three-manifold $Y$, and as usual we assume that $\Si$ has genus $g \geq 3$.  Let $M=\Sym^g(\Si)$ as above.   Instead of studying the Whitney disks connecting points in $\bT_\al \cap \bT_\be$ in $M$, it can be helpful to study their ``shadows'' in $\Si_g$.  In other words, we can associate to each $\phi \in \pi_2(\hx,\hy)$ a subsurface of $\Si_g$ called a {\it domain} $\mD(\phi)$.  

Define the submanifold $V_z \subset M$ of codimension-2 to be
\[
V_z:=\pi(\{z\} \times \underbrace{\Si \times \cdots \times \Si}_{g-1 \text{ copies}}).
\]
Recall that $\pi$ is the projection map $\Si^g \to M$.

\begin{deff} \label{deff nz}
Given $\hx,\hy \in \bT_\al \cap \bT_\be$ and $\phi \in \pi_1(\hx,\hy)$, fix a map $v \colon \bD \to M$ that represents $\phi$ and is transverse to $V_z$.  Then define $n_z(\phi)$ to be the algebraic intersection number of $v(\bD) \cap V_z$.  Note that $n_z(\phi)$ is well-defined as it does not depend on the choice of $v$ representing $\phi$ as long as $v \pitchfork V_z$.
\end{deff}

Now for some more algebraic topology.  It can be shown, assuming $g \geq 3$, that $\pi_2(M)=\bZ$, and $\pi_1(M)$ acts trivially on $\pi_2(M)$ \cite[Prop.\,2.7]{OSmain}.  Moreover, there is an explicit generator $\si \colon S^2 \to M$ of $\pi_2(M)$ with the property that 
\begin{equation} \label{eq c1 of sphere}
n_z(\si)=1, \hspace{1cm} c_1(\si_*([S^2]))=1.
\end{equation}
See \cite[Lem.\,2.8]{OSmain} and \cite{Mac}.  Since $\pi_1(M)$ acts trivially on $\pi_2(M)$, by slight abuse of notation for any given $\hx \in \bT_\al \cap \bT_\be$, we can regard $\si$ as defining an element of $\pi_2(\hx,\hx)$.  Then we have the following lemma (the proof is just algebraic topology).

\begin{lemma} \cite[Prop.\,2.15]{OSmain}  \label{lemma pix} Given any $\phi \in \pi_2(\hx,\hx)$, we have 
\[
\phi = k \si \# \phi_0,
\]
for some $k \in \bZ$ and some $\phi_0 \in \pi_2(\hx,\hx)$ with $n_z(\phi_0)=0$.  Therefore, if we set
\[
\pi_2^0(\hx,\hx):=\{\phi \in \pi_2(\hx,\hx) \mid n_z(\phi)=0\}, 
\]
then 
\[
\pi_2(\hx,\hx)\cong \bZ \oplus \pi_2^0(\hx,\hx).
\]
\end{lemma}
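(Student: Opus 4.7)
The plan is to exhibit a split short exact sequence of groups
\[
0 \longrightarrow \pi_2^0(\hx,\hx) \longrightarrow \pi_2(\hx,\hx) \stackrel{n_z}{\longrightarrow} \bZ \longrightarrow 0
\]
whose splitting is given by $k \mapsto k\si$, and then to promote the splitting to a direct sum decomposition by showing that $\si$ commutes with every element under $\sharp$.

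First I would verify that $n_z$ is a well-defined group homomorphism. Definition \ref{deff nz} already gives independence of the transverse representative. For additivity under $\sharp$, choose representatives $v_\phi, v_\psi \colon \bD \to M$ for $\phi, \psi \in \pi_2(\hx,\hx)$ that are each transverse to $V_z$; the concatenation $v_\phi \# v_\psi$ is again transverse to $V_z$, and its algebraic intersection number with $V_z$ is visibly the sum $n_z(\phi) + n_z(\psi)$. By construction the kernel is $\pi_2^0(\hx,\hx)$, which produces the exact sequence above.

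For the splitting, recall from Equation \eqref{eq c1 of sphere} that the generator $\si$ of $\pi_2(M) = \bZ$ satisfies $n_z(\si) = 1$. Since $\pi_1(M)$ acts trivially on $\pi_2(M)$, inserting a representative of $\si$ into a constant square at $\hx$ yields a well-defined element of $\pi_2(\hx,\hx)$, which we continue to call $\si$; the assignment $k \mapsto k\si$ is then a group homomorphism $\bZ \to \pi_2(\hx,\hx)$ with $n_z(k\si)=k$, so it splits $n_z$. The decomposition itself is now immediate: for $\phi \in \pi_2(\hx,\hx)$, set $k := n_z(\phi)$ and $\phi_0 := \phi \sharp (k\si)^{-1}$, so that $n_z(\phi_0) = 0$ and $\phi = k\si \sharp \phi_0$.

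The main obstacle I anticipate is upgrading this splitting to a direct sum, which requires $k\si$ to commute under $\sharp$ with every element of $\pi_2^0(\hx,\hx)$ (note that this is strictly weaker than full commutativity of $\pi_2(\hx,\hx)$). Here I would use a standard bubble-floating argument: a representative of $\si$ is a $2$-sphere attached at an interior point of the square, carrying no boundary data tied to $\bT_\al$ or $\bT_\be$, so within its homotopy class it can be slid along any interior path of a representative of $\phi_0$. Transporting the attachment point from the right end of the square to the left end exhibits an explicit homotopy from $\phi_0 \sharp \si$ to $\si \sharp \phi_0$; iterating gives centrality of $k\si$, and the split exact sequence thus promotes to the claimed direct sum $\pi_2(\hx,\hx) \cong \bZ \oplus \pi_2^0(\hx,\hx)$.
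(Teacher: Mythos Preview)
The paper does not actually prove this lemma: it is stated with a citation to \cite[Prop.\,2.15]{OSmain} and no argument is given in the text (the surrounding remark is merely that ``the proof is essentially a piece of algebraic topology''). So there is no in-paper proof to compare against.

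That said, your argument is correct and is essentially the standard one. The only point worth flagging is that your centrality step can be streamlined: once you observe that the class $\si \in \pi_2(\hx,\hx)$ is obtained by attaching a sphere at an interior point of the square, the commutation $\phi_0 \sharp \si \simeq \si \sharp \phi_0$ is an instance of the Eckmann--Hilton argument (the sphere can be inserted at any interior point, and two such insertions are homotopic because the interior of the square is connected). The hypothesis that $\pi_1(M)$ acts trivially on $\pi_2(M)$ is what guarantees that the class of $\si$ in $\pi_2(\hx,\hx)$ is independent of the chosen insertion point, so you are using exactly the right ingredient. With that, the split exact sequence upgrades to the claimed direct sum, and your proof stands.
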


In fact, we can identify $\pi_2^0(\hx,\hx)$ with $H_2(Y)$, and in order to explain this we introduce the notion of a {\it domain}.  For every $\hx \in \bT_\al \cap \bT_\be$, a class $\phi \in \pi_2^0(\hx,\hx)$ is called a {\it periodic class}.

\begin{deff}  \label{deff domains1}
Let $(\Si,\hal,\hbe,z)$ be a given Heegaard diagram of $Y$.
Let $D_1, \ldots, D_m$ denote the closures of the components of $\Si - \cup_{i=1}^g \al_i - \cup_{j=1}^g \be_j$, {\it indexed} so that $z \in D_1$, and denote by $D(\Si,\hal,\hbe)$ the free abelian group generated by these components and isomorphic to $\bZ^m$.  Thus if $\mD \in D(\Si,\hal,\hbe)$, then 
\[
\mD=\sum_{i=1}^m p_i D_i, \hspace{1cm} \text{for some } p_i \in \bZ.
\]
We define
\[
n_i(\mD):=p_i.
\]
We say that $\mD \geq 0$ if $n_i(\mD) \geq 0$ for each $i$, and given two domains $\mD, \mD'$ we say $\mD \geq \mD$ if $\mD-\mD' \geq 0$.
\end{deff}

Next, given a homotopy class $\phi \in \pi_2(\hx,\hy)$ we can define another notion of domain based on the intersection number $n_z$.

\begin{deff} Let $(\Si,\hal,\hbe,z)$ be a given Heegaard diagram of $Y$.  Let $D(\Si,\hal,\hbe)$ be the set of domains isomorphic to $\bZ^m$ from Definition \ref{deff domains1}. Set $z_1:=z$ and for each $k \in \{2,\ldots, m\}$ choose an arbitrary point $z_k$ in the interior of $D_k$.  Then given $\hx,\hy \in \bT_\al \cap \bT_\be$ and $\phi \in \pi_2(\hx,\hy)$, define 
\[
\mD(\phi):=\sum_{k=1}^m n_{z_k}(\phi) D_i.
\]
\end{deff}

Clearly, $\mD(\phi)$ is well-defined as it is independent of the choice of the points $z_k$.

Note that if $\phi \in \pi_2^0(\hx,\hx)$, then $\bdd \mD(\phi)$ is a union of $\al$ and $\be$ curves:
\[
\bdd \mD(\phi)=\sum a_i \al_i + \sum b_j \be_j,
\]
for some integer coefficients $a_i,b_j$.  In general, any domain $\mD=\sum_{i=1}^m p_i D_i$, such that its boundary is a union of $\al$ and $\be$ curves, is called a {\it periodic domain}. To any periodic domain $\mD(\phi)$ we can associate a homology class $\mH(\phi) \in H_2(Y)$ given by
\[
\mH(\phi):=\mD(\phi)+\sum a_i A_i + \sum b_j B_j,
\]
where $A_i$ and $B_j$ are the cores of the two handles attached to $\al_i$ and $\be_j$, respectively.  As a result we have, in addition to Lemma \ref{lemma pix}, the following statement.

\begin{lemma} \cite[Prop.\,2.15]{OSmain}  \label{lemma pix2}
The map $\pi_2^0(\hx,\hx) \to H_2(Y)$ given by $\phi \mapsto \mH(\phi)$ is an isomorphism.
\end{lemma}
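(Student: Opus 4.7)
My plan is to factor the map as a composition
\begin{equation*}
\pi_2^0(\hx,\hx) \xrightarrow{\phi \mapsto \mD(\phi)} \Pi \xrightarrow{\mD \mapsto \mH(\mD)} H_2(Y),
\end{equation*}
where $\Pi$ denotes the group of periodic domains with $n_z = 0$, and argue that each arrow is an isomorphism. The first step is well-definedness. For $\phi \in \pi_2^0(\hx,\hx)$, the boundary arcs of $\phi$ on $\bT_\al$ and $\bT_\be$ are loops based at $\hx$; projecting coordinate-wise to $\Si$ yields closed curves supported on $\hal \cup \hbe$, so $\bdd \mD(\phi)$ is an integer combination of whole $\al_i$'s and $\be_j$'s, and $\mD(\phi)$ is periodic. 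Capping with the 2-handle cores $A_i$ (bounded by $\al_i$ in $H_1$) and $B_j$ (bounded by $\be_j$ in $H_2$) then produces a 2-cycle $\mH(\phi)$ in $Y$; a homotopy between two representatives of $\phi$ extends to a 3-chain whose boundary is the difference of the resulting cycles, so $[\mH(\phi)] \in H_2(Y)$ depends only on $\phi$.

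For the first isomorphism $\pi_2^0(\hx,\hx) \cong \Pi$, injectivity follows from Lemma \ref{lemma pix}. If $\mD(\phi) = 0$, write $\phi = k\si \# \phi_0$ with $n_z(\phi_0) = 0$; since $n_z(\phi) = k$ agrees with $n_z(\mD(\phi)) = 0$ we have $k = 0$, and the remaining class $\phi_0$ has trivial underlying domain, which forces it to be null-homotopic. For surjectivity, given $\mD \in \Pi$ with $\bdd \mD = \sum a_i \al_i + \sum b_j \be_j$, I would interpret the formal boundary as a loop $\ga$ based at $\hx$ in $\bT_\al \cup \bT_\be$, traversing the boundary components along one coordinate of $\Sym^g \Si$ while the others are held at the remaining points of $\hx$. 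By Lemma \ref{lemma homotopy is homology}, $[\ga] \in \pi_1(M) \cong H_1(\Si)$ is represented by $\sum a_i [\al_i] + \sum b_j [\be_j]$, which vanishes because $\mD$ is a 2-chain on $\Si$ with that very boundary. Hence $\ga$ bounds a Whitney disk $v$; since $\pi_2(M) = \bZ\langle \si \rangle$ with $n_z(\si) = 1$ by Equation \eqref{eq c1 of sphere}, subtracting $n_z(v)$ copies of $\si$ produces a class in $\pi_2^0(\hx,\hx)$ whose domain has the same regional coefficients as $\mD$, hence equals $\mD$.

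For the second isomorphism $\Pi \cong H_2(Y)$, I would invoke Mayer--Vietoris for $Y = H_1 \cup_\Si H_2$. Since the handlebodies have vanishing second homology,
\begin{equation*}
0 \to H_2(Y) \to H_1(\Si) \xrightarrow{j_*} H_1(H_1) \oplus H_1(H_2),
\end{equation*}
so $H_2(Y) \cong \ker j_*$, the classes of $H_1(\Si)$ lying simultaneously in $\mathrm{span}\{[\al_i]\}$ and $\mathrm{span}\{[\be_j]\}$. Any such class yields a relation $\sum a_i[\al_i] = \sum b_j[\be_j]$ in $H_1(\Si)$, realised by a 2-chain on $\Si$ with boundary $\sum a_i \al_i - \sum b_j \be_j$. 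Expanding in the cellular basis $\{D_k\}$ and normalising by subtracting a multiple of $[\Si]$ to arrange $p_1 = 0$ gives a unique element of $\Pi$; its image under $\mD \mapsto \mH(\mD)$ recovers the original class in $H_2(Y)$, so the two constructions are mutually inverse.

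The main obstacle is the surjectivity step in the second paragraph, namely the construction of an actual Whitney disk from a prescribed periodic domain. The subtlety is in lifting the abstract formal sum $\sum a_i \al_i + \sum b_j \be_j$ to a concrete loop in $\bT_\al \cup \bT_\be \subset \Sym^g \Si$ based at $\hx$, which requires working one coordinate at a time while freezing the other coordinates at the remaining components of $\hx$; one must also pin down the $n_z$-ambiguity of the filling, which is possible only because $\pi_2(M)$ is cyclic with generator detected by $n_z = 1$. Everything else in the argument is a combination of standard algebraic topology (Mayer--Vietoris, cellular chains on $\Si$) with the already-established structural facts about $\pi_1$ and $\pi_2$ of the symmetric product.
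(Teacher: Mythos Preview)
The paper does not supply its own proof of this lemma; it simply cites \cite[Prop.\,2.15]{OSmain}. Your overall architecture---factoring through the group $\Pi$ of periodic domains with $n_z=0$ and then identifying $\Pi \cong H_2(Y)$ via Mayer--Vietoris---is exactly the standard Ozsv\'ath--Szab\'o argument, and your third paragraph (the Mayer--Vietoris step) is correct.

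There is, however, a genuine gap in your injectivity argument for $\pi_2^0(\hx,\hx)\to\Pi$. You invoke Lemma~\ref{lemma pix} to write $\phi=k\si\#\phi_0$, but since $\phi$ already lies in $\pi_2^0$ this immediately returns $k=0$ and $\phi_0=\phi$; the splitting buys you nothing. You then assert that ``trivial underlying domain forces $\phi_0$ null-homotopic'' with no justification, and this is precisely the substantive step. The missing argument runs as follows: if $\mD(\phi)=0$ then $\partial\mD(\phi)=0$, so all the coefficients $a_i,b_j$ vanish; hence the boundary loops $\phi|_{\partial^-\bD}\subset\bT_\al$ and $\phi|_{\partial^+\bD}\subset\bT_\be$ are null-homologous, and since $\bT_\al,\bT_\be$ are tori (hence $K(\bZ^g,1)$'s), null-homotopic. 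Contracting both boundary arcs inside their respective tori turns $\phi$ into an honest element of $\pi_2(M)\cong\bZ\langle\si\rangle$, and now $n_z(\phi)=0$ together with $n_z(\si)=1$ from Equation~\eqref{eq c1 of sphere} forces $\phi=0$.

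Ironically, the surjectivity step you flag as the ``main obstacle'' is in better shape than the step you treated as routine. Your construction of the loop $\gamma$ and its Whitney-disk filling is essentially correct; the one point you leave implicit---why the adjusted disk has domain exactly $\mD$ and not merely the same boundary---follows because two $2$-chains on $\Si$ with the same boundary differ by a multiple of $[\Si]$, and that multiple is detected by $n_z$.
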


We are now ready to state a deep result of Ozsv\'ath and Szab\'o about the relation of the Maslov index of $\phi \in \pi_2^0(\hx,\hx)$, the $\Spin^c$ structure $s_z(\hx)$, and $\mH(\phi)$.  The proof goes beyond the scope of this introductory article.

\begin{thm} \cite[Thm.\,4.9]{OSmain} \label{thm deep}
Let $(\Si,\hal,\hbe,z)$ be a pointed Heegaard diagram for $Y$.  Then for any $\hx \in \bT_\al \cap \bT_\be$, and any $\phi \in \pi_2^0(\hx,\hx)$, we have 
\begin{equation} \label{eq deep}
\mu(\phi)=\langle c_1(\fs_z(\phi)), \mH(\phi)\rangle.
\end{equation}
\end{thm}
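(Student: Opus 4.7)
The plan is to establish the equality by reducing to a generating set via additivity and then performing a bundle-theoretic computation over a capped surface representing a class in $H_2(Y)$.

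First, I observe that both sides of \eqref{eq deep} define additive functions $\pi_2^0(\hx,\hx) \to \bZ$: the left by Equation \eqref{additivity of Maslov}, the right by linearity of the Chern-class pairing together with the fact that $\mH: \pi_2^0(\hx,\hx) \to H_2(Y)$ is a group isomorphism (Lemma \ref{lemma pix2}). It therefore suffices to verify \eqref{eq deep} on a generating set. Since every class in $H_2(Y)$ is represented by a smoothly embedded oriented surface, I fix $\phi$ with $\mH(\phi) = [F]$ for some embedded $F \subset Y$ and prove the identity for this $\phi$.

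Second, I would set up a capping construction. Choose a representative Whitney disk $v \colon \bD \to M$ for $\phi$ transverse to $V_z$; since $n_z(\phi)=0$, I may arrange $v(\bD) \cap V_z = \emptyset$. Write $E := v^*T\Sym^g(\Si)$ and let $\Lambda$ denote the Lagrangian subbundle given by $v^*T\bT_\al$ on $\bdd^-\bD$ and $v^*T\bT_\be$ on $\bdd^+\bD$, so $\mu(\phi) = \mu(E,\Lambda)$. The periodic domain $\mD(\phi)$, together with the 2-handle cores $A_i, B_j$ weighted by the boundary coefficients of $\mD(\phi)$, defines a 2-cycle $\hat F \subset Y$ with $[\hat F] = [F]$. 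Using the Morse-theoretic description of $\bT_\al, \bT_\be$ from Section \ref{subsection spinc}, I would extend the bundle pair $(E, \Lambda)$ across each 2-handle core to build a complex vector bundle $\hat E$ over the closed surface $\hat F$; homotopy invariance of the Maslov index of a bundle pair (Section \ref{subsection maslov}) then lets me compute $\mu(E, \Lambda)$ as a first Chern number of $\hat E$ on $[\hat F]$, up to the standard normalization constant coming from the $\det^2$ versus $\det$ distinction in the definition of the Maslov class.

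Third, to match this Chern number with $\langle c_1(\fs_z(\hx)), [F]\rangle$, I exploit the Morse-theoretic description of $\fs_z(\hx)$ from Section \ref{subsection spinc}: it is represented by the nonvanishing vector field $v_\fs$ obtained from $-\nabla f$ after perturbation near the flow lines $\bar\ga_{j_i k_i}$ and $\bar\ga_0$, so $c_1(\fs_z(\hx)) = c_1(v_\fs^\perp) \in H^2(Y)$. At each intersection point of $\bT_\al \cap \bT_\be$, the summands of $T\Sym^g(\Si)$ tangent to $\bT_\al$ and $\bT_\be$ lie in the plane orthogonal to $\nabla f$ inside $T\Si \subset TY$, while the normal directions correspond to motion along $\nabla f$. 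This is precisely the splitting $TY|_\Si = v_\fs^\perp \oplus \langle v_\fs\rangle$, and tracking frames through the capping should show that $\langle c_1(\hat E), [\hat F]\rangle$ equals $\langle c_1(v_\fs^\perp), [F]\rangle = \langle c_1(\fs_z(\hx)), [F]\rangle$ (after accounting for the same normalization).

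The main obstacle, as usual for such index-theoretic comparisons, lies in this last identification: the Maslov index counts windings of a Lagrangian loop in $\mL(g)$, while $\langle c_1(\fs_z(\hx)), [F]\rangle$ is an obstruction to trivializing a line bundle on $Y$. Bridging the two requires delicate local computations on each 2-handle core, tracking how $T\bT_\al$ and $T\bT_\be$ rotate relative to a fixed symplectic trivialization of $T\Sym^g(\Si)$ and showing that the net winding matches the Euler number of $v_\fs^\perp|_F$. This frame-matching is the technical heart of Theorem 4.9 in \cite{OSmain} and requires substantially more bundle-theoretic bookkeeping than the rest of the argument.
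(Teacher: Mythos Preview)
The paper does not prove this theorem. Immediately before stating Theorem~\ref{thm deep}, the author writes: ``The proof goes beyond the scope of this introductory article.'' The result is imported wholesale from \cite[Thm.\,4.9]{OSmain} and used as a black box in the subsequent arguments (Theorem~\ref{thm finite set}, Remark~\ref{rmk on Z2}).

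Your proposal is therefore not competing with any argument in the paper. What you have written is a reasonable high-level outline of the Ozsv\'ath--Szab\'o strategy: reduce by additivity to a single periodic class, build a closed surface by capping $\mD(\phi)$ with the 2-handle cores, and then identify the Maslov index of the bundle pair over the disk with the first Chern number of $v_\fs^\perp$ over the capped surface. You are honest that the last step---the frame-matching across the caps---is where the real work lies, and you do not carry it out. So this is a sketch, not a proof; but it is the \emph{right} sketch, and it already goes further than the paper, which offers no outline at all. If you wanted to flesh it out, the place to look is indeed \cite[Sec.\,4]{OSmain}, where the computation is done via an explicit analysis of how the Lagrangian subbundle winds over the boundary of the periodic domain and how this winding is cancelled or recorded by the Chern class contribution of each handle core.
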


Next, we introduce the definition of an {\it admissible Heegaard diagram}\footnote{The definition we give for admissible is what Ozsv\'ath and Szab\'o call {\it weakly admissible} in Definition 4.10 of \cite{OSmain}).}; in Section 5 of  \cite{OSmain} it is shown that there always exists an admissible diagram for $Y$.  

\begin{deff}
A pointed Heegaard diagram $(\Si,\hal,\hbe,z)$ is called {\it admissible} if for every point $\hx \in \bT_\al \cap \bT_\be$ and every class $\phi \in \pi_2^0(\hx,\hx)$ with $\mu(\phi)=0$, the corresponding (periodic) domain $D(\phi)$ has both positive and negative coefficients.  
\end{deff}

We can now prove the main result of this section.

\begin{thm} \label{thm finite set}
Suppose $(\Si,\hal,\hbe,z)$ is an admissible pointed Heegaard diagram.  Then for any $\fs \in \Spin^c(Y)$, any two points $\hx,\hy \in Z(\fs,z)$, and any pair of integers $r,l$, the set
\begin{equation}
K:=\{ \phi \in \pi_2(\hx,\hy) \mid n_z(\phi)=r,  \mu(\phi)=l, \mD(\phi) \geq 0\}
\end{equation}
is finite.
\end{thm}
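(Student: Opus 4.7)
The plan is to parametrize $K$ by the integer points of a rational convex polyhedron whose boundedness is equivalent to the admissibility hypothesis. If $K = \emptyset$ there is nothing to prove, so fix a basepoint $\phi_* \in K$. The concatenation $\sharp$ defines a free, transitive action of $\pi_2(\hx,\hx)$ on $\pi_2(\hx,\hy)$, and by Lemma \ref{lemma pix} any $\phi \in \pi_2(\hx,\hy)$ may be written uniquely as $\phi = \phi_* \sharp k\sigma \sharp \psi_0$ with $k \in \bZ$ and $\psi_0 \in \pi_2^0(\hx,\hx)$. Using additivity of $n_z$ and $\mu$ under $\sharp$ together with $n_z(\sigma) = 1$, $n_z(\psi_0) = 0$, and $\mu(\sigma) = 2 c_1(\sigma_*([S^2])) = 2$ (by Remark \ref{rmk minimal Chern} and Equation \eqref{eq c1 of sphere}), the constraints $n_z(\phi) = r$ and $\mu(\phi) = l$ force $k = 0$ and $\mu(\psi_0) = 0$. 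Hence $K$ is in bijection with
\[
\widetilde K := \{ \psi_0 \in \pi_2^0(\hx,\hx) \mid \mu(\psi_0) = 0, \ n_i(\mD(\psi_0)) \geq - n_i(\mD(\phi_*)) \ \forall\, i = 1, \ldots, m \}.
\]

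Next I would realise $\widetilde K$ as the set of lattice points in a rational polyhedron. By Lemma \ref{lemma pix2}, $\pi_2^0(\hx,\hx) \cong H_2(Y)$, which is torsion-free for a closed oriented 3-manifold (Poincar\'e duality and the universal coefficient theorem), so $\pi_2^0(\hx,\hx)$ is a finitely generated free abelian group, and the map $\psi_0 \mapsto \mD(\psi_0)$ embeds it as a lattice $\La$ inside the integer lattice of periodic domains. By Theorem \ref{thm deep}, $\mu$ restricted to $\pi_2^0(\hx,\hx)$ agrees with the $\bZ$-linear functional $\langle c_1(\fs), \mH(\cdot)\rangle$, and hence extends linearly to $\La \otimes \bR$. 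Thus $\widetilde K = \La \cap P$, where $P \subset \La \otimes \bR$ is the rational convex polyhedron cut out by the single linear equation $\mu = 0$ together with the $m$ linear inequalities $n_i(\mD(\cdot)) \geq - n_i(\mD(\phi_*))$.

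The main step, and the only place admissibility enters, is showing $P$ is bounded. A rational polyhedron has finitely many lattice points if and only if its recession cone is $\{0\}$, so it suffices to prove that
\[
C := \{ v \in \La \otimes \bR \mid \mu(v) = 0, \ \mD(v) \geq 0 \}
\]
is trivial. Since $C$ is cut out by integer-coefficient linear equalities and inequalities, it is a rational polyhedral cone, hence generated over $\bR_{\geq 0}$ by finitely many rational rays. If $C \neq \{0\}$, clearing denominators in any such non-zero ray would produce a non-zero $\psi_0 \in \pi_2^0(\hx,\hx)$ with $\mu(\psi_0) = 0$ and $\mD(\psi_0) \geq 0$. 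But the admissibility of $(\Si, \hal, \hbe, z)$ asserts that any such non-zero periodic class has domain with both positive and negative coefficients, a contradiction. Therefore $C = \{0\}$, $P$ is bounded, and $\widetilde K$ --- and therefore $K$ --- is finite. The crux of the proof is this final translation: the combinatorial admissibility hypothesis is exactly what is needed to kill the recession cone of $P$, so that the affine family of homotopy classes with the prescribed $n_z$ and Maslov index cannot sweep out an unbounded ``positive direction'' of the domain lattice.
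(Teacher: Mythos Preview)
Your proof is correct and follows essentially the same route as the paper's: reduce to periodic classes $\psi_0 \in \pi_2^0(\hx,\hx)$ with $\mu(\psi_0)=0$ and domain bounded below by $-\mD(\phi_*)$, then use admissibility to rule out an unbounded direction. The only difference is cosmetic --- the paper argues finiteness by normalizing an unbounded sequence to the unit sphere and extracting a non-negative rational limit point, whereas you phrase the same step in the cleaner language of recession cones of rational polyhedra; both arguments terminate in the identical contradiction with admissibility.
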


\begin{proof}
If $K \neq \emptyset$, then fix $\psi \in K$.  Given any other $\phi \in K$, since $\phi,\psi \in \pi_2(\hx,\hy)$, we can write $\phi=\phi' \# \psi$, for some $\phi' \in \pi_2(\hx,\hx)$.  From Lemma \ref{lemma pix}, we know that 
\[
\phi'=k \si \# \phi_0,
\]
 for some $k\in\bZ$ and $\phi_1 \in \pi_2^0(\hx,\hx)$.  By additivity of the Maslov index (see Equation \eqref{additivity of Maslov}), we have $\mu(\phi')=\mu(\phi)-\mu(\psi)=0$. Next consider the computation 
 \begin{align*}
 0 &= \mu(\phi') \\
 & = \mu(\phi_0) + k \mu(\si) \\
 & = \langle c_1(\fs), \mH(\phi_0)\rangle + 2 k c_1(\si_*([S^2])),
 \end{align*}
 where the third inequality uses Equations \eqref{eq division by two} and \eqref{eq deep}\footnote{There are two different $c_1$'s in the last line of the computation!  The first one, $c_1(\fs)$, lives in $H^2(Y)$, and the second one, which is really $c_1(TM,\fj)$, lives in $H^2(M)$.}.  Plugging in $c_1(\si_*([S^2]))=1$ from Equation \eqref{eq c1 of sphere}, we have 
 \[
 k=-\frac{1}{2} \langle c_1(\fs),\mH(\phi_0)\rangle.
 \]
From Equation \eqref{eq c1 of sphere} we also know that $n_z(\si)=1$, which means that since $n_z(\phi)=n_z(\psi)$, we have $n_z(\phi')=0$ and so $k=0$.  Thus $\phi'=\phi_0$ and
\[
\phi=\phi_0 \# \psi, 
\]
where $\phi_0 \in \pi_2^0(\hx,\hx)$ satisfies
\[
\langle c_1(\fs),\mH(\phi_0)\rangle=0,
\]
or equivalently $\mu(\phi_0)=0$ by Equation \eqref{eq deep}. 

Next, since $\mD(\phi)\geq 0$ and $\mD(\psi) \geq 0$, then $\mD(\phi_0) \geq -\mD(\psi)$.  Recall that $\psi$ was an arbitrary fixed element.  So we may identify the set $K$ with the following set
\[
Q:=\{\phi_0 \in \pi_2^0(\hx,\hx) \mid \mu(\phi_0)=0, \mD(\phi_0) \geq \mD(\psi) \}.
\]
To complete the proof we show that $Q$ is finite.

Suppose that $Q$ is not finite.  Consider $Q$ as a lattice in $\bZ^m$, which in turn lives in $\bR^m$.  Denote by $\bar Q$ the infinite polytope defined by $Q$ in $\bR^m$.  Then since $Q$ is infinite, there exists a sequence of points $(q_j)$ with $\norm {q_j} \to \infty$.  Now consider the compact space given as the intersection of the unit sphere $S^{m-1}$ and  $\bar Q$; the sequence $({q_j}/{\norm {q_j}})$ is a subset of this compact space and therefore has a subsequence that converges to some point $q$.  However, as the coefficients of $q_j$ were bounded bellow, and $\norm {q_j} \to \infty$ it follows that $q$ has all non-negative coefficients (although probably non-rational).  As we were working in a convex polytope, there must exists a point corresponding to a periodic domain with rational coefficients  that are also non-negative (otherwise $q$ would be outside of $\bar Q$).  Clearing denominators we find a point with non-negative integer coordinates, corresponding to a domain $\phi \in \pi_2^0(\hx,\hx)$ such that $\mu(\phi)=0$, hence contradicting our assumption that the diagram is admissible.

\end{proof}

\begin{cor} \label{cor finite set}
For any $\hx,\hy \in \bT_\al \cap \bT_\be$, there are at most finitely many classes $\phi \in \pi_2(\hx,\hy)$, such that $\mu(\phi)=1$ and such that $\mM(\hx,\hy;\phi)$ is non-empty. 
\end{cor}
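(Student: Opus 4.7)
The plan is to derive this directly from Theorem \ref{thm finite set} using positivity of intersections. Since the corollary is stated in the context of $\hfhat$, the implicit restriction $n_z(\phi) = 0$ is in force, and we work with a fixed admissible pointed Heegaard diagram $(\Si,\hal,\hbe,z)$ for $Y$; such a diagram may always be chosen. The target set is
\[
S := \{ \phi \in \pi_2(\hx,\hy) \mid n_z(\phi) = 0, \; \mu(\phi) = 1, \; \mM_J(\hx,\hy;\phi) \neq \emptyset \},
\]
while Theorem \ref{thm finite set} applied with $r = 0$ and $l = 1$ gives finiteness of the purely combinatorial set
\[
K := \{ \phi \in \pi_2(\hx,\hy) \mid n_z(\phi) = 0, \; \mu(\phi) = 1, \; \mD(\phi) \geq 0 \}.
\]
Thus the entire task reduces to the inclusion $S \subseteq K$: any class admitting a $J$-holomorphic representative must have a non-negative domain.

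To prove $\mD(\phi) \geq 0$, I would fix $\phi \in S$ and a $J$-holomorphic representative $u \in \mM_J(\hx,\hy;\phi)$. For each component $D_k$ of $\Si - \bigcup_i \al_i - \bigcup_j \be_j$ select an interior point $z_k$ (with $z_1 = z$) and form the codimension-two submanifold $V_{z_k} := \pi(\{z_k\} \times \Si^{g-1}) \subset M$. Because $\pi \colon (\Si^g, \bJ) \to (M, \fj)$ is holomorphic, each $V_{z_k}$ is an $\fj$-complex submanifold, and the almost complex structures $J$ used in the Heegaard Floer construction are constrained so that every $V_{z_k}$ is simultaneously $J$-holomorphic (e.g.\ by demanding that $J$ agree with $\fj$ in a neighbourhood of each $V_{z_k}$). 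Positivity of intersections between a $J$-holomorphic disk and a $J$-holomorphic real-codimension-two submanifold then forces every local intersection index of $u$ with $V_{z_k}$ to be non-negative, hence $n_{z_k}(\phi) \geq 0$ for all $k$, and consequently $\mD(\phi) = \sum_k n_{z_k}(\phi) D_k \geq 0$. This places $\phi$ in $K$ and completes the reduction.

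The chief obstacle is the positivity of intersections step: one must justify that the class of almost complex structures admitted in Heegaard Floer theory is rich enough to render all the $V_{z_k}$ simultaneously $J$-holomorphic, and then invoke the non-trivial theorem (essentially due to Gromov, with refinements by McDuff and by Micallef--White) giving non-negative local intersection indices between $J$-holomorphic maps and $J$-holomorphic real-codimension-two submanifolds. Once those analytic inputs are accepted as black boxes, the finiteness conclusion is a clean application of Theorem \ref{thm finite set}; no additional use of energy bounds or Gromov compactness is required here, which is a notable simplification compared to the analogous Lagrangian Floer statement in Corollary \ref{for finitely many classes}.
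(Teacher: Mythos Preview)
Your proposal is correct and follows essentially the same route as the paper: reduce to Theorem~\ref{thm finite set} by showing that a class with a $J$-holomorphic representative has non-negative domain, via positivity of intersections with the codimension-two submanifolds $V_{z_k}$. The paper's proof phrases the positivity step in terms of the integrable structure $\fj$ (complex submanifolds intersect non-negatively) and only afterwards, in a separate remark, addresses the complication you flag up front---namely that the perturbed $J$ must be arranged to agree with $\fj$ near each $V_{z_k}$, citing Ozsv\'ath and Szab\'o's work to justify this; so your version is, if anything, more honest about where the analytic input lies.
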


\begin{proof}
This is an immediate consequence of Theorem \ref{thm finite set}, together with the following claim: if $\mM(\hx,\hy;\phi)\neq \emptyset$, then $\mD(\phi)\geq 0$.  Indeed, note that since $\fj$ is an {\it integrable} almost complex structure, any $\fj$-holomorphic curve $v \colon \bD \to M$ has as its image a complex manifold $v(\bD)$ in $M$.  The same is true of each submanifold $V_z$, and it is a general fact that if two complex submanifolds intersect transversely, then their algebraic intersection number is always non-negative.
\end{proof}

\begin{rmk}
There is actually a complication in the proof of Corollary \ref{cor finite set} that we have sidestepped.  As the reader may recall from the Section \ref{section Lag FH}, we actually need to perturb $\fj$ to obtain transversality of moduli space $\mM(\hx,\hy;\phi)$.  In general, such a perturbation results in a non-integrable almost complex structure, rendering the argument in our proof invalid.  However, Ozsv\'ath and Szab\'o work very hard in Section 3.3 \cite{OSmain} to show that it is possible to obtain transversality for the moduli spaces $\mM(\hx,\hy;\phi)$ by only perturbing the almost complex structure away from the submanifolds $V_{z_i}$ for $1 \leq i \leq m$ (where, recall, $m$ is the number of domains forming the basis of $\mD(\Si,\hal,\hbe)$).  This means that in a neighbourhood of each $V_{z_i}$, we still work with an integrable almost complex structure, and hence the preceding proof still works. 
\end{rmk}

\begin{rmk}\label{rmk on Z2} Note that Theorem \ref{thm deep} allows us to conclude more about the grading on $\hfhat$ than the material from Section \ref{section Lag FH}. Indeed, it follows from Equation \eqref{eq deep} (see also \cite[Lemma 3.3]{OSmain}) that Heegaard tori have minimal Maslov number 2. Thus, our constructions from Section \ref{section Lag FH} would only give $\hfhat$ a $\bZ_2$ grading, that is,
\[
\gr \colon  Z(\fs,z) \times Z(\fs,z) \to \bZ_2.
\]
However, Theorem \ref{thm deep} shows that in fact $\hfhat$ can be graded modulo $\fd(\fs)$, where $\fd(\fs)$ is given by
\[
\fd(\fs):= \gcd_{\xi \in H_2(Y)} \langle c_1(\fs),\xi \rangle.
\]
In general, being able to grade modulo $\fd(\fs)$ is  a stronger statement. For instance, if $Y$ is a homology sphere, then  \ref{thm deep} shows that we get a $\bZ$ grading.

  In other words, for some $\hx,\hy \in Z(\fs,z)$, and $\phi,\psi \in \pi_2(\hx,\hy)$, we have \linebreak $\dim\mM(\hx,\hy;\phi)=\mu(\phi)$ and $\dim\mM(\hx,\hy;\psi)=\mu(\psi)$, and 
\[
\mu(\phi)=\mu(\psi) \mod \fd(\fs).
\]

\end{rmk}

 \subsection{Bubbling} \label{subsection bubbling}
 
In this short section we explain how bubbling off of holomorphic spheres and disks is precluded as we only work with classes $\phi \in \pi_2^0(\hx,\hy)$.  We use the notation from Theorem \ref{thm gromov compactness II} (Gromov compactness II).  If a sequence $(v_n)_{n \in \bN} \subset \mM(\hx,\hy ;\phi)$ has a singular set $\De \subset \bD$, so that (up to subsequence), $v_n \to w$ on $\bD - \De$, and a holomorphic sphere or disk $v_p$ appears at each point $p \in \De$, then for large $n$, we have
\[
n_z(v_n)\geq n_z(w) + \sum_{p \in \De} n_z(v_p).
\]
As in Equations \eqref{eq energy of bubbles} and \eqref{equation on maslov indices}, we have an inequality (and not equality) because it is possible that additional bubbles can appear on the bubbles (forming so called `bubble trees').   Moreover, using the same argument as in the proof of Corollary \ref{cor finite set}, we see that the algebraic intersection numbers $n_z$ are always non-negative.  Thus, if a bubble exists, it must have $n_z=0$.  We show that this is impossible.

In the case of holomorphic spheres this is immediate from the fact that $\pi_2(M)\cong \bZ$ is generated by a map $\si \colon S^2 \to M$ with $n_z(\si)=1$ (see Equation \eqref{eq c1 of sphere}). Therefore, there are no spheres, let alone holomorphic spheres, with $n_z=0$.  The argument for disk bubbles is slightly more involved, so we state it as a lemma.

\begin{lemma} \label{lemma bubbling}
Let $v \colon (\bD, \bdd \bD) \to (\Sym^g(\Si),\bT_\al)$ satisfy $n_z(v)=0$.  If $v$ is pseudo-holomorphic, then $v$ is constant.
\end{lemma}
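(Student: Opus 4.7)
The plan is to translate the question about the disk $v$ in $\Sym^g(\Si)$ into a statement about an honest holomorphic map between Riemann surfaces via the tautological correspondence, and then force that map to be constant using the open mapping theorem. First, I would form the incidence variety
\[
S := \{(w,y) \in \bD \times \Si \mid y \in v(w) \text{ as a multiset}\}.
\]
Standard arguments show $S$ carries the structure of a compact Riemann surface with boundary such that $F \colon S \to \bD$, $(w,y) \mapsto w$, is a degree $g$ (possibly branched) holomorphic cover, and $\tilde v \colon S \to \Si$, $(w,y) \mapsto y$, is holomorphic. The boundary condition $v(\bdd \bD) \subset \bT_\al$ translates directly to $\tilde v(\bdd S) \subset \al_1 \cup \cdots \cup \al_g$, and the cardinality of $\tilde v^{-1}(y)$ (counted with multiplicity) equals the algebraic intersection number $v \cdot V_y$.

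Next, I would exploit the topological fact that $\Si \setminus (\al_1 \cup \cdots \cup \al_g)$ is connected: since $\hal = (\al_1, \ldots, \al_g)$ is a complete system of attaching circles for the handlebody $H_1$, cutting $\Si$ along these curves yields a $2g$-holed sphere. For any $y \in \Si \setminus \bigcup_i \al_i$, the submanifold $V_y$ is disjoint from $\bT_\al$, so $n_y(v) = v \cdot V_y$ is well-defined; as $y$ varies inside the connected region $\Si \setminus \bigcup_i \al_i$, this integer is locally constant and therefore equals $n_z(v) = 0$ throughout. Because $\fj$ is integrable and each $V_y$ is a complex submanifold, positivity of intersection (applied in a neighbourhood of $V_y$, where the almost complex structure may be arranged to remain integrable, as in Section 3.3 of \cite{OSmain}) upgrades $n_y(v) = 0$ from an algebraic to a geometric statement: $v(\bD) \cap V_y = \emptyset$, i.e.\ $\tilde v^{-1}(y) = \emptyset$. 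Hence $\tilde v(S) \subset \al_1 \cup \cdots \cup \al_g$.

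The conclusion is then immediate. The restriction of $\tilde v$ to the interior of $S$ is a holomorphic map between Riemann surfaces; if it were non-constant on some connected component, the open mapping theorem would force its image to be open in $\Si$, contradicting the fact that this image lies inside the $1$-real-dimensional set $\bigcup_i \al_i$. So $\tilde v$ is constant on each component of $S$, and since $v(w)$ is recovered as the multiset $\tilde v(F^{-1}(w))$, the map $v$ itself must be constant.

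The main obstacle I anticipate is making the tautological correspondence fully rigorous: one must verify that $S$ is a smooth Riemann surface (rather than a singular analytic space) at points where $v$ meets the diagonal $D \subset \Sym^g(\Si)$, and one must justify the use of positivity of intersection in the (possibly perturbed) almost-complex setting used to achieve transversality. Both points are standard in the Heegaard Floer literature, but they are genuinely where the analytic content of the proof resides.
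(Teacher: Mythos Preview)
Your proof is correct and shares the same core with the paper's argument: both rest on the fact that $\Si \setminus \bigcup_i \al_i$ is connected (equivalently, that the $\al$-curves are linearly independent so no proper subcollection bounds a subsurface) together with positivity of intersection with the divisors $V_y$. The paper, however, runs the argument by contradiction and more informally: it assumes $v$ hits some $V_q$ with $q$ off the $\al$-curves, deduces $n_q(v)>0$, and then argues that the ``shadow'' of $v$ in $\Si$ is a $2$-chain with boundary on the $\al$-curves, which by linear independence must be all of $\Si$, contradicting $n_z(v)=0$. Your route through the tautological correspondence and the open mapping theorem is a cleaner and more rigorous packaging of the same idea; it buys you a precise holomorphic map $\tilde v \colon S \to \Si$ to which you can apply standard one-variable complex analysis, at the cost of the extra care you correctly flag regarding smoothness of $S$ over the diagonal.
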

\begin{proof}
If $v (\bD) \subset \bT_\al \cup \bT_\be$ and $v$ is pseudo-holomorphic, then $E(v)=0$, which means that $v$ is constant. So suppose there exists a point $r$ in the interior of $\bD$, such that $v(r) \not \in \bT_\al$.

Let $p_i \colon \Si \times \cdots \times \Si \to \Si$ denote the projection onto the $i$-th coordinate. Then there is an $i$ such that 
\[
p_i(v(r)) \in \Si - \bigcup_{j=1}^g \al.
\]
That is, $q:=p_i(v(r))$ is a point in $\Si$ that is not on any of the $\al$ or $\be$ curves but in some domain $\mD$.  This means that $n_q(v)\neq 0$, and since $n_q(v)=n_z(v)$ for any point $ z \in \mD$, we have that $p_i(v(\bD))$ is a {\it surface} in $\Si$ with $p_i(v(\bdd \bD)) \subset \bigcup_{j=1}^g \al$.  But the $\al$ curves are linearly independent, therefore no subset of the $\al$ curves can bound a subsurface of $\Si$; that is, $p_i(v(\bD))=\Si$.  This contradicts $n_z(v)=0$.
\end{proof}

\subsection{Knot Floer homology} \label{subsection knot floer homology}

We now briefly describe Ozsv\'ath and Szab\'o's {\it Knot Floer homology} denoted by  $\hfkhat$: an invariant for null-homologous knots in 3-manifolds defined using a slightly modified construction of $\hfhat$ \cite{OSknots}.   

Let $Y$ be a closed, oriented 3-manifold, and let $K$ be a null-homologous knot in $Y$.  Then $\hfkhat(Y,K)$ is a bigraded abelian group constructed from a {\it two-pointed Heegaard diagram $(\Si, \hal,\hbe,w,z)$} that encodes the topology of $Y$, as well as of the embedding of $K \subset Y$. It is easy to see that a given Heegaard diagram $(\Si,\hal,\hbe)$ of $Y$, together with two basepoints $w$ and $z$, defines a knot in $Y$.  Indeed, we can connect $w$ with $z$ by an arc $a$ in $\Si - \cup \al_i$, and $z$ with $w$ by an arc $b$ in $\Si -\cup \be_i$.  By pushing $a$ into the handlebody obtained by attaching 2-handles to the $\al$ curves, and $b$ into the handlebody obtained from the $\be$ curves, we create a loop $ab$ (that intersects $\Si$ in exactly two points).  The loop $ab$ is a knot in $Y$.  

On the other hand, given $K\subset Y$, it is not hard to construct a two-pointed diagram.  Consider the Morse-theoretic description of a given two-pointed Heegaard diagram.  Recall that a Heegaard diagram of $Y$ can be obtained from a self-indexing Morse function (see Section \ref{subsection heegaard diagrams}).  Take the knot $K$ in $Y$, and a height function $h$ on $K$, which has only two critical points $p,q$ with $h(p)=0$ and $h(q)=3$.  Extend $h$ to a self-indexing function with the index 1 and index 2 critical points disjoint from $K$, and construct a Heegaard diagram $(\Si,\hal,\hbe)$ for $Y$ as before.  Taking $\Si = h^{-1}(3/2)$, it is clear that $K$ intersects $\Si$ at exactly two points that we define to be the basepoints $w$ and $z$. 

The construction of the Floer complex $\widehat{CFK}$ proceeds identically as before, with the only change that in the differential we require not only $n_z(\phi)=0$, but also $n_w(\phi)=0$.  The resulting {\it Knot Floer homology} is similarly invariant under all the choices that were made in its construction. The complex admits a decomposition along {\it relative $\Spin^c$ structures} denote by $\underline{\Spin^c}(Y,K):=\Spin^c(Y_0(K))$, where $Y_0(K)$ is the result of (canonical) zero-surgery along $K$.

\section{Sutured Floer homology} \label{section SFH}

Sutured manifolds $(M,\ga)$ are 3-manifolds $M$ with boundary together with a set $\ga$ of pairwise disjoint annuli and tori on their boundary. In 2006, Juh\'asz  constructed an important algebraic invariant for sutured manifolds, called {\it sutured Floer homology}, building on Ozsv\'ath and Szab\'o's revolutionary package of Heegaard Floer homology  tools.  Sutured Floer homology combines the powerful theory of sutured manifolds developed by Gabai and the analytical foundations of Floer theory.  


Gabai defined sutured manifolds in 1983 and used them to prove a number of long-standing conjectures.  For example, he proved that zero surgery on a nontrivial knot in $S^3$ cannot result in $S^1 \times S^2$ (the {\it Property R Conjecture} \cite{Gabai III}) or in $S^1 \times S^2 \# Y$, where $Y$ is some closed 3-manifold (the {\it Poenaru Conjecture} \cite{Gabai III}).  He showed that only trivial surgery on satellite knots results in a homotopy $S^3$ ({\it Property P} for satellite knots \cite{Gabai IV}). Gabai, and independently Scharlemann, also proved the {\it superadditivity of knot genus}, which means that the sum of the genera of two knots is a lower bound for the genus of their band connect sum \cite{Gabai V, Scharlemann}.  (A {\it band connect sum} of two knots generalises the concept of a connect sum of two knots.) 


Gabai defined decomposition of sutured manifolds along surfaces that gives new `simpler' sutured manifolds.  He showed that every sutured manifold admits such decompositions that eventually, after a finite number of steps, terminate in the simplest possible manifold -- a product manifold.  These {\it sutured manifold hierarchies} gave a controlled way of breaking down complex manifolds into simpler ones, and in a historical context, extend the work of Haken and Waldhausen in the 1960's on hierarchies of {\it Haken manifolds}, that is, manifolds that are {\it $P^2$-irreducible} (irreducible and have no 2-sided projective planes) and contain a properly embedded, 2-sided incompressible surface.


Whereas Heegaard Floer theory revolutionised the way one studies 3-manifolds, sutured Floer homology did the same for sutured manifolds.    Firstly, sutured Floer homology {\it detects the product manifold}, that is, a (balanced) sutured manifold is a product manifold (homeomorphic to $\text{surface} \times I$) if and only if its sutured Floer homology is equal to $\bZ$ (Corollary \ref{cor product hard direction}).  Since Juh\'asz's theory generalises knot Floer homology, this leads to a new proof that knot Floer homology {\it detects the genus of the knot} (first proved by Ozsv\'ath and Szab\'o \cite{OS genus}), and also a (simpler) proof that knot Floer homology {\it detects the unknot} (first proved by Giggini for genus one knots using contact  topology \cite{Ghi}, and then by Ni in the general case \cite{Ni}); see Remark \ref{rmk detecting genus} .   

Juha\'sz studies the way the sutured Floer homology changes under Gabai's surface decompositions, and shows that (if the decomposition is ``nice'') the sutured Floer homology of the resulting manifold is a direct summand of the sutured Floer homology of the original manifold (Theorem \ref{thm nice intro}).  This gives a way of associating an integer invariant (the rank of SFH) to manifolds at each stage of the hierarchy.  This leads to conclusions about the {\it depth of the hierarchy}; for example, if the rank of sutured Floer homology is less than $2^{k+1}$, then there is a sequence of at most $2k$ sutured decompositions that ends in a product manifold (Proposition \ref{prop stronger}).  In a similar vein, if a genus $g$ knot $K$ has rank of knot Floer homology $\hfkhat(K,g)$ less than $2^{k+1}$, then $K$ has at most $k$ pairwise disjoint non-isotopic minimal genus Seifert surface (Theorem \ref{thm disjoint surfaces}).  

Further, the support of the sutured Floer homology, as a subset of $H^2(M,\bdd M)$, gives rise to the {\it sutured polytope}, which has many properties that can be used to analyse the topology of $M$ (Section \ref{subsection polytope intro}).  For example, all faces of the polytope correspond to surface decompositions: given a face of the polytope there is a surface decomposition such that the resulting manifold's sutured polytope is precisely that face (Corollary \ref{cor decomposition}), and vice versa, given a surface decomposition, the resulting manifold's sutured polytope is precisely a face of the polytope (Proposition \ref{prop decomposition}).    In true spirit of a generalisation, whereas the Heegaard Floer homology polytope is symmetric, the sutured Floer polytope is asymmetric (Example \ref{ex asymmetric}).  Moreover, whereas the Heegaard Floer homology polytope is dual to the Thurston norm unit ball, and its vertices supporting the $\bZ$ group correspond to fibrations of the manifold (\cite[Thm.\,1.1]{Ni09}), the sutured Floer polytope is dual to the foliation cones of Cantwell and Conlon \cite{CC99}, and its vertices supporting the $\bZ$ group correspond to taut, depth one foliations of the sutured manifold (precisely the extension of the concept of a fibration) \cite{Altman2}.  Another comparison worth making, is that the Euler characteristic of the knot Floer homology is a topological invariant of the 3-manifold, namely the Alexander polynomial \cite{OS knots, Ra03}, whereas the Euler characteristic of sutured Floer homology is a type of Turaev torsion, and \cite{FJR10} give an easy algorithm for computing it (Section \ref{subsection torsion}). 


The construction of sutured Floer homology follows closely  the construction of the `hat' flavour of Heegaard Floer homology $\hfhat$.  In the language of the previous sections, we have that every sutured manifold $(M,\ga)$ admits a type of Heegaard diagram $(\Si,\hal,\hbe)$, called a {\it sutured diagram}; here the surface $\Si$ has boundary and its boundary describes the sutures on $(M,\ga)$.  There is no basepoint involved, because any basepoint could be taken to lie in a component of $\Si - \cup_i \al_i \cup_j \be_j$ that contains a boundary component of $\Si$.  Since no holomorphic disk `projects' onto such a component, it follows that automatically $n_z=0$ for every holomorphic disk.  From there, Juh\'asz proves that sutured Floer homology is well-defined by adapting Ozsv\'ath-Szab\'o's Floer theoretic framework for closed manifolds to the setting of sutured manifolds. 
 

This article is meant only as an introduction to the topic of sutured Floer homology (not as a survey article of all work done in this area), so we have omitted Juh\'asz's work on cobordisms \cite{Jucobord}.  This article has grown from the Lagrangian setting, into the Heegaard Floer, and lastly into the sutured Floer setting, and so we decided to stay focused only on the implications that could be explained without much additional theory.  Thus, we have also completely omitted substantial literature concerning the connection between sutured manifolds and contact manifolds (for example see the papers of Honda, Kazez and Mati\'c \cite{contact1,contact2,contact3}).
 
A small notational caveat: unlike in previous sections where $M$ denotes $\Sym^g(\Si)$, in this section $M$ is exclusively used to denote a sutured manifold. 

This section is organised as follows. 
 \begin{itemize}
 \item [\ref{subsection sutured manifolds intro}\, ] An introduction to sutured manifolds, sutured manifold decomposition and hierarchies.
  \item [\ref{subsection sutured diagrams}\, ] Definition of sutured Heegaard diagrams, and why they must be balanced (so that the Floer machinery works).
 \item [\ref{subsection sutured chain}\, ]  Definition of sutured Floer homology $SFH$.
  \item [\ref{subsection relative structures}\, ] Definition of relative $\Spin^c$ structures, as well as the norm-like geometric quantity $c(S,t)$ for sutured manifolds.
 \item [\ref {subsection samples}\, ] Basic properties of $SFH$, its relation to $\hfhat$ and $\hfkhat$.
 \item [\ref{subsection well-behaved surfaces intro}\, ] Types of ``well-behaved'' surfaces and the effect of surface decompositions on SFH.
 \item [\ref{subsection various}\, ] Other Juh\'asz's results, such as how $\rank SFH$ gives a bound on the depth of manifold,  the relation of $SFH$  of Seifert surface complements and $\hfkhat$, and the $SFH$ of a solid torus with $(p,q)$ torus knots as sutures.
 \item [\ref{subsection polytope intro}\, ] Definition of the sutured Floer polytope and some of its properties.
 \item [\ref {subsection norms intro}\, ] A short survey of 3-manifold norms, including Juh\'asz's new invariant $c(S,t)$ that plays the role of a norm for $SFH$.
 \item [\ref{subsection torsion}] Algorithm of Friedl, Juh\'asz, and Rasmussen for computing the Euler characteristic of $SFH$ using Fox calculus methods.
  \end{itemize}

\subsection{Sutured manifolds} \label{subsection sutured manifolds intro}

In this section we define sutured manifolds, balanced sutured manifolds, and strongly balanced sutured manifolds listed in order of decreasing generality.  In order to use the construction of $\hfhat$ for sutured manifolds, Juh\'asz had to impose certain `symmetry' constraints on the sutured manifolds to make them `balanced'; we explain why his definition is the only natural one.  Strongly balanced sutured manifolds, are necessary for the definition of  the sutured polytope, which we give in Section \ref{subsection polytope intro}. Next, we recall the definition of tautness, and explain Gabai's operation of decomposing a sutured manifold along a surface into simpler sutured manifolds.  Lastly, we give a couple of examples and define sutured hierarchies.

\begin{deff}
A {\it sutured manifold} $(M, \gamma)$ is a compact oriented 3-manifold $M$ with boundary together with a set $\gamma \subset \bdd M$ of pairwise disjoint annuli $A(\gamma)$ and tori $T(\ga)$. Furthermore, in the interior of each component of $A(\ga)$ one fixes a suture, that is, a homologically nontrivial oriented simple closed curve. We denote the union of the sutures by $s(\ga)$.

Finally, every component of $R(\gamma): = \bdd M - \Int(\ga)$ is oriented. Define $R_+(\ga)$ (or $R_-(\ga)$) to be those components of $\bdd M - \Int(\ga)$ whose normal vectors point out of (into) $M$. The orientation on $R(\ga)$ must be coherent with respect to $s(\ga)$, that is, if $\de$ is a component of $\bdd R(\ga)$ and is given the boundary orientation, then $\de$ must represent the same homology class in $H_1(\ga)$ as some suture.
\end{deff}

A trivial example is the {\it product sutured manifold} given by $(\Si \times I, \bdd \Si \times I)$ where $\Si$ is a surface with boundary and with no closed components; see Figure \ref{fig product manifolds}. Other simple examples are obtained from any closed, connected 3-manifold by removing a finite number of 3-balls and adding one trivial suture to each spherical boundary component.

\begin{figure}[h]
\centering
\includegraphics [scale=0.40]{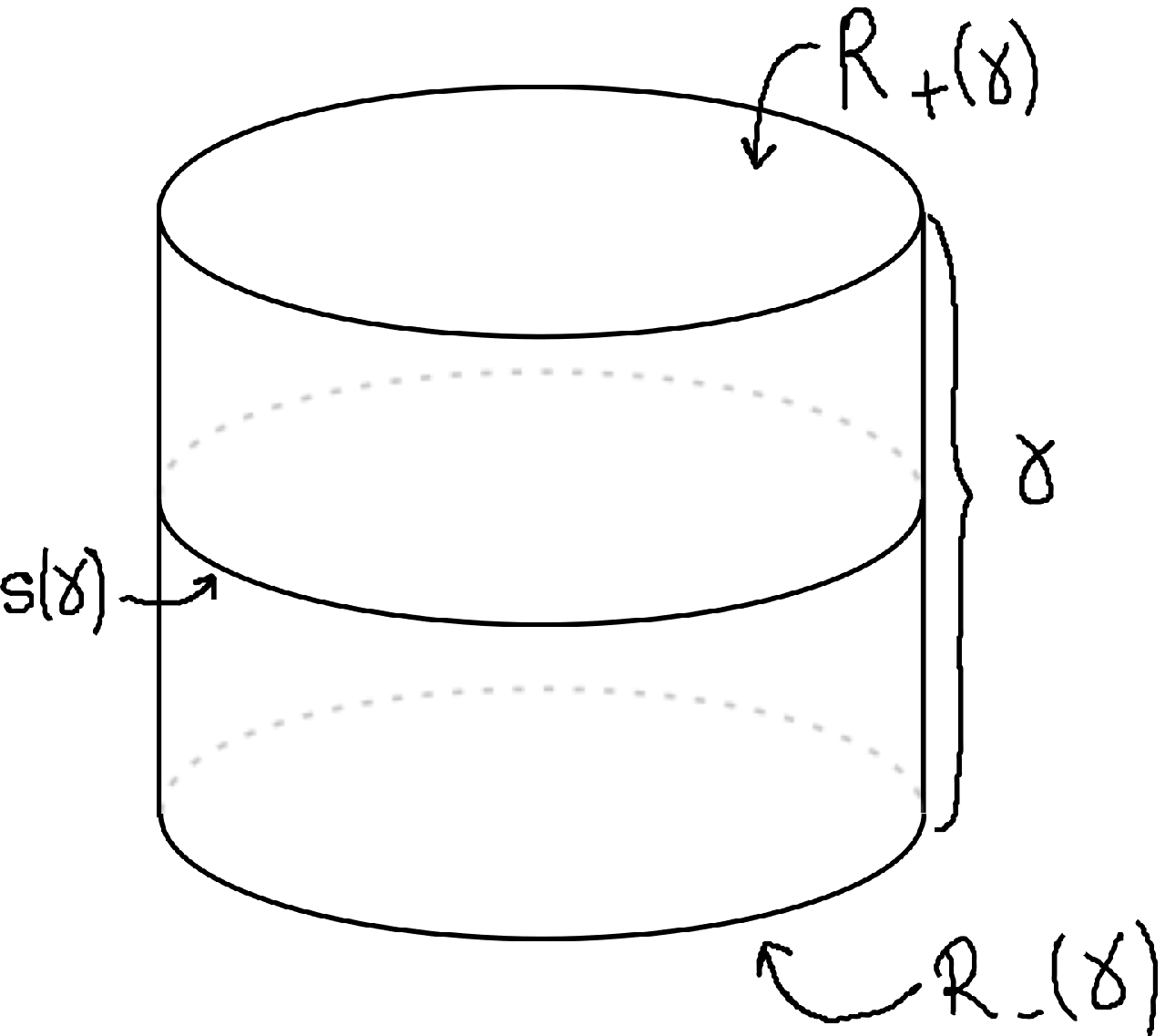}
\hspace{1cm}
\includegraphics [scale=0.40]{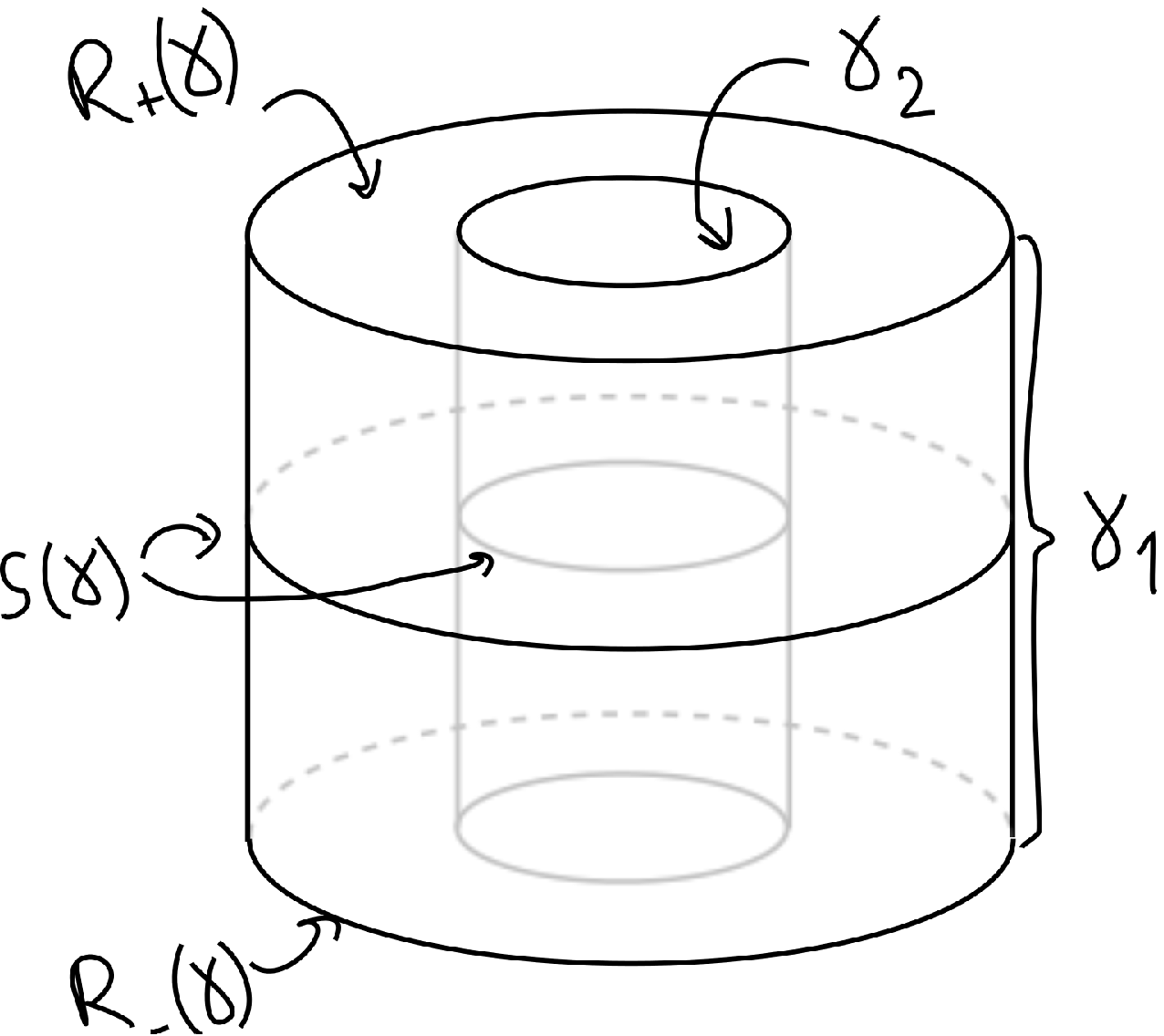}
\caption{Product sutured manifolds $\Si \times I$, with $\Si$ a disk (left) and $\Si$ an annulus (right).}
\label{fig product manifolds}
\end{figure}

Less trivial examples are those of link complements in closed 3-manifolds with sutures consisting of an even number of $(p,q)$-curves on the toroidal components. For example, see Figure \ref{fig toroidal} for an example of solid torus with $(1,1)$ and $(0,1)$ sutures, also denoted by $T(1,1;2)$ and $T(0,1;2)$, respectively.  In particular, if $K$ is a knot in a connected, oriented 3-manifold $Y$, denote by $Y(K)$ the sutured manifold homeomorphic to $Y - N(K)$ with two parallel sutures on $N(K)$ corresponding to meridians of $K$.

\begin{figure}[h]
\centering
\includegraphics [scale=0.40]{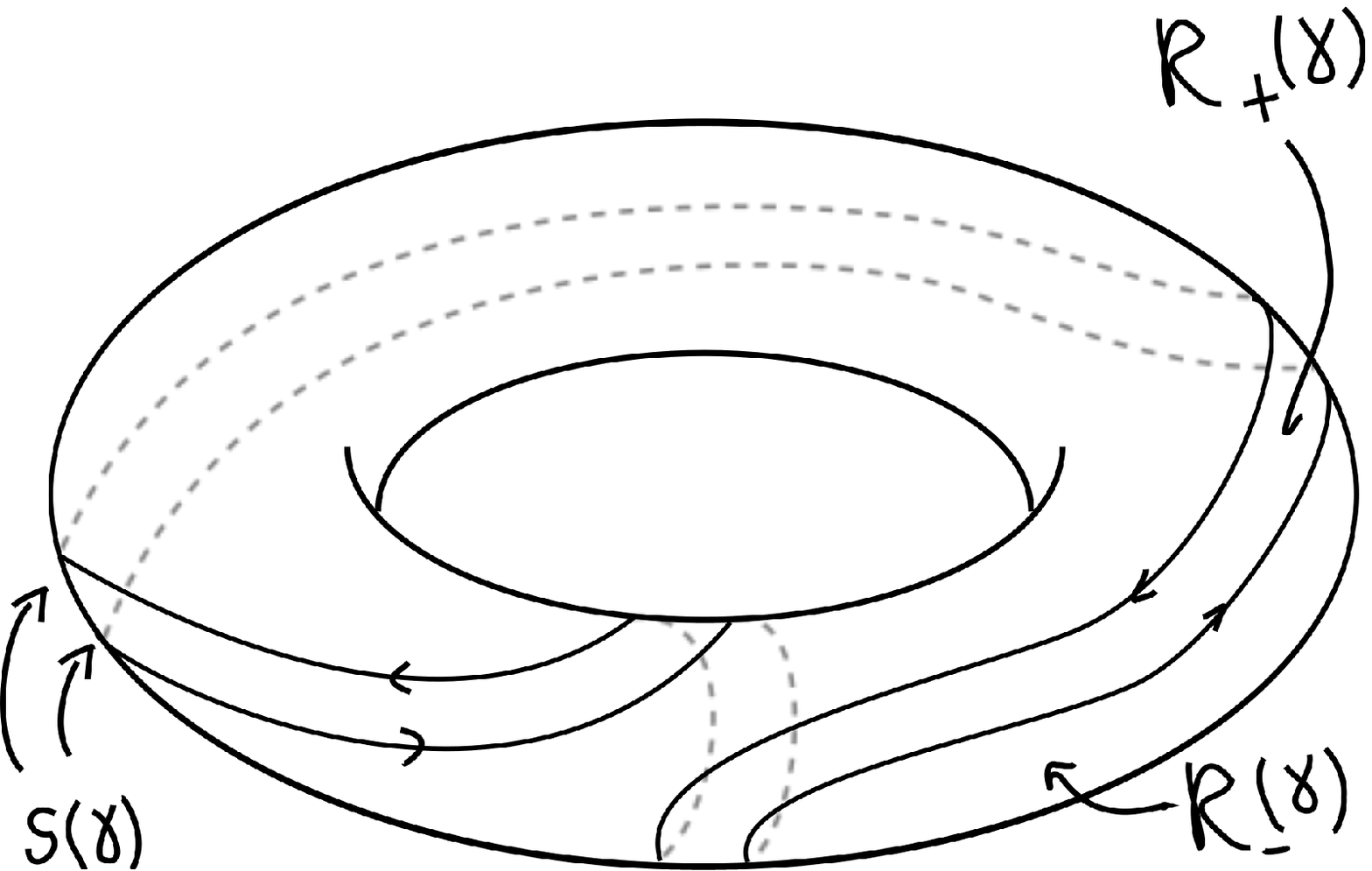}
\hspace{1cm}
\includegraphics [scale=0.40]{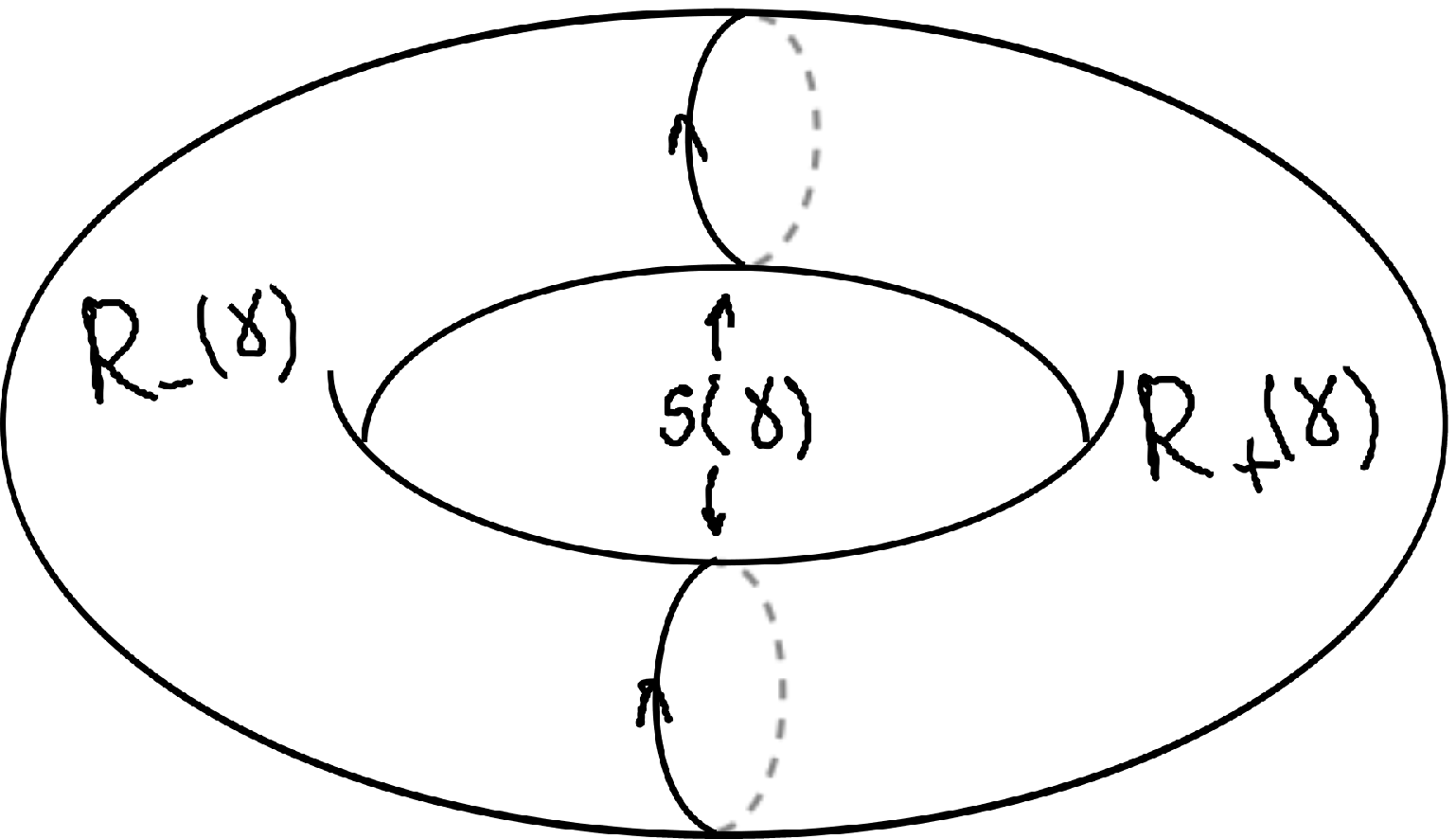}
\caption{Left: parallel $(1,1)$ torus knots give rise to $T(1,1;2)$.  Right: $T(0,1;2)$ is not taut, because $R_{\pm}(\ga)$ are not Thurston-norm minimising.}
\label{fig toroidal}
\end{figure}

\begin{ex} \label{example surface complement}
Also often studied are the complements of surfaces in closed 3-manifolds with sutures derived from the boundary of the surface (for example, the complement of a Seifert surface of a knot).   In particular, if $Y$ is a closed connected oriented 3-manifold, and $R \subset Y$ is a compact, oriented surface with no closed components, then we obtain the balanced sutured manifold $Y(R):=(M,\ga)$, where $M:=Y - N(R\times [0,1])$ and $\ga:=\bdd R \times [0,1]$.
\end{ex}

Now consider the definition of a balanced sutured manifold.

\begin{deff} [Jintro, Def.\,2.2] \label{deff balanced}
A {\it balanced} sutured manifold is a sutured manifold $(M, \ga)$ such that $M$ has no closed components, the equality $\chi(R_+(\ga)) = \chi(R_-(\ga))$ holds, and the map $\pi_0(A(\ga)) \to \pi_0(\bdd M)$ is surjective.
\end{deff}

\begin{rmk} Note that in the definition $M$ has no closed components, because those could already be studied with Heegaard Floer homology.  We only care about manifolds with boundary.  Further,  without loss of generality,  from now on we consider $M$ to be connected.
\end{rmk}

By definition, balanced sutured manifolds have $T(\ga) = \emptyset$ and each component of $\bdd M$ contains at least one element of $A(\ga)$, which can be thought of as a ``thickened'' suture.  At first glance it may not be transparent why sutured Floer homology should be defined only for this class of sutured manifolds, however it is not hard to see why this definition is only natural.  As we have already mentioned, sutured Floer homology follows the construction of $\hfhat$: start with a diagram associated to a sutured manifold and construct an associated homology group that is an invariant of the starting manifold.  The question is what properties do we need a ``sutured diagram'' $(\Si,\hal,\hbe)$ to have so that the Floer machinery can run its course?  Part of the answer is obvious: $\Si$ has to have boundary, $\hal$ and $\hbe$ have to have the same number of elements, and $\hal$, $\hbe$ each have to be a set of linearly independent cures in $H_1(\Si;\bQ)$.  Following Figure \ref{fig conditions}, we recover the definition of a balanced sutured manifold; we give a detailed explanation of the figure in Section \ref{subsection sutured diagrams}.

Next, consider the definition of a strongly balanced sutured manifold; the sutured Floer polytope is only defined for this class of manifolds.  The reason behind this becomes apparent in Section \ref{subsection polytope intro}.

\begin{deff} [Jsurfaces, Def.\,3.5]
A {\it strongly balanced} sutured manifold is a balanced sutured manifold $(M, \ga)$ such that for every component $F$ of $\bdd M$ the equality $\chi(F \cap R_+(\ga)) = \chi(F \cap R_-(\ga))$ holds.
\end{deff}

Further, a sutured manifold can be {\it taut}.

\begin{deff}
A sutured manifold $(M,\ga)$ is said to be {\it taut} if $M$ is irreducible, $R(\ga)$ is incompressible and Thurston-norm minimising in $H_2(M,\ga)$.
\end{deff}

See Figure \ref{fig toroidal} (right) for a simple example of a non-taut sutured manifold.

\begin{rmk} \label{rmk seifert complement is taut}
Note that the sutured manifold defined by a surface complement is strongly balanced.  In particular, when $K \subset S^3$ is a knot with a minimal genus Seifert surface $R$, then $S^3(R)$ is is a taut, strongly balanced sutured manifold.
\end{rmk}

Knowing whether the sutured manifold $(M,\ga)$ is taut or not, says something about its sutured Floer homology: for example, if $M$ is irreducible, but $(M,\ga)$ is not taut then $SFH(M,\ga)=0$ \cite[Prop.\,9.18]{Jusurface}; but if $(M,\ga)$ is taut, then $SFH(M,\ga) \geq 0$ (Theorem \ref{thm nonzero} below).

Now we define the operation of decomposing sutured manifolds into simpler pieces that was introduced by Gabai \cite[Def.\,3.1]{Gabai}.  
\begin{deff} \label{def decomp}
Let $(M,\ga)$ be a sutured manifold and $S$ a properly embedded surface in $M$ such that for every component $\la$ of $S \cap \ga$ one of (i)--(iii) holds:
\begin{enumerate}
\item $\la$ is a properly embedded non-separating arc in $\ga$.
\item $\la$ is  simple closed curve in an annular component $A$ of $\ga$ in the same homology class as $A \cap s(\ga)$.
\item $\la$ is a homotopically nontrivial curve in a toroidal component $T$ of $\ga$, and if $\de$ is another component of $T \cap S$, then $\la$ and $\de$ represent the same homology class in $H_1(T)$.
\end{enumerate}
Then $S$ defines a {\it sutured manifold decomposition} 
\[
(M,\ga) \leadsto^S (M',\ga'),
\]
where $M':=M - \Int(N(S))$ and 
\begin{align*}
\ga': & = (\ga \cap M') \cup N(S_+' \cap R_-(\ga)) \cap N(S'_- \cap R_+(\ga)),\\
R_+(\ga'): & = ((R_+(\ga) \cap M') \cup S_+') - \Int(\ga'), \\
R_-(\ga'): & = ((R_-(\ga) \cap M') \cup S_-') - \Int(\ga'),
\end{align*}
where $S_+'$ ($S'_-$) are the components of $\bdd N(S) \cap M'$ whose normal vector points out of (into) $M$. The manifolds $S_+$ and $S_-$ are defined in the obvious way as copies of $S$ embedded in $\bdd M'$ that are obtained by cutting $M$ along $S$.
\end{deff}

Note the following  special, simple case of surface decomposition.
\begin{deff}
A sutured manifold decomposition $(M,\ga) \leadsto^D (M',\ga')$ where $D$ is a disk properly embedded in $M$ and $\abs {D \cap s(\ga)}=2$ is called a {\it product decomposition}.
\end{deff}

\begin{rmk}
If $(M,\ga)$ is balanced and if $(M,\ga) \leadsto (M',\ga')$ is a product decomposition, then $(M',\ga')$ is also balanced.
\end{rmk}

We now work through two examples of product decomposition, which we later use to compute the sutured Floer homology of a connected sum; see Section \ref{subsection samples}.  Both of the ideas behinds these examples were described by Juh\'asz in proving Proposition 9.14 and 9.15 \cite{Ju}.

\begin{ex}  \label{ex s1s2} In this example we find a product disk $D$ in $S^1\times S^2(1)$ and show that
\[
S^1\times S^2(1) \leadsto^D S^3(2).
\] 
We can choose a ball $B_1 \subset S^1\times S^2$ such that for some point $p \subset S^1$ the intersection $B_1 \cap \{p\} \times S^2$ is a disk  $D_1$.  Remove $B_1$ and put a suture $s_1 \subset \bdd B_1$ so that $s_1 \cap \bdd D_1$ consists of two points; see Figure \ref{fig connected_sum} (left).  Clearly, there is a disk $D$ such that $D \cup D_1=\{p\}\times S^2$ and $D$ is a product disk in $S^1\times S^2 - B_1$ by construction.  Decomposing along $D$ is topologically equivalent to cutting $S^1\times S^2$ along $\{p\} \times S^2$, which leaves $[0,1] \times S^2$ with one suture on each of the boundary balls $\{0,1\}\times S^2$; see Figure \ref{fig connected_sum} (right).

\begin{figure}[h]
\centering
\includegraphics [scale=0.40]{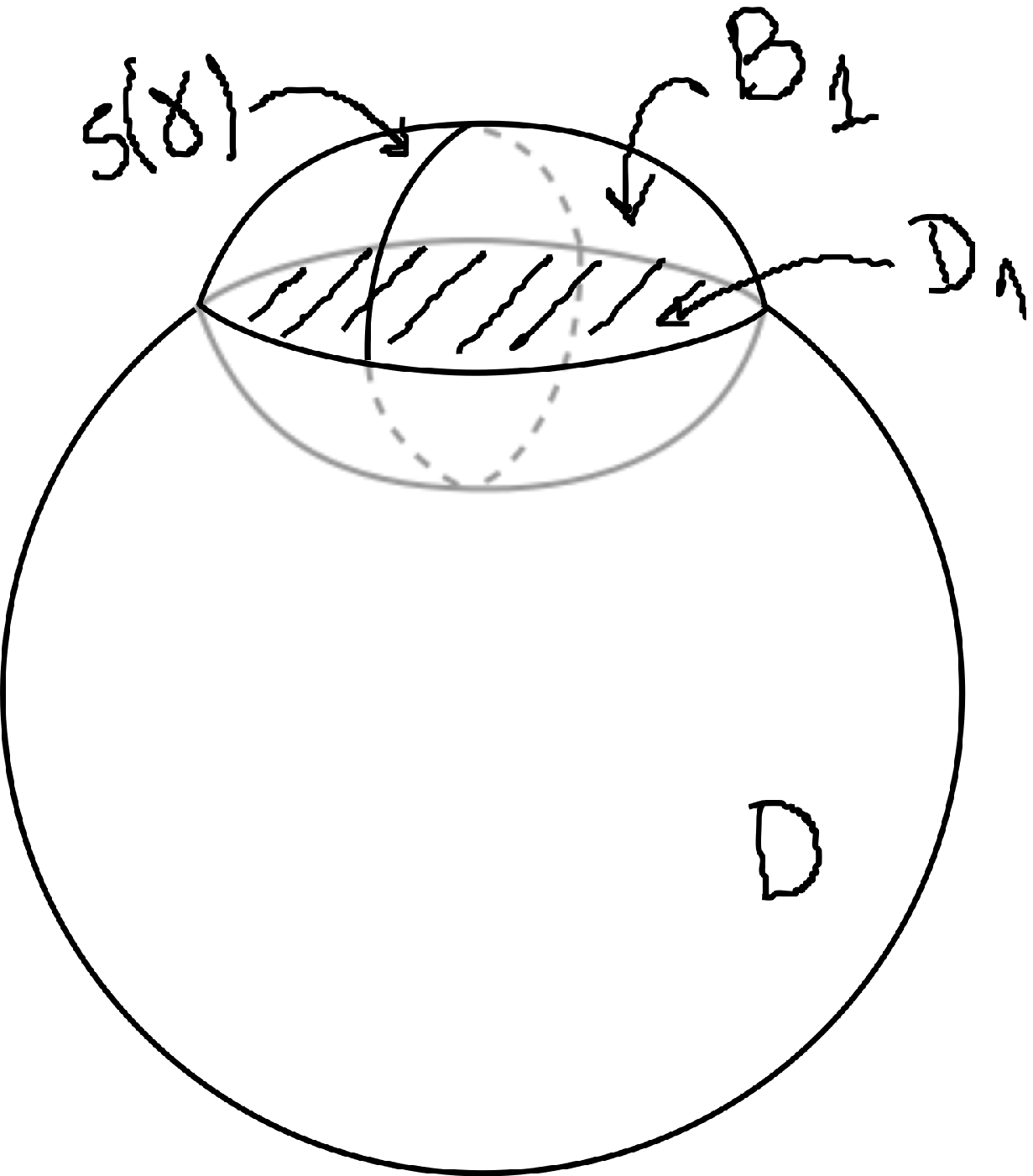} 
\hspace{1cm}
\includegraphics [scale=0.40]{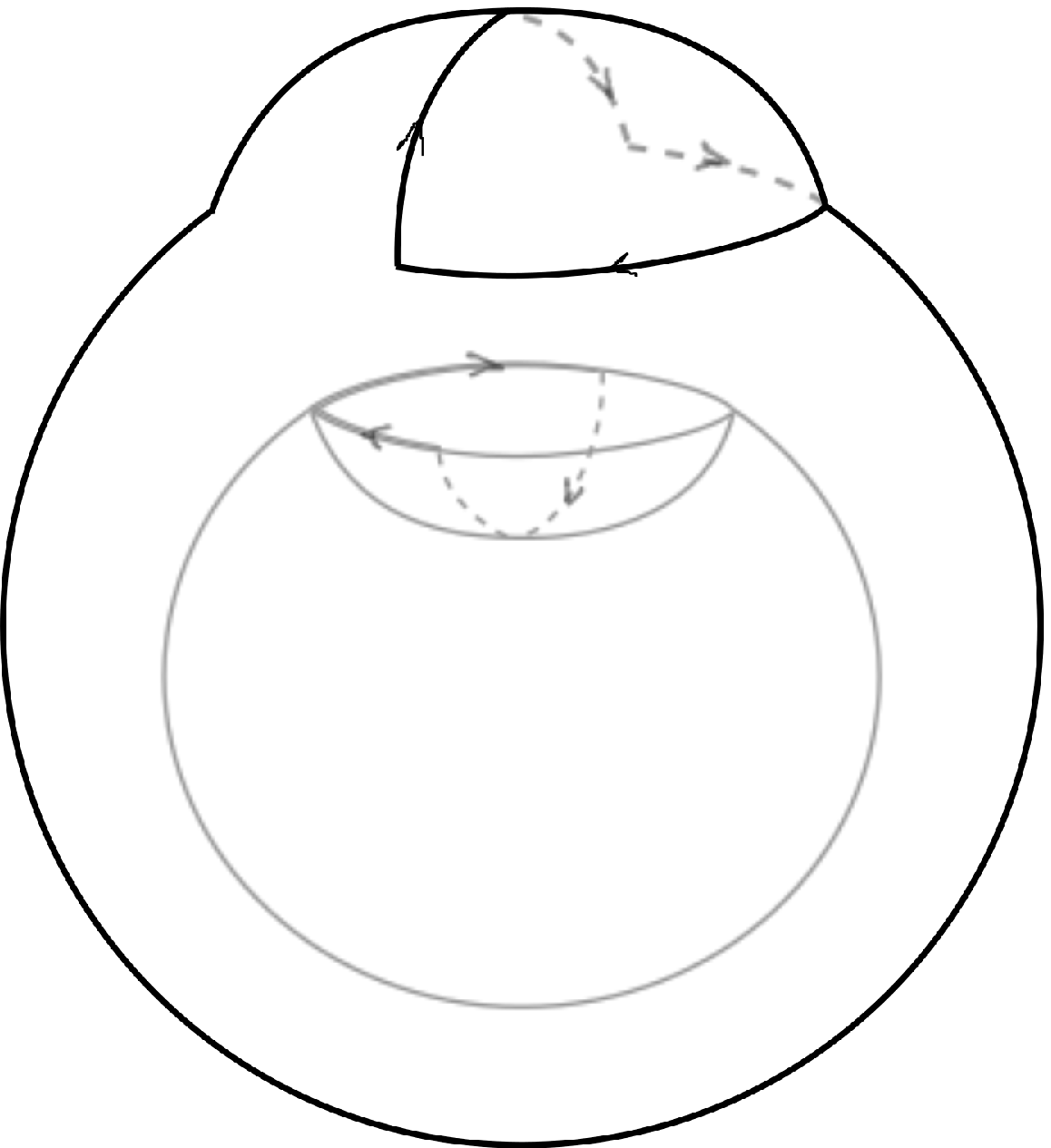}
\caption{Left: removing $B_1$ from $S^1 \times S^2$ leaves $S^1 \times S^2(1)$ and product disk $D$; $D_1 \cup D=\{p\}\times S^2$.  Right: decomposition  $S^1\times S^2 \leadsto^DS^3(2)$ with the two new sutures, one on each of the spheres.}
\label{fig connected_sum}
\end{figure}
\end{ex}

\begin{ex} \label{ex connect sum}  We use a similar idea to show that 
\[
(M,\ga) \# Y \leadsto (M,\ga) \# Y(1).
\]
Specifically, using a finger move, push a part of the boundary of $M$ containing a suture into the connect sum sphere; see Figure \ref{fig product decomp1} (left).  This leaves a product disk $D$ as depicted, such that decomposing along $D$ disconnects the manifold into two components $(M,\ga)$ and $Y(1)$ Figure \ref{fig product decomp1} (right).  Similarly, if $Y$ is replaced by a sutured manifold $(N,\nu)$ we have 
\[
(M,\ga) \# (N,\nu) \leadsto (M,\ga) \# N(1),
\]
where $N(1)=(N,\nu)\# S^3(1)$.
\begin{figure}[h]
\centering
\includegraphics [scale=0.5]{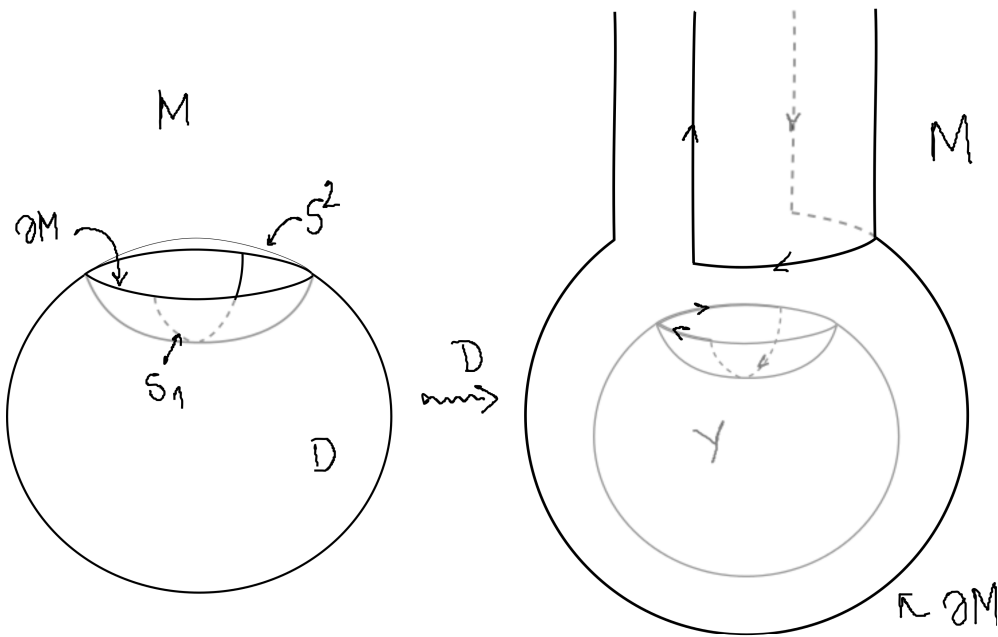}
\caption{The decomposition $(M,\ga) \# Y \leadsto^D (M,\ga) \sqcup Y(1)$.  The manifold $M$ is depicted as being on the ``outside'' (i.e. containing the point at infinity), pushed into the connect sum sphere $S$, and decomposed along $D$.  }
\label{fig product decomp1}
\end{figure}

\end{ex}

As we discuss later (Section \ref{subsection samples}) product decompositions do not change the sutured Floer homology; that is, $SFH(M,\ga)=SFH(M',\ga')$, which means that they are a very good tool for simplifying the original manifold in the hope of obtaining something for which the sutured Floer homology is more easily computed.

Lastly, each taut sutured manifold (that is not a rational homology sphere containing no essential tori) admits a series of decompositions ending in a product manifolds; this is a theorem of Gabai, see Theorem \ref{thm gabai hierarchy}.

\begin{deff} \label{deff hierarchy}
A  {\it sutured manifold hierarchy} is a sequence of decompositions along surfaces 
\[
(M_0,\ga_0) \leadsto^{S_1} (M_1,\ga_1) \leadsto^{S_2} \cdots \leadsto^{S_n} (M_n,\ga_n),
\]
where $(M_n,\ga_n)$ is a product sutured manifold.   The number $n$ is called the {\it depth} of the hierarchy.
\end{deff}

Define the {\it depth of a sutured manifold} $(M,\ga)$ to be the minimum depth over all sutured hierarchies of $(M,\ga)$.

\subsection{Sutured Heegaard diagrams} \label{subsection sutured diagrams}

In this section we define {\it sutured (Heegaard) diagrams} $(\Si, \hal,\hbe)$, and explain the necessary restrictions on the diagrams in order for the Floer machinery to work (see Figure \ref{fig conditions}).  For example, unlike the Heegaard diagrams of closed 3-manifolds, a general sutured diagram does not necessarily have the same number of $\al$ and $\be$ curves. So an obvious example of a Floer theoretic restriction is that the number of $\al$ and $\be$ curves must be the same so that the $\bT_\al$ and $\bT_\be$ are of the same dimension.  This, and other natural conditions, lead to the definition of what Juh\'asz called {\it balanced sutured manifolds} in Definition \ref{deff balanced}.

Without loss of generality we may assume that $M$ is connected.
\begin{deff} [Jintro, Def.\,2.7] A {\it sutured Heegaard diagram} is a tuple $(\Si, \hal,\hbe)$, where $\Si$ is a compact oriented surface with boundary and $\hal=\{\al_1, \ldots, \al_m\}$ and $\hbe=\{\be_1, \ldots, \be_n \}$ are two sets of pairwise disjoint simple closed curves in $\Int(\Si)$.
\end{deff}
Remark that since we are assuming that $M$ is connected, this means that $\Si$ is connected as well. Hence if $\bdd M \neq \emptyset$, then $H_2(\Si)=0$.

Any given sutured Heegaard diagram $(\Si,\hal,\hbe)$ defines a sutured manifold $(M,\ga)$ using the following construction.  Take the product manifold $\Si \times I$ and attach 3-dimensional 2-handles along the curves $\al_i \times \{0\}$ and $\be_j \times \{1\}$ for $i=1, \ldots, m$ and $j=1,\ldots, n$.  Smoothing the corners we obtain a three-manifold with boundary $M$, and with sutures $s(\ga)=\bdd \Si \times \{1/2\}$. Actually the converse is also true: given a sutured manifold (with a certain small restriction) we can find such a sutured Heegaard diagram defining it, see Lemma \ref{lemma self-indexing} for details.

However, in order for use the Floer machinery to construct a homology group from a sutured Heegaard diagram defining a sutured manifold $(M,\ga)$, we need that $\abs \hal=\abs \hbe$ and that $\hal$ and $\hbe$ are each linearly independent in $H_1(\Si; \bQ)$, so that the tori $\bT_\al$ and $\bT_\be$ are well-defined and the Lagrangian Floer construction can be applied. 

For now consider the following definition of a {\it balanced diagram}.

\begin{deff} [Jintro, Def.\,2.11]
A sutured Heegaard diagram $(\Si, \hal, \hbe)$ is called {\it balanced} if $\abs {\hal} = \abs{\hbe}$ and the maps $\pi_0(\bdd \Si) \to \pi_0(\Si - \bigcup \hal)$ and $\pi_0(\bdd \Si) \to \pi_0(\Si - \bigcup \hbe)$ are surjective.
\end{deff}
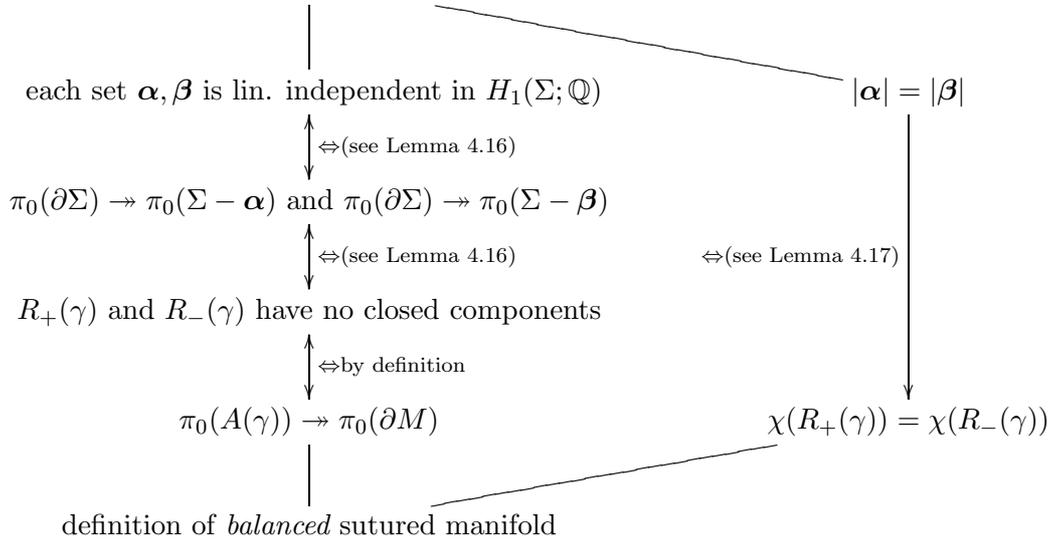
\begin{figure}[h]
\centering
\xymatrix{
 \text{Conditions on } (\Si,\hal,\hbe), \bdd \Si \neq \emptyset, \text{ for doing Floer homology}            \\
 \text{ each set $\hal,\hbe$ is lin. independent in }H_1(\Si;\bQ) \ar@{-}[u] & \abs \hal =\abs \hbe \ar[ddd]_{\Leftrightarrow \text{(see Lemma \ref{setup of definition of balanced 2}})}  \ar@{-}[ul] \\
   \pi_0(\bdd\Si) \twoheadrightarrow \pi_0(\Si - \hal) \text{ and } \pi_0(\bdd\Si) \twoheadrightarrow \pi_0(\Si - \hbe) \ar@{<->}[d]^{\Leftrightarrow \text{(see Lemma \ref{setup of definition of balanced 1}})}  \ar@{<->}[u]_{\Leftrightarrow \text{(see Lemma \ref{setup of definition of balanced 1}})} &      \\
 R_+(\ga) \text{ and } R_-(\ga) \text{ have no closed components}         \ar@{<->}[d]^{\Leftrightarrow \text{by definition}}   &       \\
  \pi_0(A(\ga)) \twoheadrightarrow \pi_0(\bdd M)  \ar@{-}[d] & \chi(R_+(\ga))=\chi(R_-(\ga)) \ar@{-}[dl] \\
 \text{definition of {\it balanced} sutured manifold}
	     }
\caption{Why a sutured manifold has to be balanced before we can use the construction of Heegaard Floer homology.}
\label{fig conditions}
\end{figure}

\begin{lemma} \label{setup of definition of balanced 1}
Let $(\Si,\hal,\hbe)$ be a sutured diagram.  Then the following statements are equivalent
\begin{enumerate} 
\item The elements of $\hal$ are linearly independent in $H_1(\Si;\bQ)$.
\item   $\pi_0(\bdd \Si) \to \pi_0(\Si - \bigcup \hal)$ is surjective.  
\item $R_+(\ga)$ has no closed components.
\end{enumerate}
An analogous list of equivalent statements can be made for $\hbe$.
\end{lemma}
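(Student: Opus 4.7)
The plan is to prove (ii) $\Leftrightarrow$ (iii) by a direct geometric identification of $R_+(\ga)$ with the surface obtained by compressing $\Si$ along $\hal$, and then to prove (i) $\Leftrightarrow$ (ii) by analysing 2-chains in $\Si$ whose boundary lies in $\bigcup\hal$. Throughout I use that $\Si$ is connected with $\bdd\Si\neq\emptyset$.

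For (ii) $\Leftrightarrow$ (iii): in the construction of $(M,\ga)$ from the sutured Heegaard diagram, $R_+(\ga)$ is homeomorphic to the surface $\Si_\al$ obtained from $\Si$ by compressing along each $\al_i$ (cutting along $\al_i$ and capping off the two new boundary circles with disks). The connected components of $\Si_\al$ are in bijection with the connected components of $\Si-\bigcup\hal$, and a given component of $\Si_\al$ is closed precisely when the corresponding component of $\Si-\bigcup\hal$ contains no boundary circle of $\Si$ in its closure. Hence (iii) fails iff some component of $\Si-\bigcup\hal$ misses $\bdd\Si$, which is exactly the failure of (ii).

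For (i) $\Leftrightarrow$ (ii), I fix a CW structure on $\Si$ in which each $\al_i$ and each component of $\bdd\Si$ is a subcomplex, and let $\overline{C_1},\ldots,\overline{C_k}$ be the closures of the components of $\Si-\bigcup\hal$. A standard argument (any 1-edge lying in the interior of some $C_j$ and appearing in $\bdd D$ would force unequal coefficients on the two adjacent 2-cells) shows that any 2-chain $D$ in $\Si$ with $\bdd D$ supported on $\bigcup\hal$ has the form $D=\sum_j m_j\overline{C_j}$; further, since each boundary circle $e\subset\bdd\Si$ lies in a unique $\overline{C_{j(e)}}$, the condition that the $\bdd\Si$-part of $\bdd D$ vanishes forces $m_j=0$ whenever $\overline{C_j}$ meets $\bdd\Si$. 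For $(\neg\text{ii})\Rightarrow(\neg\text{i})$, pick a closed component $C_{j_0}$; its geometric boundary $\bdd\overline{C_{j_0}}$ is a nonempty union of $\al$-curves, each appearing with coefficient $\pm 1$, and since $\bdd\overline{C_{j_0}}$ bounds $\overline{C_{j_0}}$ in $\Si$, this gives a nontrivial relation in $H_1(\Si;\bQ)$. Conversely, for $(\neg\text{i})\Rightarrow(\neg\text{ii})$, a nontrivial relation $\sum n_i[\al_i]=0$ yields a nonzero 2-chain $D$ with $\bdd D=\sum n_i\al_i$; the decomposition above then forces at least one nonzero coefficient $m_{j_0}$ with $C_{j_0}$ closed.

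The main obstacle is verifying that $\bdd\overline{C_{j_0}}$ is nonempty for a closed component $C_{j_0}$: this amounts to ruling out the possibility that $\overline{C_{j_0}}$ is itself a closed 2-manifold, which fails because $\overline{C_{j_0}}$ would then be a clopen proper subset of the connected surface $\Si$ (it is proper since $\bdd\Si\neq\emptyset$ but $\overline{C_{j_0}}\cap\bdd\Si=\emptyset$), contradicting the connectedness of $\Si$. The analogous equivalences for $\hbe$ and $R_-(\ga)$ follow by the same argument with $\hal$ and $\hbe$ interchanged.
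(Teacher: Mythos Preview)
Your proof is correct. The argument for (ii) $\Leftrightarrow$ (iii) is the same as the paper's. For (i) $\Leftrightarrow$ (ii) the paper takes a more functorial route: with $A=\bigcup\al_i$, linear independence is $\Ker\bigl(i_*\colon H_1(A)\to H_1(\Si)\bigr)=0$; the long exact sequence of the pair $(\Si,A)$ together with $H_2(\Si)=0$ turns this into $H_2(\Si,A)=0$, and excision identifies $H_2(\Si,A)$ with $\bigoplus_C H_2(C,\bdd N(A)\cap C)$ over the components $C$ of $\Si-\Int N(A)$, each summand vanishing exactly when $C$ meets $\bdd\Si$. Your cellular-chain analysis is the hands-on version of this same computation: the decomposition $D=\sum_j m_j\overline{C_j}$ with the constraint $m_j=0$ whenever $\overline{C_j}$ meets $\bdd\Si$ is precisely an explicit description of $H_2(\Si,A)$, and your two contrapositive directions trace which summands survive. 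The paper's packaging is cleaner and avoids the need to verify that $\bdd\overline{C_{j_0}}$ is a nonzero chain; your version is more elementary and makes the contribution of each closed region concretely visible.
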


\begin{proof}
$(i) \Leftrightarrow (ii)$ All homology groups in this proof are taken with rational coefficients.  Let $A$ denote the union of all the curves in $\hal$.

Consider the embedding $i \colon A \hookrightarrow \Si$; the components of $A$ are linearly independent if and only if the induced map on homology $i_* \colon H_1(A) \to H_1(\Si)$ is injective, that is, if $\Ker (i_*)=0$.  Next, the part of the long exact sequence of the pair $(\Si,A)$ given by
\[
0 \to H_2(\Si) \to H_2(\Si,A) \to H_1(A) \xrightarrow{i_*} H_1(\Si) \to \cdots
\]
implies that $H_2(\Si,A) \cong H_2(\Si) \oplus \Ker (i_*)$.  Since we assumed $\Si$ to be connected and have boundary, $H_2(\Si)=0$.  

By excision $H_2(\Si,N(A))=H_2(\Si - \Int(N(A)), \bdd N(A))$.  The left-hand side is just $H_2(\Si,A)$, and the right-hand side splits as a direct sum of homology groups of the form
\begin{equation} \label{C}
H_2(C,\bdd N(A) \cap C),
\end{equation}
where $C$ runs over the components of $\Si - \Int (N(A))$.  Thus, $H_2(\Si,A)=0$ if and only if each of the homology groups in \eqref{C} is zero, if and only if each $C$ contains a boundary component of $\Si$.  This last statement is equivalent to saying that $\pi_0(\bdd \Si) \to \pi_0(\Si - A)$ is surjective.
\\
\\
\noindent $(ii) \Leftrightarrow (iii)$ 
Since $R_+(\ga)$ is obtained from $\Si$ by performing surgery on the attaching $A$ curves, it follows that the components of $R_+(\ga)$ naturally correspond to components of $\Si -  A$. Thus, components of $R_+(\ga)$ have nonempty boundary if and only if the corresponding component of $\Si - A$ contains a component of $\bdd \Si$.  The statement follows.
\end{proof}

\begin{lemma} \label{setup of definition of balanced 2}
Let $(\Si,\hal,\hbe)$ be a sutured diagram defining a sutured manifold $(M,\ga)$.  Then $\abs \hal = \abs \hbe$ if and only if $\chi(R_+(\ga))=\chi(R_-(\ga))$.
\end{lemma}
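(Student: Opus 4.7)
The plan is to compute $\chi(R_{\pm}(\ga))$ explicitly in terms of $\chi(\Si)$, $\abs{\hal}$, and $\abs{\hbe}$, and then read the biconditional off directly. First I would recall how $(M,\ga)$ is built from the diagram $(\Si,\hal,\hbe)$: one starts with $\Si \times I$ and attaches a $3$-dimensional $2$-handle $D^2_i \times [-1,1]$ along an annular neighborhood of each $\al_i \times \{0\} \subset \Si \times \{0\}$, and similarly one attaches a $2$-handle along each $\be_j \times \{1\}$. After smoothing corners, the piece of $\bdd M$ lying on the $\Si\times\{0\}$ side of $\ga$ is $R_-(\ga)$, and by construction it is the surface obtained from $\Si$ by \emph{compressing along every $\al_i$}: one removes an open annular neighborhood of $\al_i$ from $\Si$ and glues in the two disk caps $D^2_i \times \{\pm 1\}$ of the attached $2$-handle. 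The analogous description on the other side identifies $R_+(\ga)$ with $\Si$ compressed along every $\be_j$.

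The second and decisive step is a short Euler-characteristic count. Compressing $\Si$ along a single interior simple closed curve $c$ is a two-move operation: the cut step replaces a closed annular neighborhood of $c$ (which has $\chi=0$) by its closed complement, leaving $\chi(\Si)$ unchanged, and the capping step then glues in two disks, adding $+2$. Hence one compression increases $\chi$ by $2$, so compressing $\Si$ along all the pairwise disjoint curves in $\hal$ (respectively $\hbe$) gives
\begin{equation*}
\chi(R_-(\ga)) = \chi(\Si) + 2\abs{\hal}, \qquad \chi(R_+(\ga)) = \chi(\Si) + 2\abs{\hbe}.
\end{equation*}
Subtracting these two identities yields $\chi(R_+(\ga)) - \chi(R_-(\ga)) = 2(\abs{\hbe} - \abs{\hal})$, and therefore $\chi(R_+(\ga)) = \chi(R_-(\ga))$ if and only if $\abs{\hal} = \abs{\hbe}$, as required.

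The whole argument is essentially routine once the topological picture is in place, so the only real obstacle is in the first step: one must be careful with the corner-smoothing and with the sign conventions, in order to be sure that $R_-(\ga)$ really does arise as the compression of $\Si \times \{0\}$ along the $\al$-curves (and not along the $\be$-curves). Once this identification is made, the $\chi$-calculation is a one-liner and the equivalence drops out.
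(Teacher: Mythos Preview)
Your argument is correct and is essentially identical to the paper's: both identify $R_\pm(\ga)$ as $\Si$ surgered along the $\hal$- or $\hbe$-curves, note that each compression raises $\chi$ by $2$, and read off the biconditional from $\chi(R_\pm(\ga))=\chi(\Si)+2\abs{\hal}$ (resp.\ $2\abs{\hbe}$). The only discrepancy is a harmless swap of conventions---the paper assigns $R_+(\ga)$ to the $\hal$-side and $R_-(\ga)$ to the $\hbe$-side, the reverse of yours---which, as you yourself flag, has no effect on the conclusion.
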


\begin{proof}
First consider the relationship between $R_+(\ga)$ and $\Si$: a 2-handle $D^2 \times [0,1]$ is attached to each $\al$ curve on $\Si$ falong $\bdd D^2 \times \{1/2\}$, so $R_+(\ga)$ consists of $\Si$ minus a neighbourhood of each $\al$ curve union $D^2 \times \{0,1\}$ of each 2-handle.  In other words, $R_+(\ga)$ is the result of doing surgeries on $\Si$ along the $\al$ curves, so $\chi(R_+(\ga))=\chi(\Si)+2\abs \hal$ and similarly $\chi(R_-(\ga))=\chi(\Si)+2\abs \hbe$.  Thus $\chi(R_-(\ga))=\chi(R_+(\ga))$ if and only if $\abs \hal = \abs \hbe$.
\end{proof}

The two lemmas together give the following proposition.

\begin{prop}
Let $(\Si, \hal,\hbe)$ be a sutured diagram of a sutured manifold $(M,\ga)$.  Then $(\Si,\hal,\hbe)$ is balanced if and only if $(M,\ga)$ is balanced.
\end{prop}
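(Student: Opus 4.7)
The plan is to assemble the two preceding lemmas, which already do most of the work, and then tie up the loose ends involving condition (A) (no closed components of $M$) and condition (C) (surjectivity of $\pi_0(A(\ga))\to\pi_0(\bdd M)$) by hand. Recall that the diagram being balanced amounts to (i) $|\hal|=|\hbe|$ together with (ii) surjectivity of $\pi_0(\bdd\Si)\to\pi_0(\Si-\bigcup\hal)$ and (ii$'$) surjectivity of $\pi_0(\bdd\Si)\to\pi_0(\Si-\bigcup\hbe)$; while the manifold being balanced packages (A) + (B) $\chi(R_+(\ga))=\chi(R_-(\ga))$ + (C). I will match (i) with (B) via Lemma \ref{setup of definition of balanced 2}, and (ii), (ii$'$) with the ``no closed components of $R_\pm(\ga)$'' statement via Lemma \ref{setup of definition of balanced 1}; what remains is to identify that closed-component statement with conditions (A) and (C).

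First I would make two elementary observations about the sutured manifold $(M,\ga)$ produced by $(\Si,\hal,\hbe)$. Since the 2-handles are attached along curves in $\Int(\Si)$, the ``side'' $\bdd\Si\times I$ of $\Si\times I$ survives untouched in $\bdd M$; by definition of a sutured Heegaard diagram $\bdd\Si\neq\emptyset$, so $\bdd M\supseteq\bdd\Si\times\{1/2\}\neq\emptyset$, and together with connectedness of $M$ this gives condition (A) for free. Moreover every suture lies in $\bdd\Si\times\{1/2\}$, which is a disjoint union of circles sitting inside the annuli $\bdd\Si\times I$, so $T(\ga)=\emptyset$ and $A(\ga)=\ga$ automatically. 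Thus in the sutured-diagram setting (A) and ``$T(\ga)=\emptyset$'' are always satisfied, and I only need to prove that
\[
(\text{i})+(\text{ii})+(\text{ii}')\;\Longleftrightarrow\;(\text{B})+(\text{C}).
\]

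The equivalence (i)$\Leftrightarrow$(B) is exactly Lemma \ref{setup of definition of balanced 2}. By Lemma \ref{setup of definition of balanced 1} applied to $\hal$ and to $\hbe$, the pair (ii)+(ii$'$) is equivalent to the statement that neither $R_+(\ga)$ nor $R_-(\ga)$ has a closed component. So it suffices to prove
\[
\bigl(R_+(\ga)\text{ and }R_-(\ga)\text{ have no closed components}\bigr)\;\Longleftrightarrow\;(\text{C}).
\]
For ($\Leftarrow$): if some component $F$ of $R_+(\ga)$ is closed, then $F$ is a connected component of $\bdd M$ that is disjoint from $\ga$, so the map $\pi_0(A(\ga))=\pi_0(\ga)\to\pi_0(\bdd M)$ misses $[F]$; the same argument applies to $R_-(\ga)$. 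For ($\Rightarrow$): if (C) fails, there is a component $F$ of $\bdd M$ disjoint from $\ga$; since $F$ is connected and $\bdd M-\Int(\ga)=R_+(\ga)\sqcup R_-(\ga)$ is a disjoint union of two open-in-$\bdd M-\ga$ pieces, $F$ lies entirely in $R_+(\ga)$ or entirely in $R_-(\ga)$, yielding a closed component.

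I do not expect a real obstacle here: the content is essentially combining Lemmas \ref{setup of definition of balanced 1} and \ref{setup of definition of balanced 2} with the elementary observation about $\bdd\Si\times I$. The only point that requires a little care is recognising that (A) and ``$T(\ga)=\emptyset$'' are built into the sutured-diagram setup, and that (C) under $T(\ga)=\emptyset$ is exactly the statement ``every boundary component of $M$ meets a suture,'' which is the natural dual of ``no closed components of $R_\pm(\ga)$.''
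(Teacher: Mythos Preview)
Your proof is correct and follows the same approach as the paper: the paper simply states ``The two lemmas together give the following proposition'' and relies on Figure~\ref{fig conditions} to display the logical equivalences, while you have written out in full the chain of implications that the figure encodes, including the observation that (A) and $T(\ga)=\emptyset$ come for free from the diagram construction and that (C) is equivalent to ``$R_\pm(\ga)$ have no closed components.''
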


We can also find a sutured diagram defining a sutured manifold.

\begin{lemma} \label{lemma self-indexing} \cite[Prop.\,2.13]{Ju}
Let $(M,\ga)$ be a sutured manifold for which the maps \linebreak $\pi_0 (R_+(\ga)) \to \pi_0(M)$ and $\pi_0(R_-(\ga)) \to \pi_0(M)$ are surjective.  Then there exists a sutured Heegaard diagram $(\Si, \al, \be)$.
\end{lemma}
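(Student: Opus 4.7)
The plan is to construct, in the spirit of Section \ref{subsection heegaard diagrams}, a self-indexing Morse function on $M$ that realises $R_-(\ga)$ as the $\{f=0\}$-level, $R_+(\ga)$ as the $\{f=3\}$-level, and on $\ga$ looks like the height function of a product; then take $\Si:=f^{-1}(3/2)$ and read off the $\al$ and $\be$ curves from the descending (resp.~ascending) manifolds of the index $1$ (resp.~index $2$) critical points.

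First I would fix a Riemannian metric $\rho$ on $M$ that is a product near $\ga$. On a collar neighbourhood of $\ga$ choose coordinates so that $\ga \cong s(\ga) \times [0,3]$ and prescribe $f$ to be the projection to $[0,3]$; likewise prescribe $f \equiv 0$ on $R_-(\ga)$ and $f \equiv 3$ on $R_+(\ga)$. Extend $f$ arbitrarily to a smooth function on $M$, then perturb it in the interior to a Morse function with no critical points in a neighbourhood of $\bdd M$ and with all critical values in $(0,3)$. Taking a further small perturbation, we may assume the flow of $-\nabla f$ is Morse--Smale.

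The main obstacle is then arranging that $f$ has no index $0$ and no index $3$ critical points; this is the only place the hypothesis is used. An interior local minimum $p$ lies in some connected component $M_0$ of $M$; by the assumed surjectivity of $\pi_0(R_-(\ga))\to \pi_0(M)$, the component $M_0$ contains a component of $R_-(\ga)$, so there is an embedded arc in $M_0$ from $p$ to $R_-(\ga)$ meeting no other critical points. Along a regular neighbourhood of this arc I apply the standard Morse cancellation/handle trading lemma for cobordisms (see e.g.\ Milnor's \emph{Lectures on the h-cobordism theorem}, reformulated so that one endpoint is a boundary component rather than a critical point) to cancel $p$ against an index-$1$ critical point produced along the arc. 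Iterating removes all index $0$ critical points. The same argument with $-f$ and $R_+(\ga)$ in place of $R_-(\ga)$ kills all index $3$ critical points.

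Once $f$ has only index $1$ and index $2$ interior critical points, a further rescaling and perturbation (supported away from $\bdd M$) makes $f$ self-indexing with $3/2$ as a regular value. Set $\Si := f^{-1}(3/2)$, which is a compact oriented surface with $\bdd \Si = s(\ga)\times \{3/2\} \subset \ga$; let $p_1,\dots,p_m$ be the index $1$ and $q_1,\dots,q_n$ the index $2$ critical points, and define
\[
\al_i := W^s(p_i;-\nabla f) \cap \Si, \qquad \be_j := W^u(q_j;-\nabla f) \cap \Si,
\]
exactly as in Equation \eqref{eq alpha and beta curves}. The Morse--Smale condition guarantees these are pairwise disjoint simple closed curves lying in $\Int(\Si)$, and by construction attaching $2$-handles along the $\al_i$ in $\Si\times\{0\}$ and along the $\be_j$ in $\Si\times\{1\}$ (then smoothing) reproduces $(M,\ga)$. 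Hence $(\Si,\hal,\hbe)$ is the required sutured Heegaard diagram.
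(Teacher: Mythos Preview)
Your proposal is correct and follows essentially the same strategy as the paper: prescribe $f$ on $\bdd M$, extend to a Morse function, use the $\pi_0$ hypothesis to eliminate index $0$ and index $3$ critical points, then make $f$ self-indexing and read off $(\Si,\hal,\hbe)$ from the gradient flow.

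The only substantive difference is in how the index $0$ (and dually index $3$) critical points are removed. You create a new index $1$ critical point along an arc to $R_-(\ga)$ and cancel against it; the paper instead argues homologically that an index $1$ critical point with a single flow line to the given index $0$ point already exists. Namely, since $H_0(R_-(\ga);\bZ_2)\to H_0(M;\bZ_2)$ is onto, the relative Morse complex has $H_0(M,R_-(\ga);\bZ_2)=0$, so some index $1$ critical point $y$ satisfies $\bdd y = x$; because the descending manifold of an index $1$ point consists of exactly two trajectories, exactly one of them hits $x$, and Smale's cancellation lemma applies directly. Both arguments are standard and valid; the paper's version avoids having to ``produce'' an auxiliary critical point and appeals to Milnor's lemma without reformulation, which makes the step slightly cleaner.
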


\begin{proof}
Following the closed case (see Section \ref{subsection heegaard diagrams}), the idea is to construct a self-indexing Morse function with no index 0 and no index 3 critical points.  Then the construction proceeds as before.

The restrictions on $\pi_0$ imply that $\pi_0(A(\ga)) \to \pi_0(\bdd M)$ is surjective and $T(\ga)= \emptyset$.  As each element of $A(\ga)$ is an annulus, choose a diffeomorphism  $\phi \colon A(\ga) \to [-1,4]$ so that $s(\ga)= \phi^{-1}(3/2)$.  Define $f \colon \bdd M \to [-1,4]$ be given by $f|_{A(\ga)}:=\phi$, $f|_{R_-(\ga)}:= -1$ and $f|_{R_+(\ga)} := 4$.  Take an extension of $f$ to the interior of $M$: generically $f$ is Morse.  Also, as before, we can assume that $f$ is self-indexing on the interior of $M$ (\cite[Thm.\,4.8]{Milnor}).

It remains to show that $f$ can be perturbed in the interior of $M$, so that the new function $f' \colon M \to [-1,4]$ is has no index 0 and no index 3 critical points, and $f'|_{\bdd M}=f|_{\bdd M}$.  This is done in Theorem 8.1 \cite{Milnor} by appropriately pairing up critical points, applying Smale's cancellation lemma, and then modifying $f$ in the neighbourhood of the cancellation.  Let $x$ be an index 0 critical point.  We can consider $x$ to be a generator of $C_0(M,R_-(\ga);\bZ_2)$.  But since $H_0(R_-(\ga);\bZ_2)\to H_0(M;\bZ_2)$ is surjective, $H_0(M,R_-(\ga);\bZ_2)=0$.  Thus, thinking in terms of  cellular homology, we know that there exists a index 1 critical point $y$, such that $\bdd y=x$.  Since $y$ is of index 1, this means that are at most two flow lines flowing from $y$, and hence there must be exactly one flow line connecting $x$ and $y$.  Smale's cancellation lemma says that we can perturb $f$ in the neighbourhood of this flow line so that the new function has no critical points in that neighbourhood.  We can pair up every critical point of index 0 with a critical point of index 1, and similarly every critical point of index 3 and with a critical point of index 2.  Therefore, we obtain a Morse function $f'$ as desired.
\end{proof}

Note that conversely, given a sutured Heegaard diagram $(\Si,\hal,\hbe)$, we can always construct a self-indexing Morse function as described in Lemma \ref{lemma self-indexing}.

Lastly, it remains to determine when two diagrams define the same manifold.  As in the closed case, the equivalence classes arise from the relation on manifolds given by Heegaard moves.

\begin{prop}
Two balanced diagrams $(\Si,\hal,\hbe)$ and $(\Si',\hal',\hbe')$ define the same balanced sutured manifold $(M,\ga)$ if and only if the diagrams are connected by a sequence of Heegaard moves.
\end{prop}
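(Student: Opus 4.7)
My plan is to handle the two directions separately, mirroring the closed case but keeping track of the extra boundary/suture data.

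For the backward direction (Heegaard moves preserve the underlying sutured manifold), I would go through the three moves one at a time. Isotopy of an $\al$- or $\be$-curve does not change the diffeomorphism type of the resulting handle attachment, because a $2$-handle attached along a simple closed curve depends only on the curve's isotopy class together with a framing, and the surface-framing is invariant under isotopy. For a handle-slide, I would verify that if $\al_1,\al_2$ cobound a pair of pants with $\ga$, then replacing $\al_1$ by $\ga$ is realised at the level of $M$ by sliding the $2$-handle glued along $\al_1$ over the $2$-handle glued along $\al_2$; this is a standard Kirby-calculus fact and leaves the smooth manifold (with sutures on $\bdd \Si \times \{1/2\}$) unchanged. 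For stabilisation, taking the connected sum with a torus carrying a single dual pair $(\al,\be)$ corresponds geometrically to carving out a small ball from $\Si\times I$ and regluing with a cancelling $1$-handle/$2$-handle pair; this also preserves $(M,\ga)$.

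For the forward direction, the plan is to reduce to a Morse-theoretic Cerf argument. By Lemma \ref{lemma self-indexing} (and the comment following it), giving a balanced diagram $(\Si,\hal,\hbe)$ for $(M,\ga)$ is equivalent to giving a self-indexing Morse function $f\colon M\to[-1,4]$ with $f|_{A(\ga)}=\phi$, $f|_{R_-(\ga)}=-1$, $f|_{R_+(\ga)}=4$, and with no critical points of index $0$ or $3$; the surface is $\Si=f^{-1}(3/2)$ and $\hal,\hbe$ are the intersections of $\Si$ with the stable and unstable manifolds of the index $1$ and $2$ critical points (cf.\ Equation \eqref{eq alpha and beta curves}). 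Given two such Morse functions $f_0,f_1$ realising the two diagrams, I would connect them by a generic smooth path $(f_t)_{t\in[0,1]}$ of functions agreeing with the prescribed data on $\bdd M$. By Cerf's theorem, along such a generic path the function is Morse except at finitely many values of $t$ where one of the following codimension-one events occurs: (a) an isotopy of ascending/descending disks of matching indices $1$ and $2$, producing an isotopy in $\Si$ of the corresponding $\al$- or $\be$-curve; (b) a handle-slide between two critical points of the same index $k\in\{1,2\}$, realising a Heegaard handle-slide on the $\al$- or $\be$-side; (c) a birth or death of a cancelling pair of critical points of adjacent indices, which on the level of diagrams is a (de)stabilisation. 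I would also need to handle the case where index $1$ and $2$ points cross through the level $3/2$, which can be arranged away from singular $t$ by perturbing the path to make it transverse to the level $3/2$, so crossings happen separately from Cerf events and are handled by isotopy of the Heegaard surface (and hence by isotopy moves).

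The main obstacle is ensuring that the one-parameter family can be chosen so that the forbidden index-$0$ and index-$3$ critical points never appear, or, if they do appear as transient births, they are immediately cancelled before the next singular time. This is the sutured analogue of the Reidemeister--Singer-type normalisation used in the closed case, and is exactly where the boundary conditions $\pi_0(R_\pm(\ga))\twoheadrightarrow\pi_0(M)$ are essential: they guarantee, as in the proof of Lemma \ref{lemma self-indexing}, that any index-$0$ (resp.\ index-$3$) critical point born in the family has an index-$1$ (resp.\ index-$2$) partner connected to it by a single flow line, so Smale's cancellation lemma may be applied in a neighbourhood of that flow line to remove the pair. The effect on the diagram is a stabilisation followed by a destabilisation (or a sequence of handle-slides and isotopies that trivialise one curve before destabilising). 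Once this normalisation is performed, the events (a)--(c) above directly translate into the three Heegaard moves, concatenating to give the desired sequence connecting $(\Si,\hal,\hbe)$ to $(\Si',\hal',\hbe')$. This completes the sketch; the detailed Cerf-theoretic analysis of the singular strata is standard and parallels the treatment in \cite{OSmain} for closed $3$-manifolds.
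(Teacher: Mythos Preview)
Your proposal is correct and follows essentially the same approach as the paper: both directions are handled identically, with the nontrivial direction reduced to a Cerf-theoretic analysis of a generic path of self-indexing Morse functions (fixed on $\bdd M$) connecting $f_0$ and $f_1$, where the finitely many non--Morse--Smale times correspond to Heegaard moves. Your discussion of the index-$0$/index-$3$ cancellation and the separation of level-crossings from genuine Cerf events is more detailed than the paper's sketch, but the underlying strategy is the same one the paper attributes to Juh\'asz \cite[Prop.\,2.15]{Ju}.
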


It is clear that if two diagrams are related by a sequence of Heegaard moves, then they define the same manifold.  Indeed, recall that these Heegaard moves are isotopies, handle-slides and (de)stabilisations: the attaching procedure is isotopy invariant; handle-slides do not effect the homotopy type of the resulting manifold, and stabilisations amount to a connected sum with a 3-ball.

  Juh\'asz proves the other direction of the proposition by generalising the method of Ozsv\'ath-Szab\'o in the closed case  \cite[Prop.\,2.15]{Ju}.  The general idea behind the proof is as follows.  Take two diagrams $(\Si_0,\hal_0,\hbe_0)$ and $(\Si_1,\hal_1,\hbe_1)$ defining the same $(M,\ga)$ and constructed using the self-indexing Morse functions  $f_0$ and $f_1$ as in the proof of Lemma \ref{lemma self-indexing}.  Fix a Riemannian metric on $M$.  Then a generic path $f$ in the space of all smooth real-valued functions on $M$ connecting $f_0$ and $f_1$ is such that for all $t \in I - E$, where $E$ is a finite set, $f_t$ is Morse-Smale.  In other words, away from a set $E$, $f_t$ is Morse with gradient flow lines flowing only from larger to strictly smaller index critical points.  We can assume that $f_t$ remains unchanged on a neighbourhood of $\bdd M$. Thus, those $f_t$ define sutured diagrams $(\Si_t,\hal_t,\hbe_t)$ of $M$, where $\hal_t$ and $\hbe_t$ are intersections of $\Si_t$ with the ascending and descending manifolds of the index one and index two critical points of $f_t$, respectively.  Now we can study how $f_t$ changes when it passes through a point $e \in E$, and this leads to the conclusion that, for a small $\epsilon>0$, the diagrams $(\Si_{e-\ep}, \hal_{e-\ep},\hbe_{e-\ep})$ and $(\Si_{e+\ep}, \hal_{e+\ep},\hbe_{e+\ep})$ differ by a Heegaard move.

\subsection{Sutured Floer homology} \label{subsection sutured chain}

In this section we summarise the differences between the Floer setting for closed manifolds and for sutured manifolds, and then  define the sutured Floer complex. It is helpful if the reader is  familiar with Section \ref{section HF}, but we define all the terms here again.

The domains of a balanced sutured diagram $(\Si,\hal,\hbe)$ are defined similarly to those of a \linebreak Heegaard diagram: denote by $D_1, \dots, D_m$  the closures of the components of \linebreak $\overline \Si:=\Si - \cup_{i=1}^g \al_i - \cup_{j=1}^g \be_j$ {\it disjoint from $\bdd \Si$}.  Choose a point $z_k$ in the interior of each $D_k$.  Then
\[
\mD(\phi):=\sum_{i=1}^m n_{z_k}(\phi) D_i,
\]
where $n_{z_k}$ is the algebraic intersection number of a Whitney disk $u$ representing $\phi$ and the hypersurface $V_z:=\{z_k\} \times \Sym^{g-1}(\Si)$, as before.

\begin{rmk} \label{rmk who cares about boundary} In general, we could also pick a point $z$ in a component $K$ of $\overline \Si$ with $\bdd K \cap \bdd \Si \neq \emptyset$.  In that case also $n_{z}(u)$ does not depend on the choice of point $z$ in $K$, so we may choose $z$ to lie on $\bdd \Si$.  But we can always homotope $u$ away from $\bdd \Sym^d(\Si)$ so that $n_z(u)=0$.  Therefore, in the definition of domains we do not need to worry about such components $K$ of $\overline \Si$ and we can use all of the Floer machinery developed for closed Heegaard diagrams.
\end{rmk}

 A domain $\mP$ is {\it periodic} if the boundary of the 2-chain is a sum of $\al$- and $\be$-curves. A balanced diagram $(\Si,\hal,\hbe)$ is {\it admissible} if every periodic domain $\mP \neq 0$ has both positive and negative coefficients. Juh\'asz showed that every balanced diagram is isotopic to an admissible one \cite[Prop.\,3.15]{Ju}.  The idea is to isotope one set of curves, say the $\hbe$ curves, using finger moves along a set of curves generating $H_1(\Si,\bdd \Si)$.  Carefully chosen curves and finger moves result in a diagram where all periodic domains have both positive and negative coefficients.   
 
 \begin{deff}
Let $(\Si,\hal,\hbe)$ an admissible balanced diagram defining a balanced sutured manifold $(M,\ga)$.  Then define $CF(\Si,\hal,\hbe)$ to be the free abelian group generated by the points of the intersection $\bT_\al \cap \bT_\be$:
\[
CFH(\Si,\hal,\hbe) := \bigoplus_{\hx \in \bT_\al \cap \bT_\be} \bZ_2 \langle \hx \rangle.
\]
  Define an endomorphism $\bdd \colon CF(\Si,\hal,\hbe) \to CF(\Si,\hal,\hbe)$ in the following way:  for a point $\hx \in \bT_\al \cap \bT_\be$, let 
\begin{equation}
\partial x := \sum_{\hy \in L_1 \cap L_2} \left( \sum_{\{\phi \in \pi_2(\hx,\hy) \mid \mu(\phi)=1\}} \#_2 \wM (\hx,\hy) \right)\cdot \hy.
\end{equation}
\end{deff}
 
 We continue to work with $\bZ_2$ coefficients because we have not discussed the matter of being able to choose coherent system of orientations, but as with Heegaard Floer homology, this can be done in the case of sutured Floer homology as well; see Remark \ref{rmk coherent orientation}.
 
 Recall that in order to prove that $(CFH(\Si,\hal,\hbe),\bdd)$ is indeed a chain complex we need to show the following two statements hold.
 
 \begin{enumerate}
\item There are at most finitely many elements in the set $\{\phi \in \pi_2(\hx,\hy) \mid \mu(\phi)=1\}$ for every $\hx,\hy \in \bT_\al\cap\bT_\be$.  This proves that $\bdd$ is well-defined.

\item If $\mu(\phi)=1$ or $\mu(\phi)=2$ and $(u_n)_{n \in \bN} \subseteq \mM(\hx,\hy;\phi)$, then there are no singular points (that is, no bubbling). This proves that $\bdd^2=0$.
\end{enumerate}
 
Indeed, for an admissible balanced diagram $(\Si,\hal,\hbe)$ the set of domains over which we sum to obtain the differential is finite.  We can use the same proof as in Theorem \ref{thm finite set} with appropriate simplifications (such as $n_z=0$ is automatic).  So the set is finite, and $\bdd$ is well-defined.

As for bubbling, disk bubbles are excluded by a slight modification of the final argument of Lemma \ref{lemma bubbling}.  Namely, if there exists a disk bubble with boundary in $\bT_\al$ and with an interior point mapping into some domain $\mD$, then by homotopy invariance of $n_z$, the projection of the disk onto $\Si$ has to be a surface.  But the $\al$ curves are linearly independent, so no subset of the $\al$ curves can bound a subsurface of $\Si$. Contradiction. Sphere bubbles are excluded because $\Si$ has no closed components and the domain of the sphere is a 2-cycle.  Therefore, $\bdd^2$ is well-defined.

\begin{thm}
The {\it sutured Floer homology} $SFH(M,\ga)$ of a balanced sutured manifold $(M,\ga)$ is defined to be the homology of the chain complex $(CFH(\Si,\hal,\hbe),\bdd)$, where $(\Si,\hal,\hbe)$ is an admissible diagram for $(M,\ga)$.  
\end{thm}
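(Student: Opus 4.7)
The content of the theorem is really twofold: first, that the chain complex $(CFH(\Si,\hal,\hbe),\bdd)$ is genuinely a complex (i.e.\ $\bdd$ is well-defined and $\bdd^2=0$); second, and more substantially, that its homology depends only on the underlying sutured manifold $(M,\ga)$, not on the admissible diagram used to compute it, nor on the auxiliary data (path of almost complex structures, perturbations producing transversality, finger-move isotopy producing admissibility) entering the construction.

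The first point has essentially already been dispatched in the paragraphs preceding the statement: finiteness of the sum in $\bdd \hx$ is established by adapting the proof of Theorem \ref{thm finite set} (admissibility replaces the role of monotonicity, and $n_z=0$ is automatic per Remark \ref{rmk who cares about boundary}), while the vanishing $\bdd^2=0$ reduces to excluding bubbling: sphere bubbles cannot occur since $\Si$ has no closed components and the image domain of a bubble is a 2-cycle in $\Si$, and disk bubbles cannot occur by the argument of Lemma \ref{lemma bubbling} combined with linear independence of $\hal$ and $\hbe$. So the plan for this part is essentially to cite those earlier arguments with the small modifications noted.

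The meat of the plan is invariance. My strategy is to reduce invariance to the classification of sutured Heegaard diagrams modulo Heegaard moves. By the proposition at the end of Section \ref{subsection sutured diagrams}, any two balanced diagrams for the same $(M,\ga)$ are connected by a finite sequence of isotopies, handle-slides, and (de)stabilisations. I would then verify that the homology is unchanged under each of these moves, proceeding in this order: (a) independence of the generic path of $\om$-compatible almost complex structures used to achieve transversality, by a standard continuation-map argument building a chain homotopy equivalence from the parametrised moduli spaces; (b) independence of the admissibility-inducing finger-move isotopy, since any two such isotopies differ by a further isotopy and (a) applies; (c) invariance under isotopy of the $\hal$ and $\hbe$ curves, again by a continuation argument, with a bit of care to arrange that admissibility holds throughout the isotopy; (d) invariance under handle-slides via the Ozsváth–Szabó triangle map, counting pseudo-holomorphic triangles in $\Sym^g(\Si)$ with boundary on $\bT_\al$, $\bT_\be$, $\bT_{\be'}$; (e) invariance under stabilisation, which follows from a model computation on the stabilised diagram showing that the new intersection point contributes a canonical generator and the differential splits as a tensor product with a two-generator acyclic-style complex (after appropriate basepoint management).

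The main obstacle is step (d), handle-slide invariance. Following the template from \cite{OSmain}, one picks a triple diagram $(\Si,\hal,\hbe,\hbe')$ where $\hbe'$ is obtained from $\hbe$ by a handle-slide; defines chain maps $CF(\Si,\hal,\hbe)\to CF(\Si,\hal,\hbe')$ by counting index-zero pseudo-holomorphic triangles weighted by a distinguished top-degree generator of $CF(\Si,\hbe,\hbe')$ (computed from a nearly symmetric diagram); and proves that the composition with the reverse map is chain-homotopic to the identity via a square counting pseudo-holomorphic rectangles. In the sutured setting one must re-verify each analytic ingredient (admissibility for triples/quadruples, finiteness of triangle counts, absence of bubbling, associativity) in the presence of $\bdd\Si$; fortunately, the situation is in fact easier than in the closed Ozsváth–Szabó setup because the basepoint conditions $n_z=0$ that were imposed by hand in the closed case are now automatic from the presence of $\bdd\Si$ (cf.\ Remark \ref{rmk who cares about boundary}). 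The rest of the argument is a fairly mechanical adaptation of the closed case; the truly hard labour is absorbed in citing Juh\'asz's verification that the required triangle/rectangle moduli spaces behave as expected in the sutured setting.
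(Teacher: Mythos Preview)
Your proposal is correct and matches what the paper does, which is essentially nothing beyond a citation: the paper simply states the theorem and remarks that ``Juh\'asz shows the invariance of various (other) choices using the same proofs as in the Heegaard Floer homology setting for closed manifolds.'' Your outline of those proofs (continuation maps for $J$ and isotopies, triangle/rectangle counts for handle-slides, direct comparison for stabilisation) is exactly the Ozsv\'ath--Szab\'o template that Juh\'asz adapts, and your observation that the sutured case is in some respects \emph{easier} because $n_z=0$ is automatic is the right one.

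One small slip: in (e) you describe the stabilised complex as a tensor product with ``a two-generator acyclic-style complex.'' A stabilisation adds a single new $\al$ and $\be$ curve on a torus handle meeting in exactly one point, so the new complex is the old one tensored with a \emph{one}-generator complex with trivial differential (i.e.\ with $\bZ_2$, or $\bZ$ over integer coefficients). The two-generator phenomenon you may be recalling arises when one adds an extra basepoint (as in link Floer homology or the $Y(n)$ computations later in this section), not under Heegaard stabilisation.
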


Juh\'asz shows the invariance of various (other) choices using the same proofs as in the Heegard Floer homology setting for closed manifolds.

As in the case of Heegaard Floer homology, the chain complex decomposes along an equivalent notion of $\Spin^c$ structures, called {\it relative $\Spin^c$ structures of $(M,\ga)$}.  We discuss them in detail in the following section, but for not note that for each relative $\Spin^c$ structure $\fs \in \Spin^c(M,\ga)$ there is a well-defined abelian group $SFH(M,\ga,\fs)$, and the direct sum of these groups is the total sutured Floer homology of $(M,\ga)$:
\[
SFH(M,\ga):=\bigoplus_{\fs \in \Spin^c(M,\ga)} SFH(M,\ga,\fs).
\]

As before, the differential lowers the grading by one 
\[
\bdd \colon CFH(M,\ga,\fs)_* \to CFH(M,\ga,\fs)_{*-1}.
\]
 By Remark \ref{rmk on Z2}, $SFH(M,\ga,\fs)$ can always be graded modulo the minimal Chern number $\fd(\fs)$, where 
\[
\fd(\fs):= \gcd_{\xi \in H_2(M)} \langle c_1(\fs),\xi \rangle.
\]
So for some $\hx,\hy$ such that $\fs(\hx)=\fs(\hy)$, and $\phi,\psi \in \pi_2(\hx,\hy)$, 
\[
\mu(\phi)=\mu(\psi) \mod \fd(\fs).
\]
In particular, as before, there is always a relative $\bZ_2$ grading of $SFH(M,\ga,\fs)$.  What we have said so far explains why  sutured Floer homology may be summarised in the introductory section of a paper by saying something like:  `Sutured Floer homology associates to a given balanced sutured manifold $(M,\ga)$ a finitely-generated bigraded abelian group denoted by $SFH(M,\ga)$, where one grading is given by the decomposition along relative $\Spin^c$ structures of $(M,\ga)$, and the other is a relative $\bZ_2$ grading'.

\subsection{Relative $\Spin^c$ structures} \label{subsection relative structures}

In this section we describe relative $\Spin^c$ structures of a sutured manifold $(M,\ga)$.  Like in the closed case, our particular topological definition originates from Turaev's work \cite{Tu90}. Our current phrasing comes from \cite{Ju}.   Unlike before, we now need to describe what restrictions we make on the vector fields on the boundary of $M$.  For proofs we refer to Juh\'asz's papers \cite{Ju,Jusurface} and \cite{Jupolytope}.

Fix a Riemannian metric on $(M,\ga)$.  Let $v_0$ denote a nonsingular vector field on $\bdd M$ that points  into $M$ on $R_-(\ga)$ and out of $M$ on $R_+(\ga)$, and that is equal to the gradient of a height function $s(\ga) \times I \to I$ on $\ga$. The space of such vector fields is contractible.

A relative $\Spin^c$ structure is defined to be a {\it homology class} of vector fields $v$ on $M$ such that $v|{\bdd M}$ is equal to $v_0$.  Here two vector fields $v$ and $w$ are said to be {\it homologous} if there exists an open ball $B \subset \Int(M)$ such that $v$ and $w$ are homotopic through nonsingular vector fields on $M - B$ relative to the boundary.  There is a free and transitive action of $H_1(M)=H^2(M,\bdd M)$ on $\Spin^c(M,\ga)$ given by {\it Reeb turbularization} \cite[p.\,639]{Tu90}.  This action makes the set $\Spin^c(M,\ga)$ into an $H_1(M)$-torsor.  From now on, we refer to a map $\iota \colon \Spin^c(M,\ga) \to H_1(M)$ as an  {\it identification} of the two sets if $\iota$ is an $H_1(M)$-equivariant bijection.  Note that $\iota$ is completely defined by which element $\fs \in \Spin^c(M,\ga)$ it sends to $0 \in H_1(M)$ (or any other fixed element of $H_1(M)$).

At this point, as before, we can define $c_1(\fs) \in H^2(M)$, the {\it Chern class of $\fs$}, to be the first Chern class of the perpendicular two-plane field $v^\perp \to M$.   Equivalently, $c_1(\fs)$ is the first obstruction to $v^\perp$ being a trivial bundle over $M$. This $c_1(\fs)$ is used in the definition of the relative $\bZ/\fd(\fs)$ grading of each $SFH(M,\ga,\fs)$ in Section \ref{subsection sutured chain}.

The perpendicular two-plane field $v_0^\perp$ is trivial on $\bdd M$ if and only if $(M,\ga)$ is strongly balanced \cite[Prop.\,3.4]{Jusurface}.  Suppose that $(M,\ga)$ is strongly balanced.  Define $T(M,\ga)$ to be the set of trivialisations of $v_0^\perp$.  Let $t \in T(M,\ga)$.  Then there is a map dependent on the choice of trivialisation,
\[
c_1(\cdot, t) \colon \Spin^c(M,\ga) \to H^2(M,\bdd M),
\]
where $c_1(\fs,t)$ is defined to be the {\it relative Euler class} of the vector bundle $v^\perp \to M$ with respect to a partial section coming from a trivialisation $t$.  So $c_1(\fs,t)$ is the first obstruction to extending the trivialisation $t$ of $v_0^\perp$ to a trivialisation of $v^\perp$.  Here $v$ is a vector field on $M$ representing the homology class $\fs$.

\begin{rmk} \label{rmk gold}
Consider for a moment the following segment of the long exact sequence of the pair $(M,\bdd M)$:
\[
\cdots \to H^1(\bdd M) \xrightarrow{d} H^2(M,\bdd M) \xrightarrow{p^*} H^2(M) \xrightarrow{i^*} H^2(\bdd M) \to \cdots
\]
Fix $\fs \in \Spin^c(M,\ga)$ and let $v$ be a vector field representing $\fs$.  Then by the naturality of Chern classes we have $i^*(c_1(v^\perp))=c_1(v_0^\perp)$.  Suppose $v_0^\perp$ is trivial, then $c_1(v^\perp) \in \Ker i^*$.  Moreover, since $v_0^\perp$ is trivial we can define the relative Chern class $c_1(\fs,t) \in H^2(M,\bdd M)$ for some trivialisation $t$, which can be thought of as an element of $H^1(\bdd M)$.  Again by naturality, we have $p^*(c_1(\fs,t))=c_1(\fs)$.  

Moreover, Lemma 3.12 \cite{Jupolytope} says that for $\fs \in \Spin^c(M,\ga)$ and $t_1,t_2 \in T(M,\ga)$, we have 
\[
c_1(\fs,t_1) - c_1(\fs,t_2)=d(t_1-t_2).
\]
So indeed, $p^*(c_1(\fs,t_1) - c_1(\fs,t_2))=p^* \circ d(t_1-t_2)=0$, which confirms that $p^*(c_1(\fs,t_1))=p^*(c_1(\fs,t_2)) \in H^2(M)$.
\end{rmk}

A related concept to $c_1(\cdot,t)$ is the geometric quantity $c(S,t)$ associated to an oriented decomposing surface $S$ in $(M,\ga)$ \cite[Def.\,3.16]{Jupolytope}.  This invariant gives rise to the {\it sutured Floer norm} that is directly related to the {\it sutured Floer polytope}; we discuss this matter in detail in Section \ref{subsection norms intro}.  
The definition of $c(S,t)$ says that it is a sum of three geometric quantities:
\begin{equation} \label{equation cst}
c(S,t):= \chi(S) + I(S) - r(S,t),
\end{equation}
where $\chi(S)$ is the Euler characteristic, and we explain the remaining two quantities in the following paragraphs.

Any generic oriented decomposing surface $S$ is such that the positive unit normal field $\nu_S$ of $S$ is nowhere parallel to $v_0$ along $\bdd S$.  Denote the components of $\bdd S$ by $T_1, \ldots, T_k$; each of the components has an orientation coming from the orientation of $S$.  Let $w_0$ denote the nowhere zero vector field obtained by projecting $v_0$ into $TS$.  Further, let $e$ be the positive unit tangent vector field of $\bdd S$.  For $1 \leq i \leq k$, define the {\it index} $I(T_i)$ to be the (signed) number of times $w_0$ rotates with respect to $e$ as we go around $T_i$.  Then set
\[
I(T_i):=\sum_{i=1}^k I(T_i).
\]

Next, let $p(\nu_S)$ be the projection of $\nu_S$ into $v_0^\perp$.  Observe that $p(\nu_S)|_{\bdd S}$ is nowhere zero.  For $1 \leq i \leq k$ define $r(T_i,t)$ to be the number of times $p(\nu_S)|_{\bdd T_i}$ rotates with respect to $r$ as we go around $T_i$.  Then set
\[
r(S,t):=\sum_{i=1}^k r(T_i,t).
\]

Continuing with the same notation, we have the following very useful lemma.  We denote by $O_S$ the subset of $\Spin^c$ structures $\fs$ for which there is a unit vector field $v$ on $M$ whose homology class is $\fs$ and $v_p \neq (-\nu_S)_p$ for every $p \in S$; for a more detailed definition see Definition \ref{def outer intro}.
  
\begin{lemma} \label{lemma the two c are the same}  \cite[Lem.\,3.10]{Jusurface}
Let $(M,\ga)$ be a strongly balanced sutured manifold.  Let $t$ be a trivialisation of $v_0^\perp$, let $\fs \in \Spin^c(M,\ga)$, and let $S$ be a decomposing surface in $(M,\ga)$ as above. Then $\fs \in O_S$ if and only if 
\[
\langle c_1(\fs,t) [T_i] \rangle =c(T_i,t) \hspace{1cm} \text{ for every } 1 \leq i \leq k.
\]
In particular, summing both sides over $1 \leq i \leq k$ we have that if $\fs \in O_S$, then 
\[
\langle c_1(\fs,t)[S] \rangle = c(S,t).
\]
\end{lemma}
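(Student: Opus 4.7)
The plan is to reduce the lemma to a relative Euler class computation on $S$ combined with careful winding-number bookkeeping along $\bdd S$.

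First, I would establish the geometric content of the hypothesis $\fs \in O_S$. Choose a representative unit vector field $v$ of $\fs$ with $v_p \neq -(\nu_S)_p$ for every $p \in S$. Then $v_p$ and $(\nu_S)_p$ are nowhere antipodal in $S^2$, so there is a canonical shortest geodesic from $v_p$ to $(\nu_S)_p$; interpolating along these geodesics produces a homotopy of unit vector fields on $S$ from $v|_S$ to $\nu_S|_S$, giving a canonical oriented isomorphism of $2$-plane bundles $v^\perp|_S \iso \nu_S^\perp = TS$. Under this identification, computing $c_1(\fs,t)$ on $S$ reduces to computing an Euler class of $TS$ with a prescribed nonzero boundary section.

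Second, I would carry out the Euler class calculation locally at each $T_i$. The trivialisation $t$ of $v_0^\perp|_{\bdd M}$ pulls back through the identification of the first step to a nowhere-zero boundary section $\tilde t$ of $TS|_{\bdd S}$. The contribution of $T_i$ to the relative Euler class, by the relative Poincar\'e--Hopf theorem, is determined by $\chi$ of the local piece of $S$ near $T_i$ together with the winding of $\tilde t$ against the boundary tangent $e$. This winding number splits cleanly into two terms: the rotation of $w_0$ (the projection of $v_0$ into $TS$) against $e$, which by definition is $I(T_i)$, and the rotation of $p(\nu_S)$ (the projection of $\nu_S$ into $v_0^\perp$) against $t$, which is $r(T_i,t)$. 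Tracing orientations through the identification in step one shows these contributions enter with signs $+I(T_i)$ and $-r(T_i,t)$, precisely matching $c(T_i,t)$. Summation over $i$ then recovers $\langle c_1(\fs,t),[S]\rangle = c(S,t)$, which is the "in particular" clause of the lemma.

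For the converse, suppose the equality $\langle c_1(\fs,t),[T_i]\rangle = c(T_i,t)$ holds for every $i$. I would use obstruction theory to produce a representative of $\fs$ that avoids $-\nu_S$ on $S$. Starting from an arbitrary representative $v'$ of $\fs$, the obstruction to homotoping $v'|_S$ rel $\bdd S$ through unit vector fields to avoid $-\nu_S$ at each point of $S$ lives in $H^2(S,\bdd S;\pi_2(S^2)) \cong \bZ^{\pi_0(S)}$; running the Poincar\'e--Hopf calculation in reverse identifies each component of this obstruction with a discrepancy $c(T_i,t) - \langle c_1(\fs,t),[T_i]\rangle$ for an appropriate $T_i$. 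The hypothesis forces every obstruction to vanish, so the required homotopy on $S$ exists; extending back through the rest of $M$ via a Reeb turbularisation yields a representative of $\fs$ witnessing $\fs \in O_S$.

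The main obstacle will be the sign and orientation bookkeeping in the second paragraph: one must verify that the identification $v^\perp|_S \iso TS$ is orientation-preserving and that the winding numbers $I(T_i)$ and $r(T_i,t)$ appear with precisely the signs needed so that $\chi(S) + I(S) - r(S,t)$ emerges, rather than some other linear combination. Once those conventions are pinned down, both implications of the lemma reduce to the same Euler class computation, read in opposite directions.
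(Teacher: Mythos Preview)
The paper does not prove this lemma; it merely quotes it from \cite[Lem.\,3.10]{Jusurface} and moves on, so there is no in-paper argument to compare against.

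That said, your outline is essentially the argument Juh\'asz gives in the original reference. The key idea --- that $v\neq -\nu_S$ on $S$ furnishes a canonical homotopy of unit vector fields from $v|_S$ to $\nu_S$, hence an identification $v^\perp|_S\cong TS$, after which $\langle c_1(\fs,t),[S]\rangle$ becomes a relative Euler number computable by Poincar\'e--Hopf with boundary term governed by the winding of the transported section $t$ --- is exactly right, and the decomposition of that winding into the $I$ and $r$ contributions is the heart of the calculation. Your warning about sign conventions is well placed: this is where most of the work actually lies.

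One point to tighten in the converse. You locate the obstruction in $H^2(S,\bdd S;\bZ)\cong \bZ^{\pi_0(S)}$, indexed by the connected components of $S$, and then say each component of the obstruction is a single discrepancy $c(T_i,t)-\langle c_1(\fs,t),[T_i]\rangle$. But a component of $S$ may have several boundary circles, so the obstruction on that component is a \emph{sum} of such discrepancies over its boundary circles, not a single one. The lemma, as stated, asserts the equality for each boundary circle $T_i$ individually, which is a finer statement than ``the sum over each component of $S$ vanishes''. To get the per-$T_i$ equality in the forward direction (and to use it in the converse) you need the more local computation Juh\'asz actually performs: along each $T_i$ one compares the section $t$ of $v_0^\perp$ directly with the section induced from $\nu_S$ via the identification $v^\perp\cong \nu_S^\perp$, and reads off the rotation number. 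This yields the per-$T_i$ identity without ever invoking the global Euler class of $TS$; the summed identity $\langle c_1(\fs,t),[S]\rangle=c(S,t)$ then follows, as the lemma says, by adding these up.
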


\subsection{Basic properties} \label{subsection samples}

In this section we exhibit some basic properties of suture Floer homology, and how it relates to $\hfhat$ and $\hfkhat$.

\subsubsection{Product manifolds} \label{subsection product manifolds}
Consider the case of the {\it product sutured manifold} $(M,\ga)$: let $R$ be a compact oriented surface with no closed components, then $(M,\ga):=(R \times I, \bdd R \times I)$ with $s(\ga):=\bdd R \times \{1/2\}$.  It is easy to show that $SFH(M,\ga)=\bZ$ \cite[Prop.\,9.4]{Ju}.  In particular, take the balanced sutured diagram $(R, \emptyset, \emptyset)$ that defines $(M,\ga)$, and is certainly admissible as $H_2(M)=0$.  Clearly, $CF(R,\emptyset, \emptyset)=\bZ$, since it has a single generator consisting of a point.  A very important result is that the converse is also true: namely, if $SFH(M,\ga)=\bZ$, then $(M,\ga)$ is a product manifold; see Corollary \ref{cor product hard direction}.

\subsubsection{Product decompositions}
Further, we consider an important computational tool: the sutured Floer homology of a manifold remains unchanged under product decompositions.  In particular, if  $(M,\ga) \leadsto^D (M',\ga')$ is  product decomposition, then $SFH(M,\ga)=SFH(M',\ga')$ \cite[Lemma\,9.13]{Ju}.  To see this consider the following sutured diagram of $(M,\ga)$ particularly suitable for decomposing along $D$.  

Take a closed neighbourhood $N(D)$ of $D$ and choose a diffeomorphism $t \colon N(D) \to [-1,4]^3$ that maps $D$ to $\{3/2\} \times [-1,4]^2$ and sends $s(\ga) \cap N(D)$ to $[-1,4] \times \bdd[-1,4] \times \{3/2\}$; see Figure \ref{fig product decomp2}.  

\begin{figure}[h]
\centering
\includegraphics [scale=0.60]{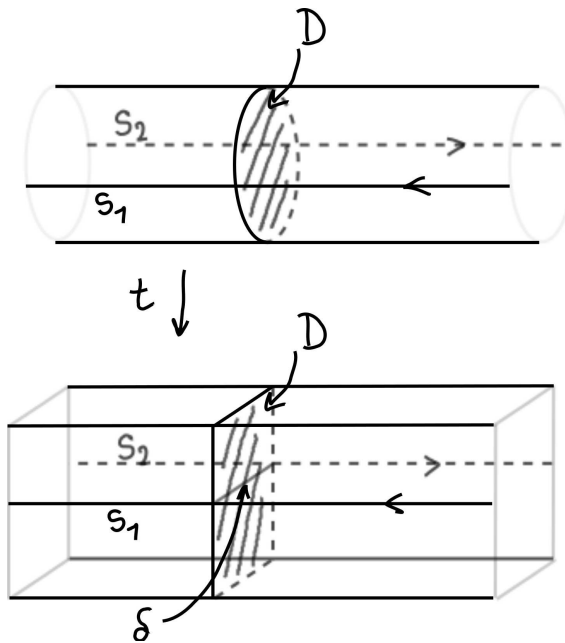}
\caption{A parametrisation $t \colon N(D) \to [-1,4]^3$ of a neighbourhood of the product disk $D$.}
\label{fig product decomp2}
\end{figure}
Denote by $p_3 \colon [-1,4]^3 \to [-1,4]$ the projection onto the third factor.  Then  we can extend $p_3 \circ t$ to a Morse function $f \colon M \to \bR$ as described in Lemma \ref{lemma self-indexing}.  Note that $f$ has no critical points in $N(D)$ and that $D$ can be seen as a union of flow lines of $-\nabla(f)$ from $R_-(\ga)$ to $R_+(\ga)$.  As before, $f$ defines a sutured diagram $(\Si,\hal,\hbe)$, where $f^{-1}(3/2)=\Si$.

Cut $\Si$ along the arc $\de=D \cap \Si$ to get a surface $\Si'$.  Since $\de$ is disjoint from $\hal \cup \hbe$, we get a new diagram $(\Si',\hal,\hbe)$ defining $(M',\ga')$.  If necessary, using finger moves (see paragraph after Remark \ref{rmk who cares about boundary}), we can isotope $\hbe$ into $\hbe'$ on $\Si'$ to make $(\Si',\hal,\hbe')$ an admissible diagram.

Note that since $\de$ is disjoint from $\hal \cup \hbe$, it is contained in a single domain that has boundary components in $\bdd \Si$.  Therefore, every domain $\mD \in D(\Si,\hal,\hbe)$, including the periodic domains, has multiplicity zero in the domain containing $\de$.  Therefore, any periodic domain $\mP \neq 0$ in $D(\Si,\hal,\hbe')$ corresponds to a periodic domain in $D(\Si',\hal,\hbe')$, so it has both positive and negative multiplicities.  Thus, both $(\Si,\hal,\hbe')$ and $(\Si',\hal,\hbe')$ are admissible, and moreover, the chain complexes $CF(\Si,\hal,\hbe')$ and $CF(\Si',\hal,\hbe')$ are isomorphic as any domain $\mD \in D(\Si,\hal,\hbe)$ has zero multiplicity in the domain containing $\de$.

\subsubsection{Closed manifolds}
Let $Y$ be a closed, connected oriented 3-manifold, and let $B_1, \ldots, B_n$ be $n$ pairwise disjoint 3-balls in the interior of $Y$.  Then we can make $M:= Y - \bigcup_{k=1}^n \Int (B_i)$ into a balanced sutured manifold by choosing the sutured to be oriented simple closed curves $s_i \subset \bdd B_i$ for all $i$.  Of course, $\ga:= \bigcup_{i=1}^n N(s_i)$.   Then $SFH(Y(1)) = \hfhat(Y)$ \cite[Prop.\,9.1]{Ju}.  Indeed, if $(\Si,\hal,\hbe, z)$ is an  admissible Heegaard diagram for $Y$, then for a small neighbourhood $U$ of $z$ in $\Si$, we let $\Si':=\Si - U$ and then $(\Si',\hal,\hbe)$ is an admissible balanced sutured diagram defining $Y(1)$.  

Moreover, in general, for any $n \in \bN$ in \cite[Prop.\,9.14]{Ju} Juh\'asz showed that
 \begin{equation}\label{eq closed mfld}
 SFH(Y(n),\ga)=\hfhat(Y) \otimes \bigotimes_{n-1} \bZ^2.
 \end{equation}

The proof is by induction on $n$.  We have just seen that Equation \eqref{eq closed mfld} holds for $n=1$; assume it is true for $n-1\geq 1$. There are three main ingredients to the proof: (1) the fact that there exists a product disk $D$ so that $Y(n-1)\#(S^1 \times S^2) \leadsto Y(n)$, (2) the fact that $\hfhat(Y\#X)=\hfhat(Y) \otimes \hfhat(X)$ for $X$ a closed 3-manifold,  and (3) the fact that $\hfhat(S^1 \times S^2) =\bZ^2$.  The latter two points are standard facts from Heegaard Floer homology (see \cite{OS properties}), and (1) we justified in Example \ref{ex s1s2}.  Putting (1)--(3) together, with $X=S^1 \times S^2$, and assuming the induction hypothesis for $n-1$, we have
 \begin{equation*}
 \begin{split}
SFH(Y(n)) &\overset{(1)}{=} SFH\left( (Y\#(S^1 \times S^2)) (n-1) \right) \\
&= \hfhat\left(Y \# (S^1\times S^2) \right) \otimes \bigotimes_{n-2}\bZ^2  \\
&\overset{(2)}{=}\hfhat(Y) \otimes \hfhat (S^1 \times S^2) \otimes \bigotimes_{n-2}\bZ^2  \\
& \overset{(3)}{=} \hfhat(Y) \otimes \bigotimes_{n-1}\bZ^2.
\end{split}
\end{equation*}
Here the second equality uses the induction hypothesis.  This completes the proof of Equation \eqref{eq closed mfld}.

Note that Equation \eqref{eq closed mfld} make sense as a generalisation of the $n=1$ case, because if we take $(M,\ga)=Y(1)$, we have 
\[
SFH(M,\ga)=SFH(Y(1))=\hfhat(Y)
\]
 as we have shown before using a sutured diagram argument.

\subsubsection{Connected sum}  Given two balanced sutured manifolds $(M,\ga)$ and $(N,\nu)$ and a closed oriented 3-manifold, the following holds \cite[Prop.\,9.15]{Ju}:
\begin{gather} \label{eq connect sum1}
SFH((M,\ga) \# (N,\nu))=SFH(M,\ga) \otimes SFH(N, \nu) \otimes \bZ^2, \\
SFH(M \# Y, \ga)=SFH(M,\ga) \otimes \hfhat(Y).\label{eq connect sum2}
\end{gather}
The key to proving Equations \eqref{eq connect sum1} and \eqref{eq connect sum2} is to note that there are product decompositions (as explained in Example \ref{ex connect sum}):
\begin{equation*}
\begin{split}
(M,\ga) \# (N,\nu) &\leadsto^D (M,\ga) \bigsqcup N(1),  \\
(M \# Y, \ga) &\leadsto^D (M,\ga) \bigsqcup Y(1).
\end{split}
\end{equation*}
Here $N(1)=(N,\nu)\#S^3(1)$.  Again, from Examples \ref{ex s1s2} and \ref{ex connect sum}  we have that
\begin{equation*}
\begin{split}
N(1) &\leadsto^D (N,\nu) \bigsqcup S^3(2), \\
S^1 \times S^2 &\leadsto^D S^3(2).
\end{split}
\end{equation*}
Therefore, taking all of these product decompositions, together with the fact that product decompositions leave the sutured Floer homology unchanged, we have
\begin{align*} 
SFH\left((M,\ga) \# (N,\nu) \right) &=SFH (M,\ga) \otimes SFH\left(N(1)\right) \\
&=SFH(M,\ga) \otimes SFH(N,\nu) \otimes SFH\left(S^3(2)\right), \\
\end{align*}
and
\begin{align*} 
SFH(M \# Y, \ga) & =SFH(M,\ga) \otimes SFH\left(Y(1)\right) \otimes \hfhat(Y) \\
& = SFH(M,\ga) \otimes \hfhat(Y).
\end{align*}
From the above it follows \cite[Cor.\,9.16]{Ju} that for $(M,\ga)$ a connected balanced sutured manifold and $n \geq 1$  we have
\[
M(n)=(M,\ga)) \#_n (S^3(1)).
\]
In particular, Equation \eqref{eq connect sum1} implies that
\begin{equation}
SFH(M(n))=SFH(M,\ga) \otimes SFH(S^3(1)) \otimes \bigotimes_n \bZ^2=SFH(M,\ga) \otimes \bZ^{2^n}.
\end{equation}
To summarise: for a closed manifold $Y$ and a sutured manifold $(M,\ga)$ we have
\begin{gather*}
SFH(Y(n))= \hfhat(Y) \otimes \bZ^{2^{n-1}}, \\
SFH(M(n))=SFH(M,\ga) \otimes \bZ^{2^n}.
\end{gather*}

\subsubsection{Knot complements} \label{subsection knot}

Lastly, we explain why sutured Floer homology can be thought of as generalising $\hfkhat$.

\begin{prop} \cite[Prop.\,9.2]{Ju}
Let $K$ be a knot in a closed connected oriented 3-manifold $Y$, then 
\[
\hfkhat (Y,K) \cong SFH(Y(K)).
\]
\end{prop}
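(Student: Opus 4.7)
The plan is to produce a sutured Heegaard diagram for $Y(K)$ directly from a doubly-pointed Heegaard diagram for $(Y,K)$ and then identify the resulting chain complexes on the nose. Let $(\Si,\hal,\hbe,w,z)$ be a two-pointed Heegaard diagram for $(Y,K)$ of the kind described in Section \ref{subsection knot floer homology}, obtained from a self-indexing Morse function $f$ on $Y$ with a height function on $K$ having a unique index $0$ and a unique index $3$ critical point $p$ and $q$, with $f(p)=0$, $f(q)=3$, and $K\cap\Si$ equal to the two points $w,z$. Remove small open disk neighborhoods $U_w,U_z\subset\Si$ of $w$ and $z$, chosen small enough to be disjoint from $\bigcup\al_i\cup\bigcup\be_j$, and set $\Si':=\Si-(U_w\cup U_z)$.

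First I would check that $(\Si',\hal,\hbe)$ is a balanced sutured diagram for $Y(K)$. The Morse function $f$ restricted to $Y-N(K)$, suitably truncated near $p$ and $q$, is a self-indexing Morse function on the sutured manifold $Y(K)$ of the type produced by Lemma \ref{lemma self-indexing}; its index $1$ and index $2$ critical points are exactly those of $f$, its level set at $3/2$ is precisely $\Si'$, and the ascending/descending manifolds of the index $1$/index $2$ critical points intersect $\Si'$ in the same curves $\hal,\hbe$ as before. The boundary components $\bdd U_w$ and $\bdd U_z$ become the two meridional sutures on $\bdd N(K)$, which are exactly the sutures of $Y(K)$. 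Balancedness is immediate since $|\hal|=|\hbe|$ carries over from the closed diagram, and each new boundary component touches some component of $\Si'\setminus\bigcup\al$ and of $\Si'\setminus\bigcup\be$ by construction.

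Next I would identify the chain complexes. The generators of both $\widehat{CFK}(\Si,\hal,\hbe,w,z)$ and $CF(\Si',\hal,\hbe)$ are the same set $\bT_\al\cap\bT_\be\subset\Sym^g(\Si')$ (the symmetric products are canonically identified, as the isotopy class of curves is unchanged). The knot Floer differential counts pseudo-holomorphic disks $\phi\in\pi_2(\hx,\hy)$ with $\mu(\phi)=1$ and $n_w(\phi)=n_z(\phi)=0$. In the sutured diagram $(\Si',\hal,\hbe)$, the two components of $\overline{\Si'}$ adjacent to the new boundary circles $\bdd U_w,\bdd U_z$ are exactly the components of $\overline\Si$ containing $w$ and $z$; by Remark \ref{rmk who cares about boundary}, the multiplicities in these boundary-adjacent components are automatically zero for any disk counted in the sutured differential. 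Thus the constraint $n_w=n_z=0$ is built into the sutured setup, and the two differentials count the same moduli spaces. The $\Spin^c$ decompositions agree under the natural identification of relative $\Spin^c$ structures on $Y(K)=Y-N(K)$ with $\underline{\Spin^c}(Y,K)=\Spin^c(Y_0(K))$ used to grade $\hfkhat$, so the summand-by-summand identification also holds.

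The two things to be careful about are admissibility and invariance. For the first, starting from a weakly admissible doubly-pointed diagram for $(Y,K)$, I would observe that periodic domains in $(\Si',\hal,\hbe)$ correspond exactly to periodic domains in $(\Si,\hal,\hbe,w,z)$ with multiplicity zero at both $w$ and $z$, so admissibility in the knot Floer sense passes to admissibility of the sutured diagram (and if necessary one performs the finger-move isotopies of \cite[Prop.\,3.15]{Ju} equivariantly with respect to both pictures). For the second, the identification is at the level of chain complexes for a single compatible diagram; since both theories are invariant under Heegaard moves, the resulting isomorphism on homology is canonical. The main obstacle is the bookkeeping for the $\Spin^c$/relative $\Spin^c$ identification and verifying that the gradings match, but at the level of total groups the argument is essentially a comparison of definitions once the diagrammatic correspondence $(\Si,\hal,\hbe,w,z)\leftrightarrow(\Si',\hal,\hbe)$ is set up.
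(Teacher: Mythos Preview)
Your proposal is correct and follows essentially the same approach as the paper: remove disk neighborhoods of the two basepoints from the doubly-pointed Heegaard diagram to obtain a balanced sutured diagram for $Y(K)$, and observe that the chain complexes coincide because the constraint $n_w=n_z=0$ in $\widehat{CFK}$ becomes automatic for boundary-adjacent regions in the sutured setting. You have simply filled in more of the details (the Morse-theoretic verification that the resulting diagram defines $Y(K)$, the admissibility check, and the $\Spin^c$ bookkeeping) than the paper's terse proof, which defers these to the definitions.
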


\begin{proof}
Let $(\Si,\hal,\hbe)$ be an admissible diagram for $Y$.  Further, let $(\Si,\hal,\hbe,w,z)$ be a diagram for $K$ in $Y$.  Then remove an open neighbourhood of the points $w$ and $z$ from $\Si$ to obtain a surface $\Si'$ with boundary.  The diagram $(\Si,\hal,\hbe)$ is now a balanced sutured diagram defining $Y(K)$.  From the definitions of both homology theories it follows that the two chain complexes are isomorphic; see Section \ref{subsection knot floer homology} for the construction of $\hfkhat(Y,K)$.
\end{proof}

\subsection{Well-behaved surfaces} \label{subsection well-behaved surfaces intro}

The result of decomposition along some surfaces can be described more easily than along others.  In this  section, we summarise the different types of surfaces, groomed, well-groomed, and nice, as well as how sutured Floer homology behaves under decomposition along nice surfaces \cite{Jusurface}.  This behaviour plays a crucial role in Section \ref{subsection various} where we present more advanced results.

 Two parallel curves or arcs $\la_1$ and $\la_2$ in a surface $S$ are said to be {\it coherently oriented} if $[\la_1]=[\la_2] \in H_1(S,\bdd S)$.

\begin{deff} \cite[Def.\,0.2]{Gabai II} \label{def groomed intro}
If $(M,\ga)$ is a balanced sutured manifold, then a surface decomposition $(M,\ga) \leadsto^S (M',\ga')$ is called {\it groomed} if for each component $V$ of $R(\ga)$ one of the following is true:
\begin{enumerate}
\item $S \cap V$ is a union of parallel, coherently oriented, non-separating closed curves,
\item $S \cap V$ is a union of arcs such that for each component $\de$ of $\bdd V$ we have $\abs{\de \cap \bdd S}=\abs{\langle \de,\bdd S \rangle}$. 
\end{enumerate}
A groomed surface is called {\it well-groomed} if for each component $V$ of $R(\ga)$ it holds that $S \cap V$ is a union of parallel, coherently oriented, non-separating closed curves or arcs.
\end{deff}

In order to define a {\it nice} surface, we need the following definition. A curve $C$ is {\it boundary-coherent} in a surface $R$, if either $[C] \neq 0$ in $H_1(R;\bZ)$, or $[C]=0$ in $H_1(R;\bZ)$ and $C$ is oriented as the boundary of the component of $R-C$ that is disjoint from $\bdd R$.

\begin{deff} \cite[Def.\,3.22]{Jupolytope} \label{def nice intro}
A decomposing surface $S$ in $(M,\ga)$ is called {\it nice} if $S$ is open, $v_0$ is nowhere parallel to the normal vector field of $S$, and for each component $V$ of $R(\ga)$ the set of closed components of $S \cap V$ consists of parallel, coherently oriented, and boundary-coherent simple closed curves.
\end{deff}

\begin{rmk} \label{rmk nice}
An important observation is that any open and groomed surface can be made into a nice surface by a small perturbation which places its boundary into a generic position.
\end{rmk}

\begin{deff} \cite[Def.\,1.1]{Jusurface} \label{def outer intro}
Let $(M,\ga)$ be a balanced sutured manifold and let $(S,\bdd S) \subset (M,\bdd M)$ be a properly embedded oriented surface.  An element $\fs \in \Spin^c(M,\ga)$ is called {\it outer with respect to $S$} if there is a unit vector field $v$ on $M$ whose homology class is $\fs$ and $v_p \neq (-\nu_S)_p$ for every $p \in S$.  Here $\nu_S$ is the unit normal vector field of $S$ with respect to some Riemannian metric on $M$.  Let $O_S$ denote the set of outer $\Spin^c$ structures.
\end{deff}

Given a surface decomposition, we can understand the relationship between the sutured Floer homology groups of the starting and resulting manifold in terms of the $\Spin^c$ structures of the starting manifold.

\begin{thm} \cite[Thm.\,1.3]{Jusurface} \label{thm nice intro}
Let $(M,\ga)$ be a balanced sutured manifold and let $(M,\ga) \leadsto^S (M',\ga')$ be a sutured manifold decomposition along a nice surface $S$.  Then 
\[
SFH(M',\ga')=\bigoplus_{\fs \in O_S} SFH(M,\ga).
\]
In particular, if $O_S$ contains a single $\Spin^c$ structure $\fs$ such that $SFH(M,\ga)\neq 0$, then
\[
SFH(M',\ga')=SFH(M,\ga,\fs).
\]
\end{thm}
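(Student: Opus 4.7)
The plan is to follow the strategy of Juhász's original paper \cite{Jusurface} by producing a sutured Heegaard diagram for $(M,\ga)$ that is specially adapted to the decomposing surface $S$, and then reading off the decomposed manifold directly from this diagram. More concretely, I would first construct a ``surface-adapted'' balanced diagram $(\Si,\hal,\hbe)$ for $(M,\ga)$ with the property that $S$ is carried by a polygonal region $P \subset \Si$: the boundary of $P$ is a union of arcs lying in $\bdd \Si$, arcs lying in curves of $\hal$, and arcs lying in curves of $\hbe$, and the position of these arcs precisely records how $S$ meets $R_\pm(\ga)$ and $\ga$. Cutting $\Si$ along $P$ (equivalently, doubling along the non-boundary part of $\bdd P$) and taking the induced $\al$- and $\be$-arcs as new $\al$- and $\be$-circles then yields a balanced diagram $(\Si',\hal',\hbe')$ for $(M',\ga')$. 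The niceness hypothesis on $S$ is exactly what is needed to ensure the arcs in $\bdd P$ land in the right places inside $R_\pm(\ga)$ and do not create pathological configurations; this is where Remark \ref{rmk nice} and Definition \ref{def nice intro} enter.

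The second main step is to set up a bijection on generators. A point $\hx \in \bT_\al \cap \bT_\be$ corresponds to a tuple of intersection points, one per $\al$-$\be$ pair; I would show (by a direct vector field argument using the definition of $\fs_z$ in Section \ref{subsection spinc} and its sutured analogue) that $\fs(\hx) \in O_S$ if and only if every point of $\hx$ lies in $\Si \setminus P$ (``on the outer side'' of $S$). Under this condition, the points of $\hx$ are canonically identified with a tuple in $\bT_{\al'} \cap \bT_{\be'}$ in $\Si'$; conversely, any generator in the cut diagram extends uniquely to a generator of the original diagram lying in the outer region. This gives the required bijection
\[
\bT_{\al'} \cap \bT_{\be'} \;\longleftrightarrow\; \bigsqcup_{\fs \in O_S} Z(\fs),
\]
whence on the level of abelian groups,
\[
CF(\Si',\hal',\hbe') \cong \bigoplus_{\fs \in O_S} CF(\Si,\hal,\hbe,\fs).
\]

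The third step is to match the differentials. Given $\hx,\hy$ both outer with respect to $S$, I would argue that any $\phi \in \pi_2(\hx,\hy)$ with $\mu(\phi)=1$ whose domain $\mD(\phi)$ is non-negative must have zero multiplicity on every component of $\Si\setminus(\hal\cup\hbe)$ that meets $\bdd P$ (the ``cut region''): otherwise the positivity of coefficients together with the outer position of $\hx,\hy$ would force $\mD(\phi)$ to be negative somewhere along $\bdd P$, contradicting that holomorphic representatives give non-negative domains (as in the proof of Corollary \ref{cor finite set}). Hence every such $\phi$ descends to a class $\phi' \in \pi_2(\hx,\hy)$ in $(\Si',\hal',\hbe')$ with the same domain, and the associated moduli spaces $\mM(\hx,\hy;\phi)$ and $\mM(\hx,\hy;\phi')$ are canonically identified, so the counts in the differential \eqref{eq deff of differential} agree. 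Together with admissibility transferred from $(\Si,\hal,\hbe)$ to $(\Si',\hal',\hbe')$ (after a standard finger-move perturbation, as in Section \ref{subsection sutured chain}), this yields an isomorphism of chain complexes and therefore the claimed identity in homology.

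The main obstacle is unquestionably the first step: producing the surface-adapted diagram and verifying that cutting along $P$ really yields a diagram for $(M',\ga')$. This requires a Morse-theoretic construction on $(M,\ga)$ where the gradient flow is compatible with $S$ (so that $S$ is swept out by flow lines from $R_-(\ga)$ to $R_+(\ga)$), which in turn relies on the niceness of $S$ to control how $\bdd S$ meets $\ga$ and $R(\ga)$; once this diagram is in hand the remaining combinatorial and analytic steps are comparatively formal. A secondary subtlety is the precise matching of outer $\Spin^c$ structures with the outer region of $\Si$, for which Lemma \ref{lemma the two c are the same} and the characterization of $O_S$ via relative Chern classes provide the key bookkeeping.
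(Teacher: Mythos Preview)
Your proposal follows essentially the same strategy as the paper's outline (and Juh\'asz's original proof in \cite{Jusurface}): build a balanced diagram $(\Si,\hal,\hbe,P)$ adapted to $S$ with $P\subset\Si$ encoding the decomposing surface, identify the generators of the cut diagram $(\Si',\hal',\hbe')$ with the intersection points $\hx$ satisfying $\hx\cap P=\emptyset$, and check that this ``outer'' condition on $\hx$ is equivalent to $\fs(\hx)\in O_S$. On that level your sketch and the paper's outline agree.

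There is, however, a genuine gap in your third step. Your claim that a non-negative Maslov-index-one domain connecting two outer generators must have zero multiplicity on the regions meeting $P$ is not justified by the positivity argument you give: nothing prevents $\mD(\phi)$ from entering $P$ across an $\al$-arc of $\bdd P$ and exiting across a $\be$-arc while keeping all multiplicities non-negative, since the corners of $\phi$ (the points of $\hx,\hy$) are outside $P$ but the interior of the domain is unconstrained. In Juh\'asz's actual proof this is exactly where the Sarkar--Wang machinery enters: one first isotopes the adapted diagram to a \emph{nice} diagram (in the Sarkar--Wang sense, where every elementary region not touching $\bdd\Si$ is a bigon or a square), so that every index-one domain contributing to $\bdd$ is an embedded bigon or rectangle. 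For such simple domains the combinatorial argument that outer generators force the domain to avoid $P$ \emph{does} go through, and the identification of differentials then follows. The paper's brief outline suppresses this step entirely (it stops at the bijection on generators), but the cited key lemma and the reference to \cite[p.\,331]{Jusurface} point precisely to this part of the argument. Without the Sarkar--Wang reduction, your direct domain-matching does not close.
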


The proof is constructive; we give an outline, but for details see the complete proof \cite[p.\,331]{Jusurface}. A key lemma is (\cite[Lem.\,5.5]{Jusurface}).  Juh\'asz defines a balanced diagram {\it adapted} to a decomposing surface $S$ in $(M,\ga)$ as a quadruple $(\Si,\hal,\hbe,P)$, where $(\Si,\hal,\hbe)$ is a balanced diagram of $(M,\ga)$ and $P$ is a subsurface of $\Si$ satisfying a set of conditions that make it possible to easily reconstruct $S \subset M$ from $P \subset \Si$ (\cite[Def.\,4.3]{Jusurface}).  Then $O_P$ is defined to be the subset of intersection points $\hx \in \bT_\al \cap \bT_\be$ that are {\it outer} with respect to $P$, that is, $\hx \in O_P$ if and only if $\hx \cap P = \emptyset$. We are justified in called these intersection points outer, because it can be showed that $\hx \in O_P$ if and only if $\fs(\hx) \in O_S$. Cutting along $\Si$ gives a way to produce an admissible balanced diagram $(\Si',\hal',\hbe')$ for $(M',\ga')$ from the diagram $(\Si,\hal,\hbe,P)$, and in particular Juh\'asz shows that there is a bijection between $\bT_{\al'} \cap \bT_{\be'}$ and $O_P$.  This completes the proof.

We briefly discuss what happens when $[S]=0$.  The resulting manifold of such decomposition has the same sutured Floer homology as the starting manifold. 

\begin{cor} \cite[Prop.\,8.6]{Jusurface} \label{cor salami}
Let $(M,\ga)$ be a balanced sutured manifold.  Suppose that $(M,\ga) \leadsto^S (M',\ga')$ is a decomposition where $S$ is nice, $[S]=0 \in H_2(M,\bdd M)$, and $(M',\ga')$ is taut.  Then $S$ separates $(M,\ga)$ into two parts $(M_1,\ga_1)$ and $(M_2,\ga_2)$ and the following holds over any field:
\[
SFH(M,\ga) = SFH(M',\ga')=SFH(M_1,\ga_1) \otimes SFH(M_2,\ga_2).
\]
\end{cor}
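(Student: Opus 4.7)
The plan is to combine Theorem \ref{thm nice intro} with a K\"unneth-type factorization on the side that is already split by the decomposition. First, since $[S]=0 \in H_2(M,\bdd M)$ (handling connected components of $S$ separately if need be), $S$ separates $M$ into two codimension-zero submanifolds $M_1,M_2$ with $M_1 \cap M_2 = S$, and the sutured decomposition gives $(M',\ga') = (M_1,\ga_1) \sqcup (M_2,\ga_2)$. Tautness of $(M',\ga')$ together with additivity of the Thurston norm under disjoint union forces each $(M_i,\ga_i)$ to be taut: $R_+(\ga_i)$ and $R_-(\ga_i)$ represent the same class in $H_2(M_i,\ga_i)$ and are both Thurston-norm minimizing, which gives $\chi(R_+(\ga_i)) = \chi(R_-(\ga_i))$ and hence that each $(M_i,\ga_i)$ is balanced. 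So $SFH(M_i,\ga_i)$ is defined.

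Next, I would verify that the disjoint-union diagram realises a tensor product of chain complexes. Picking admissible balanced diagrams $(\Si_i,\hal_i,\hbe_i)$ of genus $g_i$ for $(M_i,\ga_i)$, the disjoint union $(\Si_1 \sqcup \Si_2, \hal_1 \cup \hal_2, \hbe_1 \cup \hbe_2)$ is balanced, and after standard finger-move isotopies in each factor it is admissible. In $\Sym^{g_1+g_2}(\Si_1 \sqcup \Si_2)$ the tori $\bT_\al$ and $\bT_\be$ lie inside the single component $\Sym^{g_1}(\Si_1)\times \Sym^{g_2}(\Si_2)$, the intersection points correspond bijectively to pairs $(\hx_1,\hx_2)$, and any Whitney disk of Maslov index one must be constant on one side. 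This identifies the chain complex with $CF(\Si_1,\hal_1,\hbe_1) \otimes CF(\Si_2,\hal_2,\hbe_2)$, and the K\"unneth formula over a field gives $SFH(M',\ga') \iso SFH(M_1,\ga_1) \otimes SFH(M_2,\ga_2)$.

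To finish, I would show $SFH(M,\ga) = SFH(M',\ga')$ by proving that every $\fs \in \Spin^c(M,\ga)$ is outer with respect to $S$, so that Theorem \ref{thm nice intro} sums the entire $SFH(M,\ga)$ on its right-hand side. The set $O_S$ is nonempty: gluing nonsingular vector fields on $M_1$ and $M_2$ chosen to agree with $\nu_S$ along $S$ (which exist since each $(M_i,\ga_i)$ is balanced) produces some $\fs_0 \in O_S$. Moreover, $O_S$ is invariant under the full $H_1(M)$-torsor structure: given $\fs \in O_S$ with outer representative $v$ and any class $a \in H_1(M)$, the fact that $[S]=0$ forces the Mayer--Vietoris connecting map $H_1(M) \to \widetilde H_0(S)$ to vanish, so $H_1(M_1) \oplus H_1(M_2) \to H_1(M)$ is surjective and $a$ is realized by a loop $\la$ disjoint from $S$; Reeb turbularization of $v$ along $\la$ inside a tubular neighbourhood of $\la$ that avoids $S$ gives a representative of $\fs + a$ agreeing with $v$ on $S$, hence still outer. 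By transitivity of the action, $O_S = \Spin^c(M,\ga)$.

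The main obstacle is this last step: showing $O_S$ is all of $\Spin^c(M,\ga)$. The topological content is not automatic — it depends crucially on $[S]=0$ to push loops off $S$, and the existence of an initial outer representative genuinely uses that each $(M_i,\ga_i)$ is balanced, which is why tautness of $(M',\ga')$ rather than mere balance of $(M,\ga)$ is the correct hypothesis. Once this identification $O_S = \Spin^c(M,\ga)$ is in hand, the K\"unneth step combines with Theorem \ref{thm nice intro} to give both equalities of the corollary in one stroke.
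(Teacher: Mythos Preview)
Your argument is correct and genuinely different from the paper's. Both proofs reduce to showing $O_S=\Spin^c(M,\ga)$ and then invoke Theorem~\ref{thm nice intro}; the divergence is entirely in how that equality is established. The paper argues indirectly: tautness of $(M',\ga')$ plus Theorem~\ref{thm nonzero} gives $SFH(M',\ga')\neq 0$, whence Theorem~\ref{thm nice intro} forces $O_S\neq\emptyset$; then Lemma~\ref{lemma the two c are the same} converts membership in $O_S$ into the numerical condition $\langle c_1(\fs,t),[S]\rangle=c(S,t)$, which is vacuous once $[S]=0$. You instead produce an explicit outer $\fs_0$ by gluing $\Spin^c$ representatives on the two pieces, and then propagate outerness across the whole torsor using Mayer--Vietoris surjectivity of $H_1(M_1)\oplus H_1(M_2)\to H_1(M)$ to push Reeb-turbularization loops off $S$.

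Your route is more hands-on but buys something: it never invokes a trivialisation of $v_0^\perp$, so it does not silently require $(M,\ga)$ to be strongly balanced the way an appeal to Lemma~\ref{lemma the two c are the same} does, and it avoids the heavy input of Theorem~\ref{thm nonzero}. The paper's route is slicker once that machinery is in place, and it makes transparent that the only obstruction to $\fs\in O_S$ is a pairing with $[S]$. One small point to tidy in your write-up: the claim that $[S]=0$ forces the connecting map $H_1(M)\to\widetilde H_0(S)$ to vanish is immediate when $S$ is connected (the target is zero) but needs a word when $S$ has several components, since $[S]=0$ only controls the total algebraic intersection number; your parenthetical about handling components separately does not quite cover this, because individual components need not be null-homologous. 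In the connected case your argument is clean.
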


For details see the proof on page 333 of the original reference \cite{Jusurface}.  Juh\'asz presents two proofs: one using the same language and techniques of adapted surface diagrams, and a shorter, quicker one using Theorem \ref{thm nice intro} and Theorem \ref{thm nonzero}.  We give the latter here, although it does use material we are yet to discuss.

As $(M',\ga')$ is taut, Theorem \ref{thm nonzero} says that $SFH(M',\ga') \neq 0$.  Then Theorem \ref{thm nice intro} says that $O_S \neq \emptyset$, so specifically, there exists $\fs_0 \in O_S$.  Fix a trivialisation $t$ of $v_0^\perp$.  By Lemma \ref{lemma the two c are the same}, $\langle c_1(\fs_0,t), [S] \rangle =c(S,t)$.  Since $[S]=0$, clearly $c(S,t)=0$ as well.  But also, for any $\fs \in \Spin^c(M,\ga)$ we have  $\langle c_1(\fs,t), [S]\rangle =0$, which implies that $\fs \in O_S$.  Thus by Theorem \ref{thm nice intro}, $SFH(M,\ga)=SFH(M',\ga')$.

\subsection{Various results} \label{subsection various}

In this section we exhibit a selection of Juh\'asz's results such as the implication of the rank of sutured Floer homology of a manifold for its topology, the relation of sutured Floer homology of Seifert surface complements and knot Floer homology, as well as the sutured Floer homology of a solid torus with toroidal knots as sutures.

\subsubsection{Rank of $SFH(M,\ga)$}  Here we show that if a balanced sutured manifold is taut, then it makes sense to talk about its sutured Floer homology, because it is non-trivial.  An important part of the proof is the following theorem of Gabai.

\begin{thm} \cite[Thm.\,4.2]{Gabai}  \label{thm gabai hierarchy}
Let $(M,\ga)$ be a connected taut sutured manifold, where $M$ contains essential tori if it is a rational homology sphere.  Then $(M,\ga)$ has a sutured manifold hierarchy
\[
(M_0,\ga_0) \leadsto^{S_1} (M_1,\ga_1) \leadsto^{S_2} \cdots \leadsto^{S_n} (M_n,\ga_n), 
\]
such that each $S_i$ is connected and well-groomed, and such that if $\bdd M_{i-1} \neq \emptyset$, then $S_i\cap \bdd M_{i-1} \neq \emptyset$.
\end{thm}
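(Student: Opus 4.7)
The plan is to establish the result by induction on a carefully chosen complexity function on taut sutured manifolds, showing that each decomposition step strictly decreases complexity, and that the process terminates precisely in a product sutured manifold. Following Gabai's original strategy, I would define a complexity $c(M,\ga)$ that is lexicographically ordered, roughly the pair consisting of (a) twice the genus plus the number of boundary components of a Thurston-norm minimising representative of a non-trivial class in $H_2(M,\bdd M)$, and (b) a measure of how far $R(\ga)$ is from being connected. A product sutured manifold has the minimal complexity, so reducing complexity in each step suffices.

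For the inductive step, the main task is to produce, given a non-product taut $(M,\ga)$, a properly embedded, connected, oriented, \emph{well-groomed} surface $S$ representing a non-trivial class in $H_2(M,\bdd M)$, meeting $\bdd M$ whenever $\bdd M \neq \emptyset$, such that decomposing along $S$ yields another taut sutured manifold $(M', \ga')$ (satisfying the same essential-tori hypothesis) of strictly smaller complexity. The construction proceeds as follows: first, by the assumption that $(M,\ga)$ is not a product (combined with the essential tori condition in the homology sphere case), one may find a non-trivial class $\al \in H_2(M,\bdd M)$; next, realise $\al$ by a Thurston-norm minimising, oriented, embedded surface $T$. Using Gabai's foundational technical lemma (the ``conditioning'' of decomposing surfaces, Lemma 0.4 and related results in Gabai's paper), $T$ may be modified by a sequence of ``double-curve sum'' operations with copies of $R_{\pm}(\ga)$ to obtain a surface $S$ whose intersection with each component of $R(\ga)$ consists of coherently-oriented, non-separating, parallel arcs or closed curves, and which remains Thurston-norm minimising. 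One then discards inessential components of $S$ to ensure connectedness, adjusting by further double-curve sums if needed, and arranges $S \cap \bdd M \neq \emptyset$ when $\bdd M \neq \emptyset$ by choosing the original class $\al$ appropriately (possible since $R(\ga) \neq \emptyset$).

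Tautness of $(M',\ga')$ is the content of Gabai's ``Decomposition Theorem'' (Theorem 3.13 in his paper): a well-groomed decomposition of a taut sutured manifold along a Thurston-norm minimising surface yields a taut sutured manifold. The strict decrease of complexity then follows because either $S$ lowers the Thurston norm of some fixed basis of $H_2(M',\bdd M')$ relative to what was available in $H_2(M,\bdd M)$, or it reduces the secondary invariant measuring the complexity of $R(\ga)$; when both are minimised, $(M',\ga')$ must be a product. The essential tori hypothesis is preserved throughout because decomposition along a surface transverse to any essential torus either leaves that torus intact or produces new incompressible surfaces in $M'$.

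The main obstacle, and the heart of Gabai's argument, lies in the simultaneous control required on $S$: one needs $S$ connected, well-groomed, homologically essential, boundary-meeting, Thurston-norm minimising, \emph{and} producing a taut result. Each requirement is achievable on its own, but balancing them requires the delicate conditioning procedure (iterated double-curve sums and surgery on inessential components) that takes a generic Thurston-norm minimising representative and reshapes it without destroying norm-minimality. I would expect the bulk of the technical effort to go into verifying that these modifications remain norm-minimising and that tautness is not lost when the remaining inessential components are excised—this is where Gabai's original argument is longest and most intricate.
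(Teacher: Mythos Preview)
The paper does not prove this theorem; it is stated with a citation to \cite[Thm.\,4.2]{Gabai} and used as a black box in the proof of Theorem~\ref{thm nonzero}. There is therefore no proof in the paper to compare your proposal against. Your sketch is a reasonable outline of Gabai's original argument (complexity induction, conditioning via double-curve sums, the Decomposition Theorem for tautness), and you correctly identify where the real work lies, but as far as this article is concerned the result is simply quoted.
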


We can now prove the nontriviality result of sutured Floer homology, by applying Theorem \ref{thm nice intro} together with the existence of a sutured hierarchy along manifolds that can be perturbed to be nice. 
\begin{thm} \cite[Thm.\,1.4]{Jusurface} \label{thm nonzero}
Let $(M,\ga)$ be a balanced sutured manifold that is taut.  Then 
\[
SFH(M,\ga) \geq \bZ.
\]
\end{thm}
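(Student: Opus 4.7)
The plan is to combine Gabai's sutured hierarchy theorem (Theorem \ref{thm gabai hierarchy}) with Juh\'asz's decomposition formula (Theorem \ref{thm nice intro}) and the computation $SFH = \bZ$ for product sutured manifolds. The basic idea is that a taut $(M,\ga)$ admits a hierarchy terminating in a product manifold, and at each stage the sutured Floer homology only ``shrinks'' (becomes a direct summand), so the nonzero group $\bZ$ at the bottom of the hierarchy must lift to a nonzero subgroup of $SFH(M,\ga)$.

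First, I would apply Theorem \ref{thm gabai hierarchy} to produce a sutured manifold hierarchy
\[
(M,\ga)=(M_0,\ga_0) \leadsto^{S_1} (M_1,\ga_1) \leadsto^{S_2} \cdots \leadsto^{S_n} (M_n,\ga_n),
\]
in which each $S_i$ is connected and well-groomed, and $(M_n,\ga_n)$ is a product sutured manifold. By Remark \ref{rmk nice}, each $S_i$ may be perturbed slightly so as to become a nice decomposing surface without changing the topology of the decomposition, so we may assume each $S_i$ is nice. From Section \ref{subsection product manifolds}, we have $SFH(M_n,\ga_n) = \bZ$.

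Next, I would induct downward from $n$ to $0$. For each $i$, Theorem \ref{thm nice intro} applied to the decomposition $(M_{i-1},\ga_{i-1}) \leadsto^{S_i} (M_i,\ga_i)$ yields
\[
SFH(M_i,\ga_i) \;=\; \bigoplus_{\fs \in O_{S_i}} SFH(M_{i-1},\ga_{i-1},\fs),
\]
exhibiting $SFH(M_i,\ga_i)$ as a direct summand of $SFH(M_{i-1},\ga_{i-1})$. Composing these inclusions, $SFH(M_n,\ga_n) = \bZ$ embeds as a direct summand of $SFH(M_0,\ga_0) = SFH(M,\ga)$, which gives $SFH(M,\ga)\geq \bZ$.

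The main obstacle is the hypothesis of Theorem \ref{thm gabai hierarchy} that $M$ must contain an essential torus when it is a rational homology sphere; this excluded case must be treated separately. Since balanced sutured manifolds always have nonempty boundary (each component of $\partial M$ contains an annular suture), being a rational homology sphere here is a restrictive condition, and the excluded manifolds are essentially products or handlebody-like pieces where one can either produce the hierarchy by hand or check $SFH\ge \bZ$ directly (for instance, by identifying the manifold, up to product decomposition, with one whose $SFH$ is already known to be $\bZ$, using the invariance of $SFH$ under product decompositions discussed in Section \ref{subsection samples}). A secondary, more technical point is verifying that the perturbation from well-groomed to nice (Remark \ref{rmk nice}) is compatible with the inductive argument, i.e., that it does not destroy tautness of the intermediate $(M_i,\ga_i)$; but since the perturbation only affects $\bdd S_i$ generically and $O_S$ depends only on the isotopy class of $S$, this causes no trouble.
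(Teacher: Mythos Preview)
Your argument is essentially identical to the paper's proof: apply Gabai's hierarchy, perturb the well-groomed surfaces to nice ones via Remark~\ref{rmk nice}, use Theorem~\ref{thm nice intro} to see that $SFH$ only shrinks along the hierarchy, and conclude from $SFH(M_n,\ga_n)=\bZ$.

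One remark: your ``main obstacle'' is not actually an obstacle. A rational homology sphere is closed, whereas a balanced sutured manifold has $\bdd M\neq\emptyset$ by definition; thus the extra hypothesis in Theorem~\ref{thm gabai hierarchy} is vacuously satisfied and no separate case is needed. The paper simply notes $\bdd M\neq\emptyset$ (which also guarantees, via the clause $S_i\cap\bdd M_{i-1}\neq\emptyset$ in Gabai's theorem, that each $S_i$ is open and hence Remark~\ref{rmk nice} applies) and proceeds directly.
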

\begin{proof}
Every sutured manifold has a sutured hierarchy as described in Theorem \ref{thm gabai hierarchy}.  By definition of balanced sutured manifold $\bdd M \neq \emptyset$, so every well-groomed surface $S_i$ can be made into a nice surface as described in Remark \ref{rmk nice}.  Then we may apply Theorem \ref{thm nice intro}, to say that 
\[
SFH(M_i,\ga_i) \leq SFH(M_{i-1},\ga_{i-1}),
\]
where the notation `$\leq$' refers to the fact that one group is a summand of another.  Lastly, since $(M_n,\ga_n)$ is a product manifold, from Section \ref{subsection product manifolds} we know that $SFH(M_n,\ga_n)=\bZ$, so clearly $SFH(M,\ga) \geq \bZ.$
\end{proof}

\begin{rmk} \label{rmk not taut}
Note that as a contrast, if $(M,\ga)$ is an irreducible balanced sutured manifold, but is not taut, then $SFH(M,\ga)=0$ \cite[Prop.\,9.18]{Ju} (the proof is due to Yi Ni).
\end{rmk}

\begin{prop} \cite[Prop.\,7.6]{Jupolytope} \label{prop stronger}
Let $(M,\ga)$ be a taut, balanced sutured manifold such that $H_2(M)=0$ and $\rank SFH(M,\ga)<2^{k+1}$ for $k \geq 0$.  Then 
\[
d(M,\ga) \leq 2k,
\]
where $d(M,\ga)$ denotes the depth of the sutured manifold.
\end{prop}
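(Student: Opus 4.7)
The plan is to prove Proposition~\ref{prop stronger} by induction on $k \geq 0$, combining Gabai's hierarchy theorem (Theorem~\ref{thm gabai hierarchy}), the nice-surface decomposition formula (Theorem~\ref{thm nice intro}), and the product-detection theorem (Corollary~\ref{cor product hard direction}). For the base case $k=0$, tautness plus Theorem~\ref{thm nonzero} yields $\rank SFH(M,\ga) \geq 1$; combined with $\rank SFH(M,\ga) < 2$, this forces $SFH(M,\ga) \cong \bZ$, and Corollary~\ref{cor product hard direction} immediately gives $d(M,\ga)=0 = 2 \cdot 0$.

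For the inductive step, assume the bound for all integers smaller than $k$, and let $(M,\ga)$ satisfy the hypotheses. If already $\rank SFH(M,\ga) < 2^k$ we are done by induction, so assume $2^k \leq \rank SFH(M,\ga) < 2^{k+1}$, in which case $(M,\ga)$ is not a product. I would produce two successive well-groomed decompositions
\[
(M,\ga) \leadsto^{S_1} (M_1,\ga_1) \leadsto^{S_2} (M_2,\ga_2),
\]
each made nice by the perturbation of Remark~\ref{rmk nice}, such that $(M_2,\ga_2)$ is taut and balanced, $H_2(M_2)=0$, and $\rank SFH(M_2,\ga_2) < 2^k$. Then by induction $d(M_2,\ga_2) \leq 2(k-1)$, so $d(M,\ga) \leq d(M_2,\ga_2)+2 \leq 2k$. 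To select $S_1$, I would exploit that $H_2(M)=0$ forces the inclusion $H_2(M,\bdd M) \hookrightarrow H_1(\bdd M)$ via the long exact sequence of the pair, so every nonzero class in $H_2(M,\bdd M)$ is realized by a surface meeting the sutures. I would pick a primitive class $h \in H_2(M,\bdd M;\bZ)$ dual to a generic linear functional on the support of $SFH(M,\ga)$ and realize it by a connected well-groomed $S_1$ via Gabai's construction. Then Theorem~\ref{thm nice intro} together with Lemma~\ref{lemma the two c are the same} identify $O_{S_1}$ with the Spin$^c$ structures maximizing this functional, and a pigeonhole argument on the support gives $\rank SFH(M_1,\ga_1) \leq \lfloor \rank SFH(M,\ga)/2 \rfloor < 2^k$. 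The surface $S_2$ is then used only to restore $H_2=0$ and tautness in case they fail for $(M_1,\ga_1)$; when $[S_2]=0$ in $H_2(M_1,\bdd M_1)$, Corollary~\ref{cor salami} ensures the rank does not change, and meanwhile this cut kills a spurious $H_2$ class introduced by the first decomposition.

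The main obstacle is orchestrating the two surfaces so that well-groomedness, tautness of $(M_2,\ga_2)$, the vanishing $H_2(M_2)=0$, and the strict halving of rank all hold simultaneously. The factor of two in the depth bound reflects exactly the possibility that a second decomposition is needed to restore $H_2=0$ (or tautness) after the first rank-halving cut introduces a new homology class or an essential annulus. Making this precise will require a Mayer--Vietoris analysis of how $H_2$ transforms under cutting along $S_1$, a careful verification that the primitive class $h$ can be chosen simultaneously to yield a genuine rank drop and to keep $H_2$ controllable, and checking that well-groomedness is stable under the perturbation to nice surfaces so that Theorem~\ref{thm nice intro} actually applies. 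I expect the pigeonhole step on the Spin$^c$ support to be straightforward once one fixes an affine identification $\Spin^c(M,\ga) \to H_1(M)$; the subtler analytic content lies in confirming that some choice of $h$ produces a face of the sutured polytope containing at most half of the supporting lattice points while simultaneously being dual to a well-groomed decomposing surface inside Gabai's hierarchy.
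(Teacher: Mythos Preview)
The paper does not prove this proposition; it is quoted from \cite{Jupolytope} without argument, and Corollary~\ref{cor product hard direction} is then explicitly \emph{derived from it} (see the parenthetical remark immediately after the proposition, noting that the earlier independent proof of product detection had a gap and that Juh\'asz repaired it precisely by establishing Proposition~\ref{prop stronger}). That is the central defect in your proposal: it is circular. Your base case invokes Corollary~\ref{cor product hard direction} to pass from $SFH\cong\bZ$ to ``product'', but the $k=0$ instance of the proposition \emph{is} product detection. The same circularity reappears in the inductive step: your pigeonhole/face argument needs the support of $SFH$ to contain at least two $\Spin^c$ structures whenever $(M,\ga)$ is not a product---otherwise there is no nontrivial face to decompose along and your $S_1$ does not exist---and that statement is again exactly the content of Corollary~\ref{cor product hard direction}.

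What Juh\'asz's actual argument supplies, and what your outline lacks, is an \emph{independent} substitute for product detection: a structural result that if $(M,\ga)$ is taut, balanced, $H_2(M)=0$, horizontally prime, and not a product, then the polytope $P(M,\ga)$ has full dimension $b_1(M)\geq 1$. This is what forces the $\Spin^c$ support to spread out and makes a rank-dropping face decomposition available without already knowing product detection. Your explanation of the factor $2$ is also off: it does not come from a second cut to ``restore $H_2=0$'' (a surface with $[S_2]=0$ as in Corollary~\ref{cor salami} separates and does not kill second-homology classes), but rather from a possible preliminary decomposition along a horizontal surface---which preserves $SFH$ by Corollary~\ref{cor salami} yet still counts toward depth---needed to reach the horizontally-prime situation where the polytope-dimension result applies.
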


Recall that the depth is the minimum number of surfaces that form a sutured hierarchy of $(M,\ga)$ (Definition \ref{deff hierarchy}).  

A very important property of sutured Floer homology is that it detects the product sutured manifold, and this comes as a consequence of Proposition \ref{prop stronger}.  (Initially this result was proved in  \cite[Thm.\,9.7]{Jusurface}, but due to a gap in one of the cited works of Ni, it did not quite hold.  Thereafter, Juh\'asz proved a much stronger result, Proposition \ref{prop stronger}, using completely different techniques.)

\begin{cor} \label{cor product hard direction}
An irreducible balanced sutured manifold $(M,\ga)$ is a product manifold if and only if 
\[
SFH(M,\ga)=\bZ.
\]
\end{cor}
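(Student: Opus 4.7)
The forward implication is already at hand: if $(M,\ga)$ is a product $(R\times I,\bdd R\times I)$, then Section \ref{subsection product manifolds} computes $SFH(M,\ga)\cong\bZ$ directly from the empty sutured diagram $(R,\emptyset,\emptyset)$, which is vacuously admissible and has a single generator (the empty tuple). So the substantive task is the converse: assuming $(M,\ga)$ is irreducible and $SFH(M,\ga)\cong\bZ$, one must show that $(M,\ga)$ is a product.

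The plan is to feed $(M,\ga)$ into Proposition \ref{prop stronger} with $k=0$. Since $\rank SFH(M,\ga)=1<2=2^{0+1}$, that proposition would yield $d(M,\ga)\le 2k=0$, and a sutured hierarchy of depth zero means, by Definition \ref{deff hierarchy}, that $(M,\ga)$ itself is already a product sutured manifold. So the entire task reduces to verifying the hypotheses of Proposition \ref{prop stronger}: that $(M,\ga)$ is taut, and that $H_2(M)=0$.

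Tautness is immediate. By Remark \ref{rmk not taut}, an irreducible balanced sutured manifold that fails to be taut has $SFH=0$; since our hypothesis is $SFH(M,\ga)\cong\bZ\neq 0$, $(M,\ga)$ must be taut. The vanishing $H_2(M)=0$ is the main obstacle, and the place where one must work. The intuition is that a nonzero class in $H_2(M)$ ought to translate, via the free transitive $H_1(M)$-action on $\Spin^c(M,\ga)$ and the $\Spin^c$-decomposition $SFH(M,\ga)=\bigoplus_\fs SFH(M,\ga,\fs)$, into multiple distinct $\Spin^c$ structures supporting $SFH$, contradicting $\rank SFH(M,\ga)=1$. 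Concretely, one can try to represent a nontrivial class in $H_2(M)$ by a nice decomposing surface $S$ (possibly after perturbation, cf.\ Remark \ref{rmk nice}) with $[S]=0\in H_2(M,\bdd M)$, apply Corollary \ref{cor salami} to split $(M,\ga)$ into $(M_1,\ga_1)$ and $(M_2,\ga_2)$ with $SFH(M,\ga)=SFH(M_1,\ga_1)\otimes SFH(M_2,\ga_2)$, and observe via Theorem \ref{thm nonzero} that each tensor factor has rank at least one, so having total rank $1$ forces both pieces to have $SFH=\bZ$ and forces $S$ to have been separating in a way compatible with a product structure. Iterating this splits off all of $H_2(M)$.

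With both hypotheses of Proposition \ref{prop stronger} verified, the proposition applies with $k=0$ to give $d(M,\ga)\le 0$, so $(M,\ga)$ admits a depth-zero hierarchy and is therefore a product sutured manifold, completing the proof. The only genuinely delicate step is the $H_2(M)=0$ reduction; tautness and the final invocation of Proposition \ref{prop stronger} are bookkeeping.
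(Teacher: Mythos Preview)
Your overall architecture is exactly what the paper intends: the forward direction via the empty diagram, tautness from Remark~\ref{rmk not taut}, and then Proposition~\ref{prop stronger} with $k=0$ to conclude $d(M,\ga)=0$. The paper itself gives no further detail than ``this comes as a consequence of Proposition~\ref{prop stronger}'', so on that level you match it.

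The genuine gap is in your reduction to $H_2(M)=0$. Your plan is to represent a nonzero class in $H_2(M)$ by a nice decomposing surface $S$ with $[S]=0$ in $H_2(M,\bdd M)$ and then invoke Corollary~\ref{cor salami}. This does not work as written. A class in $H_2(M)$ is represented by a \emph{closed} surface in the interior of $M$, but Definition~\ref{def nice intro} requires a nice surface to be \emph{open} (no closed components). Consequently neither Theorem~\ref{thm nice intro} nor Corollary~\ref{cor salami} applies to such an $S$; indeed, decomposing along a closed surface produces closed components of $R(\ga')$, so the result is not even balanced. Remark~\ref{rmk nice} only lets you perturb \emph{open} groomed surfaces into nice ones, so it does not rescue the situation. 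Moreover, even granting the splitting, your conclusion ``forces $S$ to have been separating in a way compatible with a product structure'' does not actually reduce the rank of $H_2$; a non-separating closed surface stays non-separating, and the tensor-product formula of Corollary~\ref{cor salami} presupposes separation.

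To be fair, the paper does not supply this step either; it simply asserts the corollary as a consequence of Proposition~\ref{prop stronger}. The honest reduction to $H_2(M)=0$ in Juh\'asz's work \cite{Jupolytope} uses additional polytope machinery (essentially the dimension results around Corollary~\ref{cor decomposition}) rather than a direct cut-and-paste along closed surfaces. So you have correctly located the only nontrivial hypothesis to check, but the sketch you give for it would need to be replaced by a different argument.
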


Note that the statement would be false without the word irreducible: if  $P(1)$ is the Poincar\'e homology sphere with a 3-ball removed and a single suture along the spherical boundary, then $SFH(P(1))=\bZ$ \cite[Rmk.\,9.5]{Ju}, but $P(1)$ is not a product (and not irreducible by definition).

\subsubsection{Sutured Floer homology and Seifert surfaces} \label{subsection seifert}

If $K$ is a knot in a closed connected oriented $3$-manifold, $\al \in H_2(Y_0(K))$, and $i\in \bZ$, then define
\begin{equation} \label{eq hfkhat}
\hfkhat(Y,K,\al,i):=\bigoplus_{\{\fs \in \Spin^c(Y,K) \mid \langle c_1(\fs),\al \rangle =2i\}} \hfkhat(Y,K,\fs).
\end{equation}
Sometimes when $Y$ and $S$ are obvious, we write $\hfkhat(K,g)$.  See Section \ref{subsection knot floer homology} for details on the notation and the definition of knot Floer homology.

\begin{thm} \cite[Thm.\,1.5]{Jusurface} \label{thm surface complement}
Let $K$ be a null-homologous knot in a closed connected oriented 3-manifold $Y$ and let $S \subset Y$ be a minimal genus Seifert surface of $K$.  Then 
\[
SFH(Y(S)) \cong \hfkhat(Y,K,[S],g(S))
\]
\end{thm}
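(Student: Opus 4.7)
The plan is to exhibit $Y(S)$ as the result of a surface decomposition of the sutured knot complement $Y(K)$ along $S$, apply the decomposition formula of Theorem \ref{thm nice intro}, and then identify the outer $\Spin^c$-structures with those in the top Alexander grading. Viewing $S$ as a properly embedded surface in $Y(K)$ with $\bdd S$ a longitude of $K$, the intersection of $\bdd S$ with each of the two meridional sutures of $Y(K)$ is a single transverse point, so each component of $S \cap \ga$ is a non-separating properly embedded arc in an annular component of $\ga$. This verifies condition (i) of Definition \ref{def decomp}, and a direct inspection shows that cutting $Y(K)$ open along $S$ produces precisely the surface-complement sutured manifold $Y(S)$ of Example \ref{example surface complement}. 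Since $S$ is open and well-groomed in the sense of Definition \ref{def groomed intro}, Remark \ref{rmk nice} allows us to perturb $S$ into a nice surface (Definition \ref{def nice intro}); applying Theorem \ref{thm nice intro} together with the identification $SFH(Y(K),\fs) \cong \hfkhat(Y,K,\fs)$ from Section \ref{subsection knot} then yields
\[
SFH(Y(S)) \cong \bigoplus_{\fs \in O_S} \hfkhat(Y,K,\fs).
\]

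The essential remaining step is to prove that $O_S$ coincides with the set of $\Spin^c$-structures realising the top Alexander grading, namely $\{\fs : \langle c_1(\fs),[\hat S]\rangle = 2g(S)\}$, where $\hat S$ denotes the closed surface in $Y_0(K)$ obtained from $S$ by capping off $\bdd S$ with a meridional disk. Because $Y(K)$ is strongly balanced (both components $R_\pm$ are annuli), the plane field $v_0^\perp$ on $\bdd Y(K)$ is trivialisable, and Lemma \ref{lemma the two c are the same} reduces the outer condition to the single equation $\langle c_1(\fs,t),[S]\rangle = c(S,t)$ for a fixed trivialisation $t$. I would then compute $c(S,t) = \chi(S) + I(S) - r(S,t)$ term by term: $\chi(S) = 1 - 2g(S)$ is immediate, while $I(S)$ and $r(S,t)$ are explicit integers determined by tracking how $v_0$ projects into $TS$ and into $v_0^\perp$, respectively, as the longitude $\bdd S$ traverses the two sutures and the two $R_\pm$ annular strips on the boundary torus.

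The main obstacle is the bookkeeping needed to convert the relative equation $\langle c_1(\fs,t),[S]\rangle = c(S,t)$ on $(Y(K),\bdd Y(K))$ into the absolute equation $\langle c_1(\fs),[\hat S]\rangle = 2g(S)$ on $Y_0(K)$ used in the knot Floer definition (Equation \eqref{eq hfkhat}). Under the natural affine bijection between $\Spin^c(Y(K))$ and $\Spin^c(Y_0(K))$, capping $\bdd S$ with the core meridional disk $D$ of the surgery solid torus replaces $[S] \in H_2(Y(K),\bdd Y(K))$ by $[\hat S] \in H_2(Y_0(K))$, while $c_1(\fs,t)$ and $c_1(\fs)$ differ by a boundary contribution described in Remark \ref{rmk gold}. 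Balancing these two correction terms converts the integer $c(S,t)$ into $2g(S)$, showing that $O_S$ is exactly the top Alexander grading summand and completing the proof.
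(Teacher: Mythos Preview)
Your approach is essentially the paper's: realise $Y(S)$ as the result of decomposing $Y(K)$ along $S$, apply Theorem~\ref{thm nice intro}, and then identify $O_S$ via Lemma~\ref{lemma the two c are the same}. Two points where the paper is more efficient than your outline are worth noting.

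First, the paper avoids your last paragraph entirely. Rather than passing to $Y_0(K)$ and tracking how relative and absolute Chern classes differ under capping, the paper works in $Y(K)$ throughout by choosing the \emph{meridional} trivialisation $t_0$ of $v_0^\perp$ (the unit tangent field to the meridional foliation of $\bdd Y(K)$). With this choice the definition of $\hfkhat(Y,K,[S],i)$ is already expressed as a condition $\langle c_1(\fs,t_0),[S]\rangle = 2i$, so no conversion is needed. The same choice makes the computation of $c(S,t_0)$ immediate: $\bdd S$ is a longitude, so $p(\nu_S)$ does not rotate relative to the meridional field and $r(S,t_0)=0$; one checks $I(S)=-1$; and $\chi(S)=1-2g(S)$. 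Hence $c(S,t_0)=-2g(S)$.

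Second, note the sign: this computation shows that $O_S$ is the \emph{bottom} Alexander grading, $\langle c_1(\fs,t_0),[S]\rangle = -2g(S)$, not the top. Your outline asserts the correction terms ``convert $c(S,t)$ into $2g(S)$'', but an honest computation gives $-2g(S)$. The paper fixes this in one line by invoking the conjugation symmetry $\hfkhat(Y,K,[S],g(S)) \cong \hfkhat(Y,K,-[S],-g(S))$ from \cite{OSknots} (equivalently, by decomposing along $-S$ instead of $S$). You should either build this symmetry into your argument or decompose along $-S$ from the start.
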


\begin{proof}
Since $\hfkhat(Y,K,\fs) = SFH(Y(K),\fs)$, when $S$ is a minimal genus Seifert surface of $K$, we may rewrite Equation \eqref{eq hfkhat} as
\[
\hfkhat(Y,K,[S],g(S)):=\bigoplus_{\{\fs \in \Spin^c(Y(K)) \mid \langle c_1(\fs,t_0),[S] \rangle =2g(S)\}} SFH(Y(K),\fs),
\]
where we take $t_0$ to be the canonical trivialisation of $v_0^\perp$ given by the meridinal vector field $\xi$.  In particular, recall that $Y(K)$ is the sutured manifold obtain from $Y$ by removing an open neighbourhood of $K$ and adding two meridinal sutures with opposite orientations.  Choose one oriented meridian and let it foliate $\bdd M$, then at every point $p \in \bdd M$, there is a unit vector field $\xi$ obtained by taking the tangent vector to the meridian at that point as an element of $T_p\bdd M$.  This vector field lies in $v_0^\perp$, and as such defines a trivialisation $t_0 \colon v_0^\perp \to \bdd M \times \bR^2$ by taking $(p, re^{i \theta}\cdot \xi) \mapsto (p, r e^{i \theta} )$ where $\bR^2$ is thought of as $\bC$. 

By Lemma \ref{lemma the two c are the same}, we have that $\fs$ is outer with respect to $S$ if and only if $\langle c_1(\fs,t), [S] \rangle =c(S,t)$, where $t$ is a fixed trivialisation of $v_0^\perp$.  Thus if we can show that $c(S,t)=2g(S)$, then 
\[
\hfkhat(Y,K,[S],g(S)):=\bigoplus_{\fs \in O_S } SFH(Y(K),\fs),
\]
where the right-hand side is precisely equal to $SFH(Y(S))$ according to Theorem \ref{thm nice intro}, and  so would complete the proof.

So we turn to proving $c(S,t_0)=2g(S)$, for the trivialisation $t_0$ as specified above.  To begin with $\chi(S)=1-2g(S)$.  Further, since $\bdd S \subset \bdd N(K)$ is a longitude of $K$, the rotation of $p(\nu_s)$ with respect to $\xi$ is zero.  It is not hard to see that $I(S)=-1$.  So $c(S,t_0)=1-2g(S)-1=-2g(S)$.  However, according to \cite{OSknots}, 
\[
\hfkhat(Y,K,[S],g(S))=\hfkhat(Y,K,-[S],-g(S)),
\]
 so decomposing along $-S$ we obtain the result.
\end{proof}

\begin{rmk} \label{rmk detecting genus} The fact that non-taut, irreducible sutured manifolds have $SFH=0$ (see Remark \ref{rmk not taut}) together with Theorem \ref{thm surface complement},  gives another proof  that knot Floer homology detects the genus of the knot. In particular, if $K$ is a genus $g$ knot in a rational homology sphere $Y$, then $\hfkhat(K,g) \neq 0$ and $\hfkhat(K,i)=0$ for all $i>g$. (The first proof was given in \cite{OS genus}.)    Further, Theorem \ref{thm surface complement}, together with the detection of product manifolds (Corollary \ref{cor product hard direction}), gives a simplified proof of the fact that knot Floer homology detects product manifolds.  This method avoids the contact topology used by Ghiggini to prove the result for genus one knots \cite{Ghi}, and simplifies some methods used by Ni to prove the general case \cite{Ni}.
\end{rmk}

\begin{thm} \cite[Thm.\,2.3]{Juseifert} \label{thm disjoint surfaces}
Let $K$ be a genus $g$ knot in $S^3$.   Let $n$ be an integer greater than zero.  If $\rank \hfkhat(K,g)<2^{n+1}$, then $K$ has at most $n$ pairwise disjoint non-isotopic genus $g$ Seifert surfaces. In particular,  if $n=1$, then $K$ has a unique Seifert surface up to equivalence.
\end{thm}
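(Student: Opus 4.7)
The plan is to contradict the conclusion by assuming $K$ admits $n+1$ pairwise disjoint, pairwise non-isotopic genus-$g$ Seifert surfaces $S_0, S_1, \ldots, S_n$, and then to bound $\rank \hfkhat(K,g)$ from below by $2^{n+1}$ via iterated sutured surface decomposition. First I would invoke Theorem \ref{thm surface complement} to identify $\hfkhat(K,g) \cong SFH(S^3(S_0))$. The surfaces $S_1, \ldots, S_n$ remain properly embedded in $S^3(S_0)$ since they are disjoint from $S_0$. I would order them by their transverse position so that they appear as a linear stacking in the knot complement, and then cut $S^3(S_0)$ along $S_1, \ldots, S_n$ in this order to produce $n+1$ disjoint sutured manifolds $M_0, M_1, \ldots, M_n$, where each $M_j$ is the region cobounded by two consecutive Seifert surfaces in the stacking (with $S_0^{\pm}$ at the extremes).

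Next I would verify the hypotheses needed to apply Corollary \ref{cor salami} iteratively. Each $S_i$ can be perturbed to be nice by Remark \ref{rmk nice}, small perturbations keep the surfaces disjoint and genus $g$. In $S^3(S_0)$, every $S_i$ is null-homologous in $H_2$ modulo boundary, since $[S_i] = [S_0]$ in $H_2(S^3(K),\partial)$ and cutting along $S_0$ places that class on the boundary; the same reasoning propagates through the further cuts. Minimality of genus implies each $S_i$ is norm-minimizing, so by Gabai's theory each intermediate sutured manifold remains taut, and the endpoints $M_j$ are taut. Iterating Corollary \ref{cor salami} then yields
\[
SFH(S^3(S_0)) \;\cong\; SFH(M_0) \otimes SFH(M_1) \otimes \cdots \otimes SFH(M_n).
\]

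The decisive step is to show $\rank SFH(M_j) \geq 2$ for every $j$. Tautness together with Theorem \ref{thm nonzero} gives $\rank SFH(M_j) \geq 1$. Each $M_j$ is irreducible because it sits inside $S^3$ and its boundary contains two incompressible minimal-genus Seifert surfaces, so any essential sphere in $M_j$ would have to bound a ball within $M_j$ itself. If $SFH(M_j) \cong \bZ$, then Corollary \ref{cor product hard direction} would force $M_j$ to be a product sutured manifold $\Sigma \times I$, whose two $R_{\pm}$-components are the two bounding Seifert surfaces; the product structure would then exhibit an isotopy between them, contradicting our non-isotopy assumption. Hence $\rank SFH(M_j) \geq 2$, and multiplying gives $\rank \hfkhat(K,g) \geq 2^{n+1}$, which is the contrapositive of the theorem; the case $n=1$ specialises to uniqueness of the Seifert surface up to isotopy.

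The main obstacle I expect is the bookkeeping around the iterated decomposition: making sure the $S_i$ can be made simultaneously nice while preserving disjointness, confirming that tautness is preserved at every intermediate step (so that Corollary \ref{cor salami} applies throughout), and rigorously extracting an isotopy of $S_i$ and $S_{i+1}$ from a product structure on the cobounding region $M_j$. This last point, though intuitively immediate, relies on the fact that the two sides of the product can be matched with $S_i$ and $S_{i+1}$ rel boundary on $\partial N(K)$, which should follow from how the sutured structure on $M_j$ is inherited from $S^3(K)$ via the decomposition.
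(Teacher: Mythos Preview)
Your proposal is correct and follows essentially the same approach as the paper's proof: identify $\hfkhat(K,g)$ with $SFH(S^3(S_0))$ via Theorem~\ref{thm surface complement}, decompose along the remaining Seifert surfaces using Corollary~\ref{cor salami} to obtain a tensor product, and then invoke Corollary~\ref{cor product hard direction} to force each factor to have rank at least $2$. The only cosmetic difference is that the paper cuts along the union $S_1\cup\cdots\cup S_n$ in a single step rather than iterating, and it is somewhat terser about the verifications (tautness, irreducibility, null-homology) that you correctly flag as the points requiring care.
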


\begin{proof}
Let $R_1, \ldots, R_m$ be a $m$ pairwise disjoint non-isotopic genus $g$ Seifert surfaces.  Without loss of generality supposed that the surface $R_j$ separates $R_i$ from $R_k$ where $1\leq i<j<k \leq m$.  Recall from Remark \ref{rmk seifert complement is taut}, that the sutured manifold obtained from a minimal genus Seifert surface complement is a taut, strongly balanced sutured manifold $S^3(R_i)$ for any $i$.

Consider $S^3(R_1)$.  Note that the disjoint surface $R:=R_2\cup \ldots \cup R_m$ is nice, separates the manifold $S^3(R_1)$ into $m$ components $(M_i,\ga_i)$ labelled by the index of the Seifert surface that gives rise to $R_+(\ga_i)$.  By Corollary \ref{cor salami} it follows that 
\[
\rank SFH(S^3(R_1))=\rank SFH(M_1,\ga_1) \cdot \rank SFH(M_,\ga_2) \cdot \cdots \cdot \rank SFH(M_n,\ga_n).
\]
Then, by Corollary \ref{cor product hard direction} $(M_i,\ga_i)$ is a product if and only if $SFH(M_i,\ga_i)=\bZ$, which is true if and only if $R_i$ and $R_{i+1}$ are isotopic Seifert surfaces.  Thus $\rank SFH(M_i) \geq 2$.  So $\rank SFH(S^3(R_1)) \geq 2^{n}$.

By Theorem \ref{thm surface complement},  $SFH(S^3(R_1))=\hfkhat(K,g)$, so if $K$ has $n$ pairwise disjoint non-isotopic genus $g$ Seifert surfaces we have that  $\hfkhat(K,g)\geq 2^{m}$; the result follows by reversing this implication.

\end{proof}

\subsubsection{The SFH of a solid torus}

Juh\'asz computed the sutured Floer homology of $(M,\ga)$ when $M$ is the solid torus.  Let $T(p,q;n)$ be the balanced sutured manifold $(M,\ga)$, where $M$ is a solid torus, and the sutures are $n$ parallel $(p,q)$ torus knots.  Here $p$ denotes the number of times the curve on $\bdd M$ goes around in the longitudinal direction.  Note that $n$ has to be even.

\begin{prop} \cite[Prop.\,9.1]{Jupolytope} \label{torus prop}
Suppose that $T(p,q;n)$ is as described above, and suppose that $n=2k+2$, for some non-negative integer $k$.  Then there is an identification 
\[
\Spin^c(T(p,q;n)) \cong \bZ,
\]
such that the following holds
\[
SFH(T(p,q;n),i) \cong 
\begin{cases}
\bZ^{\binom{k}{\lfloor i/p \rfloor}}, & \textrm{if } 0 \leq i < p(k+1); \\
0 , & \textrm{otherwise.}
\end{cases}
\]
\end{prop}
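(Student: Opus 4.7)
The plan is to exhibit an explicit admissible balanced sutured Heegaard diagram $(\Si,\hal,\hbe)$ for $T(p,q;n)$, enumerate $\bT_\al\cap\bT_\be$ together with its splitting by the relative $\Spin^c$ grading, and argue that the Floer differential vanishes so that $SFH$ is read off from the generator count.

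First I would build the diagram. Since $R_+(\ga)$ and $R_-(\ga)$ each consist of $k+1$ annuli and hence have Euler characteristic zero, it is natural to take $\Si$ of genus one with $n=2k+2$ boundary circles, giving $|\hal|=|\hbe|=k+1$ by Lemma \ref{setup of definition of balanced 2}. Place a distinguished pair $(\al_0,\be_0)$ that records the $(p,q)$-slope of the sutures and intersects in exactly $p$ points. For each of the remaining $k$ suture-pair insertions, introduce an additional pair $(\al_j,\be_j)$ that intersects in exactly $2$ points and is disjoint from all the other curves; geometrically, these correspond to handle-attachment regions that ``thicken'' the sutured structure while keeping the ambient 3-manifold a solid torus. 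Handle attachment recovers $T(p,q;n)$, and a finger-move isotopy ensures admissibility \cite[Prop.\,3.15]{Ju}.

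With this diagram, each generator $\hx\in\bT_\al\cap\bT_\be$ is specified by choosing one of $p$ intersection points from $(\al_0,\be_0)$ together with one of $2$ intersection points from each $(\al_j,\be_j)$, giving $|\bT_\al\cap\bT_\be|=p\cdot 2^k$. Identifying $\Spin^c(T(p,q;n))\cong H_1(T(p,q;n))\cong\bZ$ through a reference generator $\hx_0$, the chain-level calculation of Section \ref{subsection spinc} shows that toggling the choice on $(\al_0,\be_0)$ shifts the $\Spin^c$-grading by $1$, while toggling the choice on a single $(\al_j,\be_j)$ shifts it by $p$. Thus the number of generators in $\Spin^c$-level $i$ is exactly $\binom{k}{\lfloor i/p\rfloor}$ for $0\le i<p(k+1)$ and is zero otherwise, matching the stated ranks.

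The hard part will be showing $\bdd=0$ on each $\Spin^c$-summand. The cleanest route is to compute the Euler characteristic of $SFH(T(p,q;n),i)$ independently via the Fox-calculus/Turaev-torsion algorithm surveyed in Section \ref{subsection torsion}, applied to a presentation of $\pi_1(T(p,q;n))$, and to check by a parity argument on the Maslov index (using Theorem \ref{thm deep}) that all generators in a fixed $\Spin^c$-level lie in the same relative $\bZ_2$ grading; together these force $\bdd$ to vanish on each summand. A more hands-on alternative that I would pursue in parallel is a positivity-of-domains argument on the genus-one diagram above: any index-one Whitney disk connecting distinct generators in the same $\Spin^c$-class must have a domain with a negative multiplicity or cross a region touching $\bdd\Si$, and in either case produces no nontrivial contribution to $\bdd$.
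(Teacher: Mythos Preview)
Your approach is genuinely different from the paper's, and the place where yours is weakest is exactly the place the paper's route makes trivial.

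The paper does not build one large diagram for $T(p,q;n)$.  Instead it observes that $T(p,q;m+n-2)$ is obtained from $T(1,0;n)$ and $T(p,q;m)$ by gluing along a boundary annulus $A$; since $[A]=0$ in $H_2$ of the glued manifold, Corollary~\ref{cor salami} gives the homology-level tensor formula
\[
SFH\bigl(T(p,q;m+n-2)\bigr)\;\cong\;SFH\bigl(T(1,0;n)\bigr)\otimes SFH\bigl(T(p,q;m)\bigr).
\]
Iterating reduces everything to the two base cases $T(1,0;4)$ and $T(p,q;2)$, each handled by a small explicit diagram.  The $\Spin^c$ grading is then reassembled through the decomposition using the affine map $f_S\colon\Spin^c(M',\ga')\to\Spin^c(M,\ga)$ of \cite[Prop.\,3.4]{Jupolytope}.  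Because Corollary~\ref{cor salami} is a statement about \emph{homology}, the question ``why is $\bdd=0$?'' never arises for general $n$: one only has to analyse the differential on two very small diagrams.

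Your plan is essentially the diagram-level unpacking of that same decomposition (your disjoint pairs $(\al_j,\be_j)$ are exactly what the annulus-gluing would produce), but you then have to redo by hand what Corollary~\ref{cor salami} gives for free.  Both of your proposed routes to $\bdd=0$ have real gaps.  The torsion-plus-parity route needs an independent computation of the torsion of $T(p,q;n)$, which is comparable work to the original problem, together with a parity claim (``all generators in a fixed $\Spin^c$ level lie in the same $\bZ_2$ grading'') that you assert but do not justify; Theorem~\ref{thm deep} concerns periodic classes and does not obviously give you this.  The positivity-of-domains route is only as good as the diagram, and you have not actually exhibited the diagram or checked that no positive index-one domain connects distinct generators in the same $\Spin^c$ class.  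Finally, the assertion that toggling one $(\al_j,\be_j)$ choice shifts the $\Spin^c$ grading by exactly $p$ is the crux of the graded statement and needs proof; in the paper's approach this is precisely what falls out of the map $f_S$.
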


Here we explain one of the  insights of the proof of Proposition \ref{torus prop}; for the complete proof see the original reference.  Let $(M_1,\ga_1):=T(1,0;n)$ and $(M_2,\ga_2):=T(p,q;m)$.  Then we can obtain the manifold $(M,\ga):=T(p,q;m+n-2)$ by simply gluing together $(M_1,\ga_1)$ and $(M_2,\ga_2)$ along two annuli $A_1 \subset \bdd M_1$ and $A_2 \subset \bdd M_2$, where $A_i$ is chosen to be a component of $R_-(\ga_i)$.   By Corollary \ref{cor salami}, we have that 
\begin{equation} \label{eq sfh of tori}
SFH\left(T(p,q;n+m-2)\right)=SFH(T(1,0;n)) \otimes SFH(T(p,q; m)).
\end{equation}
Firstly, note that $T(1,0;2)$ is just a product manifold homeomorphic to $\mbox{Annulus}\times I$, so $SFH(T(1,0;2))=\bZ$ by Corollary \ref{cor product hard direction}.  Next, notice that Equation \eqref{eq sfh of tori} implies that if we can compute $SFH(T(1,0;4))$, then we can compute $SFH(T(1,0,n))$ for any $n$ even.  Further, with this information in mind, if we can compute $SFH(T(p,q;2))$, then we also know the $SFH(T(p,q;n))$ for any $n$. Juh\'asz finds the sutured diagrams of $T(1,0;4)$ and of $T(p,q;2)$, and computes their sutured Floer homology directly; see Example 7.5 of \cite{Jupolytope} for the computation regarding $T(1,0;4)$, and the proof of Proposition \ref{torus prop} for  the computation regarding $T(p,q;2)$.  To reconstruct the $\Spin^c$ grading, apart from Equation \eqref{eq sfh of tori} we need \cite[Prop.\,3.4]{Jupolytope}, which states the existence of, and describes the properties of, an affine map $f_S \colon \Spin^c(M',\ga') \to \Spin^c(M,\ga)$ obtained from a nice surface decomposition $(M,\ga) \leadsto^S (M',\ga')$ of a strongly balanced sutured manifold $(M,\ga)$.

\subsection{The sutured polytope} \label{subsection polytope intro}

We now have all the ingredients required to define the sutured Floer polytope, which is the polytope in $H^2(M,\bdd M)$ derived from the $\Spin^c$ support of sutured Floer homology.

  Let $S(M,\ga)$ be the {\it support} of the sutured Floer homology of $(M,\ga)$:
\[
S(M,\ga):=\{ \fs \in \Spin^c(M,\ga) \mid SFH(M,\ga,\fs)\neq 0\}.
\]
Consider the map $i \colon H^2(M,\bdd M;\bZ) \to H^2(M,\bdd M;\bR)$ induced by the inclusion $\bZ \hookrightarrow \bR$.  For $t$ a trivialisation of $v_0^\perp$, define
\[
C(M,\ga,t):=\{ i \circ c_1(\fs,t) \mid \fs \in S(M,\ga)\} \subset H^2(M,\bdd M;\bR).
\]
\begin{deff}
The {\it sutured Floer polytope} $P(M,\ga,t)$ with respect to $t$ is defined to be the convex hull of $C(M,\ga,t)$.
\end{deff}

Next, we have that $c_1(\fs,t_1)-c_1(\fs,t_2)$ is an element of $H^2(M,\bdd M)$ dependent only on the trivialisations $t_1$ and $t_2$ (see Remark \ref{rmk gold}), and therefore we may write $P(M,\ga)$ to mean the polytope in $H^2(M,\bdd M;\bR)$ up to translation. 

 It is important to note that $c_1$ ``doubles the distances.''  Namely, for a fixed trivialisation $t$ and $\fs_1,\fs_2 \in \Spin^c(M,\ga)$, Lemma 3.13 of \cite{Jupolytope} says that
 \[
 c_1(\fs_1,t) - c_1(\fs_2,t)=2(\fs_1- \fs_2),
 \]  
 where $\fs_1-\fs_2$ is the unique element $h \in H^2(M,\bdd M)$ such that $\fs_1=h + \fs_2$.  Such an element exists and is unique by definition of a $H^2(M,\bdd M)$-torsor. (A {\it $G$-torsor} is a set $X$ on which $G$ acts freely and transitively.)
 
Let $t \in T(M,\ga)$.  Then an element $\al \in H_2(M,\bdd M;\bR)$ defines subsets $P_\al(M,\ga,t)$ and $C_\al(M,\ga,t)$ of $P(M,\ga,t)$ and $C(M,\ga,t)$, respectively \cite[p.17]{Jupolytope}.  Firstly, set 
\begin{equation} \label{equation calt}
c(\al,t):=\min \{ \langle c, \al \rangle \mid c \in P(M,\ga,t) \}.
\end{equation}
Then there is a subset $H_\al \subset H^2(M,\bdd M;\bR)$ given by
\[
H_\al :=\{ x \in H^2(M,\bdd M;\bR) \mid \langle x, \al \rangle=c(\al,t)\}.
\]
Lastly,
\begin{gather} 
P_\al(M,\ga,t):=H_\al \cap P(M,\ga,t), \text{ and }
C_\al(M,\ga,t):=H_\al \cap C(M,\ga,t), \\ \label{equation SFH alpha}
SFH_\al(M,\ga):=\bigoplus \{ SFH(M,\ga,\fs) \mid i(c_1(\fs,t)) \in C_\al(M,\ga,t)\}. 
\end{gather}

For an explanation of the types of well-behaved surfaces mentioned in the last part of this section see Definitions \ref{def groomed intro} and \ref{def nice intro}.

\begin{prop} \cite[Prop.\,4.12]{Jupolytope} \label{prop decomposition}
If $(M,\ga)$ is taut and strongly balanced, then $P_\al(M,\ga,t)$ is the convex hull of $C_\al(M,\ga,t)$ and it is a face of the polytope $P(M,\ga,t)$.  Furthermore, if $S$ is a nice decomposing surface that gives a taut decomposition $(M,\ga) \leadsto^S (M',\ga')$ and $[S]=\al$, then $SFH(M',\ga')=SFH_\al(M,\ga)$.  
\end{prop}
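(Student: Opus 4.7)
The plan is to prove the two claims separately, relying on general convex geometry for the first and on Theorem \ref{thm nice intro} combined with Lemma \ref{lemma the two c are the same} for the second.

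For the first claim I will invoke the following standard fact: if $P=\mathrm{conv}(C)$ for a finite subset $C$ of a real vector space $V$, and if $\al$ is any linear functional on $V$, then the set on which $\al$ attains its minimum over $P$ is a face of $P$, equal to the convex hull of the minimizing subset of $C$. Applied to the finite set $C(M,\ga,t)$ (finite because $SFH(M,\ga)$ is finitely generated, so only finitely many $\fs$ contribute) and to the functional $\langle\cdot,\al\rangle$, this immediately yields $P_\al(M,\ga,t)=\mathrm{conv}(C_\al(M,\ga,t))$ and exhibits $P_\al(M,\ga,t)$ as a face of $P(M,\ga,t)$.

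For the second claim, Theorem \ref{thm nice intro} applied to the nice decomposition gives
\[
SFH(M',\ga')=\bigoplus_{\fs\in O_S}SFH(M,\ga,\fs),
\]
so it suffices to show that the set of $\fs\in S(M,\ga)$ lying in $O_S$ equals the set of $\fs\in S(M,\ga)$ with $i(c_1(\fs,t))\in C_\al(M,\ga,t)$, i.e., with $\langle c_1(\fs,t),[S]\rangle=c(\al,t)$. The forward direction is immediate from Lemma \ref{lemma the two c are the same}: for any $\fs\in O_S$, summing the equalities $\langle c_1(\fs,t),[T_i]\rangle=c(T_i,t)$ over the boundary components of $S$ yields $\langle c_1(\fs,t),[S]\rangle=c(S,t)$. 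Since $(M',\ga')$ is taut, Theorem \ref{thm nonzero} forces $SFH(M',\ga')\neq 0$, so at least one such $\fs$ has nonzero $SFH$; combined with the previous sentence this already shows $c(\al,t)\le c(S,t)$, and moreover that every outer $\fs\in S(M,\ga)$ lies on the face $P_\al(M,\ga,t)$.

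The main obstacle is the converse: an adjunction-type estimate asserting that $\langle c_1(\fs,t),[S]\rangle\ge c(S,t)$ for every $\fs\in S(M,\ga)$, with equality forcing $\fs\in O_S$. My plan for this step is to choose a balanced diagram $(\Si,\hal,\hbe,P)$ adapted to $S$, as constructed in the proof of Theorem \ref{thm nice intro}, so that the outer $\Spin^c$ structures correspond exactly to those intersection points $\hx\in\bT_\al\cap\bT_\be$ disjoint from the subsurface $P\subset\Si$. Using a chain-level formula for $c_1(\fs(\hx),t)$ in terms of $\hx$ and the trivialisation $t$ on $\bdd M$, I would compute $\langle c_1(\fs(\hx),t),[S]\rangle-c(S,t)$ combinatorially and identify it with twice the count of points of $\hx$ lying in $P$; this count is manifestly non-negative and vanishes precisely when $\hx\in O_P$. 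Combining these ingredients gives the required equality of index sets and hence $SFH(M',\ga')=SFH_\al(M,\ga)$.
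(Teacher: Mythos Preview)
This proposition is cited from \cite{Jupolytope} and not proved in the present paper, so there is no in-paper proof to compare against; your outline follows Juh\'asz's original strategy. The convex-geometry portion and the forward direction (outer $\fs$ satisfy $\langle c_1(\fs,t),[S]\rangle=c(S,t)$, and tautness of $(M',\ga')$ guarantees at least one such $\fs$ has nonzero $SFH$, hence $c(\al,t)\le c(S,t)$) are handled correctly.

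The genuine gap is in the converse. You correctly identify that what is needed is $\langle c_1(\fs,t),[S]\rangle\ge c(S,t)$ for every $\fs\in S(M,\ga)$, with equality forcing $\fs\in O_S$, and you correctly point to the adapted diagram $(\Si,\hal,\hbe,P)$ as the tool. But the asserted identity---that $\langle c_1(\fs(\hx),t),[S]\rangle-c(S,t)$ equals twice the count of points of $\hx$ lying in $P$---is precisely the substance of the proposition, and you only announce that you ``would compute'' it. This is a nontrivial relative-Chern-class calculation in the adapted diagram (it is the technical core of \cite{Jusurface} and \cite{Jupolytope}); without carrying it out, nothing beyond the forward inclusion is established. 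Note too that Lemma~\ref{lemma the two c are the same} alone cannot supply the converse: its ``if'' direction requires equality on each component, which is not implied by equality of the sum over $[S]$. A small logical slip: your claim that ``every outer $\fs\in S(M,\ga)$ lies on the face $P_\al$'' is asserted before you have $c(\al,t)=c(S,t)$, and that equality only follows once the missing inequality is in hand.
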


The statement of part (i) of the following Corollary is simply the application of this fact, together with Gabai's result that given any $\al \in H_2(M,\bdd M)$, there exists a groomed surface decomposition $(M,\ga) \leadsto^S (M',\ga')$ such that $(M',\ga')$ is taut and $[S]=\al$; see \cite[Lem.\,0.7]{Gabai II}.  Part (ii) is slightly more involved, and builds on the theory developed in \cite{Jupolytope}.

\begin{cor} \cite[Cor.\,4.15]{Jupolytope} \label{cor decomposition}
Let $(M,\ga)$ be a taut balanced sutured manifold, and suppose that $H_2(M)=0$.  Then the following two statements hold.
\begin{enumerate}
\item For every $\al \in H_2(M,\bdd M)$, there  exists a groomed surface decomposition $(M,\ga) \leadsto^S (M',\ga')$ such that $(M',\ga')$ is taut, $[S]=\al$, and 
\[
SFH(M',\ga') \cong SFH_\al(M,\ga).
\]
If moreover, $\al$ is well-groomed, then $S$ can be chosen to be well-groomed.
\item For every face $F$ of $P(M,\ga,t)$, there exists an $\al \in H_2(M,\bdd M)$ such that $F=P_\al(M,\ga,t)$.
\end{enumerate}
\end{cor}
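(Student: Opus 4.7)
The plan is to treat (i) as an essentially immediate consequence of earlier results and concentrate the real work on (ii), which is a convex-geometric assertion about the rational polytope $P(M,\ga,t)$.

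For (i), I would first invoke Gabai's existence result (Lemma 0.7 of \cite{Gabai II}) to produce a groomed surface decomposition $(M,\ga)\leadsto^{S}(M',\ga')$ with $(M',\ga')$ taut and $[S]=\al$, taking $S$ well-groomed if $\al$ is. By Remark \ref{rmk nice}, an arbitrarily small generic perturbation of $\bdd S$ makes $S$ nice without altering either the homology class $[S]$ or the diffeomorphism type of the resulting decomposition, and in particular without disturbing tautness. Proposition \ref{prop decomposition} then immediately identifies $SFH(M',\ga')$ with $SFH_\al(M,\ga)$, completing (i).

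For (ii), write $V:=H^2(M,\bdd M;\bR)$ and note that via the Kronecker pairing $\langle\cdot,\cdot\rangle$ the dual of $V$ is naturally identified with $H_2(M,\bdd M;\bR)$. The polytope $P:=P(M,\ga,t)$ is a finite convex polytope in $V$, and a fundamental fact in convex geometry is that every face of a convex polytope is \emph{exposed}: for any face $F$ there exists a dual vector $\al$ and a constant $c(\al,t)$ such that
\[
F=\{x\in P\mid \langle x,\al\rangle=c(\al,t)\}\quad\text{and}\quad \langle x,\al\rangle\geq c(\al,t) \text{ for all } x\in P.
\]
In view of \eqref{equation calt} this is precisely the statement $F=P_\al(M,\ga,t)$ for some $\al\in H_2(M,\bdd M;\bR)$.

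The remaining and crucial issue is that $\al$ should lie in the integer lattice $H_2(M,\bdd M;\bZ)\subset H_2(M,\bdd M;\bR)$. Here I would exploit that $P$ is a \emph{rational} polytope: by definition its vertices are images under $i$ of integer classes $c_1(\fs,t)\in H^2(M,\bdd M;\bZ)$, so the affine hull of any face of $P$ is cut out by linear equations with rational coefficients. The set of dual vectors $\al$ exposing precisely $F$ (and not a strictly larger face of $P$) is the relative interior of a rational polyhedral cone in $H_2(M,\bdd M;\bR)$, namely the cone corresponding to $F$ in the normal fan of $P$. Since this relative interior is open in a rational linear subspace, it contains rational, and hence after clearing denominators integer, points. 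I expect this last step, \emph{separating} the face $F$ from its adjacent faces using an integer dual vector, to be the main technical point of the argument, but it is a standard computation with the normal fan of a rational polytope, and the whole argument otherwise reduces to bookkeeping in convex geometry.
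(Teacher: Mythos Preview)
Your proposal is correct and matches the paper's approach. For part (i) the paper gives exactly your argument: Gabai's Lemma 0.7 produces the groomed (or well-groomed) taut decomposition, a generic perturbation makes $S$ nice (Remark \ref{rmk nice}), and Proposition \ref{prop decomposition} yields the $SFH$ identification. For part (ii) the paper does not supply a proof at all, merely noting that it ``builds on the theory developed in \cite{Jupolytope}''; your normal-fan argument---that the vertices of $P(M,\ga,t)$ lie in the integer lattice, hence the polytope is rational, hence the relative interior of the normal cone to any face contains an integer point---is exactly the standard way to fill this in and is correct.
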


\begin{ex} \label{ex asymmetric}
The sutured Floer polytope is not centrally symmetric.  For example the three-component Pretzel link $P(2,2,2)$ (see Figure \ref{fig pretzel link}) has $SFH(P(2,2,2))=\bZ^3$, and the polytope is a triangle.  In general, all three-component Pretzel links $P(2s,2t,2r)$ have asymmetric polytopes (Example 8.5 of \cite{FJR10}, and also see other examples in Section 8 of \cite{FJR10}).
\end{ex}

\begin{figure}[h]
\centering
\includegraphics [scale=0.50]{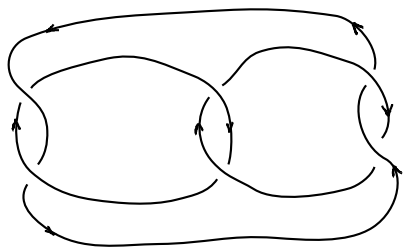} 
\caption{The sutured Floer polytope of the Pretzel link $P(2,2,2)$ is a triangle (so it is centrally asymmetric).}
\label{fig pretzel link}
\end{figure}

\begin{rmk} \label{rmk my result}
Recall that Theorem \ref{thm surface complement} said that for a null-homologous  knot $K$ in a closed oriented 3-manifold $Y$, we have  $SFH(Y(S))=\hfkhat(Y,K,[S], g(S))$, where $S$ is a minimal genus Seifert surface.  Note that if $R$ is another minimal genus Seifert surface, and if $H_2(Y)=0$ as is the case with $Y=S^3$, then $[R]=[S] \in H_2(Y(K),\bdd Y(K))$.  As a consequence $SFH(Y(S))=SFH(Y(R))$.  However, the sutured Floer polytopes $P(Y(S))$ and $P(Y(R))$ are not necessarily the same (up to translation).  See \cite{Altman1} for an infinite family of knots $K$ and pairs of Seifert surfaces $(S,R)$ for each knot that can be distinguished by their polytopes, and therefore, can also be distinguished by their sutured torsion (see Section \ref{subsection torsion}) and their $\Spin^c$ grading.
\end{rmk}

\subsection{Norms on 3-manifolds} \label{subsection norms intro} 

Here we briefly review the various (semi)norms that have been defined on the second homology group  $H_2(M,\bdd M; \bR)$ of a 3-manifold $M$ with boundary.  This survey is meant to highlight the geometric nature of the sutured Floer polytope. 

Note that we are always given a map $H_2(M,\bdd M) \to \bZ^{\geq 0}$, which is first extended to a rational-valued map on $H_2(M, \bdd M; \bQ)$ by linearity and then to a real-valued map on $H_2(M,\bdd M; \bR)$ by continuity.  Finally, in each case some work has to be done to show that the resulting map on the real-valued homology group is indeed a semi-norm.  In each case, we refer to all three maps by the same symbol, but it is obvious which one we mean. 
 
Thurston defined a semi-norm on the homology of a 3-manifold $(M,\bdd M)$ with possibly empty boundary \cite{Thurston norm}.  Given a properly embedded, oriented closed surface $S \subset M$, set 
\[
\chi_-(S):= \sum_{\textrm{components $S_i$ of } S} \max\{ 0,-\chi(S_i)\}.
\]
Then the {\it Thurston semi-norm} is given by the map
\begin{align*}
x &: H_2(M,\bdd M; \bR) \to \bZ^{\geq 0}, \\
x (\al) &:= \min \{ \chi_-(S) \mid [S]=\al \in H_2(M,\bdd M) \}.
\end{align*}
The semi-norm $x$ is a norm if there exist no subspace of $H_2(M,\bdd M)$ that is spanned by surfaces of non-negative Euler characteristic, that is, spheres, annuli and tori. The Thurston semi-norm measures the ``complexity'' of a certain homology class.  Thurston showed that some top-dimensional faces of the norm unit ball were {\it fibred}; that is, if $M$ fibres over the circle $S^1$ with fibre a closed surface $\Si \subset M$, then the ray passing through the lattice point $[\Si]$ intersects the unit ball in the interior of a top-dimensional face, and all of the rational rays through that face represent fibrations of $M$.  

Scharlemann generalised the Thurston norm \cite{Scharlemann}. 
As before let $(M,\bdd M)$ be a given 3-manifold and $S$ a properly embedded surface in $M$.  Now let $\beta$ be a properly embedded 1-complex in $M$, and define 
\[
\chi_\beta(S):=\sum_{\textrm{components $S_i$ of } S} \max \{ 0, - \chi (S_i)+ \abs{S_i \cap \beta}\}.
\]
Then the {\it generalised Thurston norm} is given by the map
\begin{align*}
x_\beta & : H_2(M,\bdd M) \to \bZ^{\geq 0}, \\
x_\beta (\al) &:= \min \{ \chi_\beta(S)\mid [S]=\al \in H_2(M,\bdd M) \}. 
\end{align*}

The generalised Thurston norm specialises to the case of sutured manifolds \cite{CC sutured Thurston norm, Scharlemann}.  In particular, suppose that $(M,\ga)$ is sutured manifold, and that $S$ is a properly embedded surface.  Then let $n(S)$ denote the absolute value of the intersection number of $\bdd S$ and $s(\ga)$ as elements of $H_1(\bdd M)$.  Define 
\[
\chi_-^s(S):=\sum_{\textrm{components $S_i$ of } S} \max \{ 0, - \chi (S_i)+\frac{1}{2}n(S_i)\}.
\]
Note that if we took $\be:=s(\ga)$, then $\chi_\be(S)=2 \chi_-^s(S) + \chi(S)$.  Similarly to before,  the {\it sutured Thurston norm}  is given by the map
\begin{align*}
x^s &: H_2(M,\bdd M) \to \bZ^{\geq 0}, \\
x^s (\al) &:= \min \{ \chi^s_-(S) \mid [S]=\al \in H_2(M,\bdd M) \}.
\end{align*}
The motivation for defining $x^s$ comes from looking at the manifold $DM$ obtained by gluing two oppositely oriented copies of $M$ along the boundary, that is,  
\[
DM:=(M,\ga) \cup (-M,-\ga) / \sim,
\] 
where the equivalence relation identifies $R_+(\ga)$ with $R_+(-\ga)$ point-wise in the obvious way.  Then $DM$ is referred to as the {\it double} of $M$.  Similarly, if $S$ is a properly embedded surface in $M$, then $DS$ is the double of $S$ in $DM$.    Now, Theorem 2.3 of \cite{CC sutured Thurston norm} says that there is a natural ``doubling map'' $D_* \colon H_2(M,\bdd M;\bR) \to H_2(DM,\bdd DM;\bR)$, so that for any $\al \in H_2(M,\bdd M;\bR)$, we have 
\[
x^s(\al)=\frac{1}{2}x(D_*(\al)).
\]

So far all of the described norms have been symmetric.  As the sutured Floer polytope is asymmetric in general, the unit balls of these norms are certainly not dual to the sutured Floer polytope. Also, in order to talk about the polytope as being dual to the unit ball of a norm, we must pick a trivialisation because otherwise the polytope is defined only up to translation in $H^2(M,\bdd M)$.  

Fix a balanced sutured manifold $(M,\ga)$ and a trivialisation $t \in T(M,\ga)$.  Using the theory developed by Juh\'asz, we define an integer-valued function on $H_2(M,\bdd M)$, dependent on $t$, that plays the role of the Thurston-type norms in the case of the sutured Floer polytope.  First of all, recall the purely geometric invariant defined in Section \ref{subsection relative structures}:
\begin{equation}
c(S,t):= \chi(S) + I(S) - r(S,t),
\end{equation}
where $\chi(S)$ is the Euler characteristic, $I(S)$ generalises the term $-\frac{1}{2}n(S_i)$ in the definition of the sutured norm, and $r(S,t)$ is an additional component, which accounts for the dependance of the polytope on the trivialisation $t$.  
  
Now, assuming that $H_2(M)=0$, for a fixed $t \in T(M,\ga)$ we define the function
\begin{align*}
y_t & : H_2(M,\bdd M) \to \bZ, \\
y_t(\al)& := \min \{ -c(S,t))\mid S \textrm{ nice decomposing surface}, [S]=\al \}.
\end{align*}
\begin{rmk} As we noted before, any open groomed surface can be slightly perturbed into a nice surface.  Any homology class $\al \neq 0$ has a groomed surface representative \cite[Lem.\,0.7]{Gabai II}, however it is not clear that a it necessarily has an open groomed representative.  Thus the condition $H_2(M)=0$.   We could have relaxed the definition and required each $S$ to satisfy all the conditions of being nice except openness, but it is not clear that this would have been helpful.
\end{rmk}

It can be shown that if $T$ is a component of $\bdd S$ such that $T \not \subset \ga$, then $I(T)=-\frac{\abs{T \cap s(\ga) }}{2}$ \cite[Lem.\,3.17]{Jupolytope}.  In other words, in this case $-I(T)=\frac{1}{2} n(T)$ which is the second term in the definition of $x^s$.

We would like to say that the function $y_t$ has some useful properties, such as that it satisfies the triangle inequality and positive homogeneity with respect to the integers. Indeed, as we see in Proposition \ref{prop properties intro}, these properties follow from the definitions and from Lemma \ref{lem calt intro}.  

\begin{lemma} \cite[Cor.\,4.11]{Jupolytope} \label{lem calt intro}
Let $(M,\ga)$ be a taut, strongly balanced sutured manifold such that $H_2(M)=0$.  Then 
\begin{equation} \label{equation calt2}
c(\al,t)=\max \{ c(S,t) \mid S \textrm{ a nice decomposing surface}, [S]=\al\}.
\end{equation} 
\end{lemma}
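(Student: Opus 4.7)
The plan is to establish the equality by proving the two one-sided inequalities separately, with Corollary~\ref{cor decomposition} supplying the surface that attains the maximum and Lemma~\ref{lemma the two c are the same} providing the bridge between $c(S,t)$ and the pairing $\langle c_1(\mathfrak{s},t),[S]\rangle$.

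First, since $P(M,\gamma,t)$ is the convex hull of the finite set $\{i\circ c_1(\mathfrak{s},t):\mathfrak{s}\in S(M,\gamma)\}$, the linear functional $\alpha$ attains its minimum on $P(M,\gamma,t)$ at a vertex, so
\[
c(\alpha,t)=\min_{\mathfrak{s}\in S(M,\gamma)}\langle c_1(\mathfrak{s},t),\alpha\rangle.
\]
For the inequality $c(\alpha,t)\ge c(S,t)$ for every nice decomposing surface $S$ with $[S]=\alpha$, I would first establish the pointwise bound
\[
\langle c_1(\mathfrak{s},t),[S]\rangle \;\ge\; c(S,t) \qquad\text{for all }\mathfrak{s}\in\Spin^{c}(M,\gamma),
\]
with equality exactly when $\mathfrak{s}\in O_S$. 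Lemma~\ref{lemma the two c are the same} already yields the equality case, so the task is to promote this to an inequality for arbitrary $\mathfrak{s}$. The argument is a local one on each boundary component $T_i$ of $S$: for a representative vector field $v$ of $\mathfrak{s}$, the pairing $\langle c_1(\mathfrak{s},t),[T_i]\rangle$ is computed as a rotation number of the projection of $v$ into $v_{0}^{\perp}$ along $T_i$ (after a generic perturbation), and the outer condition $v_p\neq -\nu_{S,p}$ is precisely what keeps this rotation number minimal; any zero of the projection $p(v)$ that is forced by failure of outerness contributes a non-negative correction. Summing the local bounds over the boundary components gives the global inequality. Taking the minimum over $\mathfrak{s}\in S(M,\gamma)$ then yields $c(\alpha,t)\ge c(S,t)$.

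For the reverse inequality, I would invoke Corollary~\ref{cor decomposition}(i) to produce a groomed decomposition $(M,\gamma)\leadsto^{S}(M',\gamma')$ with $[S]=\alpha$, with $(M',\gamma')$ taut, and with $SFH(M',\gamma')\cong SFH_{\alpha}(M,\gamma)$; after a small perturbation of $\partial S$ (Remark~\ref{rmk nice}) we may assume $S$ is nice. Since $(M',\gamma')$ is taut, Theorem~\ref{thm nonzero} gives $SFH(M',\gamma')\neq 0$, hence $SFH_{\alpha}(M,\gamma)\neq 0$, so there exists $\mathfrak{s}_0\in S(M,\gamma)$ with $i\circ c_1(\mathfrak{s}_0,t)\in C_{\alpha}(M,\gamma,t)$, that is, $\langle c_1(\mathfrak{s}_0,t),\alpha\rangle = c(\alpha,t)$. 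On the other hand, using the identification $SFH(M',\gamma')=\bigoplus_{\mathfrak{s}\in O_S}SFH(M,\gamma,\mathfrak{s})$ from Theorem~\ref{thm nice intro}, the non-vanishing forces some $\mathfrak{s}_1\in O_S\cap S(M,\gamma)$, and Lemma~\ref{lemma the two c are the same} gives $\langle c_1(\mathfrak{s}_1,t),\alpha\rangle = c(S,t)$. Combining with the first step applied to $\mathfrak{s}_1$ and with the minimality of $c(\alpha,t)$ yields $c(\alpha,t)\le c(S,t)$, so this particular $S$ realises the maximum and $c(\alpha,t)=c(S,t)$.

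The main obstacle is the adjunction-type bound $\langle c_1(\mathfrak{s},t),[S]\rangle\ge c(S,t)$ for arbitrary $\mathfrak{s}$, as Lemma~\ref{lemma the two c are the same} only handles the equality case in $O_S$. Everything else in the proof is a direct assembly of Corollary~\ref{cor decomposition}(i), Theorem~\ref{thm nice intro}, Theorem~\ref{thm nonzero}, and the vertex characterisation of $c(\alpha,t)$.
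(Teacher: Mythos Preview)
The paper does not prove this lemma itself; it is quoted from \cite[Cor.\,4.11]{Jupolytope} and used as a black box in the proof of Proposition~\ref{prop properties intro}. So there is no ``paper's own proof'' to compare against here, and I will assess your argument on its merits.

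Your Step~2 is correct and is essentially how Juh\'asz produces a surface realising the maximum: Corollary~\ref{cor decomposition}(i) plus Theorem~\ref{thm nonzero} give a nice $S$ with $[S]=\alpha$, a taut $(M',\gamma')$, and some $\mathfrak{s}_1\in O_S\cap S(M,\gamma)$; Lemma~\ref{lemma the two c are the same} then yields $\langle c_1(\mathfrak{s}_1,t),\alpha\rangle=c(S,t)$, hence $c(\alpha,t)\le c(S,t)$.

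The gap is in Step~1. The inequality
\[
\langle c_1(\mathfrak{s},t),[S]\rangle \ \ge\ c(S,t)\qquad\text{for all }\mathfrak{s}\in\Spin^c(M,\gamma)
\]
is simply false: since $c_1(\mathfrak{s}_1,t)-c_1(\mathfrak{s}_2,t)=2(\mathfrak{s}_1-\mathfrak{s}_2)$ and $\mathfrak{s}_1-\mathfrak{s}_2$ ranges over all of $H^2(M,\partial M)$, the pairing $\langle c_1(\mathfrak{s},t),[S]\rangle$ takes arbitrarily negative values as $\mathfrak{s}$ varies. So no ``local rotation-number'' argument on $\partial S$ can establish this bound, because such an argument would not see the distinction between arbitrary $\mathfrak{s}$ and those in the Floer support, and would therefore prove something false. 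What is true, and what you actually need, is the adjunction-type inequality only for $\mathfrak{s}\in S(M,\gamma)$; but this is a genuinely Floer-theoretic statement. Its proof in \cite{Jupolytope} goes through the surface decomposition formula (Theorem~\ref{thm nice intro}) together with the comparison of $O_S\cap S(M,\gamma)$ with the minimising face $C_\alpha(M,\gamma,t)$ established in Proposition~\ref{prop decomposition}, not through differential topology of the vector field alone. In short, you have correctly isolated the hard step, but the mechanism you propose for it cannot work; the Floer input is essential there, not just in Step~2.
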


\begin{prop} \label{prop properties intro}
Let $(M,\ga)$ be a taut, strongly balanced sutured manifold such that $H_2(M)=0$.  Fix $t \in  T(M,\ga)$. Then for any $\al, \be \in H_2(M,\bdd M)$ and any $m \in \bZ$, the following hold
\begin{align*}
y_t(\abs{m} \cdot \al) & = \abs{m} \cdot y_t(\al ), \\
y_t(\al + \be)  & \leq y_t(\al) + y_t(\be).
\end{align*}
\end{prop}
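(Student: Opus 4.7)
The plan is to reduce the proposition to elementary convex analysis on the sutured Floer polytope $P(M,\ga,t)$, using Lemma \ref{lem calt intro} to bridge the combinatorial definition of $y_t$ with the geometric description via $c(\al,t)$. The hypotheses that $(M,\ga)$ is taut and strongly balanced with $H_2(M)=0$ put us squarely in the setting where Lemma \ref{lem calt intro} applies.

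My first step will be to establish the identity $y_t(\al) = -c(\al,t)$ for every $\al \in H_2(M,\bdd M)$. This is essentially automatic from the definitions: $y_t(\al)$ is by construction the minimum of $-c(S,t)$ over nice decomposing surfaces $S$ representing $\al$, which equals $-\max\{c(S,t) \mid [S]=\al\}$, and Lemma \ref{lem calt intro} identifies this maximum with $c(\al,t)$ as defined in \eqref{equation calt}. Thus
\[
y_t(\al) = -c(\al,t) = -\min\{\langle c,\al\rangle \mid c \in P(M,\ga,t)\}.
\]

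With this identification in place, both claimed properties follow from standard facts about minima of linear functionals on convex sets. For positive homogeneity, since $|m|\geq 0$ I can factor it out of the infimum to obtain $c(|m|\al,t) = |m|\cdot c(\al,t)$; negating yields $y_t(|m|\al) = |m|\,y_t(\al)$. For subadditivity, picking any $c_0 \in P(M,\ga,t)$ that realises $c(\al+\be,t)$ and splitting the pairing gives
\[
c(\al+\be,t) = \langle c_0,\al\rangle + \langle c_0,\be\rangle \geq \min_{c\in P}\langle c,\al\rangle + \min_{c\in P}\langle c,\be\rangle = c(\al,t)+c(\be,t),
\]
and negation converts this into $y_t(\al+\be)\leq y_t(\al)+y_t(\be)$.

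The main obstacle is not in the proposition itself but in Lemma \ref{lem calt intro}, where the genuine work happens: showing that every value of $c(\al,t)$ as a minimum over the polytope can be achieved by a geometric surface invariant requires the theory of well-behaved surface decompositions and the correspondence between outer $\Spin^c$ structures and decomposing surfaces discussed in Section \ref{subsection well-behaved surfaces intro}. Once that lemma is in hand, however, $y_t$ is literally the support function of a compact convex polytope evaluated at $-\al$, and the proposition reduces to the classical statement that support functions are sublinear and positively homogeneous.
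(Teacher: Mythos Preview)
Your proof is correct and follows essentially the same approach as the paper: both first invoke Lemma \ref{lem calt intro} to identify $y_t(\al)=-c(\al,t)$, and then reduce the two claims to elementary properties of the minimum (equivalently, the maximum of the negative) of a linear functional over the polytope $P(M,\ga,t)$. Your additional remark that $y_t$ is the support function of the polytope is a nice way to phrase what is going on.
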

\begin{proof}
From Lemma \ref{lem calt intro} it follows that $y_t(\al)=-c(\al,t)$ for any  $\al \in H_2(M,\bdd M)$.  By definition $c(\al,t)=\min \{ \langle c, \al \rangle \mid c \in P(M,\ga,t) \}$, thus 
\[
y_t(\al)=\max \{\langle -c, \al \rangle \mid c \in P_t\},
\]
where we have denoted $P(M,\ga,t)$ by $P_t$.

The first statement of the proposition is obvious. Proving the triangle inequality is also easy, and is identical to the proof given in  \cite[Prop.\,8.2]{Jupolytope}:
\begin{align*}
y_t(\al + \be) & =\max \{ \langle -c, \al + \be \rangle \mid c \in P_t \} \\
		& =\max \{ \langle -c, \al \rangle + \langle -c, \be \rangle \mid c \in P_t \}  \\
		& \leq \max \{ \langle -c, \al  \rangle : c \in P_t \} + \max \{ \langle -c, \be \rangle \mid c \in P_t \} \\
		& = y_t(\al) + y_t(\be).
\end{align*} 
\end{proof}

As before we can extend $y_t$ to a rational-valued map on $H_2(M,\bdd M ;\bQ)$ by linearity and then to a real-valued map on $H_2(M,\bdd M; \bR)$ by continuity.  Thus, for any balanced sutured manifold with $H_2(M)=0$ we can define a {\it geometric sutured function}
\[
y_t \colon H_2(M,\bdd M) \to \bR,
\]
such that $y_t(r \cdot \alpha)=r \cdot y_t(\al)$ and $y_r(\al + \be) \leq y_t(\al) + y_t(\be)$ for $r \in \bR$ and $\al, \be \in H_2(M,\bdd M ;\bR)$.

The following corollary says that $y_t$ is actually a (semi)norm for a lot of the often-studied sutured manifolds.

\begin{cor} \label{yt dual to polytope}
Let $(M,\ga)$ be a taut, strongly balanced sutured manifold such that $H_2(M)=0$.  If there exits a $t \in T(M,\ga)$ such that 
\[
y_t \colon H_2(M,\bdd M) \to \bR^{\geq 0}, \\
\]
then $y_t$ is an asymmetric semi-norm. In particular, this is the case when $H^2(M)=0$. Moreover, the unit ball of the semi-norm $y_t$ is the dual to the polytope $P(M,\ga,t)$.  Finally,  $y_t$ is a norm if and only if  $\dim P(M,\ga,t) = b_1(M)$.
\end{cor}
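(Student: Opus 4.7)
The plan is to reduce all four claims to support-function properties of $P(M,\ga,t)$. By Lemma~\ref{lem calt intro},
\[
y_t(\al) = -c(\al,t) = \max_{c \in P(M,\ga,t)} \langle -c, \al\rangle,
\]
so on integer classes $y_t$ is (minus) the support function of $P(M,\ga,t)$, equivalently the support function of $-P(M,\ga,t)$, and extends to $H_2(M,\bdd M;\bR)$ by the same formula via linearity and continuity. Proposition~\ref{prop properties intro} already supplies positive homogeneity and the triangle inequality for $y_t$, so combined with the hypothesis $y_t \geq 0$ the asymmetric seminorm property is immediate by definition.

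For the special case $H^2(M)=0$, I would invoke the conjugation symmetry of sutured Floer homology: the involution $v \mapsto -v$ induces an involution $\fs \mapsto \bar\fs$ on $\Spin^c(M,\ga)$ satisfying $SFH(M,\ga,\fs) \cong SFH(M,\ga,\bar\fs)$ and $c_1(\bar\fs,t) = -c_1(\fs,t)$ for an appropriate companion trivialization. Consequently $C(M,\ga,t)$, and hence $P(M,\ga,t)$, is centrally symmetric about some point $c_0 \in H^2(M,\bdd M;\bR)$. By Remark~\ref{rmk gold}, varying $t$ translates $P(M,\ga,t)$ by elements in the image of $d\colon H^1(\bdd M) \to H^2(M,\bdd M)$; the obstruction to translating $c_0$ to the origin is precisely the class $p^*(c_0) \in H^2(M)$, which vanishes when $H^2(M) = 0$. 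Hence one can choose a trivialization $t$ making $P_t$ symmetric about $0$; in particular $0 \in P_t$, and therefore $y_t(\al) = \max_{c \in P_t}\langle -c,\al\rangle \geq 0$.

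The duality claim is then a direct unpacking of definitions: the unit ball of $y_t$ is
\[
\{\al \in H_2(M,\bdd M;\bR) \mid \langle -c,\al\rangle \leq 1 \;\forall\, c \in P(M,\ga,t)\},
\]
the polar dual of $-P(M,\ga,t)$, which agrees with the polar dual of $P(M,\ga,t)$ up to reflection, and literally so once $P_t$ is centrally symmetric about $0$. For the final claim, $y_t$ is a norm iff $y_t(\al) > 0$ for every nonzero $\al$, iff the unit ball is bounded, iff the origin lies in the interior of $-P_t$ (and hence of $P_t$); this requires $P_t$ to be full-dimensional in $H^2(M,\bdd M;\bR) \cong \bR^{b_1(M)}$, i.e.\ $\dim P(M,\ga,t) = b_1(M)$. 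The forward direction is immediate: if $\dim P_t < b_1(M)$, then $P_t$ lies in a proper affine subspace and some nonzero $\al$ annihilates its affine span, forcing $y_t(\al) + y_t(-\al) = 0$, and since $y_t \geq 0$ this gives $y_t(\al) = 0$ with $\al \neq 0$. The main obstacle will be the converse: assuming $P_t$ full-dimensional and $y_t \geq 0$, confirming that $0$ lies in the \emph{interior} of $P_t$ rather than merely its boundary (only then does the unit ball become bounded). I expect this step to require the central symmetry arranged in the previous paragraph, or an analogous structural feature of the sutured polytope, to exclude the pathological case of $0$ sitting on a face.
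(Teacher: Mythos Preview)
Your reduction to the support function of $P_t:=P(M,\ga,t)$ via Lemma~\ref{lem calt intro} and Proposition~\ref{prop properties intro} is exactly right, and the duality paragraph is fine. The genuine error is your appeal to conjugation symmetry to place $0$ inside $P_t$. There is no involution $\fs\mapsto\bar\fs$ on $\Spin^c(M,\ga)$ with $SFH(M,\ga,\fs)\cong SFH(M,\ga,\bar\fs)$: reversing the vector field swaps $R_+(\ga)$ and $R_-(\ga)$, so it lands you in $\Spin^c$ of a \emph{different} sutured manifold, not back in $\Spin^c(M,\ga)$. Indeed the paper records in Example~\ref{ex asymmetric} that the sutured polytope is in general \emph{not} centrally symmetric (e.g.\ a triangle for the Pretzel link $P(2,2,2)$). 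So your argument that $P_t$ is symmetric about some $c_0$ fails, and with it your mechanism for getting $0$ into $P_t$.

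The paper's route is more modest and does not need symmetry: one only needs $0\in P_t$, since then $y_t(\al)=\max_{c\in P_t}\langle -c,\al\rangle\ge\langle 0,\al\rangle=0$. When $H^2(M)=0$ (equivalently, by universal coefficients and $H_2(M)=0$, when $H_1(M)$ is torsion-free), the long exact sequence in Remark~\ref{rmk gold} gives $\operatorname{im}\bigl(d\colon H^1(\bdd M)\to H^2(M,\bdd M)\bigr)=\ker p^*=H^2(M,\bdd M)$, so by $c_1(\fs,t_1)-c_1(\fs,t_2)=d(t_1-t_2)$ one can translate $P_t$ arbitrarily; in particular one can arrange $0\in P_t$. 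This is the content of the reference the paper gives (Lemma~3.12 of \cite{Jupolytope}). Note also that this correction undermines your proposed rescue of the final equivalence: you cannot fall back on central symmetry to force $0$ into the interior of a full-dimensional $P_t$. The paper defers that step to the argument of \cite[Prop.~8.2]{Jupolytope}, which works with the polytope translated so that its centre of mass is at the origin (cf.\ Remark~\ref{rmk Ju norm}); that placement, not any conjugation symmetry, is what guarantees $0\in\operatorname{int}P_t$ when $\dim P_t=b_1(M)$.
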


The fact that such a $t$ exits follows from Lemma 3.12 of \cite{Jupolytope}.  Basically, when there is no torsion in $H_1(M)$, then we can choose a trivialisation such that $P(M,\ga,t)$ contains $0 \in H^2(M,\bdd M)$. The very last statement in the corollary uses the same argument as in the proof of \cite[Prop.\,8.2]{Jupolytope}.

\begin{rmk} \label{rmk Ju norm}
Juh\'asz defines an asymmetric semi-norm $y$ whose dual norm unit ball is $-P(M,\ga)$, where  $-P(M,\ga)$ is the centrally symmetric image of $P(M,\ga)$  \cite[Def.\,8.1]{Jupolytope}.  Here $P(M,\ga)$ is the polytope with the centre of mass at $0 \in H^2(M,\bdd M;\bR)$.  Specifically he defines
\begin{align*}
y &: H_2(M,\bdd M ; \bR) \to \bR^{\geq 0} \\
y(\al) &:=\max \{\langle -c ,\al \rangle \mid c \in P(M,\ga) \}.
\end{align*}
When $t$ is such that the centre of mass of $P(M,\ga,t)$ lies at $0\in H^2(M,\bdd M)$, then $y_t=y$.
\end{rmk}

\subsection{The Euler characteristic} \label{subsection torsion}

Lastly, we explain how to compute the Euler characteristic of sutured Floer homology using a Fox calculus method developed in \cite{FJR10}.

Each of the groups $SFH(M,\ga,\fs)$ has a relative $\bZ_2$ grading, which can be made into an absolute grading (see Section \ref{subsection grading}).  Nevertheless, in order to speak of an absolute grading on the entire sutured Floer homology group, this must be done consistently for all $\fs \in \Spin^c(M,\ga)$. It turns out that this is possible and is equivalent to choosing a homology orientation $\om$ of $(M,R_-(\ga))$ \cite[p.\,436]{FJR10}.
 Then,  for every relative $\Spin^c$ structure $\fs$, the Euler characteristic  $\chi SFH(M,\ga,\fs)$ is well-defined with no sign ambiguity.  Theorem 1 of \cite{FJR10} tells us that the Euler characteristic with respect to the orientation $\om$, denoted by $\chi SFH(M,\ga,\fs,\om)$, is a function $T_{(M,\ga,\om)} \colon \Spin^c(M,\ga) \to \bZ$ that can be thought of as the maximal abelian torsion of the pair $(M,R_-(\ga))$, in the sense of Turaev \cite{Tu01}.
Fixing an affine isomorphism $\iota \colon \Spin^c(M,\ga) \to H_1(M)$ lets us collect all of these functions into a single generating function
\[
\tau(M,\ga):=\sum_{\fs \in \Spin^c(M,\ga)} T_{(M,\ga,\om)}(\fs) \cdot \iota(\fs).
\]
We refer to $\tau(M,\ga)$ as the {\it sutured torsion} invariant. 

In the case when $(M,\ga)$ is a manifold complementary to a Seifert surface we drop the reference to $\ga$ and write just $\tau(M)$ to mean $\tau(M,\ga)$.  Note that $\tau(M,\ga)$ is an element of the group ring $\bZ[H_1(M)]$, and that it is well-defined up to multiplication by an element of the form $\pm h$, where $h \in H_1(M)$.  We can extend the affine isomorphism $\iota$ linearly to a map on the group rings denoted by the same letter $\iota \colon \bZ[\Spin^c(M,\ga)] \to \bZ[H_1(M)]$. Then
\[
\tau(M,\ga)= \iota(\chi SFH(M,\ga)).
\]

\begin{rmk}
Notice that the abelian group $H_1(M)$ is thought of as a multiplicative group; hence the notion of being well-defined up to multiplication by an element.  Specifically, if $f= \pm h\cdot g$, for elements $f,g$ of the group ring $\bZ[H_1(M)]$, then we use the notation $f \doteq g$.
\end{rmk}

\subsection{Computation using Fox calculus}

Finally, let us describe how to compute the torsion $\tau(M,\ga)$ of a given irreducible balanced sutured manifold $(M,\ga)$ with connected subsurfaces $R_\pm(\ga)$. Fix a basepoint $p \in R_-(\ga)$.  Then Proposition 5.1 of \cite{FJR10} tells us how to compute the torsion from the map $\kappa_* \colon \pi_1(R_-(\ga),p) \to \pi_1(M,p)$ induced by the natural inclusion $\kappa \colon R_-(\ga) \hookrightarrow M$.  

First, take a {\it geometrically balanced} presentation of $\pi_1(M,p)$; that is, a presentation 
\[
\pi_1(M,p)=\langle a_1, \ldots, a_m| r_1, \ldots, r_n \rangle,
\]
where the deficiency of the presentation $m-n$ is equal to the genus $g(\bdd M)$ of the boundary of $M$.  

Obtaining a geometrically balanced presentation is not hard. Any balanced sutured manifold  $(M,\ga)$ can be reconstructed in a standard way from a balanced sutured diagram  $(\Si,\hal,\hbe)$.  Recall that to recover $(M,\ga)$, thicken $\Si$ to $\Si \times [0,1]$, regard $\hal$ as curves on $\Si \times \{0\}$, and $\hbe$ as curves on $\Si \times \{1\}$.  Then attach 2-handles along $\hal$ and $\hbe$ to obtain $M$ with sutures $\bdd \Si \times \{1/2\}$.  

Suppose that we picked the orientations so that $R_-(\ga)$ is the component of the boundary on ``the bottom'' that includes the boundaries of the 2-handles attached to $\al$.  Note that the 2-handles attached to $\al$ are precisely the 1-handles attached to $R_-(\ga)$.  Then the generators of the free group $\pi_1(R_-(\ga),p)$ and the cores of the 1-handles attached to $R_-(\ga)$ are a generating set for $\pi_1(M,p)$; the cores of the 2-handles attached to $\hbe$ give the relations of $\pi_1(M,p)$ in these generators.  Therefore, the deficiency of this presentation is equal to the number of generators of $\pi_1(R_-(\ga),p)$: say this number is $l$. Finally, as $M$ is balanced, $l$ is precisely equal to the genus of $\bdd M$. 

Let $\pi_1(R_-(\ga),p):=\langle \si_1, \ldots, \si_l \rangle$.  Then the images of $\si_j$ under the map $\kappa_*$ are words in the generators $a_i$ of $\pi_1(M,p)$.
Now we can form the square matrix of Fox derivatives
\[
\Theta_M:=
\begin{pmatrix}
\varphi \Bigl( \frac{\bdd \kappa_*(\si_j)}{\bdd a_i}\Bigr) &  \varphi \bigl( \frac{\bdd r_k} {\bdd a_i} \bigr)
\end{pmatrix},
\]
where $\varphi \colon \bZ[\pi_1(M,p)] \to \bZ[H_1(M)]$ is the map induced by the abelianisation of the fundamental group.
\begin{rmk}
We use the convention that the Fox derivative is computed left-to-right.  For example, take words $u,w \in \bZ[\pi_1(M,p)]$ and apply  the Fox derivative $\frac{\bdd }{\bdd a_i} \colon \bZ [\pi_1(M,p)] \to \bZ[\pi_1(M,p)]$ to $u w$.  Then 
\[
\frac{\bdd(u w) }{\bdd a_i}=\frac{\bdd u} {\bdd a_i} \mbox{aug}(w) + u \frac{\bdd w}{\bdd a_i},
\]
where $\mbox{aug} \colon \bZ[\pi_1(M,p)] \to \bZ$ is the augmentation map.
\end{rmk}

\begin{prop} \label{prop}  \cite[Prop.\,5.1]{FJR10} Let $(M,\ga)$ be a balanced sutured manifold such that $M$ is irreducible and the subsurfaces $R_\pm(\ga)$ are connected.    Then
\[
\tau(M,\ga) \doteq \det \Theta_{M}.
\]
\end{prop}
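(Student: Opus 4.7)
The plan is to recognise $\det \Theta_M$ as computing the Reidemeister--Turaev torsion of the pair $(M,R_-(\ga))$; $\tau(M,\ga)$ has already been identified with this torsion earlier in the section (via Theorem 1 of \cite{FJR10}), so it suffices to carry out the torsion computation. I would first handle a particular ``natural'' presentation produced by Morse theory, for which the claim is transparent, and then propagate to general geometrically balanced presentations by Tietze invariance.

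For the natural presentation, I would apply Lemma \ref{lemma self-indexing} to obtain a self-indexing Morse function $f\colon M \to [-1,4]$ with no index $0$ or $3$ critical points, $f|_{R_-(\ga)}\equiv -1$ and $f|_{R_+(\ga)}\equiv 4$. This yields a handle decomposition of the pair $(M, R_-(\ga))$ consisting of $N_1$ 1-handles and $N_2$ 2-handles attached to $R_-(\ga)\times I$; balance forces $N_1=N_2=:N$. Collapsing $R_-(\ga)$ onto a bouquet of $l=g(\bdd M)$ loops $\si_1,\dots,\si_l$ based at $p$ gives the natural presentation
\[
\pi_1(M,p) = \langle \si_1,\dots,\si_l,\,t_1,\dots,t_N \mid r_1,\dots,r_N\rangle,
\]
in which $\kappa_*(\si_j)=\si_j$. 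Over $\bZ[H_1(M)]$ the relative cellular chain complex of $(\widetilde M,\widetilde{R_-(\ga)})$ collapses to a two-term complex of free modules of rank $N$ whose boundary, by the standard Fox-calculus computation of cellular boundaries, is the matrix $B=\bigl(\varphi(\bdd r_k/\bdd t_i)\bigr)$; its torsion is $\det B$. For this presentation the identities $\varphi(\bdd\si_j/\bdd\si_i)=\delta_{ij}$ and $\varphi(\bdd\si_j/\bdd t_i)=0$ put $\Theta_M$ in block-triangular form $\left(\begin{smallmatrix} I_l & A \\ 0 & B\end{smallmatrix}\right)$, so $\det\Theta_M = \det B \doteq \tau(M,\ga)$.

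The second stage is to verify that $\det\Theta_M$ is invariant (up to $\doteq$) under the balance-preserving Tietze transformations connecting any two geometrically balanced presentations of $\pi_1(M,p)$. A Tietze-II move (adjoin a new generator $a_{m+1}$ together with a defining relation $r_{n+1}=a_{m+1}w^{-1}$) enlarges $\Theta_M$ to the block matrix $\left(\begin{smallmatrix} \Theta_M & * \\ 0 & 1 \end{smallmatrix}\right)$, since $\kappa_*(\si_j)$ and $r_1,\dots,r_n$ contain no $a_{m+1}$ and $\bdd r_{n+1}/\bdd a_{m+1}=1$; this preserves the determinant. Nielsen moves on the generators or on the relators correspond to row/column operations on $\Theta_M$ whose effect, computed via the Fox chain rule, is multiplication by a unit in $\bZ[H_1(M)]$. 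The main obstacle is precisely this second stage: the classical torsion-invariance argument handles only the $r_k$-block of a Fox Jacobian, and one must additionally verify that the auxiliary $\kappa_*(\si_j)$-columns transform compatibly under generator substitutions. This reduces to applying the Fox chain rule to the words $\kappa_*(\si_j)$ and using $\varphi(\pi_1(M))\subset \bZ[H_1(M)]^\times$, but tracking the signs and the $\doteq$-ambiguities through a general Tietze sequence is the real bookkeeping one must perform.
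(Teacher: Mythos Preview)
The paper does not give its own proof of this proposition: it is simply quoted from \cite[Prop.\,5.1]{FJR10} and then immediately applied. So there is nothing in this article to compare your argument against line by line.

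That said, your outline is the standard and correct strategy for results of this type, and it is in the spirit of how \cite{FJR10} proceed. The identification $\tau(M,\ga)=T_{(M,\ga,\om)}$ with the maximal abelian torsion of $(M,R_-(\ga))$ is exactly what the paragraph preceding the proposition invokes, so reducing to a torsion computation is the right first move. Your Morse-theoretic step is also on target: the self-indexing Morse function of Lemma~\ref{lemma self-indexing} produces a relative handle decomposition with only $1$- and $2$-handles, hence a two-term complex over $\bZ[H_1(M)]$ whose torsion is the determinant of the square boundary matrix; the block-triangular shape of $\Theta_M$ in that distinguished presentation is correct. The one place to be careful is exactly the one you flag: the $\kappa_*(\si_j)$-columns are not part of a standard Alexander matrix, so the usual Tietze-invariance statement for Fox Jacobians does not literally apply. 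You handle this by tracking how both blocks transform under generator substitutions via the Fox chain rule; that is the right mechanism, but you should also note that the words $\kappa_*(\si_j)$ themselves get rewritten when the generating set changes, and it is this simultaneous rewriting that makes the combined determinant invariant up to units. With that caveat made explicit, your sketch is sound.
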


In particular, Proposition \ref{prop} can be applied in the case of a sutured manifold complementary to a minimal genus Seifert surface of a knot in $S^3$.

Lastly, let us say what it means for two sutured torsion polynomials $\tau_1:=\tau(M_1,\ga_1) \in \bZ[H_1(M_1)]$ and $\tau_2:=\tau(M_2,\ga_2) \in \bZ[H_1(M_2)]$ to be equivalent.  Note that the only relevant choices that we have made is that of the affine isomorphism $\iota_i \colon \Spin^c(M_i,\ga_i) \to H_1(M_i)$, for $i=1,2$.  Therefore, the two sutured torsion polynomials are {\it equivalent} $\tau_1 \sim \tau_2$ if there is an affine isomorphism $\psi \colon H_1(M_1) \to H_1(M_2)$, which extends linearly to a map on the group rings, such that $\psi(\tau_1)\doteq \tau_2$.  Also, we say that $\chi SFH(M_1,\ga_1)$ is {\it equivalent} to $\chi SFH(M_2,\ga_2)$ if $\tau_1 \sim \tau_2$.  Going back to Remark \ref{rmk my result}, we can now state the author's result precisely: \cite{Altman1} exhibits an infinite family of knots $K \subset S^3$ together with pairs of minimal genus Seifert surfaces $(S,R)$ such that the two Euler characteristics $\tau(S^3(S))$ and $\tau(S^3(R))$ are not equivalent, even though $SFH(S^3(R))=SHF(S^3(S))$.


\vspace{1cm}

{\sc \noindent Mathematics Institute, Zeeman Building, University of Warwick, UK.}
\\
E-mail address: irida.altman@gmail.com

\end{document}